\documentclass[opre,nonblindrev]{informs3}

\DoubleSpacedXI 


\usepackage{amsmath}   
\usepackage{booktabs}  
\usepackage{bbm}       
\usepackage{bm}        
\usepackage[lined,boxed,commentsnumbered, ruled]{algorithm2e}   
\usepackage{indentfirst}  
\usepackage{subcaption}   
\usepackage{mathtools}    
\usepackage{standalone}   
\usepackage[OT1]{fontenc}  
\usepackage[table]{xcolor} 
\usepackage{colortbl}

\usepackage{multibib}
\newcites{ec}{References}

\usepackage[hidelinks,citecolor=blue,urlcolor=blue,pdfauthor=author]{hyperref}  

\usepackage{pgf}
\usepackage{graphics, color, amsfonts, dsfont, graphicx, amssymb, epsfig}
\usepackage[utf8]{inputenc}  
\usepackage{tikz,pgfplots}
\pgfplotsset{compat=1.7}   
\usetikzlibrary{matrix,positioning,decorations.pathreplacing}

\graphicspath{ {Figures/} }

\usepackage{endnotes}
\let\footnote=\endnote

%


\mathchardef\mhyphen="2D 

\DeclareMathOperator{\CVaR}{CVaR}

\newcommand{\ra}[1]{\renewcommand{\arraystretch}{#1}}  
\newcommand{\E}{\mathbb{E}}

\newcommand{\Prob}{\mathbb{P}}

\newcommand{\R}{\mathbb{R}}

\newcommand{\calB}{\mathcal{B}}

\newcommand{\calD}{\mathcal{D}}

\newcommand{\calF}{\mathcal{F}}

\newcommand{\calK}{\mathcal{K}}

\newcommand{\calP}{\mathcal{P}}

\newcommand{\calS}{\mathcal{S}}

\newcommand{\tp}{\top}
\newcommand{\cg}{c^{\textup{\mbox{\tiny g}}}}
\newcommand{\cw}{c^{\textup{\mbox{\tiny w}}}}
\newcommand{\co}{c^{\textup{\mbox{\tiny o}}}}

\newcommand{\hreg}{h^\textup{reg}}
\newcommand{\hcall}{h^\textup{call}}

\newcommand{\dhat}{\widehat{d}}
\newcommand{\dlb}{\underline{d}}
\newcommand{\dub}{\overline{d}}

\newcommand{\lambdaub}{\overline{\lambda}}
\newcommand{\muub}{\overline{\mu}}
\newcommand{\thetaub}{\overline{\theta}}

\newcommand{\rholb}{\underline{\rho}}
\newcommand{\rhoub}{\overline{\rho}}
\newcommand{\rhot}{\widetilde{\rho}}
\newcommand{\blue}{\textcolor{black}}

\usepackage{cases}  
\usepackage{tcolorbox} 
\usepackage{lscape} 
\usepackage{afterpage} 

\newcommand{\st}{\tilde{s}}
\newcommand{\qt}{\tilde{q}}
\newcommand{\ot}{\tilde{o}}
\newcommand{\wt}{\tilde{w}}
\newcommand{\gt}{\tilde{g}}
\newcommand{\pilb}{\underline{\pi}}
\newcommand{\piub}{\overline{\pi}}
\newcommand{\varphilb}{\underline{\varphi}}
\newcommand{\varphiub}{\overline{\varphi}}

\usepackage[shortlabels]{enumitem}

\newcommand{\spe}{SP-E}
\newcommand{\spcvar}{SP-CVaR}
\newcommand{\droe}{DRO-E}
\newcommand{\drocvar}{DRO-CVaR}

\definecolor{lightgray}{gray}{0.9}
\definecolor{orange}{rgb}{1, 0.35, 0.0275} 


\usepackage{natbib}
 \bibpunct[, ]{(}{)}{,}{a}{}{,}%
 %
 %
 %
 %
 %

\TheoremsNumberedThrough     
\ECRepeatTheorems

\EquationsNumberedThrough    


\begin{document}


\RUNAUTHOR{Tsang et al.}

\RUNTITLE{Stochastic Optimization Approaches for ORASP}

\TITLE{Stochastic Optimization Approaches for an Operating Room and Anesthesiologist Scheduling Problem}

\ARTICLEAUTHORS{%
\AUTHOR{Man Yiu Tsang, Karmel S. Shehadeh, Frank E. Curtis}
\AFF{Department of Industrial and Systems Engineering, Lehigh University, Bethlehem, PA, USA; \EMAIL{mat420@lehigh.edu}, \EMAIL{kas720@lehigh.edu}, \EMAIL{frank.e.curtis@lehigh.edu}} 

\AUTHOR{Beth Hochman}
\AFF{Divisions of General Surgery \& Critical Care Medicine, Columbia University Medical Center, New York, NY, USA; \EMAIL{brh2106@cumc.columbia.edu}}

\AUTHOR{Tricia E. Brentjens}
\AFF{Department of Anesthesiology, Columbia University Medical Center, New York, NY, USA; \EMAIL{tb164@cumc.columbia.edu}}
} 

\ABSTRACT{%
We propose combined allocation, assignment, sequencing, and scheduling problems under uncertainty involving multiple operation rooms (ORs), anesthesiologists, and surgeries, as well as methodologies for solving such problems.  Specifically, given sets of ORs, regular anesthesiologists, on-call anesthesiologists, and surgeries, our methodologies solve the following decision-making problems simultaneously: (1) an allocation problem that decides which ORs to open and which on-call anesthesiologists to call in, (2) an assignment problem that assigns an OR and an anesthesiologist to each surgery, and (3) a sequencing and scheduling problem that determines the order of surgeries and their scheduled start times in each OR.  To address uncertainty of each surgery's duration, we propose and analyze stochastic programming (SP) and distributionally robust optimization (DRO) models with both risk-neutral and risk-averse objectives.  We obtain near-optimal solutions of our SP models using sample average approximation and propose a computationally efficient column-and-constraint generation method to solve our DRO models.  In addition, we derive symmetry-breaking constraints that improve the models’ solvability.  Using real-world, publicly available surgery data and a case study from a health system in New York, we conduct extensive computational experiments comparing the proposed methodologies empirically and theoretically, demonstrating where significant performance improvements can be gained.  Additionally, we derive several managerial insights relevant to practice.
}%


\KEYWORDS{Operating rooms, surgery scheduling, mixed-integer programming, stochastic programming, distributionally robust optimization}%

\maketitle \vspace{-5mm}

%


\setlength{\abovedisplayskip}{0pt}%
\setlength{\belowdisplayskip}{0pt}%
\setlength{\abovedisplayshortskip}{0pt}%
\setlength{\belowdisplayshortskip}{0pt}%

\setlength\floatsep{0.5\baselineskip plus 3pt minus 2pt}
\setlength\textfloatsep{0.5\baselineskip plus 3pt minus 2pt}
 
\section{Introduction} \label{sec:introduction}

\noindent Operating room (OR) planning and scheduling has a significant impact on costs for hospital management and the quality of the health care that a hospital is able to provide.  ORs typically generate 40--70\% of hospital revenues and incur 20--40\% of operating costs \citep{Cardoen_et_al:2010,  Zhu_et_al:2019}.  In addition, it is common for $60$-$70\%$ of patients admitted to a hospital to require surgery \citep{Guerriero_Guido:2011}. As a result, OR planning and scheduling significantly influences overall patient flow, and whether or not they operate efficiently has a large influence on the quality of care that a hospital is able to provide.

On top of their critical nature, OR planning and scheduling problems are extremely complex since they require the coordination of multiple hospital resources, including ORs themselves, anesthesiologists, surgical equipment, and so on.  Their complexity is compounded by the fact that, in addition to limited OR capacity and time, there is an overall shortage in terms of the physicians and anesthesiologists that are required to perform surgeries \citep{De-Simone_et_al:2021,Shanafelt_et_al:2016}.  Consequently, hospital managers could benefit greatly from advanced methodologies to improve OR utilization, surgical care, and quality, as well as to minimize OR operational costs.

Motivated by these important issues and our collaboration with a large health system in New York, we propose new optimization formulations of a scheduling problem in a surgical suite involving multiple parallel ORs, anesthesiologists, and elective surgeries.  Specifically, given sets of ORs, regular anesthesiologists, on-call anesthesiologists, and elective surgeries (each of which requires an OR and an anesthesiologist to be performed), our formulations aim to solve the following decision-making problems simultaneously: (a) an \textit{allocation problem} that determines which ORs to open and which on-call anesthesiologists to call in, (b) an \textit{assignment problem} that assigns an OR and an anesthesiologist to each surgery, and (c) a \textit{sequencing and scheduling} problem that determines surgery order and scheduled start time.  We call this combination an \textit{\underline{o}perating \underline{r}oom and \underline{a}nesthesiologist \underline{s}cheduling \underline{p}roblem} (ORASP).  The objective is to minimize the sum of fixed costs for opening ORs and calling in on-call anesthesiologists along with a weighted average of costs associated with the idling and overtime of anesthesiologists and ORs, as well as the surgery waiting time.

The ORASP is a challenging problem in practice as it requires a significant amount of time for OR managers to make these decisions.  Mathematical formulations of the problem are also challenging to solve for various reasons.  First, it is a complex multi-resource scheduling problem with critical limits in terms of available ORs and anesthesiologists \citep{Liu_et_al:2018, Rath_et_al:2017}.  Some types of surgeries require specialized anesthesiologists, whereas each anesthesiologist might have a different combination of specializations.  This heterogeneity in the set of anesthesiologists increases the complexity of the assignment problem of anesthesiologists to surgeries.  
\begin{figure}[t!]
    \centering
    \includegraphics[scale=0.75]{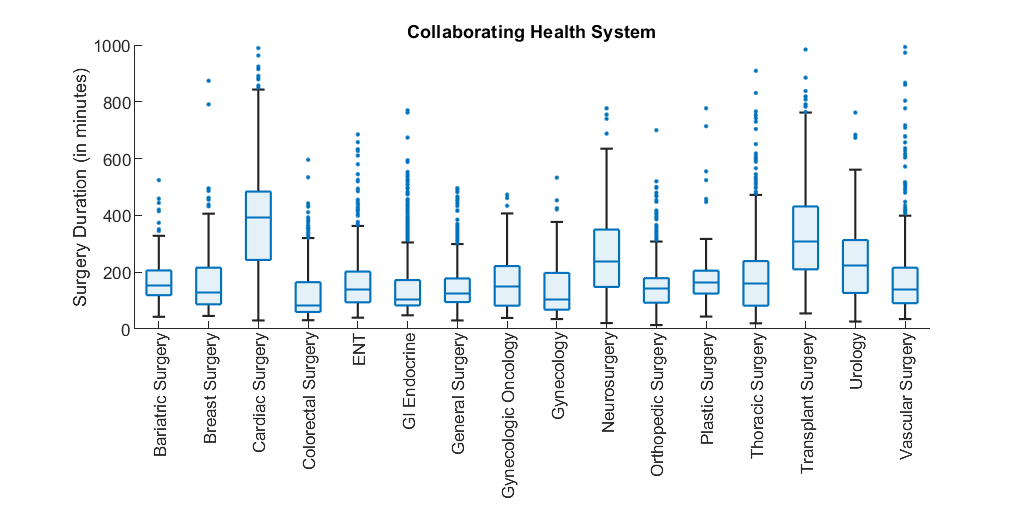}
    \caption{Box plot of surgery duration (in minutes) for different surgery types} 
    \label{fig:duration_boxplot}
\end{figure}
Second, different surgery types have different durations, and even surgery durations of the same type can vary significantly.  To illustrate this variability, we provide Figure \ref{fig:duration_boxplot}, which presents a box plot corresponding to a dataset of surgery durations (in minutes) categorized by surgical specialty.  This data has been provided by our collaborating health system based on half-of-a-year's worth of data.  The figure illustrates clearly that there is significant variability in durations within and across surgery types.  Ignoring such variability in the ORASP may lead to substantial overtime, idling, and/or surgery delays, amongst other schedule deficiencies.  Third, the ORASP is subject to a great deal of symmetry in the solution space, which can lead to computational inefficiencies (see Section~\ref{sec:symm}). 

By building high-quality schedules through solving the ORASP while accounting for the variability in surgery durations, there are substantial opportunities to improve resource utilization (equivalently, reduce overtime and idling time), improve patient and provider satisfaction, reduce delays and costs, and even achieve better surgical care.  In this paper, we propose methodologies for accomplishing these goals by considering two methodologies for handling surgery duration uncertainty: stochastic programming (SP) and distributionally robust optimization (DRO).

SP has been a popular approach for optimization under uncertainty over the past decades \citep{Zhu_et_al:2019, Rahimian_Mehrotra:2022}.  In the SP approach, one essentially needs to assume that decision-makers know the distributions of the durations of each surgery type, or they possess a sufficient amount of high-quality data to estimate these distributions.  Accordingly, one can formulate a two-stage SP model.  The first-stage problem corresponds to determining the allocation, assignment, sequencing, and scheduling decisions while the second-stage problem corresponds to evaluating the performance metrics (i.e., overtime, idle time, and waiting time).

In practice, however, one might not have access to a sufficient amount of high-quality data to estimate surgery duration distributions accurately.  This is especially true when data is limited during the planning stages when the OR schedule is constructed \citep{Shehadeh:2022,Wang_et_al:2019}. As pointed out by \cite{Kuhn_et_al:2019}, even if one employs sophisticated statistical techniques to estimate the probability distribution of uncertain problem parameters using historical data, the estimated distribution may significantly differ from the true distribution. Moreover, future surgery durations do not necessarily follow the same distribution as in the past. Thus, optimal solutions to an SP model that is formulated using an estimated distribution may inherit bias. As such, implementing the (potentially biased) optimal decisions from the SP model may yield disappointing performance in practice, i.e., under unseen data from the true distribution \citep{Smith_Winkler:2006}; in the context of the ORASP, this may correspond to significant overtime, delays, and under-utilization, amongst other negative consequences. While improving estimates of surgery durations may be possible, as pointed out by  \cite{Kayis_et_al:2012} and \cite{Shehadeh_Padman:2021}, the inherent variability in such estimates remains high, necessitating caution in their use when optimizing OR schedules.

One approach to address the above challenges is DRO. In such an approach, one constructs \textit{ambiguity sets} consisting of all distributions that possess certain partial information (e.g., first- and second-order moments) about the surgery durations.  Using these ambiguity sets, one can formulate a DRO problem to minimize the worst-case expectation of the second-stage cost over all distributions residing within the ambiguity set, which effectively means that the probability distribution of the duration of each surgery type is a decision variable \citep{Rahimian_Mehrotra:2022}.  DRO has received substantial attention recently in healthcare applications \citep{Liu_et_al:2019, Shehadeh_et_al:2020,Wang_et_al:2019} and other fields \citep{Huang_et_al:2020, Kang_et_al:2019,Pflug_Pohl:2018,Shang_You:2018} due to its ability to hedge against unfavorable scenarios under incomplete knowledge of the underlying distributions.

\subsection{Contributions}

\noindent In this paper, we propose  the first risk-neutral and risk-averse SP and DRO models for the ORASP, as well as methodologies for solving these models. We summarize our main contributions as follows.
\begin{enumerate}[leftmargin=*]
  \item \textbf{Uncertainty modeling and optimization models.}
  \begin{enumerate}
    \item 
  We propose the first SP and DRO models for the ORASP.  These models consider many of the costs relevant to our collaborating health system: fixed costs related to opening ORs and calling in on-call anesthesiologists, as well as the (random) operational costs associated with OR and anesthesiologist overtime, idle time, and surgery waiting time.  Depending on the risk preference of a decision-maker, these models determine optimal ORASP decisions that minimize the fixed costs plus a risk measure, either expectation or conditional value-at-risk (CVaR), of the operational costs.
  
    \item In the proposed SP model, we minimize the fixed costs plus a risk measure of the operational costs assuming known distributions of the surgery durations. This SP model generalizes recent SP models proposed for multiple OR scheduling problems by incorporating a larger set of important objectives, integrating allocation, assignment, sequencing, and scheduling problems, and modeling decision-makers risk preferences. Moreover, it generalizes that of \cite{Rath_et_al:2017} (a recent SP model for a closely related problem) by incorporating surgery waiting and OR and anesthesiologist idle time decision variables, constraints, and objective terms in the second stage, as well as by considering the decision-maker risk preferences. We show that this generalization can offer more realistic schedules as compared with these models.
   
    \item The proposed DRO model provides an alternative formulation for cases when surgery duration distributions are ambiguous.  The model seeks ORASP decisions that minimize the fixed costs plus a worst-case risk measure (either expectation or CVaR) of the operational costs over all surgery duration distributions defined by mean-support ambiguity sets.  Note that mean and support are two intuitive statistics that capture distribution centrality and dispersion, respectively.  Thus, practitioners could easily adjust the DRO input parameters based on their experience.
     \end{enumerate}

  \item \textbf{Solution Methodologies.} 
\begin{enumerate} 
    \item We derive equivalent solvable reformulations of the proposed mini-max nonlinear expectation and CVaR DRO models, and propose a computationally efficient column-and-constraint generation (C\&CG) method to solve the reformulations.  We also derive valid and efficient lower bound inequalities that efficiently strengthen the master problem in C\&CG, thus improving convergence.
    \item We obtain near-optimal solutions of our SP model using sample average approximation.  We also derive valid lower bounding inequalities to improve the solvability of the SP model. 
    \item We identify structural properties of the ORASP that allow us to decompose the ORASP into smaller problems and thus improve solvability. In addition, we derive new symmetry-breaking constraints, which break symmetry in the solution space of the ORASP’s first-stage decisions and thus improve the solvability of the proposed SP and DRO models.  These constraints are valid for any deterministic or stochastic formulation that employs the first-stage decisions and constraints of the ORASP.
\end{enumerate}
  
  \item \textbf{Computational and Managerial Insights.}  Using real-world, publicly available surgery data and a case study at our collaborating health system, we conduct extensive computational experiments comparing the proposed methodologies empirically and theoretically. Our results show the significance of integrating the allocation, assignment, sequencing, and scheduling problems and the negative consequences associated with (i) adopting existing non-integrated approaches (see Section~\ref{subsec:expt_Rath}) and (ii)  ignoring uncertainty and ambiguity of surgery duration (see Section~\ref{subsec:expt_sol_quality}).  In addition, our results demonstrate the computational efficiency of the proposed methodologies (see Sections~\ref{subsec:expt_comp_time} and \ref{subsec:expt_VI_SBC}) and the potential for impact in practice.

\end{enumerate}

\subsection{Structure of the Paper}

The remainder of the paper is organized as follows. In Section~\ref{sec:literature}, we review relevant literature.  Section \ref{sec:problem_setting} details our problem setting.  In Sections \ref{sec:SP_model} and \ref{sec:DRO_model}, we present and analyze our proposed SP and DRO models for the ORASP, respectively.  In Section \ref{sec:solution_method}, we present our solution strategies of our SP and DRO models, followed by a presentation of our symmetry-breaking constraints in Section~\ref{sec:symm}.  Finally, we present our numerical experiments and corresponding insights in Section~\ref{sec:numerical_experiments}.

\section{Literature Review} \label{sec:literature}

For decades, much work has been done on formulating and solving OR and other healthcare planning and scheduling problems. For comprehensive surveys, we refer to \cite{Ahmadi-Javid_et_al:2017, Cardoen_et_al:2010,shehadeh2022stochastic, Gupta_Denton:2008, Guerriero_Guido:2011, Samudra_et_al:2016, Zhu_et_al:2019}.  In this section, we review recent literature most relevant to our work, namely, studies that propose and analyze stochastic optimization approaches to solve OR planning and scheduling problems.

SP is a useful tool to model uncertainty in surgery duration when distributions are known, and it has been widely applied in the OR planning and scheduling literature \citep{Birge_Louveaux:2011,Zhu_et_al:2019}. \cite{Denton_et_al:2007} proposed the first SP for a single-OR surgery sequencing and scheduling (SAS) problem and heuristic methods to solve it. Recently, \cite{Shehadeh_et_al:2019} proposed a new SP model for the SAS problem that can be solved efficiently.  Their results indicate that remarkable computational improvement can be achieved with their model when compared with those proposed by \cite{Mancilla_Storer:2012} and \cite{Berg_et_al:2014}. \cite{Khaniyev_et_al:2020} discussed the challenges of obtaining exact solutions to the SAS problem under general duration distributions. They proposed an SP model that finds the optimal scheduled times for a given sequence of surgeries that minimize the weighted sum of expected patient waiting times and OR idle time and overtime. They derived an exact alternative reformulation of the objective function that can be evaluated numerically and proposed several scheduling heuristics.

Beyond the single OR setting, \cite{Denton_et_al:2010} introduced an SP model that decides the OR-opening and surgery-to-OR assignments.  An adapted L-shaped algorithm was proposed to solve the model. \cite{Wang_et_al:2014} proposed an SP model that extends that of \cite{Denton_et_al:2010} by considering emergency demand.  They presented several column-generation-based heuristic methods and compared their computational performances. There have also been several studies on parallel and multi-resource OR scheduling problems (e.g., assigning multiple resources, such as surgical staff,  to each surgery). \cite{Batun_et_al:2011} proposed an SP model for an OR and surgeon assignment problem with random incision times. Their numerical results show that OR pooling is beneficial in reducing the operational costs.  Assuming that surgery durations are normally distributed, \cite{Guo_et_al:2014} proposed an SP model for a nurse assignment problem. Unlike in \cite{Batun_et_al:2011}, the surgery-to-OR assignment is assumed to be predetermined.  \cite{Latorre_et_al:2016} generalized the work of \cite{Guo_et_al:2014} by considering an OR scheduling problem with surgeon and other necessary resources (e.g., nurse and anesthesiologist).  To overcome the computational difficulties, they developed a metaheuristic method based on a genetic algorithm.  \cite{Vali-Siar_et_al:2018} investigated an OR scheduling problem that considers the needs for nurses and anesthesiologists. They developed a genetic algorithm to solve their model.

While most existing studies on multiple-OR scheduling problems focus on OR-opening and surgery-to-OR assignment decisions, recent works also consider sequencing and scheduling decisions.  In fact, various empirical and optimization studies have demonstrated the benefits of integrated approaches that incorporate sequencing decisions, including improving OR performance and reducing costs compared to fixed-sequence approaches \citep{Cardoen_et_al:2010, Cayirli_et_al:2006, Denton_et_al:2007}. \cite{Freeman_et_al:2016} proposed the first SP model incorporating these decisions. To deal with the computational challenges associated with solving their model, they proposed a two-step solution approach that reduces the set of surgeries and restricts the maximum number of surgeries in each OR by solving a knapsack problem. \cite{Tsai_et_al:2021} proposed an SP model with chance constraints on overtime and waiting time and developed two approximation algorithms to solve their model. 

SP provides an excellent basis for modeling and solving the ORASP if the distributions of surgery durations are known or one has a sufficient amount of high-quality data to estimate them.  However, high-quality data is often unavailable in most real-world settings, such as for the ORASP.  Accordingly, the distributions are often hard to characterize and subject to ambiguity.  If one solves an SP model with a particular set of training data (i.e., the empirical distribution), the resulting schedule may have disappointing performance (e.g., excessive overtime and waiting time) in practice.  Various studies have shown that decision-makers tend to be averse to ambiguity in distribution \citep{Eliaz_Ortoleva:2016,Halevy:2007}.  In the context of the ORASP, some OR managers may err on the side of caution and prefer robust scheduling decisions that could safeguard the operational performance in adverse scenarios and mitigate the direct and indirect costs of operations (e.g., overtime, surgery delays, and quality of care).

Robust optimization (RO) is an alternative way to model uncertainty when the distributional information is limited. In this approach, one assumes that the random parameters lie in some uncertainty set consisting of possible scenarios and minimizes the worst-case costs over realizations in the uncertainty set.  This could give a more robust solution, potentially reducing surgery waiting time and resource overtime. Examples of RO approaches for OR and surgery scheduling include \cite{Bansal_et_al:2021, Denton_et_al:2010, Addis_et_al:2014, Marques_Captivo:2017, Moosavi_Ebrahimnejad:2020} and references therein. Recently, \cite{Breuer_et_al:2020} proposed an RO model for a combined OR planning and personnel scheduling problem that decides the number of elective surgeries and assigns staff (e.g., nurse, anesthetist, etc.) to surgeries. Unlike our ORASP, their model does not include decisions related to surgery sequences and start times.

Notably, \cite{Rath_et_al:2017} proposed the first and, as far as we are aware, so far the only RO model for an integrated OR and anesthesiologist scheduling problem that is similar to our ORASP.  They employed the uncertainty set of \cite{Bertsimas_Sim:2004} that characterizes surgery duration lower and upper bounds with a tolerance on the maximum number of perturbations with respect to the nominal surgery duration.  They solved their RO model using a decomposition algorithm and discussed the computational challenges of solving large instances.  The model by \cite{Rath_et_al:2017} only considers OR and anesthesiologists fixed and overtime costs in the objective, ignoring surgery waiting times and OR and anesthesiologists idle times.  \blue{This is notable  because, as we show in Section~\ref{subsec:expt_Rath} and \ref{appdx:models_wo_waiting}, by ignoring surgery waiting times, their model could lead to a schedule with multiple surgeries assigned to the same anesthesiologist and/or OR scheduled to start at the same time.}  Moreover, ignoring idle times can lead to poor utilization of the ORs and anesthesiologists.  In this paper, we incorporate both waiting times and idle times, which yield realistic schedules, reduce surgery delay, and improve utilization compared with \cite{Rath_et_al:2017}'s schedules. \blue{Moreover, in \ref{appdx:RO_ORASP}, we demonstrate that an extension of \cite{Rath_et_al:2017}'s RO model which incorporates all elements of the ORSAP is more computationally challenging to solve than our proposed models to the point that it may be considered intractable for real-world settings.}

Focusing on hedging against worst-case scenarios, RO often yields overly conservative decisions \citep{Roos_den-Hertog:2020}. One alternative is DRO, an approach that dates back to \cite{Scarf:1958} and has been of growing interest in recent years  \citep{Delage_Ye:2010, Rahimian_Mehrotra:2022}. Specifically, in DRO, one assumes that the \textit{distribution} of random parameters resides in some ambiguity set, i.e., a family of distributions \citep{Delage_Ye:2010,Goh_Sim:2010,Rahimian_Mehrotra:2022}.  Accordingly, one minimizes the worst-case \textit{expected} behavior over distributions in the ambiguity set.  This reduces conservatism as compared with the RO approach while relaxing the stringent assumption in the SP approach that distributions are known with certainty.  Despite these attractive features, the use of DRO models in the OR scheduling literature has been relatively sparse. 

\blue{\cite{Wang_et_al:2019} derived a DRO model of \cite{Denton_et_al:2010}'s SP model for the simple surgery bock allocation problem, where the ambiguity set captures the support, mean, and mean absolute deviation of surgery durations. The model finds OR opening and surgery-to-OR assignment decisions that minimize OR opening cost plus the worst-case expected OR overtime cost. \cite{Wang_et_al:2019} leveraged the simple structure of their second-stage problem to derive a mixed-integer linear program (MILP) reformulation of their DRO model. In addition, to solve large instances efficiently, they employed the linear decision rule (LDR) technique to derive an MILP approximation of their DRO model and proposed another heuristic approach. We emphasize the following differences between our ORASP model and \cite{Wang_et_al:2019}'s model. First,} the model by \cite{Wang_et_al:2019} does not consider the need to assign both an OR and an anesthesiologist to each surgery and does not consider the surgery sequencing and scheduling decisions that are part of \blue{the planning decisions} in the ORASP. \blue{Hence, the first stage of the ORASP model integrates a larger set of planning decisions (allocation, assignment, sequencing, and scheduling decisions) and involves a more intricate set of constraints. Second, \cite{Wang_et_al:2019}'s second-stage problem includes OR overtime as the only operational metric (second-stage objective). In contrast, we consider a larger set of operational metrics in the second stage of the ORASP (overtime and idle time for ORs and anesthesiologists, and surgery waiting time). Hence, our second-stage formulation is different (and larger in terms of the number of variables and constraints) and more complex. Third, we model decision-makers risk preference while \cite{Wang_et_al:2019} adopts a risk-neutral approach.}

\cite{Deng_et_al:2019} proposed a DRO model with chance constraints on surgery waiting and OR overtime that integrates surgery-to-OR assignment, sequencing, and scheduling decisions in multiple ORs.  \cite{Deng_et_al:2019}'s model cannot be adopted for the ORASP because it does not consider anesthesiologists' scheduling decisions (which on-call anesthesiologists call in, surgery-to-anesthesiologist assignment decisions, order of surgeries assigned to each OR) and the associated operational metrics (anesthesiologist overtime and idle time) which are part of our ORASP. Indeed, the first stage of the ORASP model has a larger and different set of assignment, sequencing, and scheduling decisions and constraints. Our second-stage formulation is also different than that of \cite{Deng_et_al:2019}. \cite{Dean_et_al:2022} proposed a DRO model for the single-OR scheduling problem in \cite{Denton_Gupta:2003} that decides the surgery schedule times for a fixed surgery sequence. The ambiguity set captures quantiles of surgery durations predicted from quantile regression forests. For other recent DRO approaches in healthcare scheduling, see, e.g., \cite{Bansal_et_al:2021a}, \cite{Shehadeh:2022}, \cite{Keyvanshokooh_et_al:2020}, and the references therein.

Several studies have proposed approaches for physician and medical professional scheduling problems but did not integrate OR and anesthesiologist scheduling decisions.  We discuss recent studies on anesthesiologist scheduling and refer to \cite{Abdalkareem_et_al:2021} and \cite{Erhard_et_al:2018} for recent surveys on the state of the art in general healthcare and physician scheduling problems. From an operational perspective, recent advances focus on developing implementable decision-support tools that automate the anesthesiologist scheduling process to replace traditional manual scheduling  \citep{Hoefnagel_et_al:2020, Joseph_et_al:2020}. However, such tools and the underlying models do not consider the ORASP decisions. Other works include empirical studies investigating different anesthesiologist scheduling paradigms (e.g., \citealp{Tsai_et_al:2017, Tsai_et_al:2020}). These studies also do not incorporate the ORASP decisions. Finally, on the optimization end, \cite{Rath_Rajaram:2022} proposed an anesthesiologist scheduling model that decides the number of anesthesiologists that are on regular duty and on call by minimizing the explicit costs (e.g., hiring cost) and implicit costs (e.g., idle cost). These studies do not consider optimizing the ORASP decisions. 

\blue{Finally, it is worth mentioning that various studies have motivated the need for modeling decision-makers risk preferences. In this paper, we adopt CVaR for modeling  OR managers' risk aversion. CVaR is one of the popular risk measures widely adopted in the stochastic optimization literature to model decision-makers risk aversion (see \citealp{Filippi_et_al:2020} for a recent review). In particular, CVaR has been used in various healthcare applications to model decision-makers risk aversion (see, e.g., \citealp{He_et_al:2019, Kishimoto_Yamashita:2018, Lim_et_al:2020, Linz_et_al:2019, Najjarbashi_Lim:2019}).  In the context of the ORASP, incorporating CVaR as a risk measure in the objective reflects the OR manager's risk-averse mindset and desire to err on the side of caution when making surgery planning decisions. This is because CVaR focuses on the tail of the operational cost distribution. Thus, minimizing the CVaR objective could mitigate large values of the operational costs. Our computational results demonstrate significant differences in the optimal planning decisions and operational performances when using the expectation and CVaR objectives.}



\section{Problem Setting} \label{sec:problem_setting}

We start by introducing our ORASP setting.  For a given day, we suppose that there is a set $I$ of elective surgeries to schedule, a set $R$ of available operating rooms (ORs), and a set $A$ of anesthesiologists. Each OR has a pre-allocated length of time $T^\text{end}$ with service hours $[0,T^\text{end}]$. \blue{Moreover, each OR can be dedicated to one or multiple types of surgical specialty (e.g., cardiothoracic, neurosurgery, etc.). Many health systems implement a dedicated OR policy, including our collaborating health system, to better manage elective surgeries. Hence, this policy has been widely adopted in the literature (see, e.g., \citealp{Aringhieri_et_al:2015, Bovim_et_al:2020, Fugener_et_al:2014, Makboul_et_al:2022, Marques_Captivo:2015, Min_Yih:2010, Neyshabouri_Berg:2017, Shehadeh:2022}). Our proposed models can be used to solve ORASP instances with ORs dedicated to one or many surgery types. }

In practice, there are two types of anesthesiologists: regular and on-call \citep{Becker_et_al:2019, Rath_Rajaram:2022, Rath_et_al:2017}. Each of the former type of anesthesiologist is scheduled to work on the given day, whereas each of the latter type is effectively on standby, ready to be called to work, if necessary. Assigning an on-call anesthesiologist to a surgery produces a high cost in some hospitals.  We use the parameter setting $\hreg_a=1$ to indicate that anesthesiologist $a \in A$ is on regular duty ($\hreg_a=0$ otherwise) and the setting $\hcall_a=1$ to indicate that this anesthesiologist is on call ($\hcall_a=0$ otherwise).  Each regular-duty anesthesiologist has a preassigned work shift [$t^\text{start}_a$, $t^\text{end}_a$], where overtime occurs if/when they work beyond the scheduled end of their shift.  In practice (e.g., at our collaborating hospital), some anesthesiologists are dedicated to cover a specific specialty, whereas some can cover a wide range of specialties.  We refer to \ref{appdx:anes_specialty} for an example.


\blue{Each surgery $i\in I$ has a type (e.g., cardiothoracic, breast, etc.), and it can be assigned to any OR that can accommodate surgeries of that type.} Similarly, the assignment of an anesthesiologist to a surgery must respect the specialty required for the surgery. We assume that the surgery-surgeon combination is already known to mimic the current practice in many hospitals. (This is also a common assumption in the literature; see, e.g., \citealp{Doulabi_et_al:2014, Marques_et_al:2014, Rath_et_al:2017}). Thus, one can think of each $i \in I$ as a surgery-surgeon unit. However, this assumption does not prevent surgeons from working in any of the ORs dedicated to their specialty. Surgery durations are random and depend on the surgery type.  We use $d_i$ to denote the duration of surgery $i$ and let $d:=[d_1,\ldots,d_I]^\top$ be the vector of all of the surgery durations.  We assume that a lower bound $\dlb_i$ and upper bound $\dub_i$ of surgery duration $d_i$ are known, which is a realistic assumption recommended by our collaborators and commonly used in healthcare scheduling \citep{Denton_et_al:2010, Shehadeh_Padman:2021, Wang_et_al:2019}.  Mathematically, the random surgery duration $D$ is a measurable function $D:\Omega\rightarrow\calS$ with measurable space $(\Omega,\calF)$, where $\calS\subseteq\R^{I}$ is the bounded support defined as $\calS=\{d\in\R^I \mid \dlb_i \leq d_i \leq \dub_i \text{ for all } i \in I\}$. We use $d$ to denote a realization of $D$.

Given $I$, $R$, and $A$ for each day, our ORASP models solve the following decision problems simultaneously: (a) an allocation problem in which we decide which OR to open, (b) an assignment problem assigning each surgery to an OR and anesthesiologist, and (c) a sequencing and scheduling problem that determines surgery order and scheduled start time.  The objective is to minimize the sum of ORs and anesthesiologists fixed costs and a weighted average of the idling and overtime of anesthesiologists and ORs, and the surgery waiting time.  For notational convenience, we define the following sets to be used in our formulations.  The sets $\calF^A$ and $\calF^R$ consist of all feasible surgery-OR and surgery-anesthesiologist assignments.  The sets $A_i$ and $R_i$ are, respectively, the sets of anesthesiologists and ORs to which a surgery can be assigned for $i\in I$.  The sets $I_a$ and $I_r$ are surgeries that could be performed by anesthesiologist $a \in A$ and in OR $r \in R$, respectively. Mathematically, we let $\kappa^A_{i,a}=1$ indicate that anesthesiologist $a$ can cover surgery $i$, and $\kappa^R_{i,r}=1$ indicate that surgery $i$ can be scheduled in OR $r$. Then, we define $\calF^A=\big\{(i,a)\in I\times A \mid \kappa^A_{i,a}=1\big\}$, $\calF^R=\big\{(i,r)\in I\times R\mid \kappa^R_{i,r}=1\big\}$, $A_i=\big\{a\in A\mid (i,a)\in\calF^A\big\}$, $R_i=\big\{r\in R\mid (i,r)\in\calF^R\big\}$, $I_a =\big\{i\in I\mid  (i,a)\in\calF^A\big\}$, and $I_r =\big\{i\in I\mid (i,r)\in\calF^R\big\}$.  A complete list of our notation can be found in \ref{appdx:notation}.

\section{Stochastic Programming Models} \label{sec:SP_model}

In this section, we present our proposed two-stage SP formulation of the ORASP, which assumes that the probability distributions of surgery durations are known.  First, let us introduce the variables, parameters, and functions defining our first-stage SP model.  For each $r \in R$, we define a binary decision variable $v_r$ that equals 1 if OR $r$ is opened, and is $0$ otherwise.  Similarly, for each $a \in A$, we define a binary variable $y_a$ that equals 1 if on-call anesthesiologist $a$ is called in, and is 0 otherwise. We define binary decision variables $x_{i,a}$ and $z_{i,r}$ taking value 1 if surgery $i$ is assigned to anesthesiologist $a$ and OR $r$ respectively, and are 0 otherwise. To determine the surgery sequence, we proceed as in \cite{Rath_et_al:2017} and define binary variables $u_{i,i'}$, $\alpha_{i,i',a}$, and $\beta_{i,i',r}$ to represent precedence relationships. Specifically, we define $u_{i,i'}$ that takes value 1 if surgery $i$ precedes surgery $i'$, and is 0 otherwise. Variables $\alpha_{i,i',a}$ and $\beta_{i,i',r}$ take value 1 if surgery $i$ precedes surgery $i'$ for anesthesiologist $a$ and in OR $r$ respectively, and are 0 otherwise. For each $i \in I$, we let nonnegative continuous variable $s_i$ represent the scheduled start time of surgery~$i$. 

For the objective function, we define $f_r$ as the nonnegative fixed cost of opening OR $r$ and~$f_a$ as the nonnegative fixed cost of calling in on-call anesthesiologist $a$.  The remaining term in the objective function is a risk measure of the second-stage function (see more below), which, for a given realization $d$ of surgery durations represented by the random variable $D$, is a weighted average of idle time, overtime, and waiting time. Our first-stage SP model can now be stated as follows:
\allowdisplaybreaks
\begin{subequations} 
\begin{align}
 \underset{x,\,y,\,z,\,v,\,u,\,s,\,\alpha,\,\beta}{\text{minimize}} \quad
&  \sum_{r\in R}f_r v_r + \sum_{a\in A}f_a y_a + \varrho_{\Prob}\big(Q(x,y,z,v,u,s,D)\big) \label{eqn:1st_stage_obj} \\
 \text{subject to\,\,\,} \quad 
&  \sum_{a\in A_i}x_{i,a} = 1,\quad \sum_{r\in R_i}z_{i,r} = 1,\quad\forall i\in I, \label{eqn:1st_stage_con1-2} \\
&  z_{i,r} \leq v_r,\quad\forall (i,r)\in\calF^R, \label{eqn:1st_stage_con3}\\
&  x_{i,a} \leq \hreg_a + y_a,\quad\forall (i,a)\in\calF^A, \label{eqn:1st_stage_con4}\\
&  y_a \leq \hcall_a,\quad\forall a\in A, \label{eqn:1st_stage_con5}\\
&  s_i \geq t^\text{start}_a - M(1-x_{i,a}),\quad s_i \leq T^\text{end}, \quad\forall (i,a)\in\calF^A, \label{eqn:1st_stage_con6-7}\\
&  \alpha_{i,i',a}\leq u_{i,i'},\quad\forall \{(i,a),(i',a)\}\subseteq\calF^A, \label{eqn:1st_stage_con8}\\
& \beta_{i,i',r} \leq u_{i,i'}, \quad\forall \{(i,r),(i',r)\}\subseteq\calF^R, \label{eqn:1st_stage_con9}\\
& u_{i,i'}+u_{i',i} \leq 1,\quad\forall \{i,i'\}\subseteq I, \label{eqn:1st_stage_con10}\\
& u_{i,i''} \geq u_{i,i'} + u_{i',i''} -1,\quad\forall \{i,i',i''\}\subseteq I, \label{eqn:1st_stage_con11}\\
& \alpha_{i,i',a}+\alpha_{i',i,a} \leq x_{i,a},\quad \alpha_{i,i',a}+\alpha_{i',i,a} \leq x_{i',a},\quad\forall \{(i,a),(i',a)\}\subseteq\calF^A, \label{eqn:1st_stage_con12-13} \\
& \alpha_{i,i',a}+\alpha_{i',i,a} \geq x_{i,a} + x_{i',a}-1 ,\quad\forall \{(i,a),(i',a)\}\subseteq\calF^A, \label{eqn:1st_stage_con14}\\
& \beta_{i,i',r}+\beta_{i',i,r} \leq z_{i,r} ,\quad \beta_{i,i',r}+\beta_{i',i,r} \leq z_{i',r}, \quad\forall \{(i,r),(i',r)\}\subseteq\calF^R, \label{eqn:1st_stage_con15-16}\\
& \beta_{i,i',r}+\beta_{i',i,r} \geq z_{i,r} + z_{i',r} - 1,\quad\forall \{(i,r),(i',r)\}\subseteq\calF^R, \label{eqn:1st_stage_con17}\\
&  \alpha_{i,i',a} \geq x_{i,a} + x_{i',a} + \beta_{i,i',r}-2,\quad\forall \{(i,a),(i',a)\}\subseteq\calF^A, \{(i,r),(i',r)\}\subseteq\calF^R,  \label{eqn:1st_stage_con18} \\
&  \beta_{i,i',r} \geq z_{i,r} + z_{i',r} + \alpha_{i,i',a}-2,\quad\forall \{(i,a),(i',a)\}\subseteq\calF^A, \{(i,r),(i',r)\}\subseteq\calF^R, \label{eqn:1st_stage_con19} \\
& x_{i,a},\, y_a,\, z_{i,r},\, u_{i,i'},\, v_r,\, \alpha_{i,i',a},\,\beta_{i,i',r}\in\{0,1\},\quad s_i \geq 0, \quad\forall  i\in I,\, a\in A,\, r\in R. \label{eqn:1st_stage_con20-21} 
\end{align} \label{eqn:1st_stage}%
\end{subequations}  \vspace{-8mm} %

The objective \eqref{eqn:1st_stage_obj} aims to find first-stage decisions $(x,y,z,v,u,s,\alpha,\beta)$ that minimize the sum of the fixed cost of opening ORs (first-term), the fixed cost of employing on-call anesthesiologists (second term), and the risk measure $\varrho_{\Prob}$ of the random second stage function $Q$ (third term). A risk-neutral decision-maker may opt to set $\varrho_{\Prob}(\cdot)=\E_{\Prob}(\cdot)$, i.e., the expected total operational costs, which is standard in the OR scheduling literature and intuitive for OR managers. In contrast, a risk-averse decision-maker might set $\varrho_{\Prob}(\cdot)=\Prob\mhyphen\CVaR_\gamma(\cdot)$, i.e., the CVaR of the total operational costs. For simplicity, we let \spe{} and \spcvar{} denote the risk-neutral and risk-averse SP models.

Constraints \eqref{eqn:1st_stage_con1-2} ensure that every surgery is assigned to exactly one anesthesiologist and one OR. Constraints \eqref{eqn:1st_stage_con3} ensure that surgeries are assigned to open ORs. Constraints \eqref{eqn:1st_stage_con4} indicate that an anesthesiologist can be assigned to surgeries if they are on regular duty (i.e., $\hreg_a=1$) or on call (i.e., $y_a=1$). Constraints \eqref{eqn:1st_stage_con5} ensure that $y_a$ may equal 1 if anesthesiologist $a$ is listed as an on-call anesthesiologist (i.e., $\hcall_a=1$). Constraints \eqref{eqn:1st_stage_con6-7} enforce that the scheduled start time of the surgery assigned to anesthesiologist $a$ is greater than or equal to his/her scheduled start time $t^\text{start}_a$ and it is scheduled within the planned service hours [0, $T^\text{end}$].  Constraints \eqref{eqn:1st_stage_con8}--\eqref{eqn:1st_stage_con11} define precedence variables $u_{i,i'}$.  Constraints \eqref{eqn:1st_stage_con8}--\eqref{eqn:1st_stage_con9} ensure that if surgery $i'$ follows surgery $i$ in either an anesthesiologist or an OR schedule, then $u_{i,i'}$ equals $1$. Constraints \eqref{eqn:1st_stage_con10}--\eqref{eqn:1st_stage_con11} maintain the precedence and transitivity relationships that prevent non-implementable schedules (see an example in \ref{appdx:eg_sequencing}). If surgeries $i$ and $i'$ are assigned to anesthesiologist $a$ (i.e., $x_{i,a}=x_{i',a}$), then constraints \eqref{eqn:1st_stage_con12-13} ensure that either $i'$ follows $i$ (i.e., $\alpha_{i,i',a}=1$) or vice versa (i.e., $\alpha_{i',i,a}=1$), but not both. Constraints \eqref{eqn:1st_stage_con14} ensure that the sequencing constraints on $\alpha$ only apply to surgeries assigned to the same anesthesiologist.  Moreover, they enforce that either $\alpha_{i,i',a}$ or $\alpha_{i',i,a}$ takes value one if both surgeries $i$ and $i'$ are assigned to anesthesiologist $a$. Constraints \eqref{eqn:1st_stage_con15-16}--\eqref{eqn:1st_stage_con17} enforce similar precedence and sequencing rules on surgeries assigned to the same OR.  Constraints \eqref{eqn:1st_stage_con18} enforce $\alpha_{i,i',a}=1$ if $i$ and $i'$ are performed by the same anesthesiologist while surgery $i'$ follows surgery $i$ in the same OR, and similarly for constraints \eqref{eqn:1st_stage_con19} with the role of anesthesiologist and OR swapped. Finally, constraints \eqref{eqn:1st_stage_con20-21} specify feasible ranges of the first-stage decision variables.

\begin{remark}
   Our proposed model allows practitioners to accommodate special scheduling requests. For example, suppose that anesthesiologist $a$ must perform a given set of surgeries $I'$. In that case, one can set $x_{i,a}=1$ for all $i \in I'$. If surgery $i$ must be performed in a particular OR, one can set $z_{i,r}=1$. Similarly, one can set $v_r=1$ if a specific OR $r \in R$ must be open and $y_a=1$ if anesthesiologist $a$ must be called in. These are special cases and simplifications of our model.
\end{remark}

Next, we introduce our second-stage (recourse) problem.  For a given set of first-stage decisions $(x,y,z,v,u,s)$ corresponding to a feasible solution of \eqref{eqn:1st_stage} and a realization of surgery durations~$d$, the following second-stage linear program (LP) computes costs related to anesthesiologist and OR idle time and overtime (i.e., the first and second terms in \eqref{eqn:2nd_stage_obj} respectively) and waiting time of surgeries (i.e., third term in \eqref{eqn:2nd_stage_obj}).

In this problem, variable $q_i$ represents the actual start time of surgery~$i$ and $w_i$ represents the waiting time of surgery $i$.  We define the nonnegative continuous variables $o_r$ $(o_a)$ and $g_r$ $(g_a)$ respectively to represent the overtime and idle time of OR $r$ (anesthesiologist $a$).  We define $\co_r$ $(\co_a)$ as the per-unit overtime penalty for OR $r$ (anesthesiologist $a$), $\cg_r$ $(\cg_a)$ as the per-unit idling penalty for OR $r$ (anesthesiologists $a$), and $\cw_i$ as the per-unit surgery waiting penalty.  Finally, $M_\text{seq}$, $M_\text{anes}$ and $M_\text{room}$ are big-$M$ parameters (see \ref{appdx:big_M} for a discussion on these parameters).  For a given realization of surgery duration $d$, our second-stage problem is as follows:
\allowdisplaybreaks
\begin{subequations}
\begin{align}
Q(x,y,z,v,u,s,d):= \underset{q,\,o,\,w,\,g}{\text{minimize}} \quad
&  \sum_{a\in A} \Big(\cg_a g_a + \co_a o_a \Big) +\sum_{r\in R} \Big(\cg_r g_r + \co_r o_r \Big) + \sum_{i\in I} \cw_i w_i\label{eqn:2nd_stage_obj} \\
 \text{subject to} \quad
&  q_{i'} \geq q_i + d_i - M_\text{seq}(1-u_{i,i'}),\quad\forall \{i,i'\}\subseteq I,\, i\ne i', \label{eqn:2nd_stage_con1}\\
&  q_i \geq s_i,\quad\forall i\in I, \label{eqn:2nd_stage_con2} \\
&  o_a \geq q_i + d_i -t^\text{end}_a - M_\text{anes}(1-x_{i,a}+y_a),\quad\forall (i,a)\in\calF^A, \label{eqn:2nd_stage_con3} \\
&  o_r \geq q_i + d_i - T^\text{end} - M_\text{room}(1-z_{i,r}),\quad\forall (i,r)\in\calF^R, \label{eqn:2nd_stage_con4}\\
& w_i \geq q_i - s_i, \quad\forall i\in I, \label{eqn:2nd_stage_con5}\\
& g_a \geq \bigg(t^\text{end}_a - t^\text{start}_a - \sum_{i\in I_a} d_i x_{i,a}\bigg) \hreg_a + o_a,\quad\forall a\in A,\label{eqn:2nd_stage_con6}\\
& g_r \geq T^\text{end} v_r - \sum_{i\in I_r} d_i z_{i,r} + o_r,\quad\forall r\in R,\label{eqn:2nd_stage_con7}\\ 
&  q_i,\, o_a,\, o_r,\, w_i,\, g_a,\, g_r \geq 0,\quad\forall i\in I,\, a\in A,\, r\in R. \label{eqn:2nd_stage_con8} 
\end{align}\label{eqn:2nd_stage}%
\end{subequations}
Constraints \eqref{eqn:2nd_stage_con1}--\eqref{eqn:2nd_stage_con2} ensure that the actual start time of a surgery is not earlier than the scheduled start time and the completion time of the previous surgeries. Constraints \eqref{eqn:2nd_stage_con3} and \eqref{eqn:2nd_stage_con4} yield the overtimes of the anesthesiologists and ORs, respectively. Note that constraints \eqref{eqn:2nd_stage_con3}--\eqref{eqn:2nd_stage_con4} are relaxed if an on-call anesthesiologist is hired or an OR is not open (i.e., overtime is zero in these two cases). Constraints \eqref{eqn:2nd_stage_con5} give the waiting time of each surgery $i \in I$ as the time from the scheduled start time of a surgery to its actual start time. Constraints \eqref{eqn:2nd_stage_con6}--\eqref{eqn:2nd_stage_con7} give the idle times of anesthesiologists and ORs, respectively. Note that the idling cost of on-call anesthesiologists and ORs that are not open are zero. It is easy to verify that formulation \eqref{eqn:2nd_stage} is feasible for any feasible first-stage decisions. Thus, we have a relatively complete recourse.

Our model \eqref{eqn:1st_stage}--\eqref{eqn:2nd_stage} generalizes recent SP models proposed for multiple-OR scheduling problems. For example, the models in \cite{Denton_et_al:2010} and \cite{Wang_et_al:2019} aim to decide the optimal number of ORs to open and surgery assignments to open ORs by minimizing the weighted sum of OR-opening and overtime-penalty costs. Our SP model generalizes these models by (a) incorporating constraints, variables, and objectives related to regular and on-call anesthesiologist scheduling; (b) incorporating a larger set of important objectives; (c) integrating allocation, assignment, sequencing, and scheduling problems; and (d) modeling decision-makers risk preferences.

Furthermore, our SP model generalizes an existing model for a closely related OR-anesthesiologist scheduling problem presented by  \cite{Rath_et_al:2017}. Key differences between our model \eqref{eqn:1st_stage}--\eqref{eqn:2nd_stage} and  \cite{Rath_et_al:2017}'s model include the following.  First, while we use similar sets of first-stage variables and constraints, we add constraints \eqref{eqn:1st_stage_con6-7} to the first-stage model to restrict the scheduled surgery time to be within the planning horizon $[0,T^\text{end}]$, which is common in practice.  Second, our second-stage model \eqref{eqn:2nd_stage} generalizes that of  \cite{Rath_et_al:2017} by considering waiting and idling metrics and the related variables and constraints.  \blue{In particular, incorporating surgery waiting time is essential to minimize delays and avoid scheduling many surgeries to be performed by the same anesthesiologist or in the same OR at the same time. For example, in \ref{appdx:models_wo_waiting}, we analyze the ORASP models without waiting time components and prove that for models such as \cite{Rath_et_al:2017}, it is optimal to schedule surgeries assigned to the same anesthesiologist to start simultaneously at the start time of that anesthesiologist, which is not possible in practice (see also the numerical results in Section \ref{subsec:expt_Rath}).} In addition, we incorporate OR and anesthesiologist idle time in the second-stage objective, which is essential to improve the utilization of these expensive resources \citep{Cardoen_et_al:2010}.  Moreover, different from \cite{Rath_et_al:2017}, we propose a CVaR model that allows for decision-makers risk aversion. Finally, we derive a DRO counterpart of our SP model to address distributional ambiguity, which is the topic of the next section. In Section \ref{subsec:expt_Rath}, we provide examples, results, and detailed discussions demonstrating the importance of incorporating these elements to produce realistic and implementable solutions with superior performance in practice.

\section{Distributionally Robust Models} \label{sec:DRO_model}

In this section, we present our proposed DRO formulation of the ORASP, which does not assume that the probability distributions of surgery durations are known.  That said, we assume that the mean $m:=(m_1, \ldots,m_{|I|})^\top$ and support $\calS=\{d\in\R^I\mid\dlb_i \leq d_i \leq \dub_i,\, i\in I\}$ of surgery durations are known.  These parameters can be estimated based on clinical expert knowledge. Moreover, when data on patient characteristics and medical history are available, one could build statistical and machine-learning models (e.g., regression models) to estimate the mean and support.

We first introduce additional sets and notation defining our ambiguity set. Let  $\calD=\calD(\calS)$ be the set of all probability measures on $(\calS,\calB)$ where $\calB$ is the Borel $\sigma$-field on $\calS$. Elements in $\calD$ can be viewed as probability measures induced by the random vector~$D$. Using this notation, we construct the following mean-support ambiguity set:
\begin{equation} \label{eqn:mean_support_uncertainty_set}
\calP(m,\calS)=\Big\{\Prob\in\calD(\calS) \,\Big|\, \E_\Prob(D) = m \Big\}.
\end{equation}
Using the ambiguity set \eqref{eqn:mean_support_uncertainty_set}, we formulate our DRO model of the ORASP as
\begin{subequations}
\begin{align}
 \underset{x,\,y,\,z,\,v,\,u,\,\alpha,\,\beta,\,s}{\text{minimize}} & \quad   \sum_{r\in R}f_r v_r + \sum_{a\in A}f_a y_a + \bigg\{\sup_{\Prob\in\calP(m,\calS)} \varrho_\Prob\big(Q(x,y,z,v,u,s,D)\big) \bigg\}   \label{eqn:mean_support_obj} \\
 \text{subject to\,\,\,} &   \quad  \text{\eqref{eqn:1st_stage_con1-2}--\eqref{eqn:1st_stage_con20-21}}. \label{eqn:mean_support_con} 
\end{align}  \label{eqn:mean_support_model}%
\end{subequations}
Formulation \eqref{eqn:mean_support_model} finds first-stage decisions $(x,y,z,v,u,s)$ that minimize the first-stage cost and the worst-case of a risk measure of the second-stage cost over distributions residing in $\calP(m,\calS)$. In what follows, we refer to model \eqref{eqn:mean_support_model} with $\rho_{\Prob}(\cdot)=\E_{\Prob}(\cdot)$ as the \droe{} model, and to model \eqref{eqn:mean_support_model} with $\rho_{\Prob} (\cdot)=\Prob\mhyphen\CVaR_{\gamma}(\cdot)$ as the \drocvar{} model. Note that formulation \eqref{eqn:mean_support_model} is a mini-max problem, which is not straightforward to solve in its presented form.  Therefore, our goal is to derive an equivalent solvable formulation of \eqref{eqn:mean_support_model}.  For brevity, we relegate detailed proofs to \ref{appdx:DRO_model_section}.

\subsection{\droe{} Model Reformulation} \label{subsec:DRO_reformulation}

In this section, we derive an equivalent reformulation of the \droe{} model (i.e., model \eqref{eqn:mean_support_model} with $\rho_{\Prob}(\cdot)=\E_{\Prob}(\cdot)$). First, in Proposition~\ref{prop:mean_support_WC_exp}, we present an equivalent reformulation of the inner maximization problem in~\eqref{eqn:mean_support_model}. 

\begin{proposition} \label{prop:mean_support_WC_exp}
  For $(x,y,z,v,u,s)$ satisfying \eqref{eqn:1st_stage_con1-2}--\eqref{eqn:1st_stage_con20-21}, the inner problem in~\eqref{eqn:mean_support_model}, namely, to solve $\sup \limits_{\Prob\in\calP(m,\calS)} \E_\Prob[Q(x,y,z,v,u,s,D)]$, is equivalent to 
\begin{equation}\label{eqn:mean_support_WC_exp_obj}
 \underset{\rho\in\R^I}{\textup{minimize}} \quad \left\{ \sum_{i\in I}\rho_i m_i + \sup_{d\in\calS} \left( Q(x,y,z,v,u,s,d) - \sum_{i\in I} \rho_i d_i \right) \right\}.
\end{equation}
\end{proposition}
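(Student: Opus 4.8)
The plan is to recognize the inner problem $\sup_{\Prob\in\calP(m,\calS)}\E_\Prob[Q(x,y,z,v,u,s,D)]$ as an infinite-dimensional moment problem over measures and to dualize it. Fix a first-stage solution $(x,y,z,v,u,s)$ feasible for \eqref{eqn:1st_stage_con1-2}--\eqref{eqn:1st_stage_con20-21} and abbreviate $Q(d):=Q(x,y,z,v,u,s,d)$. By relatively complete recourse and the nonnegativity of the recourse objective, $Q$ is a finite-valued convex piecewise-linear function of $d$ (it is the optimal value of the LP \eqref{eqn:2nd_stage}, whose right-hand sides are affine in $d$); in particular $Q$ is continuous, hence bounded, on the compact box $\calS$. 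Viewing $\Prob$ as ranging over the cone of nonnegative Borel measures on $\calS$, the inner problem is the linear program
\[
 \sup\ \Big\{\, \E_\Prob[Q(D)] \ :\ \E_\Prob[D_i]=m_i\ \ \forall i\in I,\quad \Prob(\calS)=1,\quad \Prob\ge 0 \,\Big\},
\]
to whose constraints I would attach multipliers $\rho\in\R^I$ (for the mean equalities, hence unrestricted in sign) and $\lambda\in\R$ (for the normalization).

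Next I would form the Lagrangian dual,
\[
 \underset{\rho\in\R^I,\ \lambda\in\R}{\text{minimize}}\ \Big\{\, \lambda+\sum_{i\in I}\rho_i m_i\ :\ \lambda+\sum_{i\in I}\rho_i d_i\ \ge\ Q(d)\ \ \text{for all } d\in\calS \,\Big\},
\]
and eliminate $\lambda$: for fixed $\rho$, the semi-infinite constraint is equivalent to $\lambda\ge \sup_{d\in\calS}\big(Q(d)-\sum_{i\in I}\rho_i d_i\big)$, and this supremum is a finite real number because $Q$ is bounded and $\calS$ compact. Substituting the smallest admissible $\lambda$ and minimizing it out turns the dual objective into exactly \eqref{eqn:mean_support_WC_exp_obj}. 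Weak duality is then immediate: integrating $\lambda+\sum_i\rho_i d_i\ge Q(d)$ against any $\Prob\in\calP(m,\calS)$ gives $\lambda+\sum_i\rho_i m_i\ge \E_\Prob[Q(D)]$, so the value of \eqref{eqn:mean_support_WC_exp_obj} upper-bounds the worst-case expectation.

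The remaining, and main, task is to rule out a duality gap (and to see that the dual optimum is attained, justifying ``minimize''). For this I would invoke a standard strong-duality theorem for moment problems (Isii's theorem; equivalently the conic-duality results of Shapiro, or the development in Bertsimas and Popescu): since $\calS$ is compact and $Q$ is continuous and bounded on $\calS$, there is no gap and the dual is solvable provided the moment data is regular, i.e., $(1,m)$ lies in the relative interior of the moment cone $\operatorname{cl}\operatorname{cone}\{(1,d):d\in\calS\}$. This regularity holds when $\dlb_i<m_i<\dub_i$ for all $i$, since then every small perturbation $(1+\epsilon_0,\,m+\varepsilon)$ factors as $(1+\epsilon_0)\big(1,\,(m+\varepsilon)/(1+\epsilon_0)\big)$ with $(m+\varepsilon)/(1+\epsilon_0)\in\calS$; equivalently, the dual objective is coercive in $\rho$ in this case. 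If $m_i\in\{\dlb_i,\dub_i\}$ for some $i$, then $D_i$ is $\Prob$-almost surely constant for every $\Prob\in\calP(m,\calS)$, so that coordinate can be fixed and dropped without altering either side of the claimed identity, reducing to the interior case. Combining weak duality, the no-gap result, and the elimination of $\lambda$ yields the asserted equivalence; I expect the regularity/no-gap step to be the crux, the rest being bookkeeping. As a byproduct useful for the subsequent finite reformulation, note that because $Q$ is convex and $\calS$ is a box, the inner supremum in \eqref{eqn:mean_support_WC_exp_obj} is attained at a vertex of $\calS$.
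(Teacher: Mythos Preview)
Your proposal is correct and follows essentially the same approach as the paper: both dualize the moment problem over $\calP(m,\calS)$, invoke a standard strong-duality theorem for moment problems, and then eliminate the normalization multiplier $\lambda$ (the paper's $\rho_0$) to obtain \eqref{eqn:mean_support_WC_exp_obj}. The paper is terser---it simply cites Proposition~6.68 of Shapiro, Dentcheva and Ruszczy\'nski (2014), which packages the compact-support/continuous-integrand hypotheses directly---whereas you unpack the regularity condition (interior moment vector) and handle the degenerate boundary case $m_i\in\{\dlb_i,\dub_i\}$ explicitly; this extra care is sound but not a different route.
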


Again, the problem in \eqref{eqn:mean_support_WC_exp_obj} involves an inner max-min problem that is not straightforward to solve in its presented form.  However, in Proposition~\ref{prop:mean_support_inner_max_MILP}, we present an equivalent MILP formulation of the inner problem in \eqref{eqn:mean_support_WC_exp_obj} that is solvable. 

\begin{proposition} \label{prop:mean_support_inner_max_MILP}
  Let $\Delta d_i = \dub_i - \dlb_i$ for all $i \in I$.  Then, for $(x,y,z,v,u,s)$ satisfying $\eqref{eqn:1st_stage_con1-2}$--$\eqref{eqn:1st_stage_con20-21}$, there exist $\muub_{i,a}$ for all $(i,a) \in\calF^A$, $\thetaub_{i,r}$ for all $(i,r) \in \calF^R$, and $\lambdaub_{i,i'}$ for all $\{i,i'\}\subseteq I$ such that solving the inner problem in~\eqref{eqn:mean_support_WC_exp_obj}, namely, solving $\sup_{d\in\calS} \left( Q(x,y,z,v,u,s,d) - \sum_{i\in I} \rho_i d_i \right)$, is equivalent to evaluating the following function, which can be done by solving the presented MILP:
\begin{subequations} 
\begin{align}
& \hspace{-20mm}  H(x,y,z,v,u,s,\rho)=\\
\underset{\lambda,\,\mu,\,\theta,\,\zeta,\,b}{\textup{maximize}} \quad
&  \Bigg\{ \sum_{a\in A} \cg_a(t^\textup{end}_a-t^\textup{start}_a)\hreg_a + \sum_{r\in R}\cg_r T^\textup{end} v_r \nonumber \\
& \quad + \sum_{i\in I}\Bigg[\sum_{i'\in I, i'\ne i}(\lambda_{i,i'} - \lambda_{i',i}) + \sum_{a\in A_i} \mu_{i,a} + \sum_{r\in R_i} \theta_{i,r} \Bigg] s_i \nonumber \\ 
& \quad - M_\textup{seq} \sum_{i\in I}\sum_{i'\in I, i'\ne i} \lambda_{i,i'}(1-u_{i,i'}) - \sum_{i\in I}\sum_{a\in A_i}\mu_{i,a}\Big[ t^\textup{end}_a + M_\textup{anes}(1-x_{i,a}+ y_a) \Big] \nonumber \\
& \quad - \sum_{i\in I}\sum_{r\in R_i} \theta_{i,r} \Big[T^\textup{end} +M_\textup{room}(1-z_{i,r})\Big] \nonumber \\
& \quad + \sum_{i\in I} \dlb_i\Bigg[ \sum_{i'\in I,i'\ne i} \lambda_{i,i'} + \sum_{a\in A_i} (\mu_{i,a}- \cg_a \hreg_a x_{i,a}) + \sum_{r\in R_i}(\theta_{i,r}-\cg_r z_{i,r}) - \rho_i\Bigg]  \nonumber \\
& \quad + \sum_{i\in I} \Delta d_i\Bigg[ \sum_{i'\in I,i'\ne i} \zeta^L_{i,i'} + \sum_{a\in A_i} (\zeta^M_{i,a}- \cg_a \hreg_a x_{i,a} b_i) + \sum_{r\in R_i}(\zeta^T_{i,r}-\cg_r z_{i,r} b_i) - \rho_i b_i\Bigg] \Bigg\} \label{eqn:mean_support_inner_max_MILP_obj} \\
 \textup{subject to} \quad
&  \sum_{i\in I_a} \mu_{i,a} \leq \cg_a + \co_a,\quad \sum_{i\in I_r} \theta_{i,r} \leq \cg_r + \co_r, \quad\forall a\in A,\, r\in R,\label{eqn:mean_support_inner_max_MILP_con1} \\
&  \sum_{i'\in I, i'\ne i}(\lambda_{i,i'} - \lambda_{i',i}) + \sum_{a\in A_i} \mu_{i,a} + \sum_{r\in R_i} \theta_{i,r} + \cw_i \geq 0,\quad\forall i\in I, \label{eqn:mean_support_inner_max_MILP_con3} \\
&\zeta^L_{i,i'} \leq \lambda_{i,i'},\,\,\, \zeta^L_{i,i'} \leq b_i\lambdaub_{i,i'},\,\,\, \zeta^L_{i,i'} \geq 0,\,\,\, \zeta^L_{i,i'} \geq \lambda_{i,i'} + \lambdaub_{i,i'}(b_i-1), \,\,\,\,\,\forall \{i,i'\}\subseteq I,  \label{eqn:mean_support_inner_max_MILP_con4}\\
& \zeta^M_{i,a} \leq \mu_{i,a},\,\,\, \zeta^M_{i,a} \leq b_i\muub_{i,a},\,\,\, \zeta^M_{i,a} \geq 0,\,\,\,\zeta^M_{i,a} \geq \mu_{i,a} + \muub_{i,a}(b_i-1),\,\,\,\,\,\forall (i,a)\in \calF^A,  \label{eqn:mean_support_inner_max_MILP_con5} \\
& \zeta^T_{i,r} \leq \theta_{i,r},\,\,\, \zeta^T_{i,r}\leq b_i\thetaub_{i,r},\,\,\, \zeta^T_{i,r} \geq 0,\,\,\, \zeta^T_{i,r} \geq \theta_{i,r} + \thetaub_{i,r}(b_i-1),\,\,\,\,\, \forall(i,r)\in\calF^R, \label{eqn:mean_support_inner_max_MILP_con6} \\
&  \lambda_{i,i'},\,\mu_{i,a},\,\theta_{i,r}\geq 0,\quad b_i\in\{0,1\}, \quad\forall \{i,i'\}\subseteq I,\,a\in A_i,\, r\in R_i. \label{eqn:mean_support_inner_max_MILP_con7} 
\end{align} \label{eqn:mean_support_inner_max_MILP}%
\end{subequations} \vspace{-10mm}%
\end{proposition}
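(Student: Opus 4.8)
The plan is to dualize the second-stage linear program $Q(x,y,z,v,u,s,d)$ in the recourse variables $(q,o,w,g)$ with $d$ held fixed, merge the resulting maximization with the outer supremum over $d\in\calS$, replace $d$ by a vertex of its box, and linearize the bilinear terms this creates. As a preliminary simplification I would first eliminate $g$: because the surgeries assigned to a common anesthesiologist (resp.\ OR) are totally ordered by $u$ through \eqref{eqn:1st_stage_con8} and \eqref{eqn:1st_stage_con12-13}--\eqref{eqn:1st_stage_con14} (resp.\ \eqref{eqn:1st_stage_con9} and \eqref{eqn:1st_stage_con15-16}--\eqref{eqn:1st_stage_con17}), constraints \eqref{eqn:2nd_stage_con1}--\eqref{eqn:2nd_stage_con2} force the last completion time of anesthesiologist $a$ to be at least $t^\textup{start}_a+\sum_i d_i x_{i,a}$ (and that of OR $r$ at least $\sum_i d_i z_{i,r}$). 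Hence the right-hand sides of \eqref{eqn:2nd_stage_con6}--\eqref{eqn:2nd_stage_con7} are nonnegative at every feasible recourse point, so $g_a,g_r\ge 0$ are redundant and \eqref{eqn:2nd_stage_con6}--\eqref{eqn:2nd_stage_con7} hold with equality at optimality. Substituting $g_a$ and $g_r$ out of \eqref{eqn:2nd_stage_obj} yields the constants $\sum_a \cg_a(t^\textup{end}_a-t^\textup{start}_a)\hreg_a+\sum_r \cg_r T^\textup{end}v_r$, the terms $-\sum_i d_i\big(\sum_{a\in A_i}\cg_a\hreg_a x_{i,a}+\sum_{r\in R_i}\cg_r z_{i,r}\big)$, and a reduced LP in $(q,o,w)$ with objective coefficients $\cg_a+\co_a$, $\cg_r+\co_r$, and $\cw_i$ on $o_a$, $o_r$, and $w_i$.

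Next I would invoke LP strong duality for this reduced LP: by relatively complete recourse it is feasible for every $d\in\calS$, and since its objective is bounded below by $0$ it attains a finite optimum, so its value equals that of its dual. I would write the dual using multipliers $\lambda_{i,i'}$, $\mu_{i,a}$, $\theta_{i,r}$ for \eqref{eqn:2nd_stage_con1}, \eqref{eqn:2nd_stage_con3}, \eqref{eqn:2nd_stage_con4}, and $\nu_i$, $\delta_i$ for \eqref{eqn:2nd_stage_con2}, \eqref{eqn:2nd_stage_con5}. The $o_a$- and $o_r$-columns give $\sum_{i\in I_a}\mu_{i,a}\le\cg_a+\co_a$ and $\sum_{i\in I_r}\theta_{i,r}\le\cg_r+\co_r$, i.e.\ \eqref{eqn:mean_support_inner_max_MILP_con1}; the $w_i$-column gives $\delta_i\le\cw_i$; the $q_i$-column gives $\nu_i\le\delta_i+\sum_{i'\ne i}(\lambda_{i,i'}-\lambda_{i',i})+\sum_{a\in A_i}\mu_{i,a}+\sum_{r\in R_i}\theta_{i,r}$. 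Since $s_i$ enters the dual objective only through $(\nu_i-\delta_i)s_i$ with $s_i\ge 0$, I can push $\nu_i-\delta_i$ to its largest feasible value; the chain $0\le\nu_i\le\delta_i+(\cdot)\le\cw_i+(\cdot)$ shows this value equals the bracketed quantity precisely when \eqref{eqn:mean_support_inner_max_MILP_con3} holds, and that inequality is in turn forced by dual feasibility, so it may be added explicitly while $\nu_i,\delta_i$ are eliminated. The outcome is $Q(x,y,z,v,u,s,d)=\max\{\text{dual objective}\}$ over $(\lambda,\mu,\theta)\ge 0$ satisfying \eqref{eqn:mean_support_inner_max_MILP_con1} and \eqref{eqn:mean_support_inner_max_MILP_con3}, with the dual objective affine in $d$ and the coefficient of $d_i$ equal to $\sum_{i'\ne i}\lambda_{i,i'}+\sum_{a\in A_i}(\mu_{i,a}-\cg_a\hreg_a x_{i,a})+\sum_{r\in R_i}(\theta_{i,r}-\cg_r z_{i,r})$; matching the remaining big-$M$ and $t^\textup{end}_a$, $T^\textup{end}$ terms against \eqref{eqn:mean_support_inner_max_MILP_obj} is bookkeeping.

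I would then subtract $\sum_i\rho_i d_i$ (which merely shifts the $d_i$-coefficient by $-\rho_i$) and interchange the two suprema, $\sup_{d\in\calS}\max_{(\lambda,\mu,\theta)}=\max_{(\lambda,\mu,\theta)}\sup_{d\in\calS}$, which is always valid. For fixed $(\lambda,\mu,\theta)$ the inner objective is affine in $d$ on the box $\calS$, so its supremum is attained at an extreme point; writing $d_i=\dlb_i+\Delta d_i b_i$ with $b_i\in\{0,1\}$ parametrizes all extreme points and produces the bilinear products $\lambda_{i,i'}b_i$, $\mu_{i,a}b_i$, $\theta_{i,r}b_i$. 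Each product has a binary factor, so introducing $\zeta^L_{i,i'}$, $\zeta^M_{i,a}$, $\zeta^T_{i,r}$ with the McCormick constraints \eqref{eqn:mean_support_inner_max_MILP_con4}--\eqref{eqn:mean_support_inner_max_MILP_con6} — valid once finite upper bounds $\lambdaub_{i,i'},\muub_{i,a},\thetaub_{i,r}$ on the multipliers are available — forces $\zeta^L_{i,i'}=\lambda_{i,i'}b_i$ and its analogues at every feasible point, hence is exact. Bounds $\muub_{i,a}=\cg_a+\co_a$ and $\thetaub_{i,r}=\cg_r+\co_r$ follow immediately from \eqref{eqn:mean_support_inner_max_MILP_con1}; for $\lambda$ I would argue that $\lambda_{i,i'}=0$ at optimality whenever $u_{i,i'}=0$ (the penalty $-M_\textup{seq}(1-u_{i,i'})$ then dominates every other $\lambda_{i,i'}$-coefficient) and that when $u_{i,i'}=1$ a finite cap expressed through the cost parameters removes no optimal solution, since the dual optimal value is finite. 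Collecting the constant, $s$-, big-$M$-, $\dlb$-, and $\Delta d$-terms reproduces \eqref{eqn:mean_support_inner_max_MILP} exactly, and evaluating that MILP returns the value of the inner problem in \eqref{eqn:mean_support_WC_exp_obj}.

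The step I expect to be the main obstacle is constructing a provably valid, finite upper bound $\lambdaub_{i,i'}$ on the sequencing multipliers. Unlike $\mu$ and $\theta$, which are bounded directly by a single dual constraint, the $\lambda$-variables only satisfy the one-sided relation \eqref{eqn:mean_support_inner_max_MILP_con3}, so establishing a cap requires combining the large-$M$ penalty structure with a bound on the dual optimal value (equivalently, a vertex or path-flow argument on the precedence digraph defined by $u$). By comparison, the tightness argument for \eqref{eqn:2nd_stage_con6}--\eqref{eqn:2nd_stage_con7} in the first step and the term-by-term matching against \eqref{eqn:mean_support_inner_max_MILP_obj} are delicate but essentially mechanical.
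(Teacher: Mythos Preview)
Your proposal is correct and follows essentially the same route as the paper: eliminate $g$ by showing \eqref{eqn:2nd_stage_con6}--\eqref{eqn:2nd_stage_con7} bind, take the LP dual of the reduced recourse problem, eliminate the multipliers attached to \eqref{eqn:2nd_stage_con2} and \eqref{eqn:2nd_stage_con5} by pushing them to their bounds, swap the suprema, restrict $d$ to box vertices via $b_i\in\{0,1\}$, and McCormick-linearize. The only cosmetic difference is that the paper substitutes $w_i=q_i-s_i$ \emph{before} dualizing, so it carries a single multiplier $\phi_i$ on \eqref{eqn:2nd_stage_con2} rather than your pair $(\nu_i,\delta_i)$; the subsequent elimination is then one line instead of the two-step chain you describe, but the outcome (constraint \eqref{eqn:mean_support_inner_max_MILP_con3} emerging as a projected feasibility condition, and the $s_i$-coefficient becoming the bracketed sum) is identical. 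Your justification that the right-hand sides of \eqref{eqn:2nd_stage_con6}--\eqref{eqn:2nd_stage_con7} are nonnegative at every feasible recourse point is actually more explicit than the paper's, which simply asserts tightness at optimality; note that for the anesthesiologist case you implicitly use $s_i\ge t_a^{\textup{start}}$ from \eqref{eqn:1st_stage_con6-7} and the fact that $h_a^{\textup{reg}}=1$ forces $y_a=0$, so that \eqref{eqn:2nd_stage_con3} is not relaxed. Your diagnosis of the main obstacle is also accurate: the paper defers the explicit construction of $\lambdaub_{i,i'}$ to a separate proposition, where complementary slackness (your observation that $\lambda_{i,i'}=0$ when $u_{i,i'}=0$) is combined with a telescoping sum of the inequalities \eqref{eqn:mean_support_inner_max_MILP_con3} along the precedence order to obtain a bound in the cost parameters alone.
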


Note that the McCormick inequalities \eqref{eqn:mean_support_inner_max_MILP_con4}--\eqref{eqn:mean_support_inner_max_MILP_con6} in formulation \eqref{eqn:mean_support_inner_max_MILP} involve big-$M$ coefficients $(\lambdaub_{i,i'}, \muub_{i,a}, \thetaub_{i,r})$, i.e., upper bounds on dual variables $(\lambda_{i,i'}, \mu_{i,a}, \theta_{i,r})$, that can undermine computational efficiency if they are set too large. Therefore, in Proposition~\ref{prop:dual_var_UB}, we derive tight upper bounds on these variables to strengthen the MILP reformulation.
\begin{proposition} \label{prop:dual_var_UB}
For any $(x,y,z,v,u,s)$ satisfying $\eqref{eqn:1st_stage_con1-2}$--$\eqref{eqn:1st_stage_con20-21}$, the following bounds are valid.
\begin{subequations} 
\begin{align}
&0 \leq \mu_{i,a}\leq \cg_a + \co_a,\quad\forall (i,a)\in\calF^A;\quad 0 \leq \theta_{i,r} \leq \cg_r + \co_r,\quad\forall (i,r)\in\calF^R; \\
&0 \leq \lambda_{i,i'} \leq \sum_{i\in I} \cw_i + \sum_{a\in A} (\cg_a + \co_a) + \sum_{r\in R}(\cg_r + \co_r) ,\quad\forall \{i,i'\}\subseteq I.
\end{align} 
\end{subequations}
\end{proposition}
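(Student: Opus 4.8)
The plan is to read off both families of bounds from the constraints of the MILP in Proposition~\ref{prop:mean_support_inner_max_MILP}, which are exactly the dual-feasibility conditions of the recourse LP~\eqref{eqn:2nd_stage}. The bounds on $\mu$ and $\theta$ are essentially immediate: \eqref{eqn:mean_support_inner_max_MILP_con7} gives nonnegativity of all dual variables, while \eqref{eqn:mean_support_inner_max_MILP_con1} states $\sum_{i'\in I_a}\mu_{i',a}\le\cg_a+\co_a$ and $\sum_{i'\in I_r}\theta_{i',r}\le\cg_r+\co_r$. Since $(i,a)\in\calF^A$ forces $i\in I_a$ (and similarly for ORs), a single nonnegative summand never exceeds the whole sum, so $0\le\mu_{i,a}\le\cg_a+\co_a$ and $0\le\theta_{i,r}\le\cg_r+\co_r$.

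The bound on $\lambda$ is the substantive part, because \eqref{eqn:mean_support_inner_max_MILP_con3} on its own does not bound $\lambda$ from above (one can inflate $\lambda_{i,i'}$ and $\lambda_{i',i}$ in tandem and stay feasible). It suffices to show $\lambda_{i,i'}\le C:=\sum_{i\in I}\cw_i+\sum_{a\in A}(\cg_a+\co_a)+\sum_{r\in R}(\cg_r+\co_r)$ at every optimal solution of \eqref{eqn:mean_support_inner_max_MILP}, since that is what makes $\lambdaub_{i,i'}=C$ a legitimate McCormick coefficient in \eqref{eqn:mean_support_inner_max_MILP_con4}. The key preliminary observation is that the penalty $-M_\textup{seq}\sum_{i}\sum_{i'\ne i}\lambda_{i,i'}(1-u_{i,i'})$ in~\eqref{eqn:mean_support_inner_max_MILP_obj}, for $M_\textup{seq}$ sufficiently large, forces $\lambda_{i,i'}=0$ whenever $u_{i,i'}=0$; hence at optimality the support of $\lambda$ lies in the relation $\{(i,i'):u_{i,i'}=1\}$, which by \eqref{eqn:1st_stage_con10}--\eqref{eqn:1st_stage_con11} is a transitive, antisymmetric, hence acyclic, partial order. (Equivalently, this follows from complementary slackness applied to the precedence constraints \eqref{eqn:2nd_stage_con1} in the LP dual of~\eqref{eqn:2nd_stage}.)

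Now fix a pair $\{i,i'\}$ with $u_{i,i'}=1$ — the only case in which $\lambda_{i,i'}$ can be nonzero — and let $T:=\{i'\}\cup\{k\in I:u_{i',k}=1\}$ be the down-set of $i'$. Antisymmetry (\eqref{eqn:1st_stage_con10}) gives $i\notin T$, while transitivity (\eqref{eqn:1st_stage_con11}) shows that no positively weighted $\lambda$-arc leaves $T$: if $\lambda_{k,j}>0$ with $k\in T$, then $u_{k,j}=1$ and $u_{i',j}=1$, so $j\in T$. Summing \eqref{eqn:mean_support_inner_max_MILP_con3} over $k\in T$, the $\lambda$-terms internal to $T$ cancel and those directed out of $T$ vanish, which leaves
\begin{align*}
\lambda_{i,i'} &\le \sum_{j\notin T,\ k\in T}\lambda_{j,k} \le \sum_{k\in T}\Big(\sum_{a\in A_k}\mu_{k,a}+\sum_{r\in R_k}\theta_{k,r}+\cw_k\Big)\\
&\le \sum_{a\in A}\sum_{k\in I_a}\mu_{k,a}+\sum_{r\in R}\sum_{k\in I_r}\theta_{k,r}+\sum_{i\in I}\cw_i,
\end{align*}
where in the last step I enlarged $T$ to $I$ and reindexed the double sums. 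Applying \eqref{eqn:mean_support_inner_max_MILP_con1} to each of the first two terms then bounds the right-hand side by $\sum_{a\in A}(\cg_a+\co_a)+\sum_{r\in R}(\cg_r+\co_r)+\sum_{i\in I}\cw_i=C$, and together with $\lambda_{i,i'}\ge0$ this is the second line of bounds.

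The \emph{main obstacle} is precisely this reduction to an acyclic support: \eqref{eqn:mean_support_inner_max_MILP_con3} is a flow-conservation-type inequality that is invariant under adding a two-cycle of $\lambda$-weight, so without first confining the support of $\lambda$ to the partial order encoded by $u$ the telescoping over the down-set $T$ has no content. Establishing that confinement — via optimality and the big-$M$ precedence penalty, or equivalently via complementary slackness on \eqref{eqn:2nd_stage_con1} — is the crux; everything else is bookkeeping with the dual-feasibility inequalities \eqref{eqn:mean_support_inner_max_MILP_con1} and \eqref{eqn:mean_support_inner_max_MILP_con3} that are already part of the formulation.
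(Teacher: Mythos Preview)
Your proof is correct and follows essentially the same approach as the paper: both use complementary slackness (the paper via Lemma~\ref{lem:2nd_stage_complementarity}, you via the big-$M$ penalty in~\eqref{eqn:mean_support_inner_max_MILP_obj}) to confine the support of $\lambda$ to the acyclic precedence relation encoded by $u$, and then telescope the dual-feasibility constraints~\eqref{eqn:mean_support_inner_max_MILP_con3} over a lower set of that order to bound a single $\lambda_{i,i'}$. The only cosmetic difference is that the paper realizes the lower set by passing to a linear extension (ordering surgeries by actual start time $q$ and summing the first $j$ inequalities), whereas you work directly with the down-set $T$ of $i'$ in the partial order; the cancellation of internal $\lambda$-terms and the final bound via~\eqref{eqn:mean_support_inner_max_MILP_con1} are identical.
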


Replacing the inner maximization problem in \eqref{eqn:mean_support_WC_exp_obj}  by its equivalent  MILP reformulation $H(x,y,z,v,u,s,\rho)$ in  \eqref{eqn:mean_support_inner_max_MILP}, and combining with the outer minimizing problem in \eqref{eqn:mean_support_model}, we derive the following equivalent reformulation of the \droe{} model. 
\begin{subequations}
\begin{align}
 \underset{x,\,y,\,z,\,v,\,u,\,\alpha,\,\beta,\,s,\,\rho,\,\delta}{\text{minimize}} & \quad  \sum_{r\in R}f_r v_r + \sum_{a\in A}f_a y_a + \sum_{i\in I}\rho_i m_i + \delta \label{eqn:mean_support_final_obj} \\
 \text{subject to\,\,\,\,\,\,} &   \quad  \eqref{eqn:1st_stage_con1-2}-\eqref{eqn:1st_stage_con20-21},\quad \delta \geq H(x,y,z,v,u,s,\rho). \label{eqn:mean_support_final_con1}
\end{align}  \label{eqn:mean_support_final}
\end{subequations} \vspace{-10mm}

\subsection{\drocvar{} Model Reformulation} \label{subsec:DRO-CVaR_reformulation}

In this section, we derive an equivalent reformulation of the \drocvar{} model. In Proposition~\ref{prop:mean_support_WC_CVaR}, we present an equivalent reformulation of the inner problem in~\eqref{eqn:mean_support_model} with $\rho_{\Prob}(\cdot)=\Prob\mhyphen\CVaR_{\gamma}(\cdot)$. 

\begin{proposition} \label{prop:mean_support_WC_CVaR}
For $(x,y,z,v,u,s)$ satisfying \eqref{eqn:1st_stage_con1-2}--\eqref{eqn:1st_stage_con20-21}, the inner problem in~\eqref{eqn:mean_support_model}, namely, to solve $\sup \limits_{\Prob\in\calP(m,\calS)} \Prob\mhyphen\CVaR_{\gamma}\big(Q(x,y,z,v,u,s,D)\big)$, is equivalent to 
\begin{subequations}  \label{eqn:mean_support_WC_CVaR}
\begin{align} 
 \underset{\rho_0\in\R,\,\rho\in\R^I,(\underline{\psi}_i,\overline{\psi}_i)\in\R^I \times \R^I}{\textup{minimize}} & \quad
\Bigg(\frac{1}{1-\gamma}-1\Bigg)\rho_0 + \frac{1}{1-\gamma} \sum_{i\in I}\rho_i m_i + \sup_{d\in\calS} \Bigg\{Q(x,y,z,v,u,s,d)-\sum_{i\in I} \rho_id_i \Bigg\} \label{eqn:mean_support_WC_CVaR_obj}\\
 \textup{subject to\hspace{10mm}} & \quad \rho_0 + \sum_{i\in I} \big(\dlb_i\underline{\psi}_i-\dub_i\overline{\psi}_i\big) \geq 0, \label{eqn:mean_support_WC_CVaR_obj_con1}\\
 &\quad \underline{\psi}_i-\overline{\psi}_i=\rho_i,\quad \underline{\psi}_i\geq 0,\quad \overline{\psi}_i\geq 0,\quad\forall i\in I. \label{eqn:mean_support_WC_CVaR_obj_con2}
\end{align} %
\end{subequations}%
\end{proposition}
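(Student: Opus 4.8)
The plan is to combine the Rockafellar--Uryasev representation of CVaR with the moment-duality already established in Proposition~\ref{prop:mean_support_WC_exp}, and then to eliminate the auxiliary scalar variables in closed form. Throughout I write $Q(d):=Q(x,y,z,v,u,s,d)$ for brevity; since $\calS$ is compact and \eqref{eqn:2nd_stage} has relatively complete recourse, $Q$ is finite, continuous, and bounded on $\calS$.

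First, I would use that $\Prob\mhyphen\CVaR_\gamma(Q(D))=\inf_{\eta\in\R}\{\eta+\tfrac{1}{1-\gamma}\E_\Prob[(Q(D)-\eta)^+]\}$ for every $\Prob\in\calD(\calS)$, and that (since $Q$ is bounded) $\eta$ may be restricted to a fixed compact interval. The inner objective is affine and weak$^*$-continuous in $\Prob$ over the weak$^*$-compact convex set $\calP(m,\calS)$ and convex and continuous in $\eta$, so Sion's minimax theorem lets me interchange $\sup_\Prob$ and $\inf_\eta$:
\[
\sup_{\Prob\in\calP(m,\calS)}\Prob\mhyphen\CVaR_\gamma\big(Q(D)\big)=\inf_{\eta\in\R}\Big\{\eta+\tfrac{1}{1-\gamma}\sup_{\Prob\in\calP(m,\calS)}\E_\Prob\big[(Q(D)-\eta)^+\big]\Big\}.
\]

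Next, I would apply Proposition~\ref{prop:mean_support_WC_exp} to the function $d\mapsto(Q(d)-\eta)^+$ in place of $d\mapsto Q(d)$ --- its proof uses only boundedness/continuity of the integrand and that $m$ lies in the interior of the moment cone of $\calS$ --- so the inner worst-case expectation equals $\inf_{\rho\in\R^I}\{\sum_{i\in I}\rho_i m_i+\sup_{d\in\calS}((Q(d)-\eta)^+-\sum_{i\in I}\rho_i d_i)\}$. Since $(Q(d)-\eta)^+=\max\{Q(d)-\eta,0\}$, the inner supremum decomposes as $\max\{S(\rho)-\eta,\,T(\rho)\}$ with $S(\rho):=\sup_{d\in\calS}(Q(d)-\sum_{i\in I}\rho_i d_i)$ and $T(\rho):=\sup_{d\in\calS}(-\sum_{i\in I}\rho_i d_i)=-\min_{d\in\calS}\sum_{i\in I}\rho_i d_i$. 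Substituting into the previous display, the problem becomes $\inf_\rho\{\tfrac{1}{1-\gamma}\sum_i\rho_i m_i+\inf_\eta[\eta+\tfrac{1}{1-\gamma}\max\{S(\rho)-\eta,T(\rho)\}]\}$, and since $\tfrac{1}{1-\gamma}>1$ the piecewise-linear function of $\eta$ attains its minimum at $\eta=S(\rho)-T(\rho)$, with value $S(\rho)+(\tfrac{1}{1-\gamma}-1)T(\rho)$.

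Finally, I would introduce an epigraph variable $\rho_0\geq T(\rho)$ (exact at optimality because $\tfrac{1}{1-\gamma}-1>0$). By LP strong duality over the box $\calS=\{d:\dlb_i\leq d_i\leq\dub_i\}$, the constraint $\rho_0\geq\sup_{d\in\calS}(-\sum_{i\in I}\rho_i d_i)$ is equivalent to the existence of $\underline\psi,\overline\psi\geq 0$ with $\underline\psi_i-\overline\psi_i=\rho_i$ and $\rho_0+\sum_{i\in I}(\dlb_i\underline\psi_i-\dub_i\overline\psi_i)\geq 0$, which are exactly \eqref{eqn:mean_support_WC_CVaR_obj_con1}--\eqref{eqn:mean_support_WC_CVaR_obj_con2}; collecting terms then gives \eqref{eqn:mean_support_WC_CVaR}. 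The hard part will be the rigorous justification of the $\sup_\Prob$/$\inf_\eta$ interchange --- checking the hypotheses of the minimax theorem (weak$^*$ compactness of $\calP(m,\calS)$ and the harmless restriction of $\eta$ to a compact set) --- and confirming that the moment-duality of Proposition~\ref{prop:mean_support_WC_exp}, in particular its strong-duality direction, genuinely transfers to the truncated integrand $(Q(\cdot)-\eta)^+$; the decomposition of the inner supremum and the closed-form elimination of $\eta$ and $\rho_0$ are routine.
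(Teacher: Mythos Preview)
Your proposal is correct and follows essentially the same route as the paper's proof: Rockafellar--Uryasev representation of CVaR, Sion's minimax theorem to swap $\sup_{\Prob}$ and $\inf_{\eta}$ (using weak compactness of $\calP(m,\calS)$), moment duality as in Proposition~\ref{prop:mean_support_WC_exp} applied to the integrand $(Q(\cdot)-\eta)^+$, and LP duality over the box $\calS$ to produce the $(\underline\psi,\overline\psi)$ constraints. The only cosmetic difference is bookkeeping: the paper keeps the auxiliary dual scalar $\rho_0$ from the outset and eliminates $\tau$ via the active constraint $\tau=S(\rho)-\rho_0$, whereas you first apply Proposition~\ref{prop:mean_support_WC_exp} in its $\rho_0$-free form, use $\sup_d\max\{f(d),g(d)\}=\max\{\sup_d f,\sup_d g\}$ to split the supremum, eliminate $\eta$ in closed form, and only then reintroduce $\rho_0$ as an epigraph variable---both arrive at the same reformulation.
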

Note that the inner maximization problem in \eqref{eqn:mean_support_WC_CVaR} is the same as the inner problem in \eqref{eqn:mean_support_WC_exp_obj}. Therefore, we apply the same techniques in the proof of  Proposition \ref{prop:mean_support_inner_max_MILP} to derive the following equivalent reformulation of the \drocvar{} model:
\begin{subequations} \label{eqn:mean_support_CVaR_final}
\begin{align}
\underset{x,\,y,\,z,\,v,\,u,\,\alpha,\,\beta,\,s,\,\rho_0,\rho,\,\delta}{\text{minimize}} & \quad  \sum_{r\in R}f_r v_r + \sum_{a\in A}f_a y_a + \Bigg(\frac{1}{1-\gamma}-1\Bigg)\rho_0 + \frac{1}{1-\gamma} \sum_{i\in I}\rho_i m_i + \delta \label{eqn:mean_support_CVaR_final_obj} \\
 \text{subject to\,\,\,\,\,\,\,\,\,} &   \quad  \eqref{eqn:1st_stage_con1-2}-\eqref{eqn:1st_stage_con20-21},\quad \eqref{eqn:mean_support_WC_CVaR_obj_con1}-\eqref{eqn:mean_support_WC_CVaR_obj_con2},\quad \delta \geq H(x,y,z,v,u,s,\rho). \label{eqn:mean_support_CVaR_final_con1}
\end{align} %
\end{subequations}

\section{Solution Methods} \label{sec:solution_method}
In this section, we first propose valid inequalities to improve the solvability of the \spe{} and \spcvar{} models (Section \ref{subsec:VI_SP}). Then, we propose a column-and-constraint generation (C\&CG) method to solve our \droe{} and \drocvar{} models (Section \ref{subsec:C&CG}). Also, we propose several valid inequalities to improve the convergence of our C\&CG method (Section \ref{subsec:VI_DRO}). Finally, we discuss the separability of the models (Section \ref{subsec:separability}). \vspace{-5mm}

\subsection{Valid Inequalities for the \spe{} and \spcvar{} Models} \label{subsec:VI_SP}
Note that the idle times of anesthesiologists and operating rooms are nonnegative. Thus, we add the following valid inequalities to the \spe{} and \spcvar{} models, which, as we later show, tightens its linear relaxation.
\begin{equation}\label{eqn:idle_nonneg}
   \bigg(t^\text{end}_a-t^\text{start}_a-\sum_{i\in I_a} d_i x_{i,a} \bigg) \hreg_a+ o_a \geq 0,\quad T^\text{end} v_r-\sum_{i\in I_r} d_i z_{i,r} + o_r \geq 0, \quad\forall a\in A,\, r\in R.  
\end{equation} \vspace{-8mm}

\subsection{The C\&CG Method for the \droe{} and \drocvar{} Models} \label{subsec:C&CG} \label{subsec:constraint_generation}

Note that the \droe{} model in \eqref{eqn:mean_support_final} and \drocvar{} model in \eqref{eqn:mean_support_CVaR_final}  involve inner maximization problems, specifically defining the function value $H(x,y,z,v,u,s,\rho)$ in constraints~\eqref{eqn:mean_support_final_con1} and \eqref{eqn:mean_support_CVaR_final_con1}, respectively. Thus, formulations \eqref{eqn:mean_support_final} and \eqref{eqn:mean_support_CVaR_final} cannot be solved directly using standard techniques. In this section, we develop a C\&CG method to solve our \droe{} model, and note that a similar C\&CG method can be employed to solve our \drocvar{} model. The motivation of this algorithm is as follows. Consider the inner maximization problem in~\eqref{eqn:mean_support_WC_exp_obj}.  The $i$th element of the optimal solution component $d^*\in\calS$ only takes value $\dlb_i$ or $\dub_i$. As a result, the inner maximization problem in \eqref{eqn:mean_support_WC_exp_obj} over $d\in\calS$ only needs to consider the $2^{|I|}$ combinations, i.e. $\bigtimes_{i\in I} \{\dlb_i,\dub_i\}$, to determine $d^*$.  Instead of solving an MILP with exponentially many constraints, we use a C\&CG method to identify scenarios in an iterative manner to obtain an optimal solution. 

Algorithm \ref{algo:constraint_generation_mean_support} presents our C\&CG method. In each iteration, we first solve the master problem~\eqref{eqn:mean_support_master}, which only employs the scenario-based constraints \eqref{eqn:2nd_stage_con1}--\eqref{eqn:2nd_stage_con8} corresponding to a set of scenarios indexed by $\calK$, to obtain a solution (i.e., surgery assignment, sequence, and schedule).  By considering only a subset of surgery durations, the master problem is a relaxation of the original problem. Thus, it provides a lower bound on the optimal value of the \droe{} model.  Then, given the optimal solution from the master problem, we identify a duration vector $d^*$ by solving subproblem \eqref{eqn:mean_support_inner_max_MILP}.  Given that solutions of the master problem are feasible to the original problem,  we obtain an upper bound by solving the subproblem using these solutions. Next, we introduce second-stage variables and constraints associated with the identified duration scenario to the master problem.  We then solve the master problem again with the new information (in the enlarged set $\calK$) from the subproblems.  This process continues until the gap between the lower and upper bound obtained in each iteration satisfies a predetermined termination tolerance $\varepsilon \geq 0$. Given the relatively complete recourse property of our second-stage problem, feasibility cuts are not needed. Moreover, since there are only a finite number of scenarios (i.e., $2^{|I|}$) in total, the algorithm terminates in finite number of iterations (see \citealp{Tsang_et_al:2023, Zeng_Zhao:2013}). 
\IncMargin{0em}
\begin{algorithm}[t] \SetAlgoNoEnd  \OneAndAHalfSpacedXI \small
\SetKwInOut{Initialization}{Initialization}
\Initialization{\vspace{-2mm}  Set $LB=0$, $UB=\infty$, $\varepsilon\geq0$, $\calK=\emptyset$,   $j=1$.} 
\textbf{1. Master problem.} Solve the master problem
\begin{subequations}
\begin{align}
 \underset{\substack{ x,y,z,v,u,s,\\ \alpha,\beta,\rho,\,\delta, \\ q,\,o,\,w,\,g}}{\text{minimize}} \quad
&  \sum_{r\in R}c_r v_r + c_q\sum_{a\in A}y_a + \sum_{i\in I} \rho_i m_i + \delta \label{eqn:mean_support_master_obj} \\
\text{subject to} \quad
&  \eqref{eqn:1st_stage_con1-2}-\eqref{eqn:1st_stage_con20-21},\quad\big\{\eqref{eqn:2nd_stage_con1}-\eqref{eqn:2nd_stage_con8},\, k\in\calK\big\}, \label{eqn:mean_support_master_con1} \\
& \delta \geq \sum_{a\in A} \Big(\cg_a g^k_a + \co_a o^k_a \Big) +\sum_{r\in R} \Big(\cg_r g^k_r + \co_r o^k_r \Big) + \sum_{i\in I} \cw_i w^k_i - \sum_{i\in I} \rho_i d^k_i,\,\,\, \forall k\in\calK,  \label{eqn:mean_support_master_con2}
\end{align}  \label{eqn:mean_support_master}%
\end{subequations}
\hspace{2.7mm}Record the optimal solution $(x^j,y^j,z^j,v^j,u^j,s^j,\alpha^j,\beta^j,\rho^j,\delta^j)$ and value $Z^j$. Set $LB = Z^j$. \\
\textbf{2. Subproblem.} Solve \eqref{eqn:mean_support_inner_max_MILP} with fixed $(x^j,y^j,z^j,v^j,u^j,s^j,\rho^j)$.
\begin{enumerate}[leftmargin=12mm,topsep=0mm,itemsep=0mm]
    \item [\textbf{2.1}] Record the optimal solution $b^*$ and value $Y^j$.  Set $UB = \min\big\{UB, (Z^j - \delta^j)+Y^j\big\}$.
    \item [\textbf{2.2}] If $(UB-LB)/UB<\varepsilon$ or $\delta^j\geq Y^j$, then terminate
    ; else, go to step 3.
\end{enumerate}
\textbf{3. Column-and-constraint generation routine.}
\begin{enumerate}[leftmargin=12mm,topsep=0mm,itemsep=0mm]
    \item [\textbf{3.1}] Using $b^*$ from step 2, compute $d^j_i=\dlb_i + b^*_i(\dub_i-\dlb_i)$ for all $i\in I$.
    \item [\textbf{3.2}] Add variables $(o_r^j,o_a^j,w_i^j,g_a^j,g_r^j)$ and the following constraints to the master problem:
    $$\delta \geq \sum_{a\in A} \Big(\cg_a g^j_a + \co_a o^j_a \Big) +\sum_{r\in R} \Big(\cg_r g^j_r + \co_r o^j_r \Big) + \sum_{i\in I} \cw_i w^j_i - \sum_{i\in I} \rho_i d^j_i,\quad \big\{\eqref{eqn:2nd_stage_con1}-\eqref{eqn:2nd_stage_con8} \,\,\, \text{with } d=d^j\big\}.$$
    Update $j \leftarrow j+1$ and $\calK \leftarrow \calK\cup\{j\}$. Go back to step 1.
\end{enumerate}
\BlankLine
\caption{A column-and-constraint-generation method for the \droe{} model} \label{algo:constraint_generation_mean_support}
\end{algorithm}\DecMargin{1em}

\color{black}
\begin{remark}
As suggested by a reviewer of this paper, in \ref{appdx:LDR_DRO}, we derive approximations of our proposed DRO models using the classical linear decision rules (LDR) approach (see \citealp{Georghiou_et_al:2019} for a recent survey). These approximations are not directly solvable. However, we derive equivalent MILP reformulations of these approximations and show how their sizes could grow significantly with the number of surgeries, ORs, and anesthesiologists. Computational results in \ref{appdx:LDR_DRO} demonstrate how these LDR approximations provide poor-quality solutions to the ORASP and are computationally intractable for large instances. In contrast, using our proposed models and the C\&CG method, we can solve all practical ORASP instances within a reasonable time (see Section~\ref{subsec:expt_comp_time}).
\end{remark}
\color{black}

\subsection{Valid Inequalities for the \droe{} and \drocvar{} Models} \label{subsec:VI_DRO}
Inequalities \eqref{eqn:idle_nonneg} are also valid in the master problem~\eqref{eqn:mean_support_master} for each scenario $d^k$ with $k\in\calK$.  Here, we present three more sets of valid inequalities for the master problem. First, observe that the Dirac measure on $m$ lies in $\calP(m,\calS)$. Therefore, we have
\begin{equation} \label{eqn:global_LB}
 \sup_{\Prob\in\calP(m,\calS)} \varrho_\Prob\big(Q(x,y,z,v,u,s,D)\big)  \geq Q(x,y,z,v,u,s,m) =: L,
\end{equation}
where the lower bound is a deterministic problem with a single scenario $d=m$. This means we can impose the constraint $\sum_{i\in I} m_i\rho_i + \delta \geq L$ and $[(1-\gamma)^{-1}-1]\rho_0+(1-\gamma)^{-1}\sum_{i\in I} m_i\rho_i + \delta \geq L$, which respectively serves as a global lower bound on the objective of the \droe{} and \drocvar{} models. Second, from \eqref{eqn:global_LB}, for any given first-stage decision, the recourse value with $d=m$ provides a lower bound. Therefore, in the initialization step of the C\&CG method, we could include the scenario $d=m$ (see \ref{appdx:VI_recourse_mean_LB} for details).  Third, the dual variable $\rho$ is unrestricted in both the \droe{} and \drocvar{} models; see \eqref{eqn:mean_support_final} and \eqref{eqn:mean_support_CVaR_final}.  We derive valid inequalities that provide a lower and upper bound on $\rho$ in Proposition \ref{prop:VI_dual_bounds} (see \ref{appdx:pf_VI_dual_bounds} for a proof). 

\begin{proposition} \label{prop:VI_dual_bounds}
The following bounds are valid lower and upper bounds on $\rho_i$ for all $I \in [I]$: 
\begin{equation} \label{eqn:VI_dual_bounds}
-\sum_{a\in A_i} \cg_a g_a x_{i,a} - \sum_{r\in R_i} \cg_r z_{i,r} \leq \rho_i \leq  \min\Bigg\{ \sum_{i'\in I,\,i'\ne i} u_{i,i'},\, 2 \Bigg\} \lambdaub + \sum_{a\in A_i}\co_a x_{i,a} + \sum_{r\in R_i} \co_r z_{i,r}
\end{equation}
\end{proposition}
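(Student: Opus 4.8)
The plan is to show that, for every fixed first-stage solution $(x,y,z,v,u,s)$ feasible for \eqref{eqn:1st_stage_con1-2}--\eqref{eqn:1st_stage_con20-21}, the inner minimization over $\rho$ in \eqref{eqn:mean_support_WC_exp_obj} (equivalently the $\rho$-block of \eqref{eqn:mean_support_final}, and of \eqref{eqn:mean_support_CVaR_final} since its inner maximization over $d$ is identical) has an optimal $\rho^*$ obeying \eqref{eqn:VI_dual_bounds}; imposing \eqref{eqn:VI_dual_bounds} then leaves the optimal value of the \droe{} and \drocvar{} models unchanged, so the inequalities are valid. I would carry this out inside the MILP $H(x,y,z,v,u,s,\rho)$ of Proposition~\ref{prop:mean_support_inner_max_MILP}. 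Fix an optimal $\rho^*$ and an optimal $(\lambda,\mu,\theta,\zeta,b)$ of $H(x,y,z,v,u,s,\rho^*)$; since the upper bounds $\lambdaub_{i,i'}$ are valid by Proposition~\ref{prop:dual_var_UB}, the McCormick constraints \eqref{eqn:mean_support_inner_max_MILP_con4}--\eqref{eqn:mean_support_inner_max_MILP_con6} exactly linearize the products $b_i\lambda_{i,i'}$, $b_i\mu_{i,a}$, $b_i\theta_{i,r}$, so at the optimum $\zeta^L_{i,i'}=b_i\lambda_{i,i'}$, $\zeta^M_{i,a}=b_i\mu_{i,a}$, $\zeta^T_{i,r}=b_i\theta_{i,r}$.

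Substituting these identities, the $\Delta d_i$-block of \eqref{eqn:mean_support_inner_max_MILP_obj} collapses to $\Delta d_i\,b_i\,C_i$, where $C_i:=\sum_{i'\ne i}\lambda_{i,i'}+\sum_{a\in A_i}\mu_{i,a}+\sum_{r\in R_i}\theta_{i,r}-\sum_{a\in A_i}\cg_a\hreg_a x_{i,a}-\sum_{r\in R_i}\cg_r z_{i,r}-\rho^*_i$. Apart from its integrality, $b_i$ appears in \eqref{eqn:mean_support_inner_max_MILP} only through this block and through \eqref{eqn:mean_support_inner_max_MILP_con4}--\eqref{eqn:mean_support_inner_max_MILP_con6}, so toggling $b_i$ and re-setting the corresponding $\zeta$'s keeps the solution feasible and changes the objective by exactly $\pm\Delta d_i\,C_i$; hence (assuming $\Delta d_i>0$) optimality of $b_i$ forces $C_i\ge 0$ when $b_i=1$ and $C_i\le 0$ when $b_i=0$. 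The case $b_i=0$ gives $\rho^*_i\ge\sum_{i'\ne i}\lambda_{i,i'}+\sum_{a\in A_i}\mu_{i,a}+\sum_{r\in R_i}\theta_{i,r}-\sum_{a\in A_i}\cg_a\hreg_a x_{i,a}-\sum_{r\in R_i}\cg_r z_{i,r}$, and discarding the nonnegative $\lambda_{i,i'},\mu_{i,a},\theta_{i,r}$ yields the lower bound in \eqref{eqn:VI_dual_bounds}. The case $b_i=1$ gives $\rho^*_i\le\sum_{i'\ne i}\lambda_{i,i'}+\bigl(\sum_{a\in A_i}\mu_{i,a}-\sum_{a\in A_i}\cg_a\hreg_a x_{i,a}\bigr)+\bigl(\sum_{r\in R_i}\theta_{i,r}-\sum_{r\in R_i}\cg_r z_{i,r}\bigr)$, which I bound from above next. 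When $\Delta d_i=0$ or, more generally, $m_i\in\{\dlb_i,\dub_i\}$ (so $d_i$ is $\Prob$-a.s.\ constant under every $\Prob\in\calP(m,\calS)$), a short perturbation argument shows $\rho^*_i$ can be taken inside the interval defined by \eqref{eqn:VI_dual_bounds} without changing the optimal value; these coordinates are handled separately.

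To bound the $b_i=1$ expression: the big-$M$ terms $-M_\textup{anes}\mu_{i,a}(1-x_{i,a}+y_a)$ and $-M_\textup{room}\theta_{i,r}(1-z_{i,r})$ in \eqref{eqn:mean_support_inner_max_MILP_obj}, with the big-$M$ values of \ref{appdx:big_M}, force $\mu_{i,a}=0$ unless $x_{i,a}=1,y_a=0$, and $\theta_{i,r}=0$ unless $z_{i,r}=1$; writing $\hat a,\hat r$ for the anesthesiologist and OR assigned to surgery $i$, a brief case check on $(\hreg_{\hat a},y_{\hat a})$ using \eqref{eqn:mean_support_inner_max_MILP_con1} gives $\sum_{a\in A_i}\mu_{i,a}-\sum_{a\in A_i}\cg_a\hreg_a x_{i,a}=\mu_{i,\hat a}-\cg_{\hat a}\hreg_{\hat a}\le\co_{\hat a}=\sum_{a\in A_i}\co_a x_{i,a}$, and likewise $\sum_{r\in R_i}\theta_{i,r}-\sum_{r\in R_i}\cg_r z_{i,r}\le\sum_{r\in R_i}\co_r z_{i,r}$. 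Finally $-M_\textup{seq}\lambda_{i,i'}(1-u_{i,i'})$ forces $\lambda_{i,i'}=0$ unless $u_{i,i'}=1$, and $\lambda_{i,i'}\le\lambdaub$ by Proposition~\ref{prop:dual_var_UB}, so $\sum_{i'\ne i}\lambda_{i,i'}\le(\sum_{i'\ne i}u_{i,i'})\lambdaub$; combining with the separate estimate $\sum_{i'\ne i}\lambda_{i,i'}\le 2\lambdaub$ gives $\sum_{i'\ne i}\lambda_{i,i'}\le\min\{\sum_{i'\ne i}u_{i,i'},\,2\}\lambdaub$, and summing the three pieces is the upper bound in \eqref{eqn:VI_dual_bounds}.

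The vertex sign analysis and the term-by-term bookkeeping are routine; I expect the one genuinely delicate step to be the estimate $\sum_{i'\ne i}\lambda_{i,i'}\le 2\lambdaub$ — i.e., that a single surgery's precedence duals can be charged to at most two constraints of type \eqref{eqn:2nd_stage_con1}, intuitively to the immediate successor of $i$ in its assigned OR and to its immediate successor on its assigned anesthesiologist, all other such constraints being slack at an optimal second-stage schedule. Making this precise needs a complementary-slackness argument exploiting the assignment/precedence structure of \eqref{eqn:1st_stage_con12-13}--\eqref{eqn:1st_stage_con19} and the transitivity constraints \eqref{eqn:1st_stage_con10}--\eqref{eqn:1st_stage_con11}, together with an appropriate choice of optimal second-stage dual; this is the crux. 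The perturbation argument for the degenerate coordinates $m_i\in\{\dlb_i,\dub_i\}$ is the only remaining point of care.
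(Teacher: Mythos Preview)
Your overall strategy matches the paper's --- both work with the coefficient $C_i$ of $d_i$ in the inner maximization and bound $\rho_i$ through bounds on that coefficient --- but there is a genuine gap in the logic.

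Your vertex--sign analysis uses only optimality of $b_i$ in the \emph{inner} maximization $H(\cdot,\rho^*)$: if $b_i=1$ then $C_i\ge 0$, yielding the upper bound; if $b_i=0$ then $C_i\le 0$, yielding the lower bound. But for a fixed $\rho^*$ the inner-optimal $b_i$ takes exactly one value, so your argument delivers only \emph{one} of the two bounds for each $i$. Concretely, nothing you have written rules out $b_i=1$ together with $\rho^*_i<\rholb_i$; in fact $\rho^*_i<\rholb_i$ forces $C_i>0$ and hence $b_i=1$, whereupon your case split returns the upper bound and is silent about the (violated) lower bound. The missing ingredient is precisely the perturbation step you reserve for the degenerate coordinates: one must also invoke optimality of $\rho^*$ in the \emph{outer} minimization. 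The paper does this for \emph{every} coordinate: if $\rho^*_i<\rholb_i$ then $C_i>0$, so $d^*_i=\dub_i$; replacing $\rho^*_i$ by $\rho^*_i+\epsilon$ with $\epsilon\in(0,\rholb_i-\rho^*_i]$ keeps the coefficient nonnegative (so $d^*_i=\dub_i$ remains inner-optimal), and the outer objective changes by $\epsilon(m_i-\dub_i)\le 0$. The upper bound is handled symmetrically. Your term-by-term estimates on $\mu,\theta,\lambda$ are correct and line up with the paper's, but the perturbation is needed in the generic case, not only when $m_i\in\{\dlb_i,\dub_i\}$.

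On the step you flag as the crux, $\sum_{i'\ne i}\lambda_{i,i'}\le 2\lambdaub$: the paper's argument is short and matches your intuition. By complementary slackness for the second-stage LP (Lemma~\ref{lem:2nd_stage_complementarity}), $\lambda_{i,i'}\ne 0$ only when constraint \eqref{eqn:2nd_stage_con1} is tight, i.e., surgery $i'$ starts exactly at $q_i+d_i$. Since $i$ uses exactly two resources (one anesthesiologist, one OR), at most two surgeries immediately follow $i$, so at most two of the $\lambda_{i,\cdot}$ are nonzero; each is bounded by $\lambdaub$ from Proposition~\ref{prop:dual_var_UB}. The argument thus comes directly from the primal/dual structure of \eqref{eqn:2nd_stage}, not from the first-stage precedence constraints \eqref{eqn:1st_stage_con12-13}--\eqref{eqn:1st_stage_con19}.
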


Although including \eqref{eqn:VI_dual_bounds} reduces the search space for $\rho$, in our preliminary experiments, we found that this might worsen the computational performance in some cases. This may follow from the increased model complexity by \eqref{eqn:VI_dual_bounds} due to the presence of first-stage variables.  Therefore, in our experiments, we include \eqref{eqn:global_LB} along with the following variable-free version of~\eqref{eqn:VI_dual_bounds}:
\begin{equation} \label{eqn:VI_dual_bounds_const}
 -\max_{a\in A}\cg_a - \max_{r\in R}\cg_r\leq\rho_i\leq 2\lambdaub+\max_{a\in A}\co_a + \max_{r\in R}\co_r.
\end{equation} 
In \ref{appdx:expt_DRO_VI}, we provide results comparing solution times for solving the \droe{} model using either the variable-free version \eqref{eqn:VI_dual_bounds_const} or the variable-dependent version \eqref{eqn:VI_dual_bounds}. We observe that solution times under the variable-free version are generally similar to or shorter than those under the variable-dependent version. In particular, solution times under the variable-free version are significantly shorter for large instances of the ORASP.

\subsection{Separability of the Models} \label{subsec:separability}

\color{black}
In this section, we show how the recourse problem of the ORASP can be decomposed into smaller problems under the following scheduling policies that are well-known and widely employed in practice. First, recall that some types of surgeries require specialized anesthesiologists and that each anesthesiologist might have a different combination of specializations. Thus, the assignment of an anesthesiologist to a surgery must respect the specialty required for the corresponding surgery type.  This policy is employed in all health systems in the US.

Second, many hospitals (including our collaborating hospital) employ the dedicated OR (or dedicated block) scheduling policy to construct their master schedule, which specifies the assignment of ORs to one or few surgical specialties/types. Moreover, it is common that each OR is dedicated to only one surgical specialty, and there can be multiple blocks for the same specialty within a cycle (e.g., a month) of the OR schedule \citep{Aringhieri_et_al:2015, Breuer_et_al:2020, Makboul_et_al:2022, Schneider_et_al:2020}. This is partly because most surgeries have long surgery durations, such as cardiac surgery, neurosurgery, and orthopedic surgery (see Figure~1), each requiring special surgical equipment and setups. Thus, performing one surgery of these types already occupies a large portion of the OR service hours. Therefore, dedicating the same OR for multiple surgery types could be inefficient. Various studies have also shown how the dedicated block scheduling policy could improve efficiency, reduce planning complexity, and promote coordination among surgical resources (see, e.g., \citealp{Mazloumian_et_al:2022, MHallah_Visintin:2019, Penn_et_al:2017, Zhu_et_al:2019}, and references therein).

Under these two policies, one can decompose the recourse problem into smaller subproblems based on surgery types with a shared pool of ORs and/or anesthesiologists. Let us first illustrate the idea using the example presented in Figure~\ref{fig:eg_separability}. In this example, there are six surgery types $T=\{1,2,\ldots, 6\}$, six sets of surgeries (each consisting of surgeries of the same type), seven sets of ORs, and five sets of anesthesiologists. Each set of ORs consists of all ORs dedicated to a subset of surgery types. For example, $R_{23}$ consists of all ORs to which type 2 and type 3 surgeries can be assigned, and $R_2$ consists of all ORs dedicated to type 2 surgeries. Similarly, each set of anesthesiologists consists of all anesthesiologists with the same specialty (i.e., each covering the same set of surgery type(s)). For example, $A_{45}$ consists of all anesthesiologists that can perform type 4 and type 5 surgeries. 
 \begin{figure}[t]\OneAndAHalfSpacedXI
    \centering
    \includegraphics[scale=0.8]{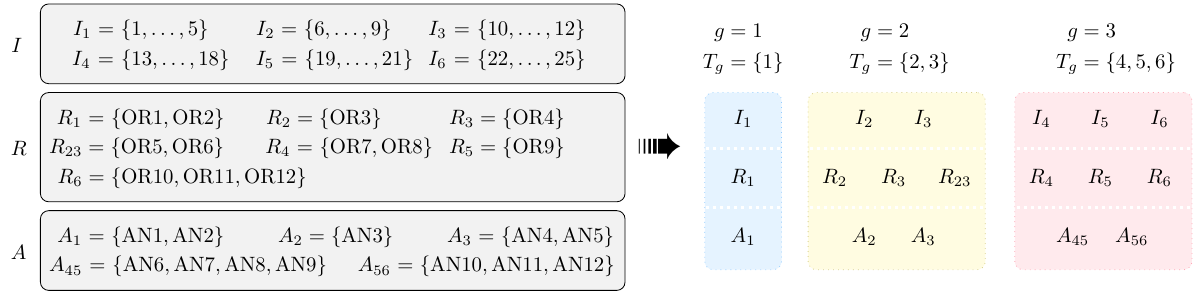}
    \caption{An illustration of the partition of an ORASP instance characterized by sets  $(I,R,A)$ and a set of six surgery types $T=\{1,\dots,6\}$. The left panel shows the sets $(I,R,A)$. The right panel shows the partition $\{(I^g,R^g,A^g)\}_{g=1}^3$ of $(I,R,A)$ based on the partition $\{T_g\}_{g=1}^3$ of $T$.}
    \label{fig:eg_separability}
\end{figure}

Note that type 1 surgeries have dedicated ORs ($R_1$) and require specialized anesthesiologists ($A_1$). In other words, they do not share resources with other surgery types. Types 2 and 3 surgeries have dedicated ORs ($R_2$ and $R_3$, respectively) and require specialized anesthesiologists ($A_2$ and $A_3$, respectively). Also, they can be scheduled in any OR in $R_{23}$ (i.e., they share $R_{23}$). Finally, types 4, 5, and 6 surgeries have dedicated ORs ($R_4$, $R_5$, and $R_6$, respectively). However, while type~4 and type 6 surgeries can only be performed by anesthesiologists in $A_{45}$ and $A_{56}$, respectively, type 5 surgeries can be performed by those in $A_{45}\cup A_{56}$ (i.e., type 5 shares anesthesiologists with types 4 and 6). Accordingly, we partition the set of surgery types $T=\{1,2,\dots,6\}$ into $T_1=\{1\}$, $T_2=\{2,3\}$, and $T_3=\{4,5,6\}$. Each element $T_g$ of $\{T_g\}_{g \in G}$ ($G=\{1,2,3\}$) consists of a unique subset of surgery types that share a subset of ORs and/or a subset of anesthesiologists. Given $\{T_g\}_{g\in G}$, we construct the following partition $\{(I^g,R^g,A^g)\}_{g\in G}$ of $(I,R,A)$: for $g=1$, we have $(I^g,R^g,A^g)=(I_1,R_1,A_1)$; for $g=2$, we have $(I^g,R^g,A^g)=(I_2\cup I_3, R_2\cup R_3\cup R_{23}, A_2\cup A_3)$; for $g=3$, we have $(I^g,R^g,A^g)=(I_4\cup I_5\cup I_6, R_4\cup R_5\cup R_6, A_{45}\cup A_{56})$. Then, we can decompose the recourse problem into three subproblems $Q^g(x^g,y^g,z^g,v^g,u^g,s^g,d^g)$ characterized by $(I^g,R^g,A^g)$ for $g\in\{1,2,3\}$.


For general ORASP instances, one can implement the following recipe to decompose the recourse problem into smaller subproblems. First, one can construct the partition $\{T_g\}_{g \in G}$ of $T$ in the following manner. A subset $T_g$ consists of a single surgery type if this type has dedicated  ORs and anesthesiologists (i.e., does not share any OR and anesthesiologist with other types). On the other hand, type $t \in T$ belongs to a subset $T_g$ with $|T_g|>1$ if (i) there is a subset of ORs to which surgeries of this type and those of any other type $t' \in T_g$ can be assigned and/or (ii) there is a subset of anesthesiologists that could perform type $t$ and any other type $t' \in T_g$ surgeries. Second, given the partition $\{T_g\}_{g\in G}$, one can construct a partition  $\{(I^g,R^g,A^g)\}_{g\in G}$ of $(I,R,A)$, where $I^g=\bigcup_{t \in T_g} I_t$ (here, $I_t$ is the set of type $t$ surgeries), $R^g=\{r\in R\mid \kappa^R_{i,r}=1 \text{ for some }i\in I^g\}$, and $A^g=\{a\in A\mid \kappa^A_{i,a}=1 \text{ for some }i\in I^g\}$. (We recall that $\kappa^R_{i,r}=1$ and $\kappa^A_{i,a}=1$ if surgery $i$ can be performed in OR $r$ and by anesthesiologist $a$, respectively.) Finally, we can decompose the recourse problem as
$$Q(x,y,z,v,u,s,d)=\sum_{g\in G} Q^g(x^g,y^g,z^g,v^g,u^g,s^g,d^g),$$
where each $Q^g(x^g,y^g,z^g,v^g,u^g,s^g,d^g)$ is characterized by $(I^g,R^g,A^g)$.

\color{black}
We can leverage this decomposable structure when solving the \spe{}, \droe{}, and \drocvar{} models; see \ref{appdx:separability} for discussions on the separability of the \droe{} and \drocvar{} models. In contrast, the \spcvar{} model does not admit such a decomposition due to the subadditivity of $\Prob\mhyphen\CVaR_\gamma(\cdot)$. However, our experimental results show that the difference in out-of-sample costs between the \spcvar{} model with and without decomposition is very small (less than $3\%$ in most cases). This indicates that the \spcvar{} model with decomposition could produce near-optimal performance. Hence, we adopt the decomposition approach when solving large instances using the \spcvar{} model.


\color{black}

\section{Symmetry-Breaking Constraints} \label{sec:symm}

Symmetry has long been recognized as a curse for solving (mixed) integer programming problems, such as assignment and sequencing problems. It allows the existence of multiple equivalent solutions and hence identical subproblems, leading to a wasteful duplication of computational effort in algorithms such as branch-and-bound and branch-and-cut. The first-stage problem of the ORASP possesses a great deal of symmetry. Although breaking symmetry is standard in (healthcare) scheduling problems to avoid exploring equivalent solutions, previous studies did not address the issue of symmetry in the ORASP. In this section, we discuss practical situations leading to symmetry in the ORASP and present strategies to break these symmetries. In \ref{appdx:SBC_fixing}, we discuss additional variable-fixing constraints.


\subsection{Operating Rooms Opening Order and Load} \label{subsec:SBC_OR_index}

In practice, each surgery type typically requires specific surgical equipment and setups (see, e.g., \citealp{Diamant_et_al:2018, Deshpande_et_al:2023, Pessoa_et_al:2015}). Thus, it is reasonable that identical ORs, i.e., ORs dedicated to the same set of types, have the same fixed opening cost, overtime cost, and idle time cost. Such a practical setup of these cost parameters is also widely adopted in the OR scheduling literature; see, e.g., numerical experiments of \cite{Breuer_et_al:2020, Denton_et_al:2010, Guo_et_al:2014, Jung_et_al:2019}.  

Swapping the sets of surgeries assigned to any pair of identical ORs with the same fixed, overtime, and idle time costs produces equivalent solutions. To illustrate, we provide an example in Figure~\ref{fig:eg_SBC_OR}. In this example, there are two surgery types and three sets of identical ORs. The set $R^1=\{\text{OR1},\text{OR2}\}$ has two identical ORs dedicated to type 1, the set  $R^2=\{\text{OR3}\}$ has one OR dedicated to type 2, and the set $R^3=\{\text{OR4},\text{OR5}\}$ has two identical ORs where both types 1 and 2 surgeries can be scheduled. Solutions~1 and 2 in Figure~\ref{fig:eg_SBC_OR} are equivalent since the number of ORs opened from each set of identical ORs is the same (i.e., one OR from $R^1$, one OR from $R^2$, and two ORs from $R^3$ are opened), and only sets of surgeries assigned to identical ORs are swapped (i.e., schedules of OR1 and OR2, as well as schedules of OR4 and OR5, are swapped). To prevent exploring such equivalent solutions, one can assume that identical ORs are numbered sequentially and enforce that the OR load (i.e., the number of scheduled surgeries) is non-increasing with the OR index. Mathematically, let $\{R^e\}_{e\in E}$ be the collection of sets of identical ORs, where each $R^e=\{r_{1,e},\dots,r_{|R^e|,e}\}$ is a set of identical ORs. Using this notation, we introduce constraints: 
\begin{subequations} \label{eqn:SBC_OR_index} 
    \begin{align}
         & v_{r_{k-1},e} \geq v_{r_k,e},\quad \forall e\in E,\, k\in[2,|R^{e}|],  \label{eqn:SBC_OR_index_2a}   \\
         & \sum\limits_{i\in I} z_{i,r_{k-1,e}} \geq \sum\limits_{i\in I} z_{i,r_{k,e}}, \quad \forall e\in E,\, k\in[2,|R^{e}|]. \label{eqn:SBC_OR_index_2b}
    \end{align}
\end{subequations}
Constraints~\eqref{eqn:SBC_OR_index_2a} enforce that identical ORs are open in ascending order of their indices.  Constraints~\eqref{eqn:SBC_OR_index_2b} enforce that an OR with a smaller index has a larger number of scheduled surgeries (i.e., larger load). Note that if identical ORs have different fixed, overtime, or idle time costs, one could group ORs with the same costs and apply the proposed symmetry-breaking constraints to each of these smaller groups of ORs.

We derived constraints~\eqref{eqn:SBC_OR_index_2a} based on similar principles presented in prior OR scheduling studies to avoid arbitrary opening of ORs. In contrast to these studies (see, e.g., \citealp{Denton_et_al:2010, HashemiDoulabi_Khalilpourazari:2022, Roshanaei_et_al:2017}), we do not adopt the strong assumption that all ORs are identical, i.e., $|E|=1$. Hence, constraints~\eqref{eqn:SBC_OR_index_2a} generalize existing constraints for breaking the symmetry in OR opening order. Constraints~\eqref{eqn:SBC_OR_index_2b} are derived based on similar symmetry-breaking principles as those outlined in studies on the bin-packing (BP) problem with identical bins, which force the load of bin $j$ to be greater than or equal to bin $j+1$. A key difference between our constraints and those derived for the BP problem is that the load of an OR in the ORASP is the number of surgeries scheduled in this OR, while a bin load in the BP problem equals the sum of the weights of items assigned to the bin.
\begin{figure}[t]\OneAndAHalfSpacedXI
    \centering
    \includegraphics[scale=0.76]{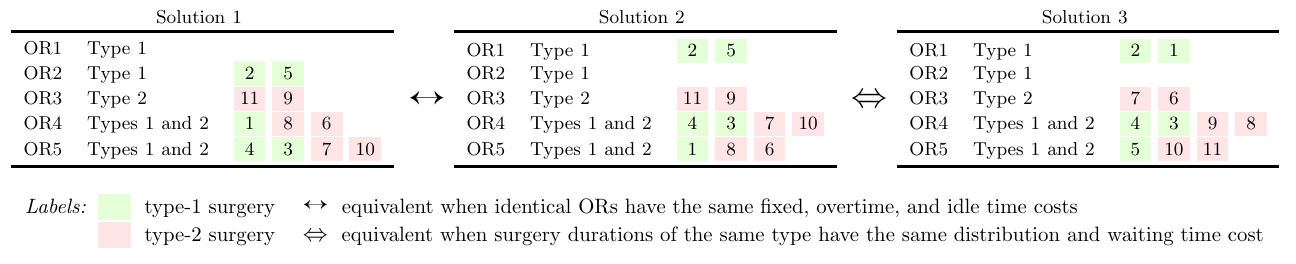}
    \caption{Examples of equivalent solutions due to symmetries in OR opening order and load (solutions~1 and 2), and  surgery-to-OR assignments (solutions~2 and 3).}
    \label{fig:eg_SBC_OR}
\end{figure}

\subsection{Surgery-to-OR Assignments}  \label{subsec:SBC_surgery_OR_assg}

Each surgery type has a clinically acceptable range for its duration that hospitals use as a reference to schedule surgeries of that type. Also, it is common to schedule a surgery using the average of the historical realizations of the duration of the corresponding surgery type. Therefore, many studies have modeled surgery duration distribution by surgery type and assumed a common distribution for durations of surgeries of the same type  (see, e.g., \citealp{Guo_et_al:2014, Kayis_et_al:2015, Pessoa_et_al:2015, Shehadeh_et_al:2019, Stepaniak_et_al:2009, Wang_et_al:2023}). In addition, existing literature on OR scheduling typically uses the same waiting time cost for surgeries of the same type (see, e.g., \citealp{Freeman_et_al:2016, Shehadeh_et_al:2019, Tsai_et_al:2021}). Nevertheless, if surgeries of the same type have different distributions and waiting time costs, one could group those that follow the same distribution and have the same waiting cost as follows. Suppose there are $n$ subtypes $\{t_1,t_2,\ldots t_n\}$ of type $t$ surgeries such that surgery durations of the same sub-type have the same distribution and waiting time cost. Then, one could replace type $t$ by types $\{t_1,t_2,\ldots t_n\}$.


Recall the example described in Section~\ref{subsec:SBC_OR_index}. Since the surgeries of the same type have the same waiting cost and their durations follow the same distribution, surgery-to-OR assignments in solutions 2 and 3 in Figure~\ref{fig:eg_SBC_OR} result in the same objective value. This is because each OR in these two solutions has the same number of scheduled surgeries of each type, and only the assignments of some surgeries of the same type are swapped. For example, in solution 2, surgeries~1 and 5 of type~1 are assigned to OR1 and OR5, respectively, whereas in solution 3, surgery 5 is assigned to OR5 and surgery 1 to OR1. We can prevent exploring such equivalent assignments by enforcing surgeries of smaller indices to be assigned to ORs with smaller indices. To derive the desired symmetry-breaking constraints, we first construct a partition $\{T_\ell\}_{\ell \in L}$ of the set of types $T$, where (i) a subset $T_\ell$ has a single surgery type if this type has dedicated ORs and (ii) type $t \in T$ belongs to a subset $T_\ell$ with $|T_\ell|>1$ if there is a subset of ORs to which surgeries of this type and those of any other type $t' \in T_\ell$ can be assigned.  We then define the partition $\{(I^\ell,R^\ell)\}_{\ell\in L}$ of $(I,R)$, where $I^\ell=\bigcup_{t\in T_\ell} I_t$ and $R^\ell=\{r\in R\mid \kappa^R_{i,r}=1 \text{ for some }i\in I^\ell\}$. We assume that ORs in $R_\ell$ and surgeries in $I_t$ are numbered sequentially, i.e., $R^\ell=\{r_{1,\ell},\dots,r_{|R^\ell|,\ell}\}$  and $I_t=\{i_{1,t},\dots,i_{|I_t|,t}\}$. Using this notation, we introduce the following constraints to break the symmetry in surgery-to-OR assignments:
\begin{subequations} \label{eqn:SBC_surg_index_2} 
    \begin{align}
         &z_{i_{j,t}, r_{k,\ell}}\leq \sum\limits_{k'=1}^k z_{i_{j-1,t},r_{k',\ell}},\quad \forall \ell\in L,\, t\in T_\ell,\, j\in[2,|I_{t}|],\, k\in[2,|R^\ell|],  \label{eqn:SBC_surg_index_2a}  \\
         & z_{i_{j-1,t}, r_{k,\ell}}\leq \sum\limits_{k'=k}^{|R^\ell|} z_{i_{j,t},r_{k',\ell}}, \quad \forall \ell\in L,\, t\in T_\ell,\, j\in[2,|I_{t}|],\, k\in[2,|R^\ell|]. \label{eqn:SBC_surg_index_2b} 
    \end{align}
\end{subequations}
%
Constraints \eqref{eqn:SBC_surg_index_2a} ensure that if surgery $i_{j,t}$ is assigned to OR $r_{k,\ell}$, then surgery $i_{j-1,t}$ is assigned to an OR with index at most $k$, while constraints \eqref{eqn:SBC_surg_index_2b} ensure that surgery $i_{j+1,t}$ is assigned to an OR with index at least $k$. Note that although including either constraints \eqref{eqn:SBC_surg_index_2a} or \eqref{eqn:SBC_surg_index_2b} could break the symmetry, using both of them could tighten the LP relaxation of the proposed models for the ORASP (see \ref{appdx:SBC_surgery_assg}).  We derived constraints \eqref{eqn:SBC_surg_index_2} based on similar principles presented in prior OR scheduling studies. Different from these studies  (see, e.g., \citealp{Denton_et_al:2010, Roshanaei_et_al:2017, Wang_et_al:2023}), we do not assume that all ORs are identical, i.e., $|L|=1$. Hence, constraints \eqref{eqn:SBC_surg_index_2} generalize existing constraints for breaking symmetry in surgery-to-OR assignments.

Now consider the case where one or more sets in the partition $\{T_\ell\}_{\ell\in L}$ consists of a single surgery type, i.e., $|T_\ell|=1$.  Suppose, in addition, that identical ORs dedicated to the surgery type defining each $T_\ell$ with $|T_\ell|=1$ (i.e., ORs in $R^\ell$) have the same fixed, overtime, and idle time costs. In this case, we could replace constraints \eqref{eqn:SBC_surg_index_2a}--\eqref{eqn:SBC_surg_index_2b} for each $T_\ell$ with $|T_\ell|=1$ with the following constraints:
\begin{subequations} \label{eqn:SBC_surg_index_1} 
    \begin{align}
         &z_{i_{j,t}, r_{k,\ell}}\leq z_{i_{j-1,t},r_{k-1,\ell}}+z_{i_{j-1,t},r_{k,\ell}},\quad \forall \ell\in L,\, t\in T_\ell,\, j\in[2,|I_{t}|],\, k\in[2,|R^\ell|]:\, |T_\ell|=1,  \label{eqn:SBC_surg_index_1c}  \\
         &  z_{i_{j-1,t}, r_{k-1,\ell}}\leq \textstyle z_{i_{j,t},r_{k-1,\ell}}+ z_{i_{j,t},r_{k,\ell}}, \quad \forall \ell\in L,\, t\in T_\ell,\, j\in[2,|I_{t}|],\, k\in[2,|R^\ell|]:\, |T_\ell|=1. \label{eqn:SBC_surg_index_1d} 
    \end{align}
\end{subequations}
Constraints \eqref{eqn:SBC_surg_index_1c} ensure that if surgery $i_{j,t}$ is assigned to OR $r_k$, then surgery $i_{j-1,t}$ is assigned to OR with index $k$ or $k-1$ only (as opposed to OR with index at most $k$ in  \eqref{eqn:SBC_surg_index_2a}), and constraints \eqref{eqn:SBC_surg_index_1d} ensure that surgery $i_{j+1,t}$ is assigned to OR with index $k$ or $k+1$ only (as opposed to OR with index at least $k$ in  \eqref{eqn:SBC_surg_index_2b}). Applying constraints~\eqref{eqn:SBC_surg_index_1} instead of \eqref{eqn:SBC_surg_index_2} for each $T_\ell$ with a single surgery type (i.e., $|T_\ell|=1$) removes a larger number of equivalent solutions. To see this, consider the set $T_\ell=\{t\}$ and suppose that there are six surgeries of type $t$ and four ORs dedicated to this type $\{\text{OR1},\ldots,\text{OR4}\}$. If surgeries $1$ to $5$ are assigned to the first two ORs (i.e., OR1 and OR2), constraints~\eqref{eqn:SBC_surg_index_1} ensure that surgery $6$ will be assigned to OR$2$ or OR$3$, but not OR$4$. In contrast, constraints \eqref{eqn:SBC_surg_index_2} allow assigning surgery $6$ to OR2, OR3, or OR$4$. These assignments are equivalent.

\subsection{Surgery Sequence} \label{subsec:SBC_surgery_seq}

As discussed in the previous section,  it is common that surgeries of the same type have the same distribution of duration and waiting time cost. Thus, given a solution with a particular sequence of surgeries assigned to an OR, an equivalent solution can be obtained by swapping the order of any pair of surgeries of the same type in that OR.  Note that constraints~\eqref{eqn:SBC_surg_index_2} and \eqref{eqn:SBC_surg_index_1} do not prevent such symmetry in the surgery sequence. To illustrate, consider the two solutions in Figure~\ref{fig:eg_SBC_surg_seq}, which satisfy constraints~\eqref{eqn:SBC_surg_index_2}. These solutions are equivalent since only the order of some surgeries of the same type in OR2, OR4, and OR5 in solution 1 are swapped to produce solution 2. For example, in OR4, the order of surgeries 3 and 4 of type~1, as well as the order of surgeries 8 and 9 of type~2, are swapped. To avoid exploring such equivalent solutions, we impose the following constraints:
\begin{equation} \label{eqn:SBC_surg_seq_2}  
u_{i_{j-1,t},i_{j,t}} \geq z_{i_{j-1,t},r}+ z_{i_{j,t},r}-1, \quad \forall t\in T,\, j\in[2,|I_{t}|],\, r\in R,
\end{equation}
where $I_t=\{i_{1,t},\dots,i_{|I_t|,t}\}$ is the set of type $t$ surgeries. Constraints~\eqref{eqn:SBC_surg_seq_2} ensure that surgeries of the same type in the same OR are sequenced in ascending order of their indices. Constraints~\eqref{eqn:SBC_surg_seq_2} are generic and can be employed for any instance with ORs dedicated to multiple types.  Moreover, these constraints do not impose any restriction on the order of surgeries of the different types. The optimal sequence of surgeries in each OR is determined by solving the ORASP. Finally, note that we use general precedence variables $u_{i,i'}$ to model sequencing decisions in the ORASP.  Thus, we cannot adopt symmetry-breaking constraints proposed for formulations that employ other types of binary variables for the sequencing problem, such as sequence-position and time-index variables (see \citealp{Baker_Trietsch:2013} for a detailed discussion). Within the related literature that adopts general precedence variables to model sequencing decisions (see, e.g., \citealp{Celik_et_al:2023, Rath_et_al:2017}),  studies did not investigate the issue of symmetry in the surgery sequence in their formulations. 
\begin{figure}[t]\OneAndAHalfSpacedXI
    \centering
    \includegraphics[scale=0.76]{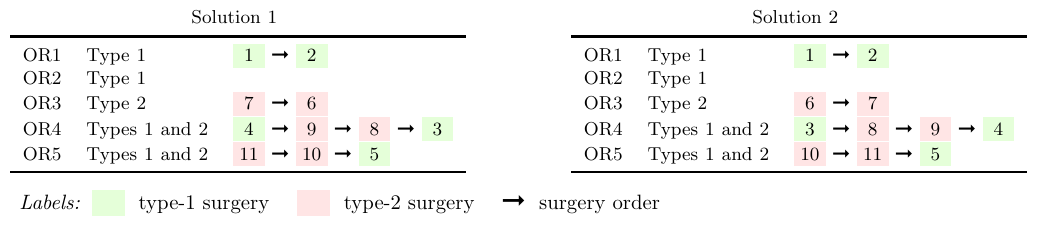}
    \caption{Examples of equivalent solutions due to symmetry in surgery sequence.}
    \label{fig:eg_SBC_surg_seq}
\end{figure}

\subsection{On-Call Anesthesiologist Employment Order} \label{subsec:SBC_oncall_anes}

Recall that specialized anesthesiologists are trained to perform particular types of surgery. Other types of anesthesiologists could have a different combination of specializations. Hence, it is common that identical on-call anesthesiologists (i.e., anesthesiologists with the same set of specializations) have the same fixed cost of being called in (see, e.g., \citealp{Rath_et_al:2017, Rath_Rajaram:2022}). This leads to symmetry in on-call anesthesiologist employment order. For example, suppose that there are three identical on-call anesthesiologists. Then, $y^1=(1,0,0)^\tp$, $y^2=(0,1,0)^\tp$, and $y^3=(0,0,1)^\tp$ are equivalent since only one of the three anesthesiologists is called. To prevent exploring such equivalent solutions, one can assume that identical on-call anesthesiologists are numbered sequentially and enforce that they are called in descending order of their indices. Mathematically, let $\{A^\text{call}_h\}_{h\in H}$ be the collection of sets of identical on-call anesthesiologists, where each $A^\text{call}_h=\{a'_{1,h},\dots,a'_{|A^\text{call}_h|,h}\}$ is a set of identical on-call anesthesiologists. Then, we introduce the constraints
\begin{equation} \label{eqn:SBC_oncall_anex_index} 
\textstyle y_{a'_{k-1,h}}\leq y_{a'_{k,h}}, \quad \forall h\in H,\, k\in[2,|A^\text{call}_h|]. 
\end{equation}
Constraints~\eqref{eqn:SBC_oncall_anex_index} ensure that identical on-call anesthesiologists are called in descending order of their indices.

\color{black}

\section{Numerical Experiments} \label{sec:numerical_experiments}

In this section, we use sets of publicly available surgery data to construct various ORASP instances and perform a case study from our collaborating health system. We conduct extensive computational experiments comparing the proposed methodologies computationally and operationally, demonstrating where significant performance improvements can be obtained and deriving insights relevant to practice. In Section \ref{subsec:expt_description}, we describe the set of ORASP instances constructed and discuss the experimental setup. In Section \ref{subsec:expt_comp_time}, we analyze solution times of the proposed models. We demonstrate the efficiency of the proposed valid inequalities and symmetry-breaking inequalities in Section \ref{subsec:expt_VI_SBC}. In Section \ref{subsec:expt_opt_sol}, we compare the optimal solutions of the proposed models. Then, we compare their operational performance via out-of-sample simulation tests in Sections \ref{subsec:expt_sol_quality} and \ref{subsec:expt_Rath}. Finally, in Section \ref{subsec:expt_real_data}, we present a case study and derive managerial insights.

\subsection{Test Instances and Experimental Setup} \label{subsec:expt_description}
We develop diverse ORASP instances based on prior literature and a publicly available dataset from \cite{Mannino_et_al:2010}. This dataset consists of three years of actual surgery durations for six different surgical specialties. Table \ref{table:instance_summary} summarizes the six ORASP instances we constructed based on this data. Each of these instances is characterized by the number of surgeries and their types, the number of ORs and their types, the number of anesthesiologists, and the master/block schedule. In \ref{appdx:instance_detail}, we provide summary statistics of the datasets and details of the master schedule. Note that instances 1--2 are relatively small, instances 3--4 are medium-sized, and instances 5--6 are large. In  \ref{appdx:num_results_add}, we provide additional computational results for another set of six ORASP instances constructed based on another set of publicly available surgery data.
\begin{table}[t]\centering\small
\footnotesize
\caption{Instance summary} \label{table:instance_summary}
\ra{0.4}  
\begin{tabular}{@{}l|cccccc@{}} \toprule
      & Instance 1 & Instance 2 & Instance 3 & Instance 4 & Instance 5 & Instance 6 \\ \midrule
Number of surgeries $|I|$ &   15 &    20 &    25 &    40 &    55 &    80 \\
Number of ORs $|R|$       &    7 &     8 &     8 &    11 &    20 &    32 \\
Number of anesthesiologists $|A|$ &    5 &     9 &    10 &    16 &    24 &    40 \\
\bottomrule
\end{tabular}
\end{table}

We obtain the parameters for each instance as follows. We estimate the mean $m_i$ and standard deviation $\sigma_i$ of the duration of each surgery type from \cite{Mannino_et_al:2010}. As in prior literature, we set the lower bound $\dlb_i$ and upper bound $\dub_i$ as the $20$th and $80$th percentiles of the data of that surgery type, respectively.  For the \spe{} and \spcvar{} models, we generate the in-sample scenarios using lognormal (logN) distributions with the estimated mean and variance clipped at $\dlb_i$ and $\dub_i$. The overtime costs per hour are set to $\co_r=450$ and $\co_a=150$ \citep{Rath_et_al:2017}. We set the OR fixed cost as $f_r=900$, which is equivalent to double the per-hour OR overtime cost \citep{Denton_et_al:2010}. The on-call anesthesiologist's fixed cost is set to $f_a=1000$ \citep{Rath_et_al:2017}. \blue{\cite{Gupta:2007} argued that $\cw_i=\cw$ is assumed in many practical situations because ``OR managers do not typically have data upon which to base choices of different values of these parameters for different types of surgeries.'' Indeed, prior studies use the same waiting time cost for all surgeries (see, e.g., numerical experiments in \citealp{Denton_Gupta:2003, Denton_et_al:2007, Freeman_et_al:2016, Khaniyev_et_al:2020, Shehadeh_et_al:2019, Tsai_et_al:2021}). Moreover, while there is no gold standard for choosing the per-unit waiting time cost parameter ($\cw$), studies commonly set this parameter such that it is smaller than the per-unit OR overtime cost ($\co_r$), with a ratio $\cw_i:\co_r$ ranging from $1:1.25$ to $1:3$. Based on these considerations, we set $\cw_i=200$ (with $\cw_i:\co_r$ ratio of $1:2.25$)}.   We consider 3 different cost structures for the idling costs per hour: cost 1 ($\cg_r=\cg_a=0$), cost 2 ($\cg_r=300$ and $\cg_a=0$), and cost 3 ($\cg_r=300$ and $\cg_a=100$). We maintain the ratio $\co/\cg$ as $1.5$ as suggested in the literature \citep{Shehadeh_et_al:2019}.

We solve the \spe{} and \spcvar{} models via sample average approximation (SAA) with $N$ scenarios, which replaces the true distribution by the empirical distribution from the data (see \ref{appdx:SP_SAA} complete models). We pick $\gamma=0.95$ for the \spcvar{} model. To decide the number of scenarios $N$, we employ the Monte Carlo optimization (MCO) procedure, which provides statistical lower and upper bounds on the optimal value of the ORASP based on the optimal solution to its SAA \citep{Kleywegt_et_al:2002,Lamiri_et_al:2009}. This in turn provides a statistical estimate of the approximated relative gap (see \ref{appdx:MCO} for a detailed description and corresponding results). Applying the MCO procedure with $N=100$ in the \spe{} model, the approximated relative gaps for the ORASP instances described in Table \ref{table:instance_summary} range from $0.06\%$ to $1.05\%$. Note that a larger $N$ could result in longer solution times without significant improvements in the approximated relative gaps. Therefore, we select $N=100$ for our computational experiments. 

We implemented our proposed models and algorithm in AMPL modeling language and use CPLEX (version 20.1.0.0) as the solver with default settings.  We set the relative MIP tolerance to $2\%$. We solve large DRO instances using an inexact version of our proposed C\&CG method by imposing a time limit of $600$s when solving the master problems \citep{Tsang_et_al:2023}. Unless stated otherwise, we include the proposed symmetry-breaking constraints in all models and the proposed VIs to both SP and DRO models. We conducted all the experiments on a computer with an Intel Xeon Silver processor 2.10 GHz CPU and 128 Gb memory. \vspace{-5mm}

\subsection{Computational Time} \label{subsec:expt_comp_time}

In this section, we analyze the computational times for solving our proposed models. For each instance and cost structure, we solve the \spe{} and \spcvar{} models with $20$ generated SAA instances, while we solve the \droe{} and \drocvar{} models using the lower and upper bounds of surgery durations with $10$ different means generated from a uniform distribution on $[0.9m_i,1.1m_i]$. Table~\ref{table:CPU_time} presents the average solution times in seconds under cost 1.  Throughout this section, for large instances marked with `$\dag$' for the \droe{} model, we apply VIs \eqref{eqn:global_LB} and \eqref{eqn:VI_dual_bounds} with initial scenario $m$ that produces shorter computational times. 

We first observe that solution time increases as the size of the ORASP instance increases.  Second, we can solve all instances using the \spe{}, \droe{}, and \drocvar{} models within a reasonable time.  In fact, we can solve medium-sized instances in less than three minutes while the solution times of large instances range from two minutes to an hour.   Third, solution times of the \droe{} model are slightly longer than the \spe{} model. This is reasonable as the master problem of large ORASP instances is a large-scale scenario-based MILP and its size increases with the number of C\&CG iterations.  Fourth, solution times of the \drocvar{} model, ranging from 1 second to 100 seconds, are significantly shorter than other models. One possible explanation is that the worst-case scenarios that maximize the CVaR objective are always those with long surgery durations. Thus, our C\&CG method quickly identifies these scenarios and terminates in a small number of iterations.  On the other hand, solution times of the \spcvar{} model are the longest among all models, and it cannot solve larger instances (i.e., instances 5 and 6). This is expected since the \spcvar{} model is not separable (see Section \ref{subsec:separability}). Moreover, solving SP problems with CVaR objectives is known to be challenging. Finally, we remark that solution times are generally longer under costs~2 and 3, where we also consider the idle time in the objective (see \ref{appdx:expt_comp_time} for detailed results). Nevertheless, the average solution times of the \spe{} and \droe{} models are within $2$ hours, and that of the \drocvar{} model is within $3$ hours. According to our clinical collaborators, these solution times are reasonable; i.e., our proposed models are tractable for practical purposes. \blue{In \ref{appdx:comp_time_mutliOR}, we present computational times for another set of ORASP instances where some ORs accommodate multiple surgery types. The solution times of such ORASP instances remain reasonable.}

%
\begin{table}[t]\centering\OneAndAHalfSpacedXI
\footnotesize   
\caption{Computational time (in s) with cost 1 (instance with ``$\dag$'': apply \eqref{eqn:global_LB} and \eqref{eqn:VI_dual_bounds} with initial scenario $m$; instance with ``--'': cannot be solved within $10$ hours)} \label{table:CPU_time}
\ra{0.7}  
\begin{tabular}{@{}l|llllllllllll@{}} \toprule
         & Instance 1 & Instance 2 & Instance 3 & Instance 4 & Instance 5 & Instance 6 \\ \midrule
\spe{}       & 1.53       & 9.41       & 3.90       & 21.01      & 109.25     & 1496.49    \\[0.5ex]
\spcvar{}     & 12.54      & 491.83     & 18.14      & 971.94     & --         & --        \\
\droe{}      & 7.20       & 28.21      & 14.14      & 102.95     & 152.11     & 2505.44$^\dag$    \\ [0.5ex]
\drocvar{} & 1.64       & 2.31       & 2.53       & 5.61       & 16.67      & 99.59      \\
\bottomrule
\end{tabular}
\end{table}



\blue{Finally, per a reviewer's suggestion, we also investigate the computational performance of an extension of \cite{Rath_et_al:2017}'s RO model that incorporates all the elements of the ORASP. In \ref{appdx:RO_ORASP},  we present the extended RO model for the ORASP and develop a decomposition algorithm to solve it based on the one presented by \cite{Rath_et_al:2017}. Computational results in \ref{appdx:RO_ORASP} demonstrate that the RO approach takes substantially longer time to solve even small instances of the ORASP. For example, we could not solve instance 2 using the RO decomposition algorithm within the imposed two-hour time limit, and the relative MIP gap at termination ranges from 36\% to 73\%. These results suggest that the proposed SP and DRO approaches for the ORASP are more computationally efficient than the RO approach, further emphasizing our contributions.}

\subsection{Efficiency of Valid Inequalities and Symmetry-Breaking Constraints} \label{subsec:expt_VI_SBC}

In this section, we demonstrate the efficiency  of the proposed valid inequalities (VIs) and symmetry-breaking constraints (SBCs). For brevity and illustrative purposes, we present results for the \spe{} and \droe{} models only.  First, we analyze the impact of including VIs \eqref{eqn:idle_nonneg} in the \spe{} model. For each ORASP instance, we solve $20$ generated SAA instances with (w/) and without (w/o) VIs \eqref{eqn:idle_nonneg}. Table~\ref{table:VI_SP} presents the average solution time w/ and w/o these VIs, and the average ratio of the optimal objective values of LP relaxations of the \spe{} model w/ VIs and w/o VIs. In general, solution times are longer on average w/o these VIs, and the differences in solution times are more significant for large instances. For example, the percentage increase in average solution time ranges from $45\%$ to $75\%$ for large ORASP instances. We attribute the difference in solution time to a weaker LP relaxation of the \spe{} model w/o these VIs. It is clear from Table~\ref{table:VI_SP} the LP relaxations w/ these VIs are strictly tighter, and they can be up to $3$ times the LP relaxations w/o these VIs. These results demonstrate the efficiency of VIs  \eqref{eqn:idle_nonneg} for the \spe{} model. 
\begin{table}[t]\centering\small \OneAndAHalfSpacedXI
\footnotesize
\caption{Average solution time (in s) of the \spe{} model with (w/) and without (w/o) VIs, and average ratio of optimal objective values of LP relaxations of the \spe{} model w/ VI to that w/o VIs}\label{table:VI_SP}
\ra{0.8}  
\begin{tabular}{@{}l|lllllll@{}} \toprule
                    & Instance 1 & Instance 2 & Instance 3 & Instance 4 & Instance 5 & Instance 6 \\ \midrule
Time (w/ VIs)        & 1.53       & 9.41       & 3.90       & 21.01      & 109.25     & 1496.49    \\
Time (w/o VIs)       & 1.57       & 9.48       & 5.47       & 22.85      & 159.67     & 2603.40    \\ \midrule
LP Relaxation Ratio & 1.12       & 1.30       & 1.07       & 1.59       & 2.08       & 2.93   \\
\bottomrule
\end{tabular}
\end{table}

Next, we analyze the impact of including the proposed VIs in Section \ref{subsec:VI_DRO} in the master problem of the C\&CG method for the \droe{} model. Table~\ref{table:VI_DRO} presents the solution time and number of iterations of our C\&CG method w/ and w/o these VIs. While solution times w/ and w/o VIs are similar for small to medium-sized instances, solution times w/ VIs are significantly shorter for large instances. This is because C\&CG w/o these VIs takes a considerably larger number of iterations to converge w/o VIs.  For example, the number of iterations w/o VIs is doubled for instance 6. In addition, we observe that the lower bound (LB) and optimality gap converges faster when we introduce our proposed VIs into the master problem. To illustrate this, we provide Figure~\ref{fig:VI_DRO}, which presents the LB and optimality gap w/ and w/o these VIs in one subproblem of instance 4. We observe that due to the better bounding effect from these VIs, both LB and the optimality gap converge in a smaller number of iterations when we introduce the proposed VIs. We also note that without these VIs, some large instances cannot be solved. For example, instance 6 under cost 2 cannot be solved within 3 hours without these VIs.
\begin{table}[t]\centering\small \OneAndAHalfSpacedXI
\footnotesize
\caption{Solution time (in s) of the \droe{} model with (w/) and without (w/o) VIs, and number of iterations w/ and w/o VIs (instance with $\dag$: apply \eqref{eqn:global_LB} and \eqref{eqn:VI_dual_bounds} with initial scenario $m$)}\label{table:VI_DRO}
\ra{0.8}  
\begin{tabular}{@{}l|llllll@{}} \toprule
                  & Instance 1 & Instance 2 & Instance 3 & Instance 4 & Instance 5 & Instance 6 \\ \midrule
Time (w/ VIs)      & 8.92       & 34.24      & 17.48      & 94.19      & 115.28     & 2501.97$^\dag$    \\
Time (w/o VIs)     & 7.94       & 22.99      & 21.25      & 93.64      & 179.84     & 5300.02    \\ \midrule
No. Iter (w/ VIs)  & 22         & 29         & 34         & 56         & 56         & 62$^\dag$         \\
No. Iter (w/o VIs) & 23         & 31         & 46         & 79         & 93         & 150      \\
\bottomrule
\end{tabular}
\end{table}
\begin{figure}  \OneAndAHalfSpacedXI
    \centering
    \includegraphics[scale=0.65]{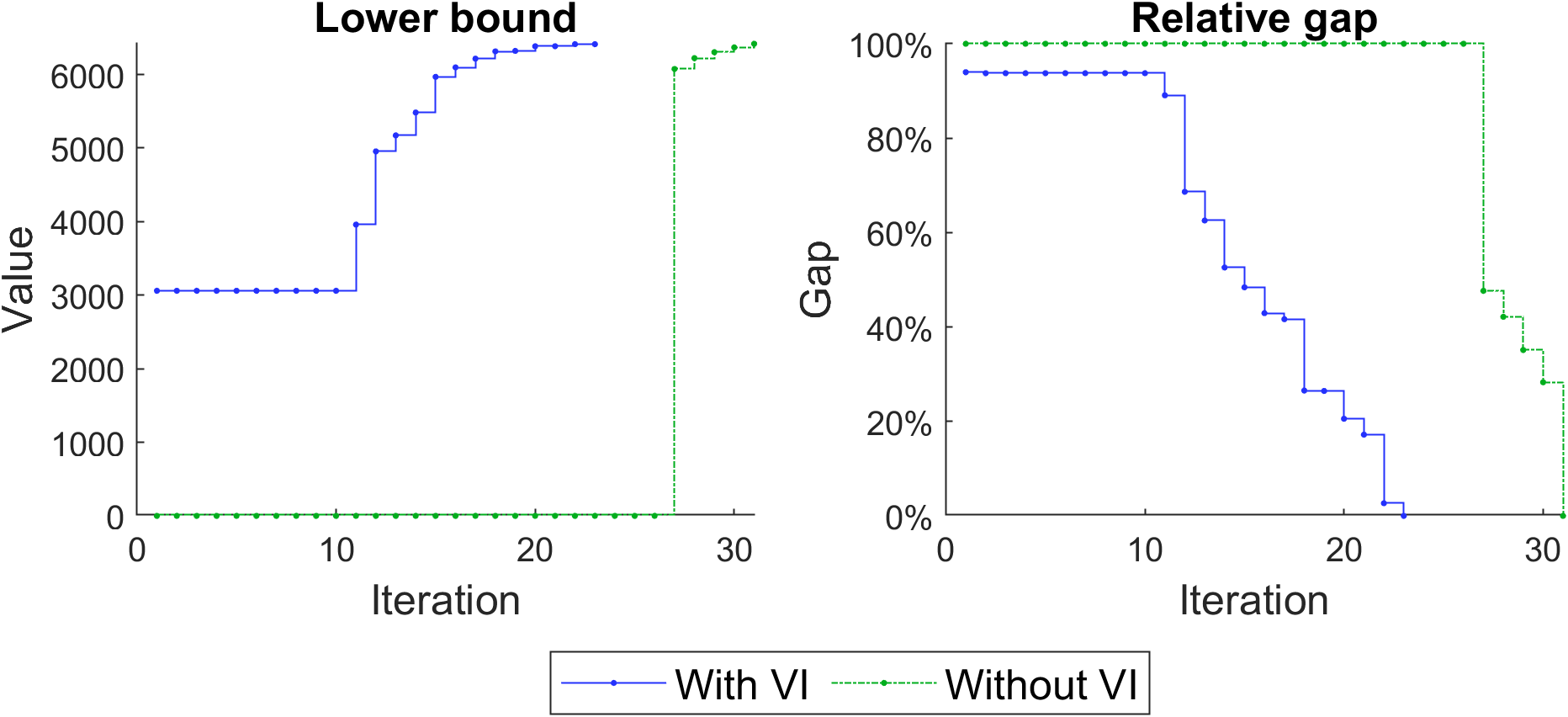}
    \caption{Lower bound and relative gap over iteration with and without VIs in the DRO-E model (Instance 4)} \label{fig:VI_DRO}
\end{figure}

Finally, we study the efficiency of the proposed SBCs. We only focus on the \spe{} model for brevity as the results are similar for the \droe{} model. Given the challenges of solving ORASP instances w/o these SBCs, we use instance 1 in this experiment for illustrative purposes. We first generate 20 sets of scenarios for this instance with number of scenarios $N \in \{10, 20, 50, 100, 200\}$. Then, we separately solve the \spe{} model w/ and w/o these SBCs. Figure \ref{fig:SBC_SP} illustrates the solution time for different $N$, where solid and dashed lines represent the average solution times w/ and w/o SBCs, respectively. The shaded regions are the corresponding $20$th and $80$th percentiles. We observe that solution times are significantly longer without SBCs. Specifically, using our SBCs, we can solve the generated instances 5 to 145 times faster. Indeed, without our SBCs, medium and large SP model instances with $N=100$ cannot be solved within one hour. These results demonstrate the importance of breaking symmetry in the first-stage decisions and the effectiveness of our SBCs.
\begin{figure}  \OneAndAHalfSpacedXI
    \centering
    \includegraphics[scale=0.65]{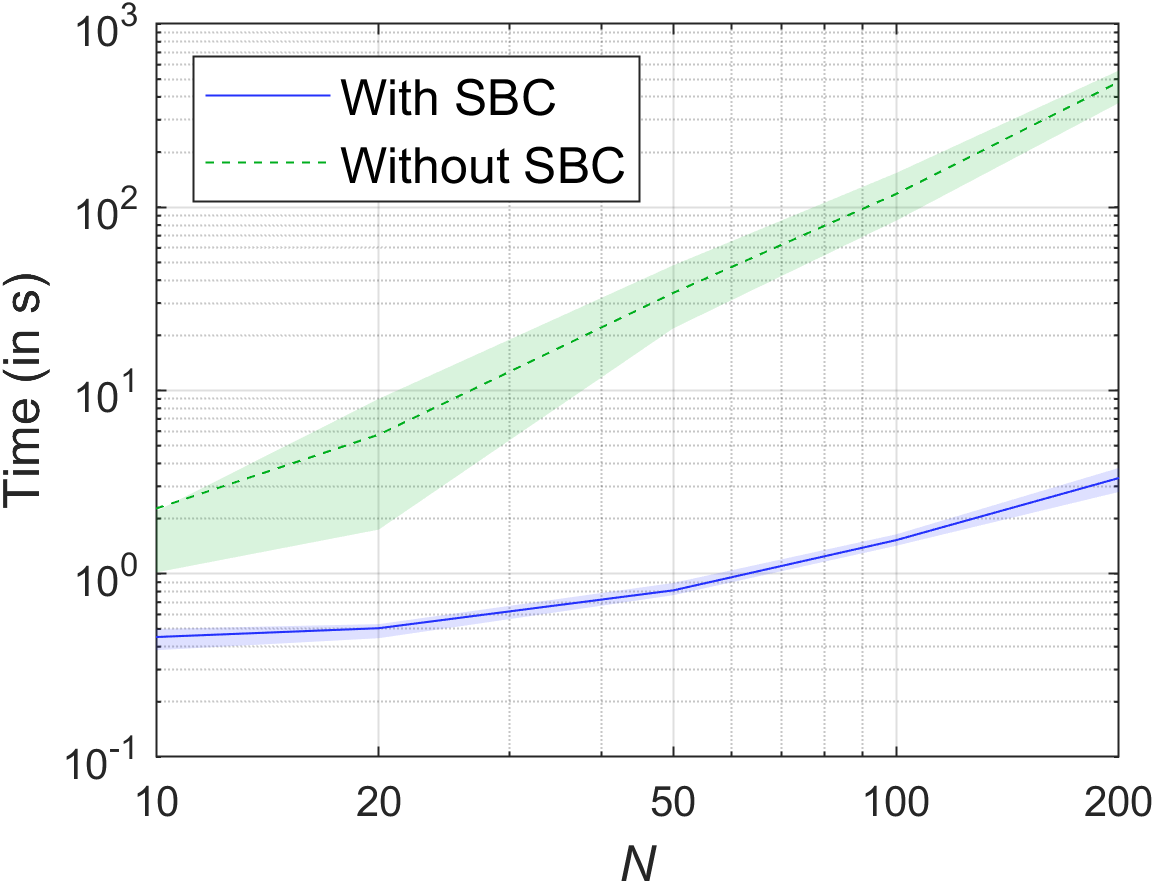}
    \caption{Solution time (in s) of the SP-E model with and without SBCs (Instance 1)}    \label{fig:SBC_SP}
\end{figure}

\subsection{Analysis of Optimal Solutions} \label{subsec:expt_opt_sol}

In this section, we compare the optimal solutions of the proposed \spe{}, \spcvar{}, \droe{}, and \drocvar{} models. \blue{For illustrative purposes and brevity, we present optimal solutions to the \spe{} and \spcvar{} models from one SAA replication (the observations across different SAA replications are similar; see \ref{appdx:expt_analysis_SAA_rep}).} 

Let us first analyze the number of ORs opened by these models presented in Figure~\ref{fig:expt_num_OR}. Under cost~1, the \drocvar{} and \spe{} models open the largest and smallest number of ORs, respectively. Similarly, we observe that the \drocvar{} and \spe{} models call in the largest and smallest number of on-call anesthesiologists, respectively. Under costs 2--3, which include the OR idling cost, all models open fewer ORs than under cost 1 to avoid excessive OR idle time. However, the \droe{} and \drocvar{} models open more ORs than the \spe{} and \spcvar{} models, leading to a smaller number of surgeries scheduled in each OR in general. As we show in the next section, by opening more ORs, scheduling fewer surgeries in each OR, and employing additional on-call anesthesiologists, the \droe{} and \drocvar{} models intend to mitigate surgery delays that may accumulate due to a tighter schedule with fewer ORs and yield a shorter waiting time when compared with \spe{} and \spcvar{} models.

\begin{figure} \OneAndAHalfSpacedXI
    \centering
    \includegraphics[scale=0.7]{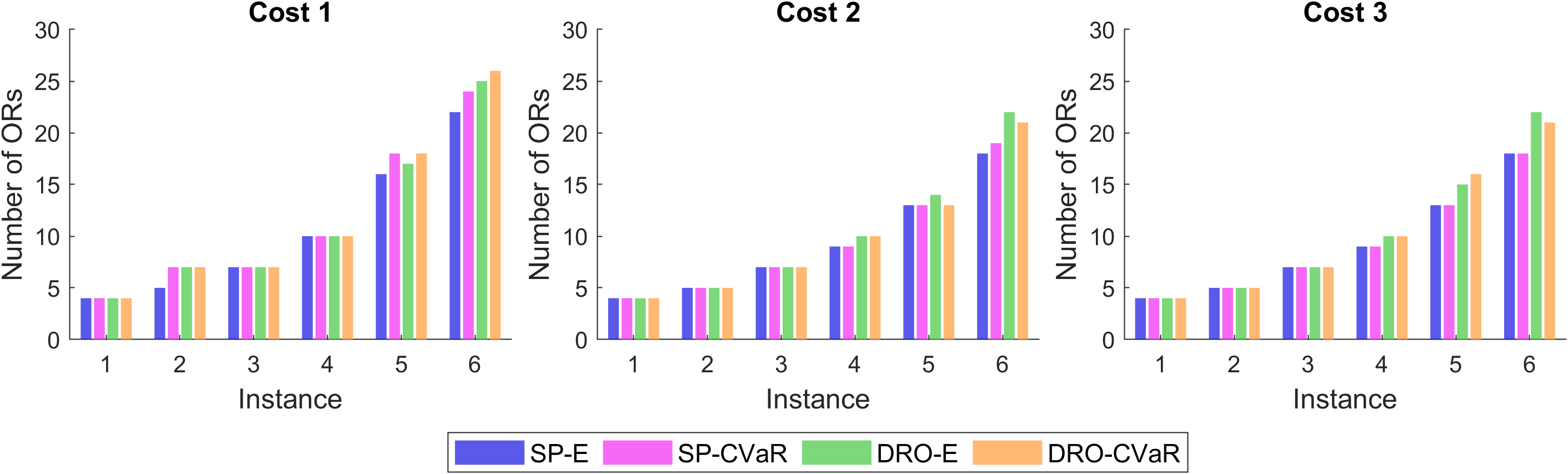}
    \caption{Number of ORs opened for different instances under different costs} \label{fig:expt_num_OR}
\end{figure}

Next, we analyze the structure of the optimal schedules obtained from each model. \blue{We note that the optimal schedules for different ORs in most ORASP instances follow a similar pattern. Hence, we present results for OR~1 in instance 3 under cost 1 for brevity and illustrative purposes, where all models schedule four surgeries.}   Figure~\ref{fig:schedule_OR1} presents the time allocated to each surgery (i.e., the difference between the scheduled start time of a surgery and its subsequent surgery). The two dotted lines in this figure represent the minimum and maximum surgery durations.


\begin{figure} \OneAndAHalfSpacedXI
    \centering
    \includegraphics[scale=0.65]{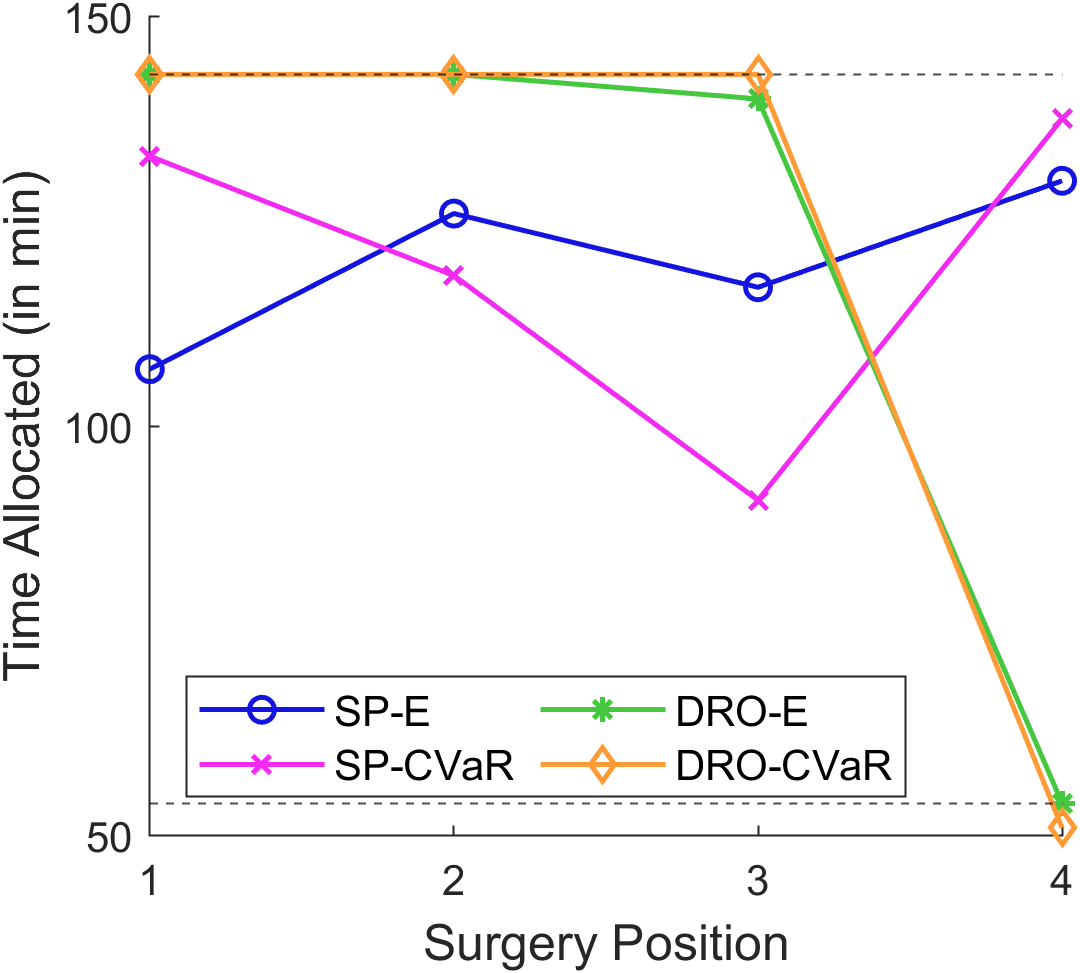}
    \caption{Illustration of the optimal schedules for OR 1 in instance 3. The two dotted lines indicate the minimum and maximum surgery durations.}
    \label{fig:schedule_OR1}
\end{figure}

We observe the following from Figure~\ref{fig:schedule_OR1}. First, the \drocvar{} and \droe{} models intend to protect against the risk of surgery delays that may accumulate due to long surgery durations by allocating longer times to the first three surgeries than the other models (also reflected by shorter waiting times from the \drocvar{} and \droe{} models reported in the next section). Specifically, the \drocvar{} model allocates the maximum surgery duration to surgery 1--3, leaving a shorter time for the last surgery than the other models.  The \droe{} model allocates the maximum surgery duration to surgery 1--2 and slightly less (more) time to surgery 3 (the last surgery) than the \drocvar{} model. Second, the \spcvar{} model allocates a longer time to the first surgery than the \spe{} model and a longer time to the last surgery than the other models, potentially leading to a smaller overtime. In contrast, time allowances in the \spe{} schedule exhibit a zigzag pattern with less (more) time allocated to surgery~1 (the last surgery) than other surgeries.

\subsection{Analysis of Solution Quality} \label{subsec:expt_sol_quality}

In this section, we analyze the operational performance of the optimal schedules via out-of-sample simulation under cost 1 (we provide similar results for other cost structures in \ref{appdx:expt_sim_results}). Specifically, we first generate four sets of $N'=10,000$ out-of-sample scenarios under various distributions, summarized in Table \ref{table:OS_dist}. In setting I, we assume perfect distributional information. That is, we generate $N'$ samples from the same distribution (logN) that we use in the optimization. In settings II-IV, we vary the surgery duration distributions to study the impact of misspecified distributional information, i.e., when the in-sample scenarios do not accurately reflect the true distribution (see \citealp{Shehadeh:2022,Wang_et_al:2020}). Specifically, in setting II, we use a truncated normal distribution with the estimated mean $m$ and variance $\sigma^2$ on $[(1-\Delta)\dlb_i,(1+\Delta)\dub_i]$ with $\Delta \in \{0, 0.25, 0.5\}$. A larger value of $\Delta$ corresponds to a higher variation level and $\Delta=0$ indicates that only the distribution is perturbed with the same support.  In setting III, we use a uniform distribution $U[(1-\Delta)\dlb_i,(1+\Delta)\dub_i]$ with $\Delta\in\{0,0.25,0.5\} $. We denote simulations under setting II (and similarly for setting III) with $\Delta=0$, $\Delta=0.25$, and $\Delta=0.5$ as IIa, IIb, and IIc, respectively.  Finally, in setting IV, we use a beta distribution with the same mean and variance on $[0.5\dlb_i,1.5\dub_i]$. These settings are motivated by our clinical collaborators, who observe significant changes in distribution and range of actual surgery durations between different time frames (e.g., month, year). Our analysis of \cite{Mannino_et_al:2010}'s data also suggest an annual change in the lower and upper bounds of actual surgery durations ranging from $-17\%$ to $31\%$. Second, we solve the second-stage problem with the generated scenarios to compute the out-of-sample performance metrics (i.e., overtime and waiting time). For the sake of brevity in our presentation, we discuss the results for instances 2 and 6; we have similar observations about the results for the other instances.
\begin{table}[t]\centering\small
\footnotesize
\caption{Out-of-sample distributions} \label{table:OS_dist}
\ra{0.4}  
\begin{tabular}{@{}l|l@{}} \toprule
Setting & Distribution of $d_i$ for $i\in I$ \\ \midrule
I       & Same distribution as the in-sample scenarios, i.e., lognormal \\
II      & Truncated normal distribution with mean $m_i$, variance $\sigma_i^2$ on $[(1-\Delta)\dlb_i,(1+\Delta)\dub_i]$, where $\Delta\in\{0,0.25,0.5\} $\\
III     & Uniform distribution on $[(1-\Delta)\dlb_i,(1+\Delta)\dub_i]$, where $\Delta\in\{0,0.25,0.5\} $\\
IV      & Beta distribution with same mean $\mu_i$ and variance $\sigma_i^2$ on $[0.5\dlb_i,1.5\dub_i]$ \\
\bottomrule
\end{tabular}
\end{table}

We first analyze the out-of-sample values of the operational metrics. Table \ref{table:OS_performance_metric} shows the average waiting time and OR overtime under different distributional settings (observations for anesthesiologist overtime are similar to those for OR overtime). It is clear that the \droe{} and \drocvar{} schedules generally yield significantly shorter waiting times and slightly longer overtime than the \spe{} and \spcvar{} models under all simulation settings. Furthermore, the \spcvar{} schedules lead to shorter waiting times than the \spe{} model and the shortest overtime in most settings. Under misspecified distributional settings II--IV (i.e., when the true distribution is different from the in-sample distribution used to generate the data for optimization),  the waiting time and overtime are generally longer than the perfect distributional setting I. Notably, the performance of the \spe{} solutions significantly deteriorates with longer waiting times and OR overtime. Finally, when the true distribution significantly deviates from the in-sample distribution (e.g., setting IIIc), the \droe{} and \drocvar{} models produce shorter or approximately the same overtime as the \spcvar{} model.

\begin{table}[t]\centering\small
\footnotesize
\caption{Average out-of-sample waiting time and OR overtime under settings I--IV (Instances 2 and 6)} \label{table:OS_performance_metric}
\ra{0.47}  
\begin{tabular}{@{}l|rrrr|rrrr@{}} \toprule
\textbf{Waiting   Time} & \multicolumn{4}{c|}{\textbf{Instance 2}}                                                                    & \multicolumn{4}{c}{\textbf{Instance 6}}                                                                    \\ [0.25ex]
                        & \multicolumn{1}{c}{\,\,\,\,\spe{}\,\,\,\,} & \multicolumn{1}{c}{\,\,\spcvar{}\,\,} & \multicolumn{1}{c}{\,\,\droe{}\,\,} & \multicolumn{1}{c|}{\drocvar{}} & \multicolumn{1}{c}{\,\,\,\,\spe{}\,\,\,\,} & \multicolumn{1}{c}{\,\,\spcvar{}\,\,} & \multicolumn{1}{c}{\,\,\droe{}\,\,} & \multicolumn{1}{c}{\drocvar{}} \\ \midrule \rowcolor{lightgray}
Setting I               & 259                    & 91                       & 0                       & 0                            & 346                    & 181                      & 2                       & 0                            \\
Setting IIa             & 250                    & 69                       & 0                       & 0                            & 248                    & 135                      & 0                       & 0                            \\ \rowcolor{lightgray}
Setting IIb             & 469                    & 176                      & 62                      & 69                           & 768                    & 464                      & 227                     & 200                          \\ 
Setting IIc             & 567                    & 235                      & 123                     & 136                          & 1175                   & 769                      & 491                     & 439                          \\ \rowcolor{lightgray}
Setting IIIa            & 257                    & 74                       & 0                       & 0                            & 267                    & 142                      & 0                       & 0                            \\ 
Setting IIIb            & 620                    & 247                      & 110                     & 121                          & 1093                   & 669                      & 381                     & 336                          \\ \rowcolor{lightgray} 
Setting IIIc            & 1026                   & 487                      & 350                     & 376                          & 2347                   & 1614                     & 1219                    & 1113                         \\ 
Setting IV              & 546                    & 258                      & 167                     & 186                          & 1472                   & 1027                     & 733                     & 670                          \\ \midrule
\textbf{OR Overtime}    & \multicolumn{4}{c|}{\textbf{Instance 2}}                                                                    & \multicolumn{4}{c}{\textbf{Instance 6}}                                                                    \\  [0.25ex]
                        & \multicolumn{1}{c}{\,\,\,\,\spe{}\,\,\,\,} & \multicolumn{1}{c}{\,\,\spcvar{}\,\,} & \multicolumn{1}{c}{\,\,\droe{}\,\,} & \multicolumn{1}{c|}{\drocvar{}} & \multicolumn{1}{c}{\,\,\,\,\spe{}\,\,\,\,} & \multicolumn{1}{c}{\,\,\spcvar{}\,\,} & \multicolumn{1}{c}{\,\,\droe{}\,\,} & \multicolumn{1}{c}{\drocvar{}} \\ \midrule \rowcolor{lightgray}
Setting I               & 116                    & 41                       & 69                       & 69                            & 79                    & 77                      & 215                       & 147                          \\                        
Setting IIa             & 115                    & 28                       & 71                      & 71                           & 35                     & 47                       & 225                     & 154                          \\ \rowcolor{lightgray}
Setting IIb             & 212                    & 91                       & 113                     & 110                          & 249                    & 213                      & 347                     & 271                          \\ 
Setting IIc             & 256                    & 128                      & 141                     & 136                          & 433                    & 360                      & 458                     & 383                          \\ \rowcolor{lightgray}
Setting IIIa            & 118                    & 29                       & 70                      & 70                           & 41                     & 51                       & 221                     & 151                          \\ 
Setting IIIb            & 280                    & 134                      & 140                     & 136                          & 412                    & 331                      & 437                     & 352                          \\ \rowcolor{lightgray} 
Setting IIIc            & 480                    & 304                      & 290                     & 271                          & 1114                   & 893                      & 896                     & 805                          \\ 
Setting IV              & 241                    & 144                      & 153                     & 144                          & 544                    & 469                      & 525                     & 461                         \\
\bottomrule
\end{tabular}
\end{table}

To further illustrate the impact of model misspecification, we compare out-of-sample costs under setting III with $\Delta\in\{0,0.25,0.5\}$ using instance 6. (Recall that a larger $\Delta$ corresponds to a larger extent of misspecification).  First, Figure~\ref{fig:OS_OpCost_CDF_settingIII} shows the out-of-sample distributions of the operational (i.e., second-stage) costs for instance 6. When $\Delta=0$, since the distributional change is mild, the operational costs of the \spe{} and \spcvar{} schedules are generally lower than that of the \droe{} and \drocvar{} schedules. When $\Delta\in\{0.25,0.5\}$, i.e., deviations from the in-sample distribution are large, the \spcvar{}, \droe{}, and \drocvar{} schedules yield significantly lower operational costs than the \spe{} schedule on average and at all quantiles. Notably, the \drocvar{} schedules lead to the lowest operational costs under large $\Delta$, and the \droe{} model leads to lower operational costs than the \spcvar{} model when $\Delta=0.5$. These results demonstrate the robustness of the \spcvar{}, \droe{}, and \drocvar{} solutions against distributional changes and the potential operational benefits of adopting the \droe{} and \drocvar{} solutions when the true distribution significantly deviates from the in-sample distribution.

\begin{figure}[t] \OneAndAHalfSpacedXI
    \centering
    \includegraphics[scale=0.72]{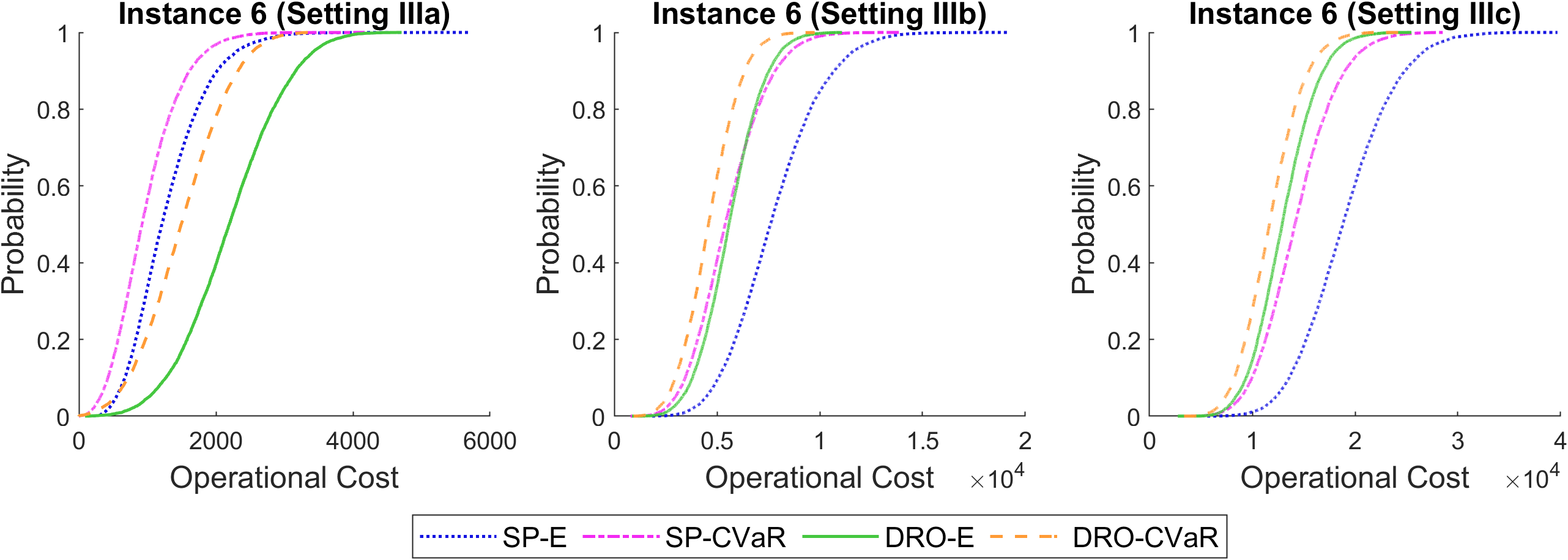}
    \caption{Out-of-sample cumulative distribution function of the operational cost (Instance 6)}   \label{fig:OS_OpCost_CDF_settingIII}
\end{figure}

Second, Figure \ref{fig:OS_CDF_instance6_settingIII} shows the distributions of the total cost, as a sum of the fixed and operational costs, under setting III with $\Delta\in\{0,0.25,0.5\}$. When $\Delta=0$, both \spe{} and \spcvar{} schedules produce lower total costs than the \droe{} and \drocvar{} schedules since the \spe{} and \spcvar{} models open fewer ORs. However, when $\Delta\in\{0.25,0.5\}$, i.e., deviations from the in-sample distribution are large, the \droe{} and \drocvar{} models yield lower total costs at upper quantiles, especially when $\Delta=0.5$. These results further illustrate that \droe{} and \drocvar{} models can protect against distributional ambiguity and show the trade-off between fixed and operational costs. Specifically, by opening more ORs, the \droe{} and \drocvar{} models incur higher fixed costs, but significantly smaller operational costs. We note that while the fixed cost is a one-time cost (i.e., fixed once the ORs are open and on-call anesthesiologists are called in), the operational cost represents recurring long-run costs. With reference to our results, practitioners could decide which model to adopt based on their preferences and actual situations.
\begin{figure}[t] \OneAndAHalfSpacedXI
    \centering
    \includegraphics[scale=0.72]{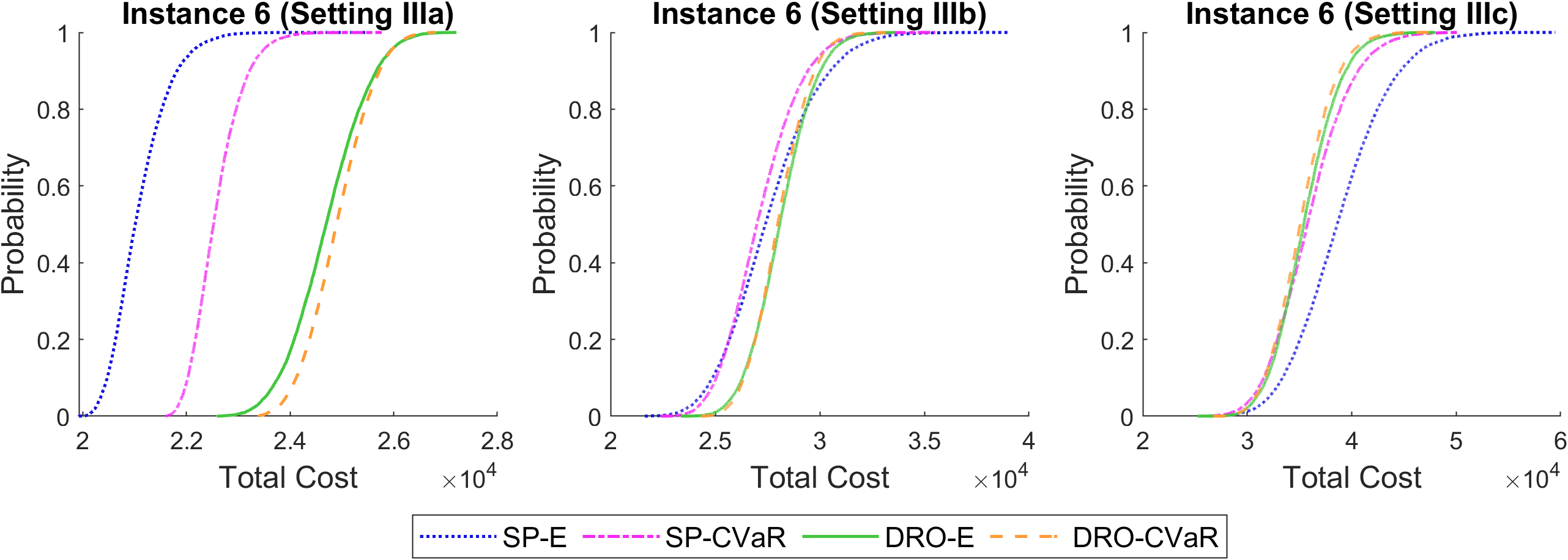}
    \caption{Out-of-sample cumulative distribution function of the total cost (Instance 6)}   \label{fig:OS_CDF_instance6_settingIII}
\end{figure}

Finally, we investigate the value of distributional robustness from the perspective of out-of-sample disappointment, which measures the extent to which the out-of-sample cost exceeds the model's optimal value \citep{Van-Parys_et_al:2021}. Let $\texttt{V}^{\texttt{opt}}$ and $\texttt{V}^{\texttt{out}}$ be the model's optimal value and the out-of-sample objective value, respectively. That is, $\texttt{V}^{\texttt{opt}}$ and $\texttt{V}^{\texttt{out}}$ can be viewed as the estimated and actual costs of implementing the model's optimal solutions, respectively. Then, we define the out-of-sample disappointment as $\max \{(\texttt{V}^{\texttt{out}}-\texttt{V}^{\texttt{opt}})/\texttt{V}^{\texttt{opt}},\, 0 \}$. A disappointment of zero implies that $\texttt{V}^{\texttt{out}} \leq\texttt{V}^{\texttt{opt}}$, which in turn indicates that the model is more conservative and avoids underestimating costs. Figure~\ref{fig:OS_disap} presents the distributions of the out-of-sample disappointments for instance 6. Notably, the \drocvar{} model yields significantly smaller out-of-sample disappointments at all quantiles. Moreover, the out-of-sample disappointment of the \drocvar{} model is the most stable with the smallest standard deviation. On the other hand, disappointments of the \spe{} model are significantly larger than all models, especially at the upper quantiles (e.g., exceeding 100\%). Both \droe{} and \spcvar{} models yield smaller disappointments than the \spe{} model, with the \droe{} model having smaller disappointments than the \spcvar{} model under settings IIIc and IV. These results demonstrate that the \spcvar{}, \droe{}, and \drocvar{} models provide a more robust estimate of the actual cost that the hospital will incur in practice. Thus, risk-averse OR managers who seek a robust financial plan and operational performance may prefer these solutions.

\begin{figure} \OneAndAHalfSpacedXI
    \centering
    \includegraphics[scale=0.72]{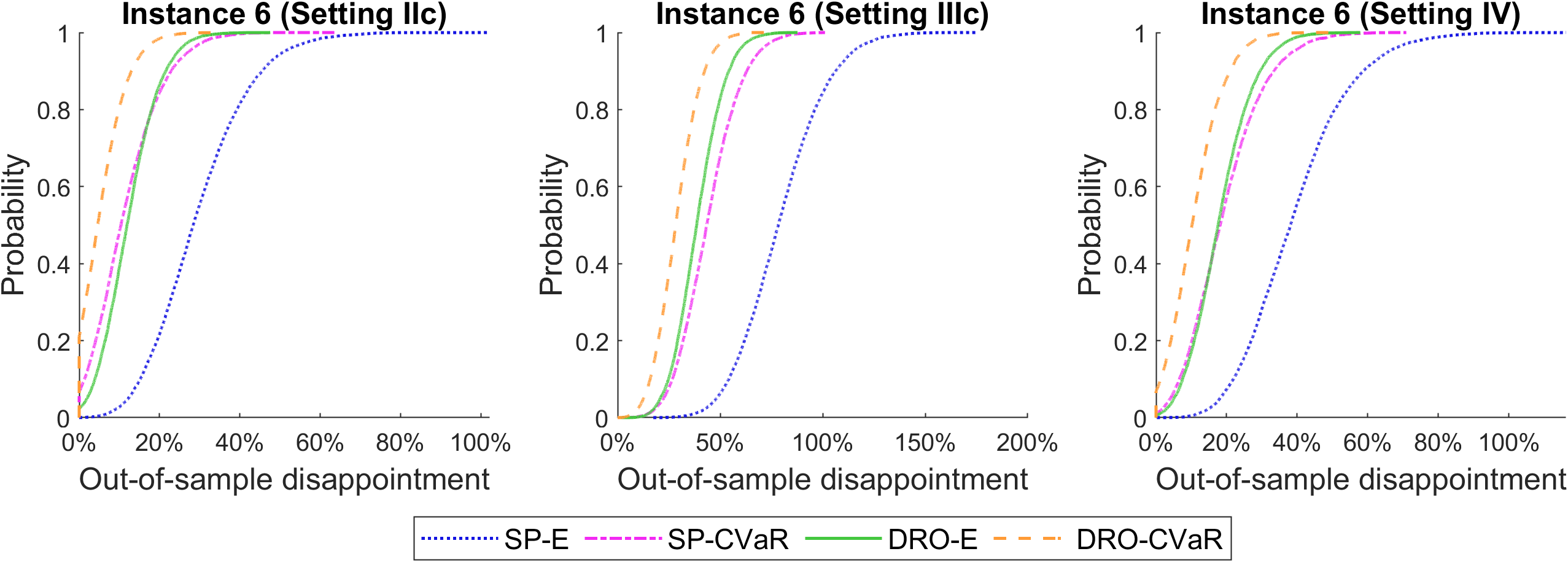}
    \caption{Cumulative distribution function of the out-of-sample disappointment (Instance 6) \vspace{-3mm}}
    \label{fig:OS_disap}
\end{figure}

\subsection{Comparison with Non-Integrated Approaches} \label{subsec:expt_Rath}

In this section, we present results comparing the operational and computational performance of solutions to our proposed models \blue{with related non-integrated approaches,} those of \cite{Rath_et_al:2017}'s RO model and a sequential approach detailed later in this section. We focus on our \spe{} and \droe{} models only for the sake of brevity in our presentation.

First, we compare the performance of our solutions with solutions to \cite{Rath_et_al:2017}'s RO model. Recall that \cite{Rath_et_al:2017}'s model does not include idle, waiting time components, and other components, and thus can schedule multiple surgeries to start simultaneously; see \ref{appdx:expt_Rath_et_al} for details of this model. For a fair comparison, we include symmetry-breaking constraints for all models. To compare the operational performance of the optimal solutions, we solve instance 1 under cost 1 using the three models. (The observations are similar for other ORASP instances.) In particular, we solve \cite{Rath_et_al:2017}'s RO model by their proposed decomposition method with different sizes $\tau\in\{0.1,0.2,0.4\}$ of the uncertainty set adopted in their paper. We provide a detailed comparison of the optimal solutions in \ref{appdx:expt_Rath_et_al}. We highlight that the RO model schedules all surgeries at time zero \blue{(which is consistent with our theoretical results in \ref{appdx:models_wo_waiting})}, while our models do not schedule surgeries performed by the same anesthesiologist or in the same OR to start at the same time. Moreover, as shown in \ref{appdx:expt_Rath_et_al}, the RO model assigns more surgeries to some anesthesiologists, which leads to the possibility of larger overtimes when compared with our proposed models. Figure~\ref{fig:OS_Rath_et_al_wait} shows the histograms of the out-of-sample waiting time associated with the optimal schedules under setting I. These plots indicate that solutions from \cite{Rath_et_al:2017}'s RO model lead to significantly larger waiting times than our \spe{} and \droe{} models. These results demonstrate the importance of our proposed generalization of the second-stage problem in the ORASP. \textcolor{black}{Finally, although this model does not include all components of the ORASP, it is challenging to solve (see \ref{appdx:expt_Rath_et_al}).}


Second, we also compare our proposed \spe{} model with a sequential approach that separates the OR assignment decisions from the remaining decisions (i.e., anesthesiologist assignment, sequencing, and scheduling decisions). Specifically, in the sequential approach, we first solve \cite{Denton_et_al:2010}'s classical surgery-to-OR assignment model to obtain $(v^\star,z^\star)$; see \ref{appdx:expt_Rath_et_al} for the formulation. Then, we solve our ORASP model by fixing ($v, z$) to $(v^\star,z^\star)$. We follow the same experiment settings in Section \ref{subsec:expt_description} to solve instances 1--6 under cost 1 using the two approaches.  Table \ref{table:sep_OR_assg_numOR} in \ref{appdx:expt_Rath_et_al}  presents the number of ORs opened, and Table \ref{table:sep_OR_assg_OS} summarizes the associated average out-of-sample waiting time, OR overtime, and operational costs. We observe that the sequential approach opens the same or smaller number of ORs than our \spe{} model. Moreover, the former results in a packed schedule, thus leading to longer waiting times, OR overtime, and, consequently, higher operational costs. In particular, the operational cost of the sequential approach is two times higher than that of our \spe{} model for large instances (e.g., instances 5--6). These results suggest that our integrated approach could yield better operational performance than the sequential approach.

\begin{figure} \OneAndAHalfSpacedXI
    \centering
    \includegraphics[scale=0.65]{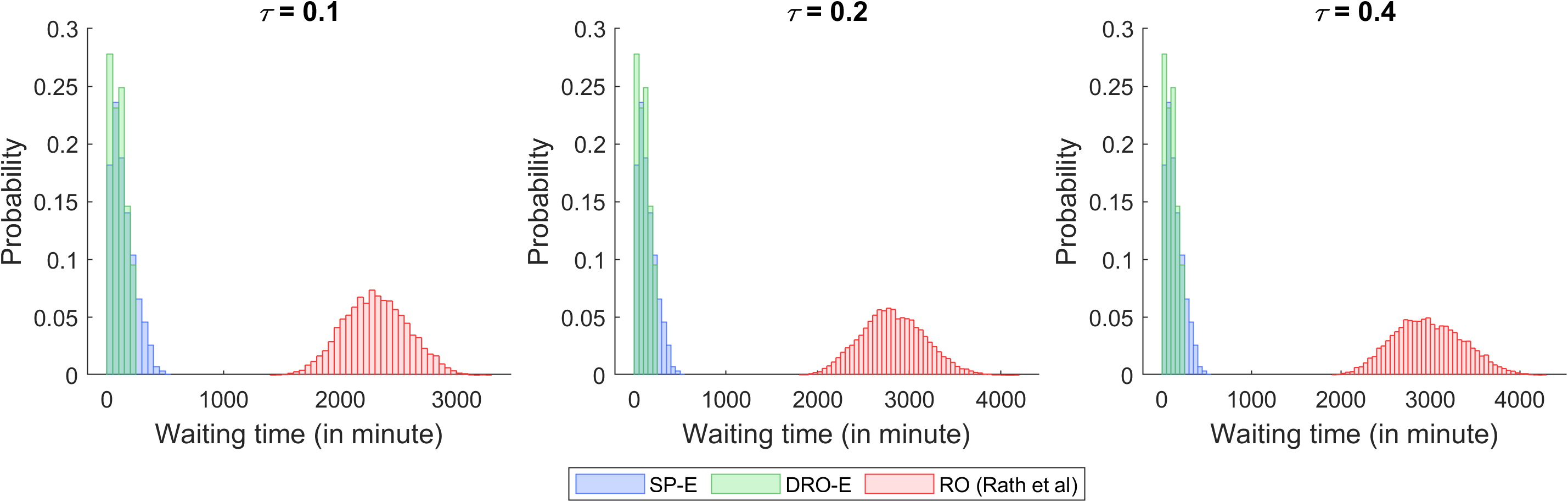}
    \caption{Waiting time from three different models in instance 1 ($\tau$ controls the size of RO uncertainty set) \vspace{-2mm}}
    \label{fig:OS_Rath_et_al_wait}
\end{figure}

\subsection{Case Study} \label{subsec:expt_real_data}
In this section, we present a case study based on questions asked by our collaborators and information from their health systems. Specifically, we use the master schedule of a day in September~2021 to examine the sensitivity of the proposed models' optimal solutions to the cost parameters in the objective (which they can easily adjust) and determine the number of surgeries to schedule. This master schedule consists of $29$ ORs (all open) and $14$ surgery types. There are $35$ anesthesiologists, $8$ of whom are on call. In what follows, we use the same experimental setup described in Section \ref{subsec:expt_description}. We estimate $m$ and $\sigma$ of duration $d$ from the data provided by our collaborators.

First, we investigate the impact of the waiting cost parameter $\cw_i$ on the optimal schedule. Specifically, we keep other cost parameters as in cost 1 and solve the models with $\cw_i\in\{100,200,300,400\}$. For brevity, in Figure~\ref{fig:sen_waiting}, we illustrate the optimal time allowances for four colorectal surgeries assigned to one of the ORs.  (We observe similar results for other surgery types and ORs). As $\cw_i$ increases, the \spe{} and \spcvar{} models assign more time to each surgery except the last one. In contrast, the \droe{} and \drocvar{} models appear to be less sensitive to $\cw_i$, and, in particular, time allowances are the same except for $\cw_i=400$ in the \droe{} model. This is because the \droe{} and \drocvar{} schedules already assign longer time to each surgery to hedge against waiting time. Given that the OR service hour is fixed, there is less or no room to assign more time to each surgery when $\cw_i$ increases in the \droe{} and \drocvar{} schedules.

\begin{figure} \OneAndAHalfSpacedXI
     \centering
     \includegraphics[scale=0.56]{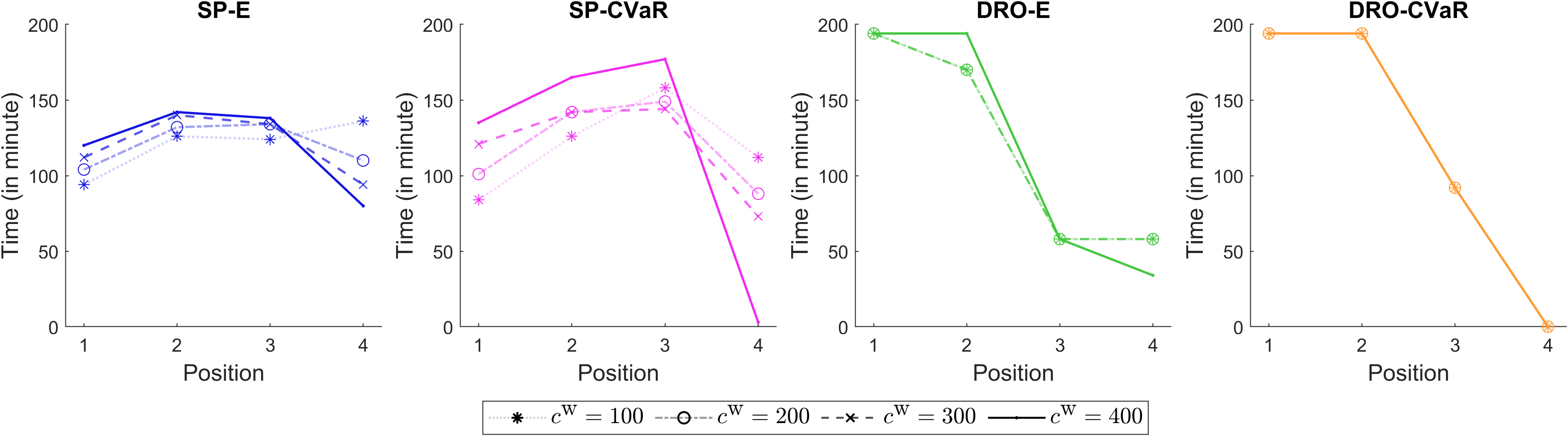}
     \caption{Time allowance to each surgery in one of the ORs with different $\cw_i \in \{100, 200, 300, 400\}$.}  \label{fig:sen_waiting}
\end{figure}

Second, we investigate the impact of $\co_a$ on the optimal schedule. Again, we keep the other cost parameters as in cost 1 and consider $\co_a\in\{50,150,250,350,450\}$. Figure \ref{fig:sen_anes_OT} shows the time allocated to colorectal surgeries in the same OR we used in Figure \ref{fig:sen_waiting}. 
In general, when $\co_a$ increases, the time allowances of the \spe{} and \spcvar{} models decrease for surgeries in earlier positions (i.e., positions 1 and 2), and it increases in the last position. This is reasonable as the \spe{} and \spcvar{} models tend to avoid overtime when $\co_a$ increases. In contrast, the \droe{} and \drocvar{} models are insensitive to the change of $\co_a$. This makes sense because these models already schedule longer time for surgeries in earlier positions to avoid delays, which potentially mitigates overtime. Finally, we observe that when $\co_a$ increases from $350$ to $450$ (resp. from $150$ to $250$), the \spcvar{} and \droe{} models (resp. the \drocvar{} model) employ an extra on-call anesthesiologist for colorectal surgeries. This is to avoid excessive anesthesiologist overtime.

\begin{figure} \OneAndAHalfSpacedXI
     \centering
     \includegraphics[scale=0.56]{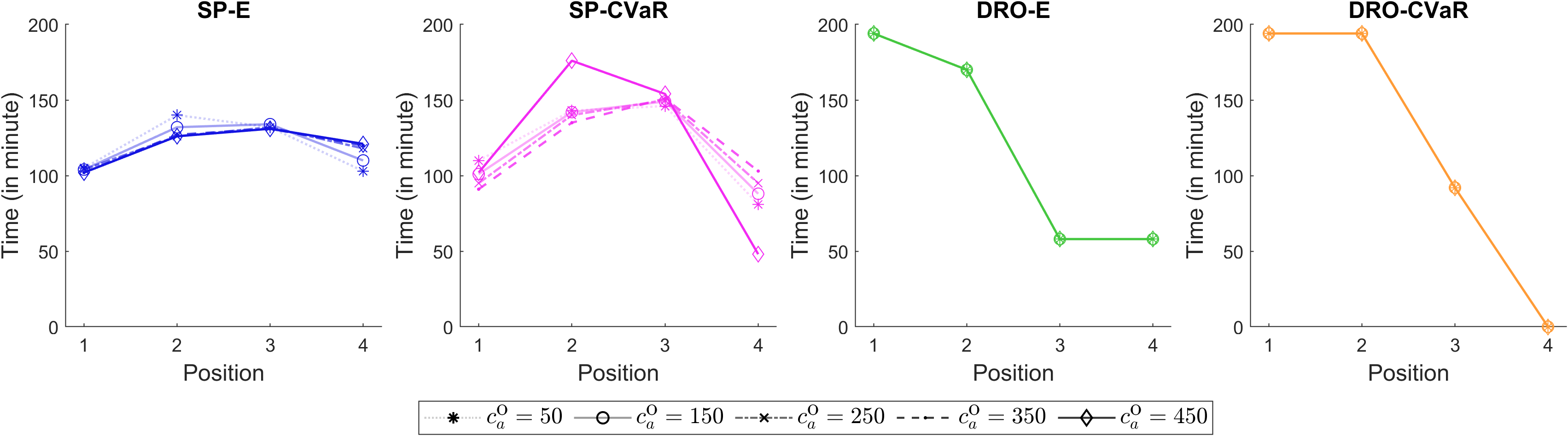}
     \caption{Time allowance to each surgery in one of the ORs with different $\co_a$}  \label{fig:sen_anes_OT}
\end{figure}

Finally, our collaborators are interested in determining the number of surgeries to schedule in each OR to maintain good operational performances that balance variable operational cost and revenue.  Therefore, we use the same technique in \cite{Berg_et_al:2014} to determine the number of surgeries to schedule that maximizes the difference between the profit from scheduling (i.e., performing) surgeries and the associated estimated cost (i.e., optimal value of our models). We use cost 1 and cardiac surgery as an example, for which there are six specialized anesthesiologists (one of whom is on call) and five ORs in the master schedule. 

Figure \ref{fig:sen_I} shows the revenue under different profits $\{2000, 2500, 3000\}$ per surgery. When the profit is relatively low (i.e., $2000$ or $2500$), all models schedule $5$ surgeries (to maximize revenue), precisely one surgery per OR. This is because cardiac surgery has a long duration with a mean of $384$ minutes, and each OR is available for $480$ minutes.  However, when the profit is larger (i.e., $3000$), the \spe{} model schedules $10$ surgeries (i.e., two surgeries per OR) while the other models schedule $5$ since these models tend to hedge against duration scenarios that may lead to high operational costs.  We note that the resulting packed OR schedule of the \spe{} model leads to poor operational performance. For example, the waiting time and anesthesiologist overtime associated with the \spe{} schedule are $511$ and $1227$ minutes, respectively, compared with zero waiting and overtime of $26$ minutes in other models. Our collaborators indicate that they prefer a less packed schedule to ensure smooth operations with fewer delays and overtime.

\begin{figure} \OneAndAHalfSpacedXI
     \centering
     \includegraphics[scale=0.65]{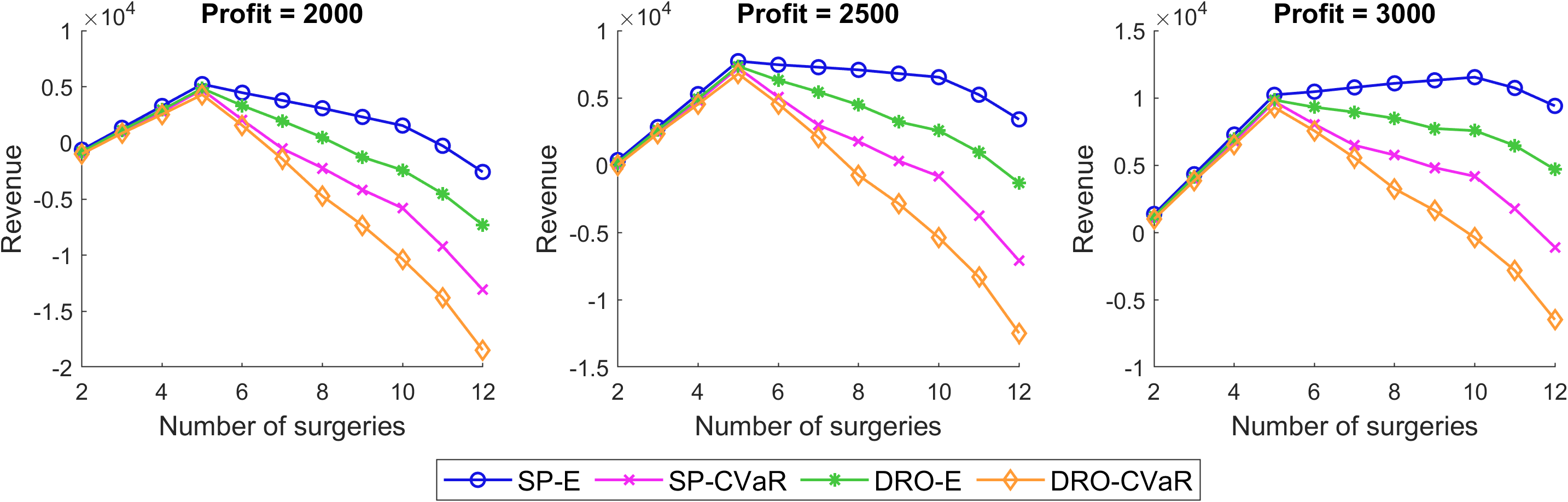}
     \caption{Revenue from scheduling different number of surgeries}  \label{fig:sen_I}
\end{figure}

\section{Conclusion} \label{sec:conclusion}

In this paper, we study an operating room and anesthesiologist scheduling problem (ORASP) under uncertainty. We propose the first risk-neutral and risk-averse SP models to address uncertainty in surgery durations in the ORASP, which generalizes the state-of-the-art models by (a) incorporating a larger set of important objectives, (b) integrating allocation, assignment, sequencing, and scheduling problems, and (c) modeling the decision-maker's risk preference. In addition, recognizing that high-quality data to estimate surgery duration distribution accurately is often not available, we propose DRO counterparts of our SP models based on a mean-support ambiguity set to account for distributional ambiguity. We derive equivalent solvable reformulations of these DRO models and propose a C\&CG method that efficiently solves the reformulations.

Using publicly available surgery data and a case study from a large health system in New York, we construct various ORASP instances and conduct extensive computational experiments to compare the proposed methodologies, illustrating the potential benefits of our proposed integrated approach in practice.  We observe significant differences in the optimal \spe{}, \spcvar{}, \droe{}, and \drocvar{} schedules and thus, substantial differences in these solutions' impact on the operational performance and costs. Our results also show the significance of integrating the allocation, assignment, sequencing, and scheduling problems and the negative consequences of adopting existing non-integrated approaches and  ignoring the uncertainty and ambiguity of surgery duration. We also conduct sensitivity analysis using the case study data from our collaborating health system to derive insights relevant to OR managers. Finally, our results demonstrate the computational efficiency of the proposed methodologies.

We suggest the following areas for future research. First, we would like to incorporate other sources of uncertainty, such as arrival of emergency surgeries. Second, our model can serve as a building block towards data-driven and robust OR planning. In particular, we aim to extend our models to more comprehensive OR and surgery planning models by considering all relevant organizational and technical constraints such as optimizing the block (master) schedule and incorporating recovery units. Finally, it would be interesting to explore advanced data-driven and machine learning methods that exploit, for example, patients' characteristics to model surgery duration and other random factors and hence, to further optimize the OR planning process.

\ACKNOWLEDGMENT{The authors are grateful to anonymous referees, the Editor, and the Associate Editor for their constructive comments and helpful suggestions. Dr. Karmel S.~Shehadeh dedicates her effort in this paper to every little dreamer in the whole world who has a dream so big and so exciting. Believe in your dreams and do whatever it takes to achieve them--the best is yet to come for you.}


\ECSwitch


\ECHead{Appendix}

%
%
%


\section{Anesthesiologists Specialty Example} \label{appdx:anes_specialty}

Table \ref{table:anes_specialty_eg} provides an example of an anesthesiologist's specialty and assignment at our collaborating health system. Some specialized anesthesiologists are dedicated to a specific specialty (e.g., cardiothoracic, obstetrics and pediatric). Some can perform a wide range of surgeries (i.e., cross-cover) such as general, orthopedic, neurosurgery and transplant surgeries.

%
 \begin{table}[h]\centering\small
\ra{0.6}  
{\OneAndAHalfSpacedXI \caption{Anesthesiologist specialties. Empty slots indicate that the cross coverage is possible and slots with NA indicate that such a cross-coverage is not possible. (ORTH: orthopedic, NSG: neurosurgery, CARD: Cardiothoracic, OB: Obstetrics)} \label{table:anes_specialty_eg} }
\begin{tabular}{@{}l|llllllll@{}} \toprule
& \multicolumn{8}{c}{\textbf{Specialty cross coverage}}   \\
\textbf{Primary}       & General & ORTH & Pain Medicine & NSG & Transplant & CARD & OB & Pediatric \\ \midrule
General       &         &      &               &     &            & NA   & NA & NA        \\ 
ORTH          &         &      &               &     &            & NA   & NA & NA        \\
Pain Medicine &         &      &               &     &            & NA   & NA & NA        \\
NSG           &         &      &               &     &            & NA   & NA & NA        \\
Transplant    &         &      &               &     &            & NA   & NA & NA        \\
CARD          &         &      &               &     &            &      & NA & NA        \\
OB            & NA      & NA   & NA            & NA  & NA         & NA   &    & NA        \\
Pediatric     & NA      & NA   & NA            & NA  & NA         & NA   & NA &           \\
\bottomrule
\end{tabular}
\end{table}

\section{Notation} \label{appdx:notation}

Table \ref{table:notation} summarizes the notation we use in the models, including parameters, sets, and decision variables.
\begin{table}[t] 
\footnotesize
\center
\caption{Notation} \label{table:notation}
   \renewcommand{\arraystretch}{0.7}
\begin{tabular}{ll}
\hline 
 \multicolumn{2}{l}{\textbf{Indices}} \\
$i$ & index of surgery, $i\in I$ \\
$a$ & index of anesthesiologist, $a\in A$ \\
$r$ & index of operating room, $r\in R$ \\
$l$ & index of surgery type, $l\in L$ \\
\multicolumn{2}{l}{\textbf{Parameters and sets}} \\
$I$ & set of surgeries \\
$A$ & set of anesthesiologists \\
$R$ & set of operating rooms \\
$L$ & set of surgery types \\
$\calF^A$ / $\calF^R$ & set of $(i,a)$ / $(i,r)$ pairs such that surgery $i$ can be performed by anesthesiologist $a$ / in operating room $r$ \\
$A_i$ / $R_i$ & set of anesthesiologists / operating rooms to which surgery $i$ can be assigned \\
$I_a$ / $I_r$ & set of surgeries that could be performed by anesthesiologist $a$ / in operating room $r$ \\
$t^\text{start}_a$ / $t^\text{end}_a$ & start / end time of anesthesiologist $a$ \\
$T^\text{end}$ & end time of the day \\
$f_r$ & fixed cost of using operating room $r$\\
$f_a$ & fixed cost of employing an on-call anesthesiologist \\
$\cg_a$ / $\cg_r$ & idling cost of anesthesiologist $a$ / operating room $r$\\
$\co_a$ / $\co_r$ & overtime cost of anesthesiologist $a$ / operating room $r$\\
$\cw_i$ & waiting cost of surgery $i$\\
$\kappa^A_{i,a}$ / $\kappa^R_{i,a}$ & $1$ if surgery $i$ can be done by anesthesiologist $a$ / in operating room $r$, $0$ otherwise\\
$\hreg_a$ & $1$ if anesthesiologist $a$ is on regular duty, $0$ otherwise\\
$\hcall_a$ & $1$ if anesthesiologist $a$ is on call, $0$ otherwise \\
$D_i$ & random duration of surgery $i$ ($d_i$ as a realization) \\
$\dlb_i$ / $\dub_i$ & lower / upper bound of duration of surgery $i$ \\
  \multicolumn{2}{l}{\textbf{First-stage decision variables}} \\
$x_{i,a}$ & $1$ if surgery $i$ is performed by anesthesiologist $a$, $0$ otherwise \\
$z_{i,r}$ & $1$ if surgery $i$ is performed in operating room $r$, $0$ otherwise \\
$y_a$ & $1$ if anesthesiologist $a$ is assigned from on call, $0$ otherwise \\
$v_r$ & $1$ if operating room $r$ is used, $0$ otherwise \\
$s_i$ & scheduled start time of surgery $i$ \\
$u_{i,i'}$ & $1$ if surgery $i'$ follows surgery $i$, $0$ otherwise \\ 
$\alpha_{i,i',a}$ & $1$ if surgery $i'$ follows surgery $i$ for anesthesiologist $a$, $0$ otherwise \\
$\beta_{i,i',r}$ & $1$ if surgery $i'$ follows surgery $i$ in operating room $r$, $0$ otherwise \\
  \multicolumn{2}{l}{\textbf{Second-stage decision variables}} \\
$q_i$ & actual start time of surgery $i$\\
$o_a$ / $o_r$ & overtime of anesthesiologist $a$ / operating room $r$ \\
$w_i$ & waiting time of surgery $i$ \\
$g_a$ / $g_r$ & idling time of anesthesiologist $a$ / operating room $r$ \\
\hline
\end{tabular}
\end{table}

\clearpage
\section{Example and Discussions in Section \ref{sec:SP_model}} \label{appdx:SP_model}

\subsection{Example on Sequencing Variable Constraints}  \label{appdx:eg_sequencing}
\begin{example}  \label{eg:1st_stage_eg}
There are two possible types of non-implementable decisions without constraints \eqref{eqn:1st_stage_con9} and \eqref{eqn:1st_stage_con10}. First, consider that surgeries $1$ and $2$ are assigned to the same anesthesiologist $a$ and in the same OR $r$, but we have that surgery $1$ is performed followed by surgery $2$ in the anesthesiologist schedule, but vice versa in the OR schedule, with both scheduled start time being $0$. That is, the anesthesiologist perform surgery $1$ first, but surgery $2$ is scheduled first in the OR. Obviously, this schedule is not implementable and constraints \eqref{eqn:1st_stage_con9} excludes such a possibility.

The second type of non-implementable decision corresponds to cycles. As an example, assume that we have four surgeries with the same scheduled start times, say $0$, as follows.
\begin{center}
Anesthesiologist 1: $1\rightarrow 2$ \quad OR 1: $3\rightarrow1$\\
Anesthesiologist 2: $4\rightarrow 3$ \quad OR 2: $2\rightarrow4$ 
\end{center}
For anesthesiologist 1 to perform surgery $1$, one needs to wait for the completion of surgery $3$ in OR~1. However, to perform surgery $3$ in OR~1, anesthesiologist 2 has to perform surgery $4$ first, which follows surgery $2$ in OR~2. Lastly, to perform surgery~$2$, anesthesiologist $1$ needs to perform surgery~$1$. That is, a cycle occurs and no surgeries could be conducted. Note that with the transitivity constraints \eqref{eqn:1st_stage_con10}, decisions with cycles can be excluded.
\end{example}

\subsection{Choice of Big \texorpdfstring{$M$}{TEXT} Parameters} \label{appdx:big_M}
In the SP model \eqref{eqn:1st_stage}--\eqref{eqn:2nd_stage}, there are four big $M$ parameters. We provide a suitable choice of these parameters for actual implementation. First, from constraints \eqref{eqn:1st_stage_con6-7}, we have that $M=\max_{a\in A} \big\{t^\text{start}_a\big\}$ is a suitable choice. To derive a suitable choice of the remaining three parameters which are related to the maximum of $q_i+d_i$, we provide the following lemma. (Note that $q_i$ is the second-stage variable).

\begin{lemma} \label{lem:big_M_actual}
Suppose that $d_i\in[\dlb_i,\dub_i]$ for all $i\in I$. Then, for any first-stage decision $(x,y,z,v,u,s)$ and realization $d$, we have $\max_{i\in I} \big\{ q_i + d_i \big\} \leq T^\text{end} + \sum_{i\in I}\dub_i$.
\end{lemma}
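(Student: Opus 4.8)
The idea is to bound the actual start time $q_i$ of each surgery by induction along the precedence order encoded in $u$. Intuitively, a surgery cannot start until all surgeries scheduled before it (by the $u$-precedence) have finished, and each such predecessor contributes at most its own duration upper bound $\dub_{i'}$ to the delay; the scheduled start times $s_i$ themselves are bounded by $T^\text{end}$ via constraint \eqref{eqn:1st_stage_con6-7}. Summing these contributions over all surgeries gives the claimed bound.

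\textbf{Step 1 (base case / ordering).} Fix a feasible first-stage decision $(x,y,z,v,u,s)$ and a realization $d \in \calS$. I would first argue that the binary relation induced by $u$ (together with the constraints \eqref{eqn:1st_stage_con1-2}--\eqref{eqn:1st_stage_con20-21}, in particular the transitivity constraints \eqref{eqn:1st_stage_con11} and the antisymmetry constraints \eqref{eqn:1st_stage_con10}) admits a topological-type ordering of $I$; equivalently, there are no cycles among surgeries that are forced to wait on one another (this is exactly the point of Example \ref{eg:1st_stage_eg} in \ref{appdx:eg_sequencing}). Relabel the surgeries $i_1, i_2, \dots, i_{|I|}$ consistently with this order, so that whenever $q_{i_k}$ is constrained from below via \eqref{eqn:2nd_stage_con1} by $q_{i_j} + d_{i_j}$ with $u_{i_j, i_k} = 1$, we have $j < k$.

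\textbf{Step 2 (inductive bound on $q_i$).} I claim $q_{i_k} \le T^\text{end} + \sum_{j=1}^{k-1} \dub_{i_j}$ for every $k$. For the recourse LP \eqref{eqn:2nd_stage}, the minimal feasible value of $q_{i_k}$ (which is what an optimal solution attains, since increasing $q$ only increases overtime and waiting costs) is
\[
q_{i_k} = \max\Big\{ s_{i_k},\ \max_{j < k:\, u_{i_j,i_k}=1} \big( q_{i_j} + d_{i_j} \big) \Big\}.
\]
Using $s_{i_k} \le T^\text{end}$ from \eqref{eqn:1st_stage_con6-7}, the induction hypothesis $q_{i_j} \le T^\text{end} + \sum_{\ell=1}^{j-1}\dub_{i_\ell}$, and $d_{i_j} \le \dub_{i_j}$, each term on the right is at most $T^\text{end} + \sum_{\ell=1}^{j}\dub_{i_\ell} \le T^\text{end} + \sum_{\ell=1}^{k-1}\dub_{i_\ell}$, which establishes the claim. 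Then $q_{i_k} + d_{i_k} \le T^\text{end} + \sum_{j=1}^{k-1}\dub_{i_j} + \dub_{i_k} \le T^\text{end} + \sum_{i\in I}\dub_i$, and taking the max over $k$ gives the lemma. A small subtlety to address: the statement should hold for \emph{any} optimal second-stage solution, so I would note that constraints \eqref{eqn:2nd_stage_con1}--\eqref{eqn:2nd_stage_con2} together with the nonnegative objective coefficients force $q$ to equal its componentwise-minimal feasible value at optimality (or simply remark that the bound holds for the minimal feasible $q$, which suffices for choosing $M$).

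\textbf{Main obstacle.} The only delicate point is Step 1: justifying that $u$ induces an acyclic order so the induction in Step 2 is well-founded. This relies on the transitivity constraints \eqref{eqn:1st_stage_con11} and the fact that constraints \eqref{eqn:1st_stage_con8}--\eqref{eqn:1st_stage_con9}, \eqref{eqn:1st_stage_con18}--\eqref{eqn:1st_stage_con19} tie the OR- and anesthesiologist-level precedence ($\alpha,\beta$) back to $u$, so that the waiting chains in \eqref{eqn:2nd_stage_con1} never form a cycle. Everything after that is a routine induction.
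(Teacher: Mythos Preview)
Your proposal is correct. The induction along a topological order of the $u$-precedence is sound: constraints \eqref{eqn:1st_stage_con10}--\eqref{eqn:1st_stage_con11} make the relation $\{(i,i'):u_{i,i'}=1\}$ a strict partial order on $I$, so a linear extension exists and your bound $q_{i_k}\le T^\text{end}+\sum_{j<k}\dub_{i_j}$ goes through cleanly.

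The paper's own proof is much terser and takes a different route: rather than arguing inductively for an arbitrary feasible first-stage decision, it simply asserts that the latest possible completion time is realized in the configuration where every $s_i=T^\text{end}$ and all surgeries are assigned to a single anesthesiologist (i.e., a single chain), in which case the last surgery ends at $T^\text{end}+\sum_{i\in I}d_i\le T^\text{end}+\sum_{i\in I}\dub_i$. This is an extremal-configuration argument rather than a proof for a fixed schedule. Your approach is more rigorous in that it actually bounds $q_i+d_i$ for \emph{every} feasible first-stage decision without needing to justify that the single-chain configuration dominates; the paper's argument is shorter but leaves implicit the step that no other configuration can exceed this worst case. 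Both arrive at the same bound, and your identification of the acyclicity of $u$ as the main prerequisite is exactly right.
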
   

\begin{proof}{Proof.}
The last possible actual surgery start time corresponds to the situation that the scheduled start time for all the surgeries are $T^\text{end}$. Moreover, all surgeries are scheduled to one anesthesiologist only. This leads to the desired upper bound. \Halmos
\end{proof}

As a result of Lemma \ref{lem:big_M_actual}, it suffices to set the big-$M$ parameters as $M_\textup{seq} = T^\text{end} + \sum_{i\in I}\dub_i$, $M_\textup{anes} =  T^\text{end} + \sum_{i\in I}\dub_i - \min_{a\in A}\big\{ t^\textup{end}_a \big\}$ and $M_\textup{room} = \sum_{i\in I}\dub_i$.

\begin{remark} \label{appdx:rem_big_M}
The proof of \ref{lem:big_M_actual} is based on the hypothetical situation that an anesthesiologist conducts all the surgeries. In practical ORASP instances, we have anesthesiologists specialized for a single surgery type $\ell\in\{1,\dots,L^\text{spec}\}$ and a pool of anesthesiologists for general surgery types $\{L^\text{spec}+1,\dots,L\}$. In this case, we could tighten the upper bound to 
$$\max_{i\in I} \big\{ q_i + d_i \big\} \leq T^\text{end} + \max\Bigg\{\max_{\ell = 1,\dots,L^\text{spec}} \Big\{ \dub_\ell|I_\ell| \Big\},\, \sum_{\ell=L^\text{spec}+1}^L \dub_\ell|I_\ell| \Bigg\}, $$
where $\dub_\ell$ and $|I_\ell|$ are the maximum surgery duration and number of surgeries of type $\ell$, respectively. Note that it is not optimal to schedule all surgeries at $T^\text{end}$. Indeed, scheduling all surgeries at $\max_{a\in A}t^\text{start}_a$ could produce a better solution by reducing both idle time and overtime. Therefore, it suffices to choose 
$$M_\text{seq}= \max_{a\in A} \big\{t^\text{start}_a\big\} + \max\Bigg\{\max_{\ell = 1,\dots,L^\text{spec}} \Big\{ \dub_\ell|I_\ell| \Big\},\, \sum_{\ell=L^\text{spec}+1}^L \dub_\ell|I_\ell| \Bigg\}$$
as in \cite{Rath_et_al:2017}. Note that we exploit the practical ORASP instance structures to result in tighter big-$M$ parameters.
\end{remark}


\color{black}
\subsection{Analysis of Models without Waiting Time Components} \label{appdx:models_wo_waiting}

In this section, we analyze ORASP models that do not incorporate waiting time components in the second-stage problem. In Proposition~\ref{prop:waiting_cost_zero}, we show that for models without waiting time components, it is optimal to schedule surgeries assigned to the same anesthesiologist to start simultaneously at the start time of that anesthesiologist, which is not possible in practice.

\begin{proposition} \label{prop:waiting_cost_zero}
Let $(x,y,z,v,u,s,\alpha,\beta)$ be any feasible first-stage solution satisfying \eqref{eqn:1st_stage_con1-2}--\eqref{eqn:1st_stage_con20-21}, and define $\st_i=\sum_{a\in A_i} t^\textup{start}_a x_{i,a}$ for all $i\in I$. Then, the following assertions hold. 
\begin{itemize}
    \item[(i)]  $(x,y,z,v,u,\st,\alpha,\beta)$ is a feasible first-stage solution with $\tilde{s}_i\leq s_i$ for all $i\in I$.
    \item[(ii)] If $\cw_i=0$ for all $i\in I$, then $Q(x,y,z,v,u,\st,d)\leq Q(x,y,z,v,u,s,d)$ for all $d\in\calS$. Thus, $\sum\limits_{r\in R} f_r v_r+\sum \limits_{a\in A}f_a y_a+\varrho_{\Prob}\big(Q(x,y,z,v,u,\st,D)\big)\leq \sum \limits_{r\in R} f_r v_r+\sum\limits_{a\in A}f_a y_a+\varrho_{\Prob}\big(Q(x,y,z,v,u,s,D)\big)$. In particular, this inequality holds irrespective of the choice of the risk measure $\rho(\cdot)$ and the probability distribution $\Prob$. 
\end{itemize}
\end{proposition}

\begin{proof}{Proof.}
We first prove part (i). Since $(x,y,z,v,u,s,\alpha,\beta)$ is a feasible first-stage solution satisfying  \eqref{eqn:1st_stage_con1-2}--\eqref{eqn:1st_stage_con20-21}, we only need to show that replacing $s$ in $(x,y,z,v,u,s,\alpha,\beta)$ with $\st$ results in another feasible first-stage solution  $(x,y,z,v,u,\st,\alpha,\beta)$. That is, we need to show that $\st_i=\sum_{a\in A_i} t^\textup{start}_a x_{i,a}$ satisfies constraints \eqref{eqn:1st_stage_con6-7} (the only constraints involving $\st$). Since $x$ satisfies \eqref{eqn:1st_stage_con1-2}, for each $i \in I$, we have $x_{i,a_i}=1$ for exactly one  $a_i \in A_i$ and $x_{i,a'}=0$ for all $a' \in A_i \setminus \{a_i\}$. Moreover, since $s$ satisfies \eqref{eqn:1st_stage_con6-7}, we have $t^\text{start}_{a_i} \leq s_i \leq T^\text{end}$ for all $i\in I$. It follows that $\st_i=\sum_{a\in A_i}t^\text{start}_a x_{i,a}=t^\text{start}_{a_i} \leq s_i \leq T^\text{end}$ for all $i \in I$. Thus, $\st$ satisfies constraints \eqref{eqn:1st_stage_con6-7}. Accordingly, we conclude  that $(x,y,z,u,v,\st,\alpha,\beta)$ is a feasible first-stage solution  satisfying \eqref{eqn:1st_stage_con1-2}--\eqref{eqn:1st_stage_con20-21}. This completes the proof of part (i).

Next, we prove part (ii). We first claim that for any given $d\in\calS$ and first-stage solution $(\bar{x},\bar{y},\bar{z},\bar{v},\bar{u},\bar{s})$, an optimal solution $(\bar{q}, \bar{o}, \bar{w}, \bar{g})$ to the second-stage problem $Q(\bar{x},\bar{y},\bar{z},\bar{v},\bar{u},\bar{s},d)$ defined in \eqref{eqn:2nd_stage} satisfies 
\begin{equation}\label{eqn_pf:prop_waiting_cost_zero_1}
   \bar{q}_i=\max \bigg\{\bar{s}_i,\, \max_{i': \bar{u}_{i',i}=1} \{\bar{q}_{i'}+d_{i'}\} \bigg\},\quad\forall i\in I.
\end{equation}
To show \eqref{eqn_pf:prop_waiting_cost_zero_1}, note from \eqref{eqn:2nd_stage_con1}--\eqref{eqn:2nd_stage_con2} that $\bar{q}_i \geq \max \big\{\bar{s}_i,\, \max_{i': \bar{u}_{i',i}=1} \{\bar{q}_{i'}+d_{i'}\} \big\}$ for all $i\in I$. Also, it is easy to verify from the objective of minimizing $(o_a, o_r,g_a, g_r)$  and constraints \eqref{eqn:2nd_stage_con3}--\eqref{eqn:2nd_stage_con4} and \eqref{eqn:2nd_stage_con6}--\eqref{eqn:2nd_stage_con7} that at optimality, we have $\bar{o}_a=\max_{i\in A_i:\bar{x}_{i,a}=1,\,\bar{y}_a=0}\big\{\bar{q}_i+d_i-t^\text{end}_a\big\}$, $\bar{o}_r=\max_{i\in R_i:\bar{z}_{i,r}=1}\big\{\bar{q}_i+d_i-T^\text{end}\big\}$, $\bar{g}_a = \bar{o}_a + \big(t^\text{end}_a - t^\text{start}_a - \sum_{i\in I_a} d_i \bar{x}_{i,a}\big) \hreg_a$, and $\bar{g}_r=\bar{o}_r + T^\text{end} \bar{v}_r - \sum_{i\in I_r} d_i \bar{z}_{i,r}$. It follows that $(\bar{o}_a, \bar{o}_r, \bar{g}_a, \bar{g}_r)$ and hence the second-stage objective value are non-decreasing in $\bar{q}$. This implies that $\bar{q}_i=\max \big\{\bar{s}_i, \max_{i': u_{i',i}=1} \{\bar{q}_{i'}+d_{i'}\} \big\}$, which proves the claim in \eqref{eqn_pf:prop_waiting_cost_zero_1}. Now, let $(q^*,o^*,w^*, g^*)$ and $(\qt^*,\ot^*,\wt^*,\gt^*)$ be optimal solutions to the second-stage problem \eqref{eqn:2nd_stage} with first-stage solutions $(x,y,z,u,v,s,\alpha,\beta)$ and $(x,y,z,u,v,\st,\alpha,\beta)$, respectively. Since $\st_i \leq s_i$ for all $i\in I$ (shown in part (i)), from \eqref{eqn_pf:prop_waiting_cost_zero_1}, we have $\qt^*_i \leq q^*_i$ for all $i\in I$. As a result, we have  $\ot^*_a\leq o^*_a$ and $\gt^*_a\leq g^*_a$ for all $a\in A$, and we have $\ot^*_r\leq o^*_r$ and $\gt^*_r\leq g^*_r$ for all $r\in R$. It follows that $Q(x,y,z,u,v,\st,d)\leq Q(x,y,z,u,v,s,d)$ for all $d\in\calS$. Since the two solutions have the same number of open ORs ($v$) and employed on-call anesthesiologists ($y$)  and  $Q(x,y,z,u,v,\st,d)\leq Q(x,y,z,u,v,s,d)$ holds for all $d\in\calS$, we have
\begin{align}
\sum\limits_{r\in R} f_r v_r+\sum \limits_{a\in A}f_a y_a+\varrho_{\Prob}\big(Q(x,y,z,v,u,\st,D)\big)\leq \sum \limits_{r\in R} f_r v_r+\sum\limits_{a\in A}f_a y_a+\varrho_{\Prob}\big(Q(x,y,z,v,u,s,D)\big). \label{eq:TC}
\end{align}
Note that inequality \eqref{eq:TC} holds irrespective of the choice of the risk measure and the probability distribution. This completes the proof of part (ii). \Halmos
\end{proof}


Proposition~\ref{prop:waiting_cost_zero} shows that when the waiting time components are not included in the second-stage problem of the ORASP, it is optimal to set the scheduled start time of each surgery to the start time of the anesthesiologist assigned to that surgery, i.e., $s_i=t^\textup{start}_a$ if $x_{i,a}=1$. This, in turn, implies that surgeries assigned to the same anesthesiologist are scheduled to start simultaneously, which is not possible in practice. Mathematically, if one removes the waiting time metric (i.e., $\sum_{i \in I}\cw_iw_i$) from the objective function in (2a) and constraints (2f), one can, without loss of optimality, remove the scheduled start time variables $s_i$ and constraints \eqref{eqn:1st_stage_con6-7} from the first-stage formulation and replace $s_i$ in constraints \eqref{eqn:2nd_stage_con2} with  $\sum_{a\in A_i}t^\text{start}_a x_{i,a}$. The resulting SP formulation is
\begin{subequations} 
\begin{align}
 \underset{x,\,y,\,z,\,v,\,u,\,\alpha,\,\beta}{\text{minimize}} \quad
&  \sum_{r\in R}f_r v_r + \sum_{a\in A}f_a y_a + \varrho_{\Prob}\big(\widetilde{Q}(x,y,z,v,u,D)\big)  \\
 \text{subject to\,\,\,} \quad
&  \text{\eqref{eqn:1st_stage_con1-2}--\eqref{eqn:1st_stage_con5},  \eqref{eqn:1st_stage_con8}--\eqref{eqn:1st_stage_con19},} \\
&  x_{i,a},\, y_a,\, z_{i,r},\, u_{i,i'},\, v_r,\, \alpha_{i,i',a},\,\beta_{i,i',r}\in\{0,1\}, \quad\forall  i\in I,\, a\in A,\, r\in R,
\end{align} \label{eqn:1st_stage_wo_scheduling}%
\end{subequations}%
where the second-stage problem is given by
\begin{subequations}
\begin{align}
\widetilde{Q}(x,y,z,v,u,d):=\quad \underset{q,\,o,\,g}{\text{minimize}} \quad
&  \sum_{a\in A} \Big(\cg_a g_a + \co_a o_a \Big) +\sum_{r\in R} \Big(\cg_r g_r + \co_r o_r \Big) \\
 \text{subject to} \quad
&  \text{\eqref{eqn:2nd_stage_con1}, \eqref{eqn:2nd_stage_con3}--\eqref{eqn:2nd_stage_con4}, \eqref{eqn:2nd_stage_con6}--\eqref{eqn:2nd_stage_con7},}\\
&  q_i \geq \sum_{a\in A_i} t^\text{start}_a x_{i,a},\quad\forall i\in I,\\
&  q_i,\, o_a,\, o_r,\, g_a,\, g_r \geq 0,\quad\forall i\in I,\, a\in A,\, r\in R. 
\end{align}\label{eqn:2nd_stage_wo_scheduling}%
\end{subequations}
Our analyses indicate that scheduled start time decisions are redundant in models that do not incorporate waiting time objectives, variables, and constraints in the second-stage formulation. In other words, models that do not consider the waiting time component will produce schedules that are not implementable in practice, with, for example, surgeries assigned to each anesthesiologist scheduled to start simultaneously. In contrast, our proposed models for the ORASP, which incorporate the waiting time metric, variables, and constraints in the second-stage problem, produce implementable schedules with each surgery having a specific scheduled start time, different from other surgeries sharing the same resource (anesthesiologist and OR). Note that the OR and healthcare scheduling literature have long recognized scheduling decisions as crucial planning decisions impacting patient flow and system performance. Indeed, most healthcare systems use an appointment schedule that specifies the sequence of non-overlapping scheduled start times of surgeries sharing the same resources to manage patient flow in the ORs and facilitate the coordination of different resources (e.g., anesthesiologists and ORs in the ORASP).

\color{black}

\section{Proofs and Discussions Related to \droe{} and \drocvar{} Models in Section \ref{sec:DRO_model}} \label{appdx:DRO_model_section}

\subsection{Proof of Proposition \ref{prop:mean_support_WC_exp}} \label{appdx:pf_mean_support_WC_exp}

\begin{proof}{Proof of Proposition \ref{prop:mean_support_WC_exp}}
Note that $\calS$ is compact, $\psi_0(d):=Q(x,y,z,v,u,s,d)$ and $\psi_i(d):=d_i$ for $i\in I$ are continuous functions in $d$. From Proposition 6.68 of \cite{Shapiro_et_al:2014}, strong duality holds and the worst-case expectation equals
\begin{align*}
 \underset{\rho}{\text{minimize}} \quad
&  \rho_0 + \sum_{i\in I}\rho_i m_i \\
 \text{subject to} \quad
& Q(x,y,z,v,u,s,d) - \sum_{i\in I} \rho_i d_i \leq \rho_0,\quad\forall d\in\calS.
\end{align*}
Since the constraint holds for all $d\in\calS$, we can rewrite it as 
$$\rho_0 \geq \sup_{d\in\calS} \Big\{Q(x,y,z,v,u,s,d) - \sum_{i\in I} \rho_i d_i  \Big\}.$$
As we minimize $\rho_0$ in the objective, we have that the dual problem is equivalent to \eqref{eqn:mean_support_WC_exp_obj}. \Halmos
\end{proof}

\subsection{Proof of Proposition \ref{prop:mean_support_inner_max_MILP}} \label{appdx:pf_mean_support_inner_max_MILP}

Before proceeding to the proof of Proposition \ref{prop:mean_support_inner_max_MILP}, we first derive the dual of the second-stage problem \eqref{eqn:2nd_stage} in Lemma \ref{lem:second_stage_dual}.

\begin{lemma} \label{lem:second_stage_dual}
The second-stage problem \eqref{eqn:2nd_stage} is equivalent to
\begin{subequations} 
\begin{align}
 \underset{\lambda,\,\mu,\,\theta}{\textup{maximize}} \quad
&  \sum_{a\in A} \cg_a(t^\textup{end}_a-t^\textup{start}_a)\hreg_a + \sum_{r\in R}\cg_r T^\textup{end} v_r  \nonumber \\
& \quad + \sum_{i\in I}\Bigg[\sum_{i'\in I, i'\ne i}(\lambda_{i,i'} - \lambda_{i',i}) + \sum_{a\in A_i} \mu_{i,a} + \sum_{r\in R_i} \theta_{i,r} \Bigg] s_i \nonumber \\ 
& \quad - M_\textup{seq} \sum_{i\in I}\sum_{i'\in I, i'\ne i} \lambda_{i,i'}(1-u_{i,i'}) - \sum_{i\in I}\sum_{a\in A_i}\mu_{i,a}\Big[ t^\textup{end}_a + M_\textup{anes}(1-x_{i,a}+ y_a) \Big] \nonumber \\
& \quad - \sum_{i\in I}\sum_{r\in R_i} \theta_{i,r} \Big[T^\textup{end} +M_\textup{room}(1-z_{i,r})\Big] \nonumber \\
& \quad + \sum_{i\in I} \Bigg[ \sum_{i'\in I,i'\ne i} \lambda_{i,i'} + \sum_{a\in A_i} (\mu_{i,a}- \cg_a \hreg_a x_{i,a}) + \sum_{r\in R_i}(\theta_{i,r}-\cg_r z_{i,r})\Bigg] d_i \label{eqn:2nd_stage_dual_reform_obj} \\
 \textup{subject to} \quad
&  \sum_{i\in I_a} \mu_{i,a} \leq \cg_a + \co_a,\quad\forall a\in A, \label{eqn:2nd_stage_dual_reform_con1}\\
&  \sum_{i\in I_r} \theta_{i,r} \leq \cg_r + \co_r,\quad\forall r\in R, \label{eqn:2nd_stage_dual_reform_con2} \\
&  \sum_{i'\in I, i'\ne i}(\lambda_{i,i'} - \lambda_{i',i}) + \sum_{a\in A_i} \mu_{i,a} + \sum_{r\in R_i} \theta_{i,r} + \cw_i \geq 0,\quad\forall i\in I, \label{eqn:2nd_stage_dual_reform_con3} \\
&  \lambda_{i,i'},\,\mu_{i,a},\,\theta_{i,r}\geq 0,\quad\forall i\in I,\,a\in A_i,\, r\in R_i,\,i'\in I\setminus\{i\}. \label{eqn:2nd_stage_dual_reform_con4}
\end{align} \label{eqn:2nd_stage_dual_reform}%
\end{subequations} \vspace{-5mm}%
\end{lemma}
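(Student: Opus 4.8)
The plan is to recognize \eqref{eqn:2nd_stage_dual_reform} as the LP dual of a preprocessed form of the recourse problem \eqref{eqn:2nd_stage}, and then to invoke strong LP duality, which applies because \eqref{eqn:2nd_stage} is feasible for every feasible first-stage decision (relatively complete recourse) and is bounded below by $0$ (nonnegative costs and variables). First I would eliminate the variables $w$, $g_a$, and $g_r$. Since $\cw_i,\cg_a,\cg_r\ge 0$, every optimal recourse solution satisfies $w_i=q_i-s_i$ and constraints \eqref{eqn:2nd_stage_con6}--\eqref{eqn:2nd_stage_con7} with equality, and the bounds $q_i\ge 0$, $w_i\ge 0$, $g_a\ge 0$, $g_r\ge 0$ become redundant: $q_i\ge s_i\ge 0$ and $w_i\ge q_i-s_i\ge 0$ are immediate, while the redundancy of $g_a\ge 0$ and $g_r\ge 0$ follows from the first-stage sequencing constraints. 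Indeed, \eqref{eqn:1st_stage_con8}, \eqref{eqn:1st_stage_con10}--\eqref{eqn:1st_stage_con11}, and \eqref{eqn:1st_stage_con14} force the surgeries assigned to any regular anesthesiologist $a$ into a precedence chain $i_1\to\cdots\to i_k$ with $u_{i_j,i_{j+1}}=1$, so \eqref{eqn:2nd_stage_con1}--\eqref{eqn:2nd_stage_con2} give $q_{i_k}+d_{i_k}\ge q_{i_1}+\sum_{i\in I_a}d_i x_{i,a}$ and \eqref{eqn:1st_stage_con6-7} gives $q_{i_1}\ge s_{i_1}\ge t^\text{start}_a$; combined with \eqref{eqn:2nd_stage_con3} this yields $o_a\ge t^\text{start}_a+\sum_{i\in I_a}d_i x_{i,a}-t^\text{end}_a$, i.e.\ the right-hand side of \eqref{eqn:2nd_stage_con6} is already nonnegative. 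The OR case is analogous using \eqref{eqn:1st_stage_con9}, \eqref{eqn:1st_stage_con17}, \eqref{eqn:2nd_stage_con4}, and $v_r\in\{0,1\}$, and the on-call and unopened cases are trivial since then $\hreg_a=0$ or $z_{i,r}\equiv 0$.

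After substituting these equalities and dropping the redundant bounds, \eqref{eqn:2nd_stage} becomes a linear program in $(q,o)$ (with $q$ now free) whose objective is $\sum_{a\in A}(\cg_a+\co_a)o_a+\sum_{r\in R}(\cg_r+\co_r)o_r+\sum_{i\in I}\cw_i q_i$ plus the constant $\sum_{a\in A}\cg_a(t^\text{end}_a-t^\text{start}_a)\hreg_a+\sum_{r\in R}\cg_r T^\text{end}v_r-\sum_{i\in I}d_i\big(\sum_{a\in A_i}\cg_a\hreg_a x_{i,a}+\sum_{r\in R_i}\cg_r z_{i,r}\big)-\sum_{i\in I}\cw_i s_i$, subject to \eqref{eqn:2nd_stage_con1}--\eqref{eqn:2nd_stage_con4} and $o_a,o_r\ge 0$. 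I would then write its dual, attaching multipliers $\lambda_{i,i'}\ge 0$ to \eqref{eqn:2nd_stage_con1}, $\pi_i\ge 0$ to \eqref{eqn:2nd_stage_con2}, $\mu_{i,a}\ge 0$ to \eqref{eqn:2nd_stage_con3}, and $\theta_{i,r}\ge 0$ to \eqref{eqn:2nd_stage_con4}. The dual constraints associated with $o_a$ and $o_r$ are precisely \eqref{eqn:2nd_stage_dual_reform_con1} and \eqref{eqn:2nd_stage_dual_reform_con2}; because $q_i$ is free, its dual constraint is the equality $\pi_i=\cw_i+\sum_{i'\ne i}(\lambda_{i,i'}-\lambda_{i',i})+\sum_{a\in A_i}\mu_{i,a}+\sum_{r\in R_i}\theta_{i,r}$, and $\pi_i\ge 0$ is exactly \eqref{eqn:2nd_stage_dual_reform_con3}. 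Substituting this expression for $\pi_i$ into the dual objective, where it multiplies the right-hand side $s_i$, and adding the constant recorded above makes the $\cw_i s_i$ terms cancel and leaves precisely \eqref{eqn:2nd_stage_dual_reform_obj}, completing the identification.

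The routine parts are the bookkeeping in the second paragraph, namely assembling the constant term and carrying out the dualization, which amount to standard LP-duality manipulations. The step I expect to be the main obstacle is the preprocessing, specifically establishing that the idle-time bounds $g_a\ge 0$ and $g_r\ge 0$ are redundant (equivalently, that \eqref{eqn:2nd_stage_con6}--\eqref{eqn:2nd_stage_con7} are tight at optimality): this is the only place where the precedence structure imposed by the first-stage constraints must be exploited rather than merely the recourse constraints themselves, and without it the dual would retain additional multipliers for \eqref{eqn:2nd_stage_con6}--\eqref{eqn:2nd_stage_con7} and would not collapse to the clean form \eqref{eqn:2nd_stage_dual_reform}.
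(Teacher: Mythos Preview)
Your proof is correct and follows essentially the same route as the paper: substitute out $w$ and $g$ using tightness at optimality, take the LP dual of the reduced problem in $(q,o)$, and eliminate the multiplier attached to $q_i\ge s_i$. The only notable differences are that you explicitly justify the redundancy of $g_a\ge 0$ and $g_r\ge 0$ via the precedence-chain argument (the paper simply asserts that \eqref{eqn:2nd_stage_con6}--\eqref{eqn:2nd_stage_con7} are tight at optimality without addressing this point), and that you drop the redundant bound $q_i\ge 0$ before dualizing so that $q_i$ is free and $\pi_i$ is determined by an equality, whereas the paper retains $q_i\ge 0$, obtains an inequality on the corresponding dual variable $\phi_i$, and then argues $\phi_i$ attains its upper bound since $s_i\ge 0$; both variants land at \eqref{eqn:2nd_stage_dual_reform}.
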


\begin{proof}{Proof of Lemma \ref{lem:second_stage_dual}}
First, from the second-stage problem \eqref{eqn:2nd_stage}, the optimal solution $q_i$, $o_a$ and $o_r$ give the actual surgery start time, the overtime of anesthesiologists and OR, respectively. From constraints \eqref{eqn:2nd_stage_con2} and the minimization nature of the problem, we have  $w_i=q_i-s_i$. Moreover, constraints \eqref{eqn:2nd_stage_con6}--\eqref{eqn:2nd_stage_con7} achieve equality at optimality, which characterize the idle times of anesthesiologists and ORs, respectively. Therefore, the second-stage problem \eqref{eqn:2nd_stage} is equivalent to
\begin{subequations} 
\begin{align}
 \underset{q,\,o,\,w,\,g}{\text{minimize}} \quad
&  \sum_{a\in A} \Bigg\{\cg_a \Bigg[\bigg(t^\text{end}_a - t^\text{start}_a-\sum_{i\in I_a} d_i x_{i,a}\bigg)\hreg_a + o_a \Bigg] + \co_a o_a \Bigg\} \nonumber \\
& \quad +\sum_{r\in R} \Bigg[\cg_r \bigg( T^\text{end} v_r - \sum_{i\in I_r} d_i z_{i,r} + o_r\bigg) + \co_r o_r \Bigg] + \sum_{i\in I} \cw_i (q_i-s_i) \\
 \text{subject to} \quad
&  q_{i'} \geq q_i + d_i - M_\text{seq}(1-u_{i,i'}),\quad\forall \{i,i'\}\subseteq I,\, i\ne i', \\
&  q_i \geq s_i,\quad\forall i\in I,  \\
&  o_a \geq q_i + d_i -t^\text{end}_a - M_\text{anes}(1-x_{i,a}+y_a),\quad\forall (i,a)\in\calF^A,  \\
&  o_r \geq q_i + d_i - T^\text{end} - M_\text{room}(1-z_{i,r}),\quad\forall (i,r)\in\calF^R, \\
&  q_i,\, o_a,\, o_r \geq 0,\quad\forall i\in I,\, a\in A,\, r\in R. 
\end{align}%
\end{subequations}%
By LP duality, this is equivalent to
\begin{subequations} 
\begin{align}
 \underset{\lambda,\,\mu,\,\theta,\,\phi}{\textup{maximize}} \quad
& \Bigg\{ \sum_{a\in A} \cg_a(t^\textup{end}_a-t^\textup{start}_a)\hreg_a + \sum_{r\in R}\cg_r T^\textup{end} v_r + \sum_{i\in I}(\phi_i-\cw_i) s_i \nonumber \\ 
& \quad - M_\textup{seq} \sum_{i\in I}\sum_{i'\in I, i'\ne i} \lambda_{i,i'}(1-u_{i,i'}) - \sum_{i\in I}\sum_{a\in A_i}\mu_{i,a}\Big[ t^\textup{end}_a + M_\textup{anes}(1-x_{i,a}+ y_a) \Big] \nonumber \\
& \quad - \sum_{i\in I}\sum_{r\in R_i} \theta_{i,r} \Big[T^\textup{end} +M_\textup{room}(1-z_{i,r})\Big] \nonumber \\
& \quad + \sum_{i\in I} \Bigg[ \sum_{i'\in I,i'\ne i} \lambda_{i,i'} + \sum_{a\in A_i} (\mu_{i,a}- \cg_a \hreg_a x_{i,a}) + \sum_{r\in R_i}(\theta_{i,r}-\cg_r z_{i,r})\Bigg] d_i \Bigg\}\label{eqn:2nd_stage_dual_obj} \\
 \textup{subject to} \quad
&  \sum_{i\in I_a} \mu_{i,a} \leq \cg_a + \co_a,\quad\forall a\in A, \label{eqn:2nd_stage_dual_con1}\\
&  \sum_{i\in I_r} \theta_{i,r} \leq \cg_r + \co_r,\quad\forall r\in R, \label{eqn:2nd_stage_dual_con2} \\
&  \sum_{i'\in I, i'\ne i}(\lambda_{i,i'} - \lambda_{i',i}) + \sum_{a\in A_i} \mu_{i,a} + \sum_{r\in R_i} \theta_{i,r} - \phi_i \geq -\cw_i,\quad\forall i\in I, \label{eqn:2nd_stage_dual_con3} \\
&  \lambda_{i,i'},\,\mu_{i,a},\,\theta_{i,r},\,\phi_i\geq 0,\quad\forall i\in I,\,a\in A_i,\, r\in R_i,\,i'\in I\setminus\{i\}. \label{eqn:2nd_stage_dual_con4}
\end{align} \label{eqn:2nd_stage_dual}%
\end{subequations}%
Note that from constraints \eqref{eqn:2nd_stage_dual_con3}, we have
$$0\leq \phi_i \leq \sum_{i'\in I, i'\ne i}(\lambda_{i,i'} - \lambda_{i',i}) + \sum_{a\in A_i} \mu_{i,a} + \sum_{r\in R_i} \theta_{i,r}  + \cw_i,\quad\forall i\in I.$$
Since we maximize $\phi_i s_i$ with $s_i \geq 0$ in the objective, the optimal solution  $\phi^*_i$ is the upper bound derived from constraints \eqref{eqn:2nd_stage_dual_con3}. This shows the equivalence of \eqref{eqn:2nd_stage_dual} and \eqref{eqn:2nd_stage_dual_reform}. \Halmos
\end{proof}

\begin{proof}{Proof of Proposition \ref{prop:mean_support_inner_max_MILP}}
Substituting \eqref{eqn:2nd_stage_dual_reform} into \eqref{eqn:mean_support_WC_exp_obj}, we have
\begin{subequations} 
\begin{align}
 \underset{\lambda,\,\mu,\,\theta,\,d}{\textup{maximize}} \quad
&  \sum_{a\in A} \cg_a(t^\textup{end}_a-t^\textup{start}_a)\hreg_a + \sum_{r\in R}\cg_r T^\textup{end} v_r  \nonumber \\
& \quad + \sum_{i\in I}\Bigg[\sum_{i'\in I, i'\ne i}(\lambda_{i,i'} - \lambda_{i',i}) + \sum_{a\in A_i} \mu_{i,a} + \sum_{r\in R_i} \theta_{i,r} \Bigg] s_i \nonumber \\ 
& \quad - M_\textup{seq} \sum_{i\in I}\sum_{i'\in I, i'\ne i} \lambda_{i,i'}(1-u_{i,i'}) - \sum_{i\in I}\sum_{a\in A_i}\mu_{i,a}\Big[ t^\textup{end}_a + M_\textup{anes}(1-x_{i,a}+ y_a) \Big] \nonumber \\
& \quad - \sum_{i\in I}\sum_{r\in R_i} \theta_{i,r} \Big[T^\textup{end} +M_\textup{room}(1-z_{i,r})\Big] \nonumber \\
& \quad + \sum_{i\in I} \Bigg[ \sum_{i'\in I,i'\ne i} \lambda_{i,i'} + \sum_{a\in A_i} (\mu_{i,a}- \cg_a \hreg_a x_{i,a}) + \sum_{r\in R_i}(\theta_{i,r}-\cg_r z_{i,r}) - \rho_i\Bigg] d_i \label{eqn:mean_support_inner_max_naive_obj} \\
 \textup{subject to} \quad
&  \text{\eqref{eqn:2nd_stage_dual_reform_con1}--\eqref{eqn:2nd_stage_dual_reform_con4}},\label{eqn:mean_support_inner_max_naive_con1} \\
&  \dlb_i \leq d_i\leq \dub_i,\quad\forall i\in I. \label{eqn:mean_support_inner_max_naive_con2}
\end{align} \label{eqn:mean_support_inner_max_naive}%
\end{subequations}%
Problem \eqref{eqn:mean_support_inner_max_naive} is not linear due to the quadratic terms between the dual variables and $d_i$ in the objective function. Note that we can first perform maximization over $d$, i.e.,
\begin{subequations} 
\begin{align}
 \underset{d}{\textup{maximize}} \quad
&  \sum_{i\in I} \Bigg[ \sum_{i'\in I,i'\ne i} \lambda_{i,i'} + \sum_{a\in A_i} (\mu_{i,a}- \cg_a \hreg_a x_{i,a}) + \sum_{r\in R_i}(\theta_{i,r}-\cg_r z_{i,r}) - \rho_i\Bigg] d_i \label{eqn:mean_support_inner_max_d_obj} \\
 \textup{subject to} \quad
&  \dlb_i \leq d_i\leq \dub_i,\quad\forall i\in I. \label{eqn:mean_support_inner_max_d_con}
\end{align} \label{eqn:mean_support_inner_max_d}%
\end{subequations}%
Problem \eqref{eqn:mean_support_inner_max_d} is separable in $i$ and the objective function is linear in $d_i$. Therefore, either $\dlb_i$ or $\dub_i$ is an optimal solution. Hence, problem \eqref{eqn:mean_support_inner_max_d} is equivalent to the binary program
\begin{subequations} 
\begin{align}
 \underset{b}{\textup{maximize}} \quad
&  \sum_{i\in I} \Bigg[ \sum_{i'\in I,i'\ne i} \lambda_{i,i'} + \sum_{a\in A_i} (\mu_{i,a}- \cg_a \hreg_a x_{i,a}) + \sum_{r\in R_i}(\theta_{i,r}-\cg_r z_{i,r}) - \rho_i\Bigg] (\dlb_i + b_i \Delta d_i) \label{eqn:mean_support_inner_max_d_IP_obj} \\
 \textup{subject to} \quad
&  b_i\in\{0,1\},\quad\forall i\in I, \label{eqn:mean_support_inner_max_d_IP_con}
\end{align} \label{eqn:mean_support_inner_max_d_IP}%
\end{subequations}%
where $\Delta d_i=\dub_i - \dlb_i$. With the reformulation \eqref{eqn:mean_support_inner_max_d_IP}, problem \eqref{eqn:mean_support_inner_max_naive} is equivalent to  
\begin{subequations} 
\begin{align}
 \underset{\lambda,\,\mu,\,\theta,\,b}{\textup{maximize}} \quad
&  \sum_{a\in A} \cg_a(t^\textup{end}_a-t^\textup{start}_a)\hreg_a + \sum_{r\in R}\cg_r T^\textup{end} v_r  \nonumber \\
& \quad + \sum_{i\in I}\Bigg[\sum_{i'\in I, i'\ne i}(\lambda_{i,i'} - \lambda_{i',i}) + \sum_{a\in A_i} \mu_{i,a} + \sum_{r\in R_i} \theta_{i,r} \Bigg] s_i \nonumber \\ 
& \quad - M_\textup{seq} \sum_{i\in I}\sum_{i'\in I, i'\ne i} \lambda_{i,i'}(1-u_{i,i'}) - \sum_{i\in I}\sum_{a\in A_i}\mu_{i,a}\Big[ t^\textup{end}_a + M_\textup{anes}(1-x_{i,a}+ y_a) \Big] \nonumber \\
& \quad - \sum_{i\in I}\sum_{r\in R_i} \theta_{i,r} \Big[T^\textup{end} +M_\textup{room}(1-z_{i,r})\Big] \nonumber \\
& \quad + \sum_{i\in I} \Bigg[ \sum_{i'\in I,i'\ne i} \lambda_{i,i'} + \sum_{a\in A_i} (\mu_{i,a}- \cg_a \hreg_a x_{i,a}) + \sum_{r\in R_i}(\theta_{i,r}-\cg_r z_{i,r}) - \rho_i\Bigg] (\dlb_i + b_i\Delta d_i) \label{eqn:mean_support_inner_max_MINLP_obj} \\
 \textup{subject to} \quad
&  \text{\eqref{eqn:2nd_stage_dual_reform_con1}--\eqref{eqn:2nd_stage_dual_reform_con4}},\label{eqn:mean_support_inner_max_MINLP_con1} \\
&  b_i\in\{0,1\},\quad\forall i\in I. \label{eqn:mean_support_inner_max_MINLP_con2}
\end{align} \label{eqn:mean_support_inner_max_MINLP}%
\end{subequations}%
Finally, to reformulate it into an MILP, we introduce auxiliary variables $\zeta^L_{i,i'}=\lambda_{i,i'} b_i$, $\zeta^M_{i,a}=\mu_{i,a} b_i$ and $\zeta^T_{i,r}=\theta_{i,r} b_i$ with McCormick inequalities given by \eqref{eqn:mean_support_inner_max_MILP_con4}--\eqref{eqn:mean_support_inner_max_MILP_con6}. The existence of the upper bounds of dual variables for McCormick inequalities will be shown in Proposition \ref{prop:dual_var_UB}. \Halmos
\end{proof}

\subsection{Proof of Proposition \ref{prop:dual_var_UB}} \label{appdx:pf_dual_var_UB}

To obtain an upper bound for the dual variables, we first derive the complementary properties of the optimal dual solutions in Lemma \ref{lem:2nd_stage_complementarity}.

\begin{lemma} \label{lem:2nd_stage_complementarity}
For a given first-stage decision $(x,y,z,v,u,s)$ and realization $d$, let $(\lambda^*,\,\mu^*,\,\theta^*)$ be an optimal solution to \eqref{eqn:2nd_stage_dual_reform}. Then, we have the following equations:
\begin{subequations} 
\begin{align}
\mu^*_{i,a}(1-x_{i,a}+y_a) &= 0,\quad\forall (i,a)\in\calF^A, \\
\theta^*_{i,r}(1-z_{i,r}) &=0,\quad\forall (i,r)\in\calF^R, \\
\lambda^*_{i,i'}(1-u_{i,i'}) &= 0,\quad\forall i\in I,\, i'\in I\setminus\{i\}.
\end{align} 
\end{subequations}%
\end{lemma}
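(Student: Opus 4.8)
The plan is to prove all three identities at once via linear programming complementary slackness, applied to the (simplified) primal recourse problem obtained in the proof of Lemma~\ref{lem:second_stage_dual} --- the one with variables $q,\,o_a,\,o_r$ and constraints \eqref{eqn:2nd_stage_con1}, \eqref{eqn:2nd_stage_con2}, \eqref{eqn:2nd_stage_con3}, \eqref{eqn:2nd_stage_con4} together with nonnegativity --- and its dual \eqref{eqn:2nd_stage_dual_reform} (equivalently \eqref{eqn:2nd_stage_dual}, which carries the same optimal $(\lambda,\mu,\theta)$). First I would fix an optimal primal solution $(q^*,o^*)$ and an optimal dual solution $(\lambda^*,\mu^*,\theta^*)$. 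The variable $\lambda_{i,i'}$ is dual to constraint \eqref{eqn:2nd_stage_con1}, $\mu_{i,a}$ to \eqref{eqn:2nd_stage_con3}, and $\theta_{i,r}$ to \eqref{eqn:2nd_stage_con4}, so complementary slackness says that whenever one of these duals is strictly positive the matching primal constraint is tight at $(q^*,o^*)$. Each of the three claimed products is automatically zero on the trivial branch --- when $x_{i,a}=1,\,y_a=0$ so that $1-x_{i,a}+y_a=0$; when $z_{i,r}=1$; when $u_{i,i'}=1$ --- so it remains to rule out the other branch under the assumption that the corresponding dual variable is positive.

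Consider $\mu^*_{i,a}>0$ with $1-x_{i,a}+y_a\ge 1$. Complementary slackness forces $o^*_a=q^*_i+d_i-t^\text{end}_a-M_\text{anes}(1-x_{i,a}+y_a)\le q^*_i+d_i-t^\text{end}_a-M_\text{anes}$, and plugging in $M_\text{anes}=T^\text{end}+\sum_{j\in I}\dub_j-\min_{a'\in A}t^\text{end}_{a'}$, the bound $q^*_i+d_i\le T^\text{end}+\sum_{j\in I}\dub_j$ from Lemma~\ref{lem:big_M_actual}, and $t^\text{end}_a\ge\min_{a'\in A}t^\text{end}_{a'}$ gives $o^*_a\le\min_{a'\in A}t^\text{end}_{a'}-t^\text{end}_a\le 0$. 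Identical computations handle the other two cases: if $\theta^*_{i,r}>0$ with $z_{i,r}=0$ then $o^*_r=q^*_i+d_i-T^\text{end}-M_\text{room}\le 0$ using $M_\text{room}=\sum_{j\in I}\dub_j$; and if $\lambda^*_{i,i'}>0$ with $u_{i,i'}=0$ then $q^*_{i'}=q^*_i+d_i-M_\text{seq}\le 0$ using $M_\text{seq}=T^\text{end}+\sum_{j\in I}\dub_j$. Combined with $o^*_a,\,o^*_r,\,q^*_{i'}\ge 0$, each of these forces the value to be exactly $0$ and the underlying chain of inequalities to be tight.

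To finish, I would convert these tight chains into contradictions. The equalities force $q^*_i+d_i$ to equal its a priori maximum $T^\text{end}+\sum_{j\in I}\dub_j$, which by the argument behind Lemma~\ref{lem:big_M_actual} can only occur when every surgery is chained by $u$ behind surgery $i$ with realized duration equal to $\dub_j$ and the head of the chain scheduled at $T^\text{end}$; such a schedule is never optimal for the recourse problem when the waiting and overtime penalties are positive (it realizes the maximal possible waiting and overtime), so the inequalities above are in fact strict and we contradict nonnegativity of $o^*_a$, $o^*_r$, $q^*_{i'}$. A cleaner alternative I would actually present is to take the big-$M$ constants strictly larger than the thresholds in \ref{appdx:big_M} (the formulation remains valid, cf.\ Remark~\ref{appdx:rem_big_M}), which makes $o^*_a<0$, $o^*_r<0$, $q^*_{i'}<0$ outright. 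Either way we conclude $\mu^*_{i,a}=0$ unless $1-x_{i,a}+y_a=0$, $\theta^*_{i,r}=0$ unless $z_{i,r}=1$, and $\lambda^*_{i,i'}=0$ unless $u_{i,i'}=1$, which are exactly the three stated identities. The one delicate point, and the main obstacle, is precisely this degenerate boundary case where $q^*_i+d_i$ attains its maximum and the stated big-$M$ values yield only a weak inequality; all the rest is routine complementary slackness plus substitution of the explicit big-$M$ expressions and Lemma~\ref{lem:big_M_actual}.
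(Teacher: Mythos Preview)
Your proposal is correct and follows essentially the same complementary-slackness route as the paper: when the binary switch is ``off'' the big-$M$ forces the corresponding primal constraint to be slack, hence the dual variable must vanish (you phrase this as the contrapositive --- assume the dual is positive, conclude the constraint is tight, derive a contradiction with nonnegativity --- but it is the same argument). The only difference is that the paper dispatches your ``delicate boundary case'' in one clause by invoking ``a sufficiently large choice of the big $M$ parameters,'' which is exactly your cleaner alternative of taking the constants strictly above the thresholds in \ref{appdx:big_M}; your more elaborate first option (arguing the degenerate extreme schedule cannot be optimal) is unnecessary once you allow that slack in the constants.
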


\begin{proof}{Proof of Lemma \ref{lem:2nd_stage_complementarity}}
 For brevity, we only argue that $\lambda^*_{i,i'}(1-u_{i,i'})=0$ and the remaining two equations can be derived in the same manner. If $u_{i,i'}=1$, then the equation holds immediately. Consider the case that $u_{i,i'}\ne 0$. Then, by a sufficiently large choice of the big $M$ parameters (see \ref{appdx:big_M}), the slack variable associated to constraint \eqref{eqn:2nd_stage_con1} is strictly positive. By the complementary slackness condition, the dual variable $\lambda^*_{i,i'}$ is zero. This completes the proof. \Halmos
\end{proof}

\begin{proof}{Proof of Proposition \ref{prop:dual_var_UB}}
The bounds for $\mu_{i,a}$ follow immediately from the its non-negativity constraint \eqref{eqn:2nd_stage_dual_con4} and the constraint \eqref{eqn:2nd_stage_dual_con1}. Similarly, the bounds for $\theta_{i,r}$ follow from constraints \eqref{eqn:2nd_stage_dual_con2} and \eqref{eqn:2nd_stage_dual_con4}. For $\lambda_{i,i'}$, the lower bound follows from \eqref{eqn:2nd_stage_dual_con4}. 

Next, we derive the upper bound for $\lambda_{i,i'}$. Consider a given realization $d$ and denote the actual surgery start time as $q_i$ for $i\in I$. Note that this is known when given a feasible first-stage decision. Without loss of generality, let $q_1 \geq q_2 \geq \dots \geq q_{|I|}$. Define $C_i = \cw_i+\sum_{a\in A_i}\mu_{i,a} + \sum_{r\in R_i} \theta_{i,r}$ for $i\in I$. From Lemma \ref{lem:2nd_stage_complementarity}, at optimality, we immediately have $\lambda_{i,i'}=0$ for any $i'\geq i$ (since surgery $i$ cannot precede surgery $i'$). Using this observation, the constraints \eqref{eqn:2nd_stage_dual_con3} read as follows:
\begin{align*}
\sum_{i'\geq 2}\lambda_{i',1} &\leq \cw_1 + \sum_{a\in A_1}\mu_{1,a} + \sum_{r\in R_1}\theta_{1,r}, \\
\sum_{i'\geq 3}\lambda_{i',2} &\leq \lambda_{2,1} + \cw_2 + \sum_{a\in A_2}\mu_{2,a} + \sum_{r\in R_2}\theta_{2,r}, \\
\sum_{i'\geq 4}\lambda_{i',3} &\leq \lambda_{3,1} + \lambda_{3,2} + \cw_3 + \sum_{a\in A_3}\mu_{3,a} + \sum_{r\in R_3}\theta_{3,r}, \\
& \vdots \\
\sum_{i'\geq |I|}\lambda_{i',|I|-1} &\leq \lambda_{|I|-1,1} + \cdots + \lambda_{|I|-1,|I|-2} +  \cw_{|I|-1} + \sum_{a\in A_{|I|-1}}\mu_{|I|-1,a} + \sum_{r\in R_{|I|-1}}\theta_{|I|-1,r}.
\end{align*}
Note that the constraint for $|I|$ (i.e., the last inequality) holds automatically since the summation on the left is zero. 
%
%
If we sum from the first to the $j$th inequalities, we obtain
$$\sum_{i=1}^j \sum_{i'=j+1}^{|I|} \lambda_{i',i} \leq \sum_{i=1}^j C_i,\quad\forall j\in\{1,\dots,|I|-1\}.$$
Note that all $\lambda_{i,i'}$ with $i'<i$ appear in at least one of the above $|I|-1$ inequalities. Since $\lambda_{i,i'}\geq 0$, this concludes that an upper bound for $\lambda_{i,i}$ is given by
\begin{align*}
\sum_{i=1}^{|I|} C_i &= \sum_{i\in I} \cw_i + \sum_{i\in I}\sum_{a\in A_i} \mu_{i,a} + \sum_{i\in I}\sum_{r\in R_i}\theta_{i,r} \\
&\leq \sum_{i\in I} \cw_i + \sum_{a\in A} (\cg_a + \co_a) + \sum_{r\in R}(\cg_r + \co_r) ,
\end{align*}
where the last inequality follows from constraints \eqref{eqn:2nd_stage_dual_con1} and \eqref{eqn:2nd_stage_dual_con2}. \Halmos
\end{proof}

\begin{remark}
From the proof of Proposition \ref{prop:dual_var_UB}, we could obtain a tighter upper bound for $\lambda$ by summing over only the largest $|I|-1$ constants $C$. That is, 
$$\lambda_{i,i'} \leq \Bigg(\sum_{i\in I} \cw_i - \min_{i\in I} \cw_i \Bigg) + \sum_{a\in A}(\cg_a+\co_a) + \sum_{r\in R} (\cg_r + \co_r).$$
In particular, if $\cw_i=c^W$ for all $i\in I$, $\cg_a=\cg_A$, $\co_a=\co_A$ for all $a\in A$ and $\cg_r=\cg_R$, $\co_r=\co_R$ for all $r\in R$, the upper bound is given by
$$\lambda_{i,i'} \leq \Big( |I|-1 \Big) \cw + |A| (\cg_A+\co_A) + |R| (\cg_R + \co_R).$$
In Example \ref{appdx:eg_tightUB}, we show that this bound is the tightest possible constant upper bound of $\lambda_{i,i'}$.
\end{remark}

\begin{example} \label{appdx:eg_tightUB}
Suppose we have one anesthesiologist, two ORs and five surgeries. Their schedules (i.e., first-stage decisions) and the surgery durations are given as follows. \par \vspace{2mm}
\makebox[3cm]{Anesthesiologist 1}: $5\rightarrow 4\rightarrow 3\rightarrow 2\rightarrow 1$ \par
\makebox[3cm]{Operating room 1}: $5\rightarrow 4\rightarrow 3$ \qquad\qquad\qquad \makebox[3cm]{Operating room 2}: $2\rightarrow 1$  \vspace{-2.5mm}
$$\hspace{-6cm} s = (400, 300, 200, 100, 0) \quad \quad \quad d = (150, 160, 170, 180, 190)$$
Assume the cost structure $c^W=100$, $\cg_A=30$, $\co_A=150$, $\cg_R=20$ and $\co_R=450$. We can solve the second-stage model to obtain the optimal dual solutions, which are given by
$$\OneAndAHalfSpacedXI
\mu = \begin{pmatrix} 180 \\ 0 \\ 0 \\ 0 \\ 0\end{pmatrix}, \quad 
\theta = \begin{pmatrix} 0 & 470 \\ 0 & 0\\ 470 & 0 \\ 0 & 0\\ 0 & 0 \end{pmatrix}, \quad 
\lambda = \begin{pmatrix} 0 & 0 & 0 & 0 & 0 \\ 750 & 0 & 0 & 0 & 0 \\ 0 & 850 & 0 & 0 & 0 \\ 0 & 0 & 1420 & 0 & 0 \\ 0 & 0 & 0 & 1520 & 0  \end{pmatrix}.
$$
Note that the upper bound on $\lambda$ we derived in Proposition \ref{prop:dual_var_UB} is 
$$\Big( |I|-1 \Big) \cw + |A| (\cg_A+\co_A) + |R| (\cg_R + \co_R) = 4(100)+(30+150)+2(20+450)=1520.$$
This shows that the upper bound on $\lambda$ we obtain in the proof of Proposition \ref{prop:dual_var_UB} is tight. It is straightforward to see that the upper bounds on $\mu$ and $\theta$ are tight as well. 
\end{example}

\subsection{Proof of Proposition \ref{prop:mean_support_WC_CVaR}} \label{appdx:pf_mean_support_WC_CVaR}
\begin{proof}{Proof of Proposition \ref{prop:mean_support_WC_CVaR}}
 For notational simplicity, we suppress the dependence of $\Prob$ in $\E_{\Prob}(\cdot)$ and $\Prob\mhyphen\CVaR_\gamma(\cdot)$ in the proof. First, recall the definition of CVaR \citep{Rockafellar_Uryasev:2000}:
\begin{equation} \label{eqn:CVaR_def}
    \CVaR_\gamma\big(Z)=\min_{\tau\in\R} \Bigg\{ \tau + \frac{1}{1-\gamma}\E\big[ \max\{Z-\tau,0\}\big] \Bigg\}.
\end{equation}
With this definition, we have
\begin{equation}
\sup_{\Prob\in\calP(m,\calS)} \CVaR_\gamma\big(Q(x,y,z,v,u,s,d)\big)= \sup_{\Prob\in\calP(m,\calS)} \min_{\tau\in\R} \Bigg\{ \tau + \frac{1}{1-\gamma}\E\big[ \max\{Q(x,y,z,v,u,s,d)-\tau,0\}\big] \Bigg\}.
\end{equation}
The objective function is convex in $\tau$ and concave in $\Prob$. Moreover, the set $\calP(m,\calS)$ is weakly compact (under the topology of weak convergence of probability measures) (see, e.g., \cite{Sun_Xu:2016ec}). Then, by Sion's minimax theorem \citep{Sion:1958ec},  
\begin{align}
\sup_{\Prob\in\calP(m,\calS)} \CVaR_\gamma\big(Q(x,y,z,v,u,s,d)\big)&= \sup_{\Prob\in\calP(m,\calS)} \min_{\tau\in\R} \Bigg\{ \tau + \frac{1}{1-\gamma}\E\big[ \max\{Q(x,y,z,v,u,s,d)-\tau,0\}\big] \Bigg\} \nonumber \\
&= \min_{\tau\in\R} \Bigg\{ \tau + \frac{1}{1-\gamma} \sup_{\Prob\in\calP(m,\calS)} \E\big[ \max\{Q(x,y,z,v,u,s,d)-\tau,0\}\big] \Bigg\}. \label{eqn:mean_support_CVaR_inner}
\end{align}
Applying the same argument for strong duality of the worst-case expectation problem in Proposition \ref{prop:mean_support_WC_exp}, we have that $\sup_{\Prob\in\calP(m,\calS)} \E\big[ \max\{Q(x,y,z,v,u,s,d)-\tau,0\}\big]$ is equivalent to
\begin{subequations}
\begin{align}
 \underset{\rho_0\in\R,\,\rho\in\R^I}{\text{minimize}} & \quad
\rho_0 + \sum_{i\in I}\rho_i m_i \\
 \text{subject to} &   \quad \rho_0 + \sum_{i\in I}\rho_i d_i\geq Q(x,y,z,v,u,s,d)-\tau ,\quad\forall d\in\calS,\label{eqn:mean_support_CVaR_model_reform2_C1} \\
 & \quad \rho_0 + \sum_{i\in I}\rho_i d_i\geq 0 ,\quad\forall d\in\calS, \label{eqn:mean_support_CVaR_model_reform2_C2}
\end{align}  \label{eqn:mean_support_CVaR_model_reform2}%
\end{subequations}
where \eqref{eqn:mean_support_CVaR_model_reform2_C1} and \eqref{eqn:mean_support_CVaR_model_reform2_C2} follows from the definition of $\max \{\cdot, 0\}$. Note that these constraints hold for all $d\in\calS$. Therefore, we can reformulate \eqref{eqn:mean_support_CVaR_model_reform2} as
\begin{subequations}
\begin{align}
 \underset{\rho_0\in\R,\,\rho\in\R^I}{\text{minimize}} & \quad
\rho_0 + \sum_{i\in I}\rho_i m_i \\
 \text{subject to} &   \quad \tau \geq \max_{d\in\calS} \Bigg\{Q(x,y,z,v,u,s,d)-\sum_{i\in I} \rho_id_i \Bigg\} - \rho_0, \\
 & \quad \rho_0 + \min_{d\in\calS} \sum_{i\in I}\rho_i d_i\geq 0.
\end{align}  \label{eqn:mean_support_CVaR_model_reform3}%
\end{subequations}
Combining \eqref{eqn:mean_support_CVaR_model_reform3} with the outer minimization problem in $\tau$ in \eqref{eqn:mean_support_CVaR_inner}, we derive the following equivalent reformulation of $\sup_{\Prob\in\calP(m,\calS)} \CVaR_\gamma\big(Q(x,y,z,v,u,s,d)\big)$:
%
\begin{subequations}
\begin{align}
 \underset{\tau\in\R,\,\rho_0\in\R,\,\rho\in\R^I}{\text{minimize}} & \quad
\tau + \frac{1}{1-\gamma} \Bigg(\rho_0 + \sum_{i\in I}\rho_i m_i \Bigg)\\
 \text{subject to} &   \quad \tau \geq \max_{d\in\calS} \Bigg\{Q(x,y,z,v,u,s,d)-\sum_{i\in I} \rho_id_i \Bigg\} - \rho_0, \label{eqn:mean_support_CVaR_model_reform4_con1} \\
 & \quad \rho_0 + \min_{d\in\calS} \sum_{i\in I}\rho_i d_i\geq 0. 
\end{align}  \label{eqn:mean_support_CVaR_model_reform4}%
\end{subequations}
Note that for any given $\rho_0\in\R$ and $\rho\in\R^I$, the optimal solution $\tau^*=\max_{d\in\calS} \big\{Q(x,y,z,v,u,s,d)-\sum_{i\in I} \rho_id_i \big\} - \rho_0$ from constraint \eqref{eqn:mean_support_CVaR_model_reform4_con1}. Therefore, we can reformulate  problem \eqref{eqn:mean_support_CVaR_model_reform4} as
\begin{subequations}
\begin{align}
 \underset{\rho_0\in\R,\,\rho\in\R^I}{\text{minimize}} & \quad
\Bigg(\frac{1}{1-\gamma}-1\Bigg)\rho_0 + \frac{1}{1-\gamma} \sum_{i\in I}\rho_i m_i + \max_{d\in\calS} \Bigg\{Q(x,y,z,v,u,s,d)-\sum_{i\in I} \rho_id_i \Bigg\}\\
 \text{subject to} & \quad \rho_0 + \min_{d\in\calS} \sum_{i\in I}\rho_i d_i\geq 0. \label{eqn:mean_support_CVaR_model_reform5_con1}
\end{align}  \label{eqn:mean_support_CVaR_model_reform5}%
\end{subequations}
Finally, by LP duality, we have
$$\min_{d\in\calS} \sum_{i\in I}\rho_id_i=\max_{\psi\in\Psi(\rho)} \sum_{i\in I} \big(\dlb_i\underline{\psi}_i-\dub_i\overline{\psi}_i\big),$$
where $\Psi(\rho)=\{(\underline{\psi}_i,\overline{\psi}_i)\in\R_+^I \times \R_+^I \mid \underline{\psi}_i-\overline{\psi}_i=\rho_i,\,\forall i\in I\}$.
Therefore, if there exists $\psi\in\Psi(\rho)$ such that $\rho_0 + \sum_{i\in I} \big(\dlb_i\underline{\psi}_i-\dub_i\overline{\psi}_i\big) \geq 0$, constraint \eqref{eqn:mean_support_CVaR_model_reform5_con1} holds, and vice versa. Hence, we can derive the following reformulation of $\sup_{\Prob\in\calP(m,\calS)} \CVaR_\gamma\big(Q(x,y,z,v,u,s,d)\big)$:
\begin{subequations}
\begin{align}
 \underset{\rho_0\in\R,\,\rho\in\R^I,(\underline{\psi}_i,\overline{\psi}_i)\in\R^I \times \R^I}{\text{minimize}} & \quad
\Bigg(\frac{1}{1-\gamma}-1\Bigg)\rho_0 + \frac{1}{1-\gamma} \sum_{i\in I}\rho_i m_i + \max_{d\in\calS} \Bigg\{Q(x,y,z,v,u,s,d)-\sum_{i\in I} \rho_id_i \Bigg\}\\
 \text{subject to\hspace{10mm}} & \quad \rho_0 + \sum_{i\in I} \big(\dlb_i\underline{\psi}_i-\dub_i\overline{\psi}_i\big) \geq 0, \label{eqn:mean_support_CVaR_model_final_reform_con1}\\
 &\quad \underline{\psi}_i-\overline{\psi}_i=\rho_i,\quad\forall i\in I, \label{eqn:mean_support_CVaR_model_final_reform_con2}\\
 &\quad \underline{\psi}_i\geq 0,\, \overline{\psi}_i\geq 0,\quad\forall i\in I. \label{eqn:mean_support_CVaR_model_final_reform_con3}
\end{align}  \label{eqn:mean_support_CVaR_model_final_reform}%
\end{subequations}
This completes the proof. \Halmos
\end{proof}

\begin{remark}
From constraint \eqref{eqn:mean_support_CVaR_model_reform5}, we note that $\rho_0^*= -  \min_{d\in\calS} \sum_{i\in I}\rho_i d_i$ is an optimal solution since the coefficient associated to $\rho_0$ in the objective is $1/(1-\gamma)-1 \geq 0$ for $\gamma\in(0,1)$. Therefore, an equivalent reformulation of $\sup_{\Prob\in\calP(m,\calS)} \CVaR_\gamma\big(Q(x,y,z,v,u,s,d)\big)$ without going through LP duality is
\begin{equation} \label{eqn:mean_support_CVaR_model_final_reform2}
\min_{\rho\in\R^I} \Bigg\{\frac{1}{1-\gamma} \sum_{i\in I}\rho_i m_i -   \Bigg(\frac{1}{1-\gamma}-1\Bigg)  \min_{d\in\calS} \sum_{i\in I}\rho_i d_i + \max_{d\in\calS} \Bigg\{Q(x,y,z,v,u,s,d)-\sum_{i\in I} \rho_id_i \Bigg\}  \Bigg\}.
\end{equation}
However, this does not facilitate the use of the C\&CG algorithm (see Section \ref{subsec:C&CG}) since we have two optimization problems inside the objective function. Nevertheless, we will use \eqref{eqn:mean_support_CVaR_model_final_reform2} to verify the separability of the \drocvar{} model.

\end{remark}

\section{Proofs and Discussions on Valid Inequalities for the \droe{} and \drocvar{} Models} \label{appdx:VI_DRO}

\subsection{Valid Inequalities for the Recourse Function} \label{appdx:VI_recourse_mean_LB}
\begin{lemma} \label{lem:VI_recourse_mean_LB_DRO_E}
The following lower-bounding inequalities are valid for the \droe{} model.
\begin{subequations} 
\begin{align}
 &  \sum_{i\in I} m_i\rho_i + \delta \geq \sum_{a\in A} (\co_a o^\text{m}_a + \cg_a g^\text{m}_a) + \sum_{r\in R} (\co_r o^\text{m}_r + \cg_r g^\text{m}_r)+\sum_{i\in I} \cw_i w^\text{m}_i , \label{eqn:mom_recourse_LB_con1}\\
 & q^\text{m}_{i'} \geq q^\text{m}_i + m_i - M_\text{seq} (1 - u_{i,i'}),\quad\forall \{i,\,i'\}\subseteq I,\, i'\ne i, \label{eqn:mom_recourse_LB_con2}\\
 & q^\text{m}_i \geq s_i,\quad\forall i\in I, \label{eqn:mom_recourse_LB_con3}\\
 & o^\text{m}_a \geq q^\text{m}_i + m_i - t^\text{end}_a - M_\text{anes}(1-x_{i,a} + y_a),\quad\forall (i,a)\in \calF^A, \label{eqn:mom_recourse_LB_con4}\\
 & o^\text{m}_r \geq q^\text{m}_i + m_i - T^\text{end} - M_\text{room}(1 - z_{i,r}),\quad\forall (i,r)\in \calF^R, \label{eqn:mom_recourse_LB_con5}\\
 & w^\text{m}_i \geq q^\text{m}_i - s_i,\quad\forall i\in I, \label{eqn:mom_recourse_LB_con6}\\
 & g^\text{m}_a \geq \Bigg[(t^\textup{end}_a - t^\textup{start}_a) - \sum_{i\in I_a} m_i x_{i,a} + o^\text{m}_a\Bigg] g_a, \quad\forall (i,a)\in\calF^A, \label{eqn:mom_recourse_LB_con7}\\
 & g^\text{m}_r \geq T^\textup{end}v_r - \sum_{i\in I_r} m_i z_{i,r} + o^\text{m}_r, \quad\forall (i,r)\in\calF^R, \label{eqn:mom_recourse_LB_con8}\\
 & q^\text{m}_i,\,o^\text{m}_a,\,o^\text{m}_r,\, w^\text{m}_i,\,\,g^\text{m}_a,\,g^\text{m}_r \geq 0,\quad\forall i\in I,\, a\in A,\, r\in R. \label{eqn:mom_recourse_LB_con9}
\end{align} \label{eqn:mom_recource_LB}%
\end{subequations}%
\vspace{-10mm}
\end{lemma}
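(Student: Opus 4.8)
The plan is to show that the linear system \eqref{eqn:mom_recource_LB} is nothing but a re-encoding of the elementary lower bound \eqref{eqn:global_LB} in terms of the recourse LP \eqref{eqn:2nd_stage} evaluated at the mean vector, so that appending it cannot cut off any point feasible to the \droe{} reformulation \eqref{eqn:mean_support_final} (or to its master relaxation \eqref{eqn:mean_support_master}).

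First I would record that $m\in\calS$, since $\dlb_i\le m_i\le\dub_i$ for all $i\in I$; hence the Dirac measure at $m$ belongs to $\calP(m,\calS)$, which is exactly how \eqref{eqn:global_LB} yields $\sup_{\Prob\in\calP(m,\calS)}\E_\Prob[Q(x,y,z,v,u,s,D)]\ge Q(x,y,z,v,u,s,m)=:L$. I would then transfer this to the variables of the reformulation: by Propositions \ref{prop:mean_support_WC_exp} and \ref{prop:mean_support_inner_max_MILP}, any feasible $(x,y,z,v,u,s,\rho,\delta)$ of \eqref{eqn:mean_support_final} satisfies $\delta\ge H(x,y,z,v,u,s,\rho)=\sup_{d\in\calS}\bigl(Q(x,y,z,v,u,s,d)-\sum_{i\in I}\rho_i d_i\bigr)$, and evaluating this supremum at the feasible choice $d=m$ gives $\delta\ge Q(x,y,z,v,u,s,m)-\sum_{i\in I}\rho_i m_i$, i.e. $\sum_{i\in I}\rho_i m_i+\delta\ge L$. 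The identical argument applies in the master problem \eqref{eqn:mean_support_master}; note that $d=m$ need not belong to the current scenario set $\calK$, so this bound is genuinely new information there (equivalently, one may simply seed $\calK$ with the scenario $d=m$).

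It remains to unfold $L=Q(x,y,z,v,u,s,m)$ through its defining LP. By \eqref{eqn:2nd_stage}, $Q(x,y,z,v,u,s,m)$ equals the minimum of $\sum_{a\in A}(\cg_a g_a+\co_a o_a)+\sum_{r\in R}(\cg_r g_r+\co_r o_r)+\sum_{i\in I}\cw_i w_i$ over $(q,o,w,g)$ subject to \eqref{eqn:2nd_stage_con1}–\eqref{eqn:2nd_stage_con8} with $d$ replaced by $m$. Renaming these auxiliary decision variables $(q^\text{m},o^\text{m},w^\text{m},g^\text{m})$ produces exactly constraints \eqref{eqn:mom_recourse_LB_con2}–\eqref{eqn:mom_recourse_LB_con9} — with \eqref{eqn:mom_recourse_LB_con7}–\eqref{eqn:mom_recourse_LB_con8} read as the analogues of the idle-time constraints \eqref{eqn:2nd_stage_con6}–\eqref{eqn:2nd_stage_con7} at $d=m$ — while \eqref{eqn:mom_recourse_LB_con1} is the epigraph inequality $\sum_{i\in I}\rho_i m_i+\delta\ge(\text{objective of that LP})$. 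Since the $(q^\text{m},o^\text{m},w^\text{m},g^\text{m})$ occur nowhere else, minimizing over them turns \eqref{eqn:mom_recource_LB} into the single inequality $\sum_{i\in I}\rho_i m_i+\delta\ge Q(x,y,z,v,u,s,m)$, which we just proved holds at every feasible point; conversely, for any feasible point of \eqref{eqn:mean_support_final} (resp. \eqref{eqn:mean_support_master}) we may set $(q^\text{m},o^\text{m},w^\text{m},g^\text{m})$ equal to an optimal recourse solution at $d=m$ — which exists by relatively complete recourse of \eqref{eqn:2nd_stage} — so that \eqref{eqn:mom_recource_LB} is satisfied. This establishes validity.

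The only delicate point is the bookkeeping in the last paragraph: the scalar bound $\sum_{i\in I}\rho_i m_i+\delta\ge L$ is a one-line consequence of plugging $d=m$ into $H$, but to conclude that the \emph{system} \eqref{eqn:mom_recource_LB} is valid (rather than merely implied) one must check that the fresh variables can always be chosen consistently — for which relatively complete recourse of \eqref{eqn:2nd_stage} is precisely what is needed — and that after the minimization \eqref{eqn:mom_recourse_LB_con1} is tight enough to reproduce the promised global lower bound $L$. I would also flag that \eqref{eqn:mom_recourse_LB_con7} is to be interpreted as $g^\text{m}_a\ge\bigl[(t^\text{end}_a-t^\text{start}_a)-\sum_{i\in I_a}m_i x_{i,a}\bigr]\hreg_a+o^\text{m}_a$, matching \eqref{eqn:2nd_stage_con6} with $d_i$ replaced by $m_i$, so that the system coincides with \eqref{eqn:2nd_stage} at $d=m$.
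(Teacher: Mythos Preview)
Your proposal is correct and follows essentially the same approach as the paper: both arguments rest on the lower bound $\sup_{\Prob\in\calP(m,\calS)}\E_\Prob[Q(\cdot,D)]\ge Q(\cdot,m)$ obtained from the Dirac measure at $m$, and then recognize \eqref{eqn:mom_recourse_LB_con2}--\eqref{eqn:mom_recourse_LB_con9} as the second-stage constraints \eqref{eqn:2nd_stage_con1}--\eqref{eqn:2nd_stage_con8} instantiated at $d=m$, so that \eqref{eqn:mom_recourse_LB_con1} becomes the epigraph form of $\sum_{i\in I}\rho_i m_i+\delta\ge Q(\cdot,m)$. Your write-up is in fact more careful than the paper's brief proof---you make explicit the role of relatively complete recourse in furnishing a feasible $(q^\text{m},o^\text{m},w^\text{m},g^\text{m})$, and you correctly flag that the trailing ``$g_a$'' in \eqref{eqn:mom_recourse_LB_con7} must be read as $\hreg_a$ to match \eqref{eqn:2nd_stage_con6}.
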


\begin{proof}{Proof.}
Note that we minimize $\sum_{i\in I} m_i\rho_i + \delta$ in the master problem. This is equivalent to minimizing the right-hand side expression of \eqref{eqn:mom_recourse_LB_con1}, which is the second-stage cost. \eqref{eqn:mom_recourse_LB_con2}--\eqref{eqn:mom_recourse_LB_con8} are the second-stage constraints with the scenario $d=m$ (i.e., the mean of the surgery duration). Therefore, in view of the inequality
$$\sup_{\Prob\in\calP(m,\calS)} \varrho_\Prob\big(Q(x,y,z,v,u,s,d)\big)  \geq Q(x,y,z,v,u,s,m),$$
the inequalities are valid. \Halmos
\end{proof}

\begin{lemma}
The following lower-bounding inequalities are valid for the \drocvar{} model.
\begin{subequations} 
\begin{align}
 &  \Bigg(\frac{1}{1-\gamma}-1\Bigg)\rho_0 + \frac{1}{1-\gamma} \sum_{i\in I}\rho_i m_i + \delta \geq \sum_{a\in A} (\co_a o^\text{m}_a + \cg_a g^\text{m}_a) + \sum_{r\in R} (\co_r o^\text{m}_r + \cg_r g^\text{m}_r)+\sum_{i\in I} \cw_i w^\text{m}_i , \label{eqn:mom_recourse_LB_DRO_CVaR_con1}\\
 & \eqref{eqn:mom_recourse_LB_con2}-\eqref{eqn:mom_recourse_LB_con9}. \label{eqn:mom_recourse_LB_DRO_CVaR_con22}
\end{align} \label{eqn:mom_recource_LB_DRO_CVaR}%
\end{subequations}%
\vspace{-10mm}
\end{lemma}

\begin{proof}{Proof.}
The validity of inequalities \eqref{eqn:mom_recourse_LB_DRO_CVaR_con1}--\eqref{eqn:mom_recourse_LB_DRO_CVaR_con22} can be easily verified using the same argument and techniques in the proof of Lemma~\ref{lem:VI_recourse_mean_LB_DRO_E}.  \Halmos
\end{proof}

\subsection{Proof of Proposition \ref{prop:VI_dual_bounds}} \label{appdx:pf_VI_dual_bounds}
\begin{proof}{Proof of Proposition \ref{prop:VI_dual_bounds}}
We focus on the \droe{} model since the same argument applies to the \drocvar{} model. For simplicity, we suppress the dependence of $x$, $z$ and $u$ in $\rholb$ and $\rhoub$.  First, we consider the lower bound. For a fixed first-stage decision, suppose that there exists $i\in I$ such that $\rho^*_i < \rholb$, where $\rho^*$ is an arbitrary solution. Let $(\lambda^*,\mu^*,\theta^*,d^*)$ be an optimal solution to \eqref{eqn:mean_support_inner_max_naive}, i.e., the dual reformulation of $\max_{d\in\calS}\Big\{ Q(x,y,z,v,u,s,d)-\sum_{i\in I}\rho_id_i\Big\}$. Since $\rho^*_i<\rholb$, in the objective function, the coefficient associated to $d_i$ is positive. Indeed,
\begin{align*}
&\quad\,\,\sum_{i'\in I,i'\ne i}\lambda^*_{i,i'} + \sum_{a\in A_i} (\mu^*_{i,a} - \cg_a \hreg_a x_{i,a} ) + \sum_{r\in R_i} (\theta^*_{i,r} - \cg_r z_{i,r} ) - \rho^*_i\\
& \geq -\sum_{a\in A_i} \cg_a \hreg_a x_{i,a} - \sum_{r\in R_i} \cg_r z_{i,r} - \rho^*_i\\
& \geq -\sum_{a\in A_i} \cg_a \hreg_a x_{i,a} - \sum_{r\in R_i} \cg_r z_{i,r} - \rholb \\
&= 0.
\end{align*}
Therefore, we must have $d^*_i = \dub_i$. Next, for any $\epsilon\in(0,\rholb-\rho^*_i]$, define another solution $\rhot$ by
$$\rhot_j=\begin{cases} \rho^*_j+\epsilon, &\text{ if } j=i, \\ \rho^*_j, &\text{ otherwise.}\end{cases}$$
Note that changing $\rho^*_i$ to $\rhot_i$ only affects the coefficient associated to $d_i$ in the objective \eqref{eqn:mean_support_inner_max_naive_obj}. By construction of $\epsilon$, the coefficient associated to $d_i$ is still positive. Therefore, $d^*_i=\dub_i$ is still optimal. After maximizing $d_i$, the remaining program of \eqref{eqn:mean_support_inner_max_naive} is the same. That is, we have
\begin{equation} \label{eqn:pf_VI_dual_bounds}
    \sup_{d\in\calS}\Big\{Q(x,y,z,v,u,s,d)-\sum_{i\in I}\rhot_i d_i \Big\} = \sup_{d\in\calS}\Big\{Q(x,y,z,v,u,s,d)-\sum_{i\in I}\rho^*_i d_i \Big\} - \epsilon \dub_i.
\end{equation}
Hence, we have
\begin{align*}
&\quad\,\,\sum_{i\in I}\rhot_i m_i + \sup_{d\in\calS}\Big\{Q(x,y,z,v,u,s,d)-\sum_{i\in I}\rhot_i d_i \Big\} \\
&= \Bigg(\sum_{i\in I}\rho^*_i m_i+\epsilon m_i \Bigg) + \sup_{d\in\calS}\Big\{Q(x,y,z,v,u,s,d)-\sum_{i\in I}\rho^*_i d_i \Big\} -\epsilon\dub_i \\
&= \Bigg(\sum_{i\in I}\rho^*_i m_i + \sup_{d\in\calS}\Big\{Q(x,y,z,v,u,s,d)-\sum_{i\in I}\rho^*_i d_i \Big\} \Bigg)+ \epsilon(m_i - \dub_i) \\
&\leq \sum_{i\in I}\rho^*_i m_i + \sup_{d\in\calS}\Big\{Q(x,y,z,v,u,s,d)-\sum_{i\in I}\rho^*_i d_i \Big\}.
\end{align*}
The first equality follows from \eqref{eqn:pf_VI_dual_bounds} while the second equality is obtained by re-arraigning the terms. The inequality follows from $m_i\leq\dlb_i$ and $\epsilon>0$. Therefore, without loss of optimality, we can assume that $\rho_i\geq\rholb_i$.

To derive an upper bound for $\rho_i$, it suffices to determine a upper bound for the coefficient associated to $d_i$, i.e., 
$$\sum_{i'\in I,i'\ne i}\lambda_{i,i'} + \sum_{a\in A_i} (\mu_{i,a} - \cg_a \hreg_a x_{i,a} ) + \sum_{r\in R_i} (\theta_{i,r} - \cg_r z_{i,r} ) .$$
We can then derive the upper bound by applying a similar argument when deriving the lower bound. Note, from the complementary slackness condition of the second-stage problem \eqref{eqn:2nd_stage}, $\lambda_{i,i'}\ne 0$ if the constraint \eqref{eqn:2nd_stage_con1} achieves equality. Since we only have two resources (anesthesiologist and OR), at most two surgeries will immediately follow surgery $i$. Hence, using the upper bound on $\lambda$ provided in Proposition \ref{prop:dual_var_UB}, we have $\sum_{i'\in I,i'\ne i} \lambda_{i,i'} \leq \min\big\{\sum_{i'\in I,i'\ne i} u_{i,i'}, 2\big\} \,\lambdaub$.


Next, we consider the second term. For regular anesthesiologists (i.e., $\hreg_a = 1$), by the complementary slackness condition in Lemma \ref{lem:2nd_stage_complementarity}, we have $\mu_{i,a} = 0$ if $x_{i,a}=0$. Since $\mu_{i,a}^I \leq \cg_a + \co_a$ from Proposition \ref{prop:dual_var_UB}, we have $\mu_{i,a} - \cg_a \hreg_a x_{i,a} \leq \co_a x_{i,a}$. For on-call anesthesiologists (i.e., $\hreg_a=0$), we have $x_{i,a}=0$ if $y_a=0$ and this follows that $\mu_{i,a}=0$. On the other hand, if $y_a=1$, then $\mu_{i,a}=0$ from Lemma \ref{lem:2nd_stage_complementarity}. Hence, for on-call anesthesiologists, we have $\mu_{i,a} - \cg_a \hreg_a x_{i,a} = 0 \leq \co_a x_{i,a}$. Therefore, this results in the desired upper bound $\co_a x_{i,a}$. Finally, a similar argument using complementarity condition on the last term yields the desired upper bound for the last term. \Halmos

\end{proof}

\section{Separability of the Models} \label{appdx:separability}
\subsection{Discussion on Separability of the \droe{} Model} \label{appdx:DRO_sep}


Note that for some instances of the ORSAP, the \droe{} model can be decomposed into a sum of sub-problems. In this appendix, we discuss how the \droe{} model could be decomposed into a sum of sub-problems. \blue{Recall that in Section~\ref{subsec:separability}, under practical settings, we can decompose the recourse problem as
$$Q(x,y,z,v,u,s,d)=\sum_{g\in G} Q^g(x^g,y^g,z^g,u^g,s^g,d^g),$$
where each $Q^g(x^g,y^g,z^g,u^g,s^g,d^g)$ is characterized by $(I^g,R^g,A^g)$ (a subset of $(I,R,A)$) and a set of surgery types $T^g$ (a subset of the surgery types $T$).} Recall the support of $d$ is $\calS=\calS^1\times\calS^2\times\dots\times\calS^L$, where $\calS^\ell$ is the support for the surgery of type $\ell\in\{1,\dots,L\}$. For notation simplicity, we write $\chi=(x,y,z,v,u,s)$. Hence,
\begin{align} 
    \sup_{\Prob\in\calF(\calS,m)} \E_\Prob\big[Q(\chi,d)\big] 
    &= \min_{\rho} \Bigg\{ \sum_{i\in I}\rho_i m_i + \sup_{d\in\calS} \bigg\{Q(\chi,d) - \sum_{i\in I} \rho_i d_i\bigg\} \Bigg\} \label{eqn:sep_1} \\
    &= \min_{\rho} \Bigg\{ \sum_{g\in G} \big(\rho^g)^\tp m^g +  \sup_{d\in\calS} \bigg\{ \sum_{g\in G} Q^g(\chi^g,d^g) - \sum_{g\in G} \big(\rho^g)^\tp d^g  \bigg\} \Bigg\} \label{eqn:sep_2} \\
    &= \min_{\rho} \Bigg\{ \sum_{g\in G} \big(\rho^g)^\tp m^g +  \sum_{g\in G} \sup_{d\in\calS^g} \Big\{ Q^g(\chi^g,d^g)-\big(\rho^g\big)^\tp d^g  \Big\} \Bigg\}\label{eqn:sep_3} \\
    &= \sum_{g\in G} \min_{\rho^g} \Bigg\{ \big(\rho^g)^\tp m^g +\sup_{d\in\calS^g} \Big\{ Q^g(\chi^g,d^g)-\big(\rho^g)^\tp d^g  \Big\} \Bigg\}  \label{eqn:sep_4}\\
    &=\sum_{g\in G} \sup_{\Prob^g\in\calF(\calS^g,m^g)} \E_{\Prob^g}\big[Q(\chi^g,d^g)\big]   \label{eqn:sep_5}.
\end{align}%
Equation \eqref{eqn:sep_1} follows from Proposition \ref{prop:mean_support_WC_exp} while  equation \eqref{eqn:sep_2} follows from the recourse function decomposition. In particular, vectors $\rho^g$ and $m^g$ correspond to entries of surgeries in $I^g$. Equation \eqref{eqn:sep_3} makes use of the separability of $\calS=\bigtimes_{g\in G} \calS^g$, where $\calS^g$ is the support of durations for surgeries in $I^g$. Equation \eqref{eqn:sep_4} follows from the separability of the objective function in $\rho$. Equation \eqref{eqn:sep_5} is a direct consequence of Proposition \ref{prop:mean_support_WC_exp}.

The derivation also shows that the worst-case distribution of the \droe{} model takes the form $\bigtimes_{g\in G}\Prob^g_\star$ for some $\Prob^g_\star\in\calF(\calS^g,m^g)$. That is, durations of surgeries in $I^g$ are independent of those in $I^{g'}$ for any $\{g,g'\}\subseteq G$. This is because some surgeries within $I^g$ share a pool of ORs and/or anesthesiologists. Hence, ORs in $R^g$ may be assigned with surgeries of multiple types, and anesthesiologists in $A^g$ may perform surgeries of multiple types. These lead to the possibility that durations of surgeries in $I^g$ are not independent in the worst case.


\subsection{Separability of the \drocvar{} Model} \label{appdx:CVaR_DRO_sep}
Next, we show that the \drocvar{} model can be decomposed into a sum of sub-problems as in the \droe{} model. Note that $\Prob\mhyphen\CVaR_\gamma(\cdot)$ is subadditive, i.e., $\Prob\mhyphen\CVaR_\gamma(Z_1+Z_2) \leq \Prob\mhyphen\CVaR_\gamma(Z_1) + \Prob\mhyphen\CVaR_\gamma(Z_2)$ for any integrable random variables $Z_1$ and $Z_2$. Therefore, if we use $\Prob\mhyphen\CVaR_\gamma(\cdot)$ directly as our objective, we may not be able to obtain such a decomposition.

\blue{To study the separability of the \drocvar{} model, using the notation in \ref{appdx:DRO_sep}, we can rewrite the recourse function as
$$Q(x,y,z,v,u,s,d)=\sum_{g\in G} Q^g(x^g,y^g,z^g,u^g,s^g,d^g).$$}
Recall the support of $d$ is $\calS=\calS^1\times\calS^2\times\dots\times\calS^L$, where $\calS^\ell$ is the support for the surgery of type $\ell\in\{1,\dots,L\}$. Again, for notation simplicity, we write $\chi=(x,y,z,v,u,s)$. Hence, we have
\begin{align} 
    &\quad\, \sup_{\Prob\in\calF(\calS,m)} \Prob\mhyphen\CVaR_\gamma\big(Q(\chi,d)\big) \nonumber\\
    &= \min_{\rho} \Bigg\{\frac{1}{1-\gamma} \sum_{i\in I}\rho_i m_i -   \Bigg(\frac{1}{1-\gamma}-1\Bigg)  \min_{d\in\calS} \sum_{i\in I}\rho_i d_i + \sup_{d\in\calS} \Bigg\{Q(\chi,d)-\sum_{i\in I} \rho_id_i \Bigg\}  \Bigg\} \label{eqn:sep_CVaR_1} \\
    &= \min_{\rho} \Bigg\{ \frac{1}{1-\gamma}\sum_{g\in G} \big(\rho^g)^\tp m^g - \Bigg(\frac{1}{1-\gamma}-1\Bigg)  \min_{d\in\calS} \bigg\{\sum_{g\in G}\big(\rho^g\big)^\tp d^g \bigg\} + \sup_{d\in\calS} \bigg\{ \sum_{g\in G} Q^g(\chi^g,d^g) - \sum_{g\in G} \big(\rho^g)^\tp d^g  \bigg\} \Bigg\} \label{eqn:sep_CVaR_2} \\
    &= \min_{\rho} \Bigg\{ \frac{1}{1-\gamma}\sum_{g\in G} \big(\rho^g)^\tp m^g  - \Bigg(\frac{1}{1-\gamma}-1\Bigg) \Bigg[ \sum_{g\in G} \min_{d\in\calS^g} \big(\rho^g\big)^\tp d^g   \Bigg] + \sum_{g\in G} \sup_{d\in\calS^g} \Big\{ Q^g(\chi^g,d^g)-\big(\rho^g)^\tp d^g \Big\} \Bigg\} \label{eqn:sep_CVaR_3} \\
    &= \sum_{g\in G} \min_{\rho^g} \Bigg\{ \frac{1}{1-\gamma} \big(\rho^g)^\tp m^g - \Bigg(\frac{1}{1-\gamma}-1\Bigg)  \min_{d\in\calS^g} \big(\rho^g\big)^\tp d^g + \sup_{d\in\calS^g} \Big\{ Q^g(\chi^g,d^g)-\big(\rho^g)^\tp d^g  \Big\} \Bigg\}  \label{eqn:sep_CVaR_4}\\
    &=\sum_{g\in G} \sup_{\Prob^g\in\calF(\calS^g,m^g)} \Prob^g\mhyphen\CVaR_\gamma\big(Q(\chi^g,d^g)\big)   \label{eqn:sep_CVaR_5}.
\end{align}
Equation \eqref{eqn:sep_CVaR_1} follows from \eqref{eqn:mean_support_CVaR_model_final_reform2} while equation \eqref{eqn:sep_CVaR_2} follows from the recourse function decomposition. IIn particular, vectors $\rho^g$ and $m^g$ correspond to entries of surgeries in $I^g$. Equation \eqref{eqn:sep_CVaR_3} is a result of the separability of $\calS=\bigtimes_{g\in G} \calS^g$, where $\calS^g$ is the support of durations for surgeries in $I^g$. Equation \eqref{eqn:sep_CVaR_4} follows from the separability of the objective function in $\rho$. Equation \eqref{eqn:sep_CVaR_5} is a direct consequence of the \drocvar{} model reformulation.

\section{Discussion and Examples on Symmetry-Breaking Constraints} \label{appdx:SBC}

\subsection{Symmetry-Breaking Constraints on Surgery-to-OR Assignments} \label{appdx:SBC_surgery_assg}

We note that either the constraints \eqref{eqn:SBC_surg_index_2a} or \eqref{eqn:SBC_surg_index_2b} would suffice to enforce the surgery index ordering. However, these two sets of constraints are not equivalent in the sense that there exist feasible solutions to the former one but not the latter one and vice versa in the LP relaxation of the ORASP models (see Example \ref{eg:SBC_eg1}). Therefore, using both of them could tighten the LP relaxation.


\begin{example} \label{eg:SBC_eg1}
Suppose we have surgeries $1$ to $5$ and ORs $1$ to $3$ of same type. We write $Z=(z_{i,r})$ as a $5\times 3$ matrix to denote the set of $z$ variables (i.e., surgery-to-OR assignments). Consider the matrices:
\begin{equation*}\OneAndAHalfSpacedXI
    Z^1 = \begin{pmatrix}
1    & 0    & 0   \\
0.6  & 0.4  & 0   \\
0.4  & 0.2  & 0.4 \\
0.2  & 0.4  & 0.4 \\
0.25 & 0.25 & 0.5 \\
\end{pmatrix}, \quad
Z^2 = \begin{pmatrix}
1    & 0    & 0   \\
0.6  & 0.4  & 0   \\
0.4  & 0.2  & 0.4 \\
0.25 & 0.25 & 0.5 \\
0.2  & 0.4  & 0.4 \\
\end{pmatrix}.
\end{equation*}
Note that $Z^1$ is feasible to constraints \eqref{eqn:SBC_surg_index_1d} but not \eqref{eqn:SBC_surg_index_1c} since we have $z_{5,1}\not\leq z_{4,1}$. On the other hand, $Z^2$ is feasible to constraints \eqref{eqn:SBC_surg_index_1c} but not \eqref{eqn:SBC_surg_index_1d} since $z_{4,3}\not\leq z_{5,3}$. This shows that using both sets of constraints could tighten the LP relaxation. 
\end{example}

\subsection{Variable Fixing Constraints} \label{appdx:SBC_fixing}

One approach to address symmetry in (mixed) integer optimization problems is to fix the value of some integer variables (see, e.g., \citealp{Vo-Thanh_et_al:2018}). This reduces the actual number of integer variables in the model and thus, potentially improves the computational efficiency. In this section, we discuss three classes variable fixing constraints tailored for the ORASP.

In practice, it is common for identical ORs to have the same fixed, overtime and idle time cost, leading to symmetry in surgery-to-OR assignments (see discussions in Section~\ref{subsec:SBC_surgery_OR_assg}). Let us recall, as described in Section~\ref{subsec:SBC_surgery_OR_assg}, that $\{T_\ell\}_{\ell\in L}$ is a partition of the set of types $T$, where (i) a subset $T_\ell$ has a single surgery type if this type has dedicated ORs and (ii) type $t \in T$ belongs to a subset $T_\ell$ with $|T_\ell|>1$ if there is a subset of ORs to which surgeries of this type and those of any other type $t' \in T_\ell$ can be assigned. We then define the corresponding partition $\{(I^\ell,R^\ell)\}_{\ell\in L}$ of $(I,R)$, where $I^\ell=\bigcup_{t\in T_\ell} I_t$ and $R^\ell=\{r\in R\mid \kappa^R_{i,r}=1 \text{ for some }i\in I^\ell\}$.  

First, when $|T_\ell|=1$, we can always assign surgery $i_{j,t}$ to an OR with index $r_{k,\ell}$ where $k\leq j$ by imposing
\begin{equation} \label{eqn:SBC_OR_assg_fix_1}
\textstyle z_{i_{j,t},r_{k,\ell}} = 0, \quad \forall \ell\in L,\, t\in T_\ell,\,j\in[\min\{|I^\ell|,|R^\ell|\}],\,  k\in\{j+1,\dots,|R^\ell|\}:\, |T_\ell|=1,
\end{equation}
where we recall that ORs in $R_\ell$ and surgeries in $I_t$ are numbered sequentially, i.e., $R^\ell=\{r_{1,\ell},\dots,r_{|R^\ell|,\ell}\}$  and $I_t=\{i_{1,t},\dots,i_{|I_t|,t}\}$. Second, when the ORs in $R^\ell$ are identical (i.e., all dedicated to the same set of types $T_\ell$), we can enforce that the first OR in $R^\ell$ is open. Mathematically, let $L_\text{hom}$ be a subset of the index set $L$ such that ORs in $R^\ell$ are identical. We then introduce the constraints
\begin{equation}  \label{eqn:SBC_open_OR_fix_2} 
v_{r_{1,\ell}} = 1,\quad\forall \ell\in L_\text{hom}.
\end{equation} 
Note that constraints~\eqref{eqn:SBC_open_OR_fix_2} is valid irrespective of the number of surgery types that ORs in $R^\ell$ are dedicated to.

Finally, it is common that identical regular anesthesiologists (i.e., anesthesiologists with the same set of specializations and shift) have the same overtime and idle time cost (see discussions in Section~\ref{subsec:SBC_oncall_anes}). This leads to symmetry in surgery-to-anesthesiologist assignments. We could address this symmetry by fixing some surgery-to-anesthesiologist assignments. To derive the desired variable fixing constraints, we first construct a partition $\{T_{\ell'}\}_{\ell'\in L'}$ of the set of types $T$, where (i) a subset $T_{\ell'}$ has a single surgery type if this type has dedicated anesthesiologists and (ii) type $t \in T$ belongs to a subset $T_{\ell'}$ with $|T_{\ell'}|>1$ if there is a subset of anesthesiologists to which surgeries of this type and those of any other type $t' \in T_\ell$ can be assigned.  We then define the partition $\{(I^{\ell'},A^{\ell'})\}_{\ell'\in L'}$ of $(I,A)$, where $I^{\ell'}=\bigcup_{t\in T_{\ell'}} I_t$ and $A^{\ell'}=\{a\in A\mid \kappa^A_{i,a}=1 \text{ for some }i\in I^{\ell'}\}$. We assume that anesthesiologists in $A^{\ell'}$ and surgeries in $I^{\ell'}$ are numbered sequentially, i.e., $A^{\ell'}=\{a_{1,\ell'},\dots,a_{|A^{\ell'}|,\ell'}\}$  and $I^{\ell'}=\{i_{1,\ell'},\dots,i_{|I^{\ell'}|,\ell'}\}$. Here, without loss of generality, we assume that regular anesthesiologists are labeled before on-call anesthesiologists in $A^{\ell'}$. Also, we consider the realistic setting where there is at least one regular anesthesiologist in $A^{\ell'}$. Therefore, anesthesiologist $a_{1,\ell'}$ is always regular. If the anesthesiologists in $A_{\ell'}$ are identical, we can always assign the first surgery in $I^{\ell'}$ to the first regular anesthesiologist in $A^{\ell'}$. (Note that we consider the practical situation where the fixed cost of calling in an on-call anesthesiologist is sufficiently large such that it is always optimal to assign surgeries to all regular anesthesiologists before calling in on-call anesthesiologists.) Mathematically, let $L'_\text{hom}$ be a subset of index set $L'$ such that anesthesiologists in $A^{\ell'}$ are identical. With this notation, we introduce the constraints
\begin{equation} \label{eqn:SBC_anes_assg_fix}
x_{i_{1,\ell'},a_{1,\ell'}}=1, \quad \forall \ell'\in L'_\text{hom}.
\end{equation}
Constraints~\eqref{eqn:SBC_anes_assg_fix} ensure that the first surgery in $I^{\ell'}$ (i.e., $i_{1,\ell'}$) is assigned to the first regular anesthesiologist in $A^{\ell'}$ (i.e., $a_{1,\ell'}$).

\newpage
\color{black}
\section{Linear Decision Rule-based Approximations of the DRO Models} \label{appdx:LDR_DRO}

As suggested by a reviewer of this paper, in this section, we compare our proposed exact approach for reformulating and solving our DRO models with an approximation based on linear decision rules (LDR). In \ref{appdx:LDR_MILP}, we present the LDR approximations of the DRO models and their equivalent MILP reformulations. Then, in \ref{appdx:LDR_Results}, we provide computational results comparing the LDR approximations and the exact approach.

\subsection{The LDR-based Approximations} \label{appdx:LDR_MILP}

In this section, we first formulate the LDR approximations of our proposed \droe{} and \drocvar{} models and then derive equivalent MILP reformulations of these approximations. To formulate the LDR approximation, we introduce the following linear approximations of the second-stage decision variables (as a function of the random surgery duration $d$):
\begin{subequations} \label{eqn:DRO_LDR_variables}
\begin{alignat}{3}
& q_i(d)=q'_i+(q''_i)^\tp d, \qquad && o_a(d)=o'_a+(o''_a)^\tp d, \qquad && o_r(d)=o'_r+(o''_r)^\tp d, \\
& w_i(d)=w'_i+(w''_i)^\tp d, \qquad && g_a(d)=g'_a+(g''_a)^\tp d, \qquad && g_r(d)=g'_r+(g''_r)^\tp d, 
\end{alignat}
\end{subequations}
for all $i\in I$, $r\in R$, and $a\in A$. Here, variables $q'_i$, $o'_a$, $o'_r$, $w'_i$, $g'_a$, and $g'_r$ are of dimension 1, and variables $q''_i$, $o''_a$, $o''_r$, $w''_i$, $g''_a$, and $g''_r$ are of dimension $|I|$. Using \eqref{eqn:DRO_LDR_variables}, we formulate the following LDR approximation of the DRO model:
\begingroup
\small
\begin{subequations} 
\begin{align}
 \underset{\substack{x,\,y,\,z,\,v,\,u,\,s,\,\alpha,\,\beta \\  q,\,o,\,w,\,g}}{\text{minimize}} \quad
&  \sum_{r\in R}f_r v_r + \sum_{a\in A}f_a y_a + \sup_{\Prob\in\calP(m,\calS)} \varrho_{\Prob}\Bigg(\sum_{a\in A} \bigg\{\cg_a \big[g'_a+(g''_a)^\tp d\big] + \co_a \big[o'_a+(o''_a)^\tp d\big] \bigg\} \nonumber \\
& \quad +\sum_{r\in R}  \bigg\{\cg_r \big[g'_r+(g''_r)^\tp d\big] + \co_r \big[o'_r+(o''_r)^\tp d\big] \bigg\}  + \sum_{i\in I} \cw_i \big[w'_i+(w''_i)^\tp d\big]  \Bigg) \label{eqn:DRO_LDR_obj} \\
 \text{subject to\,\,\,} \quad
&  \text{\eqref{eqn:1st_stage_con1-2}--\eqref{eqn:1st_stage_con20-21}}, \label{eqn:DRO_LDR_con1} \\
&  q'_{i'}+(q''_{i'})^\tp d  \geq \big[q'_{i}+(q''_{i})^\tp d\big] + d_i - M_\text{seq}(1-u_{i,i'}),\quad\forall \{i,i'\}\subseteq I,\, i\ne i',\, d\in\calS, \label{eqn:DRO_LDR_con2}\\
&  q'_{i}+(q''_{i})^\tp d \geq s_i,\quad\forall i\in I,\, d\in\calS,  \label{eqn:DRO_LDR_con3} \\
&  o'_{a}+(o''_{a})^\tp d \geq \big[q'_{i}+(q''_{i})^\tp d\big] + d_i -t^\text{end}_a - M_\text{anes}(1-x_{i,a}+y_a),\quad\forall (i,a)\in\calF^A,\, d\in\calS, \label{eqn:DRO_LDR_con4} \\
&  o'_{r}+(o''_{r})^\tp d \geq \big[q'_{i}+(q''_{i})^\tp d\big] + d_i - T^\text{end} - M_\text{room}(1-z_{i,r}),\quad\forall (i,r)\in\calF^R,\, d\in\calS, \label{eqn:DRO_LDR_con5}\\
& w'_{i}+(w''_{i})^\tp d \geq \big[q'_{i}+(q''_{i})^\tp d\big] - s_i, \quad\forall i\in I,\, d\in\calS, \label{eqn:DRO_LDR_con6}\\
& g'_{a}+(g''_{a})^\tp d \geq \bigg(t^\text{end}_a - t^\text{start}_a - \sum_{i\in I_a} d_i x_{i,a}\bigg) \hreg_a + \big[o'_{a}+(o''_{a})^\tp d \big],\quad\forall a\in A,\, d\in\calS, \label{eqn:DRO_LDR_con7}\\
& g'_{r}+(g''_{r})^\tp d\geq T^\text{end} v_r - \sum_{i\in I_r} d_i z_{i,r} + \big[o'_{r}+(o''_{r})^\tp d \big],\quad\forall r\in R,\, d\in\calS, \label{eqn:DRO_LDR_con8}\\ 
&  q'_{i'}+(q''_{i'})^\tp d \geq 0,\,\, o'_{a}+(o''_{a})^\tp d\geq 0,\,\, g'_{a}+(g''_{a})^\tp d\geq 0,\quad\forall i\in I,\, a\in A,\, d\in\calS, \label{eqn:DRO_LDR_con9} \\
&  w'_{i}+(w''_{i})^\tp d \geq 0,\,\, o'_{r}+(o''_{r})^\tp d\geq 0,\,\, g'_{r}+(g''_{r})^\tp d\geq 0,\quad\forall i\in I,\, r\in R,\, d\in\calS. \label{eqn:DRO_LDR_con10} 
\end{align} \label{eqn:DRO_LDR}%
\end{subequations}%
\endgroup
Note that \eqref{eqn:DRO_LDR} is a mini-max problem with an infinite number of constraints (since the set $\calS$ is not finite). Therefore, formulation~\eqref{eqn:DRO_LDR} is not straightforward to solve in the presented form. In Propositions~\ref{prop:DRO_E_LDR_MILP} and \ref{prop:DRO_CVaR_LDR_MILP}, we derive equivalent MILP reformulations of \eqref{eqn:DRO_LDR} with $\varrho_\Prob=\E_\Prob$ (the \droe{} model) and $\varrho_\Prob=\Prob\mhyphen\CVaR_\gamma$ (the \drocvar{} model), respectively.

\begin{proposition} \label{prop:DRO_E_LDR_MILP}
Solving the LDR approximation of the \droe{} model, i.e., problem~\eqref{eqn:DRO_LDR}  with $\varrho_\Prob=\E_\Prob$, is equivalent to solving the following MILP.
\begingroup
\small
\begin{subequations} 
\begin{align}
 \underset{\substack{x,\,y,\,z,\,v,\,u,\,s,\,\alpha,\,\beta \\  \rho,\,q,\,o,\,w,\,g,\,\pi,\,\varphi}}{\textup{minimize}} \quad
&  \sum_{r\in R}f_r v_r + \sum_{a\in A}f_a y_a + \Bigg\{\sum_{a\in A}\big(\cg_a g'_a + \co_a o'_a\big) + \sum_{r\in R}\big(\cg_r g'_r + \co_r o'_r\big) + \sum_{i\in I} \cw_i w'_i\Bigg\} \nonumber \\
& \hspace{80mm} + \Big(m^\tp\rho + \dub^\tp\piub - \dlb^\tp\pilb\Big) \label{eqn:DRO_E_LDR_MILP_obj} \\
 \textup{subject to\,\,\,} \quad
&  \text{\eqref{eqn:1st_stage_con1-2}--\eqref{eqn:1st_stage_con20-21}}, \label{eqn:DRO_E_LDR_MILP_con1} \\
& \piub-\pilb = \sum_{a\in A}\big(\cg_a g''_a + \co_a o''_a\big) + \sum_{r\in R}\big(\cg_r g''_r + \co_r o''_r\big) + \sum_{i\in I} \cw_i w''_i-\rho,\quad \piub\geq 0,\quad\pilb\geq 0, \label{eqn:DRO_E_LDR_MILP_con2}\\
&  q'_{i'} - q'_{i} + M_\textup{seq}(1-u_{i,i'}) \geq \dub^\tp\varphiub^1_{i,i'}-\dlb^\tp\varphilb^1_{i,i'},\quad\forall \{i,i'\}\subseteq I, \label{eqn:DRO_E_LDR_MILP_con3} \\
& \varphiub^1_{i,i'}-\varphilb^1_{i,i'}=q''_{i}-q''_{i'} + e_i,\quad \varphiub^1_{i,i'}\geq 0,\quad \varphilb^1_{i,i'}\geq 0,\quad\forall \{i,i'\}\subseteq I, \label{eqn:DRO_E_LDR_MILP_con4} \\
&  q'_i - s_i \geq \dub^\tp\varphiub^2_{i}-\dlb^\tp\varphilb^2_{i},\quad\forall i\in I, \\
& \varphiub^2_{i}-\varphilb^2_{i}=-q''_i,\quad \varphiub^2_{i}\geq 0,\quad \varphilb^2_{i}\geq 0,\quad\forall i\in I, \\
&  o'_a-q'_i+t^\textup{end}_a + M_\textup{anes}(1-x_{i,a}+y_a) \geq \dub^\tp\varphiub^3_{i,a}-\dlb^\tp\varphilb^3_{i,a},\quad\forall (i,a)\in\calF^A, \\
& \varphiub^3_{i,a}-\varphilb^3_{i,a}= q''_i -o''_a+e_i,\quad \varphiub^3_{i,a}\geq 0,\quad \varphilb^3_{i,a}\geq 0,\quad\forall (i,a)\in\calF^A, \\
&  o'_r-q'_i+T^\textup{end} + M_\textup{room}(1-z_{i,r}) \geq \dub^\tp\varphiub^4_{i,r}-\dlb^\tp\varphilb^4_{i,r},\quad\forall (i,r)\in\calF^R, \\
& \varphiub^4_{i,r}-\varphilb^4_{i,r}= q''_i -o''_r+e_i,\quad \varphiub^4_{i,r}\geq 0,\quad \varphilb^4_{i,r}\geq 0,\quad\forall (i,r)\in\calF^R, \\
&  w'_i-q'_i+s_i \geq \dub^\tp\varphiub^5_{i}-\dlb^\tp\varphilb^5_{i},\quad\forall i\in I, \\
& \varphiub^5_{i}-\varphilb^5_{i}=q''_i-w''_i,\quad \varphiub^5_{i}\geq 0,\quad \varphilb^5_{i}\geq 0,\quad\forall i\in I, \\
&  g'_a-o'_a- \big(t^\textup{end}_a - t^\textup{start}_a \big) \hreg_a  \geq \dub^\tp\varphiub^6_{a}-\dlb^\tp\varphilb^6_{a},\quad\forall a\in A, \\
& \varphiub^6_{a}-\varphilb^6_{a}=o''_a-g''_a-\hreg_a x_a,\quad \varphiub^6_{a}\geq 0,\quad \varphilb^6_{a}\geq 0,\quad\forall a\in A, \\
&  g'_r-o'_r- T^\textup{end}_r v_r \geq \dub^\tp\varphiub^7_{r}-\dlb^\tp\varphilb^7_{r},\quad\forall r\in R, \\
& \varphiub^7_{r}-\varphilb^7_{r}=o''_r - g''_r - z_r,\quad \varphiub^7_{r}\geq 0,\quad \varphilb^7_{r}\geq 0,\quad\forall r\in R, \\
&  q'_i \geq \dub^\tp\varphiub^8_{i}-\dlb^\tp\varphilb^8_{i},\quad \varphiub^8_{i}-\varphilb^8_{i}=-q''_i,\quad \varphiub^8_{i}\geq 0,\quad \varphilb^8_{i}\geq 0,\quad\forall i\in I, \\
&  o'_a  \geq \dub^\tp\varphiub^9_{a}-\dlb^\tp\varphilb^9_{a},\quad \varphiub^9_{a}-\varphilb^9_{a}= -o''_a,\quad \varphiub^9_{a}\geq 0,\quad \varphilb^9_{a}\geq 0,\quad\forall a\in A, \\
&  g'_a  \geq \dub^\tp\varphiub^{10}_{a}-\dlb^\tp\varphilb^{10}_{a},\quad \varphiub^{10}_{a}-\varphilb^{10}_{a}= -g''_a,\quad \varphiub^{10}_{a}\geq 0,\quad \varphilb^{10}_{a}\geq 0,\quad\forall a\in A, \\
&  w'_i \geq \dub^\tp\varphiub^{11}_{i}-\dlb^\tp\varphilb^{11}_{i},\quad \varphiub^{11}_{i}-\varphilb^{11}_{i}=-w''_i,\quad \varphiub^{11}_{i}\geq 0,\quad \varphilb^{11}_{i}\geq 0,\quad\forall i\in I, \\
&  o'_r  \geq \dub^\tp\varphiub^{12}_{r}-\dlb^\tp\varphilb^{12}_{r},\quad \varphiub^{12}_{r}-\varphilb^{12}_{r}= -o''_r,\quad \varphiub^{12}_{r}\geq 0,\quad \varphilb^{12}_{r}\geq 0,\quad\forall r\in R, \\
&  g'_r  \geq \dub^\tp\varphiub^{13}_{r}-\dlb^\tp\varphilb^{13}_{r},\quad \varphiub^{13}_{r}-\varphilb^{13}_{r}= -g''_r,\quad \varphiub^{13}_{r}\geq 0,\quad \varphilb^{13}_{r}\geq 0,\quad\forall r\in R, \label{eqn:DRO_E_LDR_MILP_con22}
\end{align} \label{eqn:DRO_E_LDR_MILP}%
\end{subequations}%
\endgroup
where $x_a=(x_{i,a})_{i\in I}\in\R^I$, $z_r=(z_{i,r})_{i\in I}\in\R^I$ and $e_i\in\R^I$ is the $i$-th standard basis vector.
\end{proposition}

\begin{proof}{Proof.}
Following the proof of Proposition~\ref{prop:mean_support_WC_exp}, we can reformulate the worst-case expectation in the objective~\eqref{eqn:DRO_LDR_obj} as
\begingroup
\small
\begin{align}
\underset{\rho\in\R^I}{\textup{minimize}} \quad
& m^\tp\rho + \max_{d\in\calS} \Bigg\{\sum_{a\in A} \bigg\{\cg_a \big[g'_a+(g''_a)^\tp d\big] + \co_a \big[o'_a+(o''_a)^\tp d\big] \bigg\} \nonumber \\
& \hspace{16mm} +\sum_{r\in R}  \bigg\{\cg_r \big[g'_r+(g''_r)^\tp d\big] + \co_r \big[o'_r+(o''_r)^\tp d\big] \bigg\}  + \sum_{i\in I} \cw_i \big[w'_i+(w''_i)^\tp d\big] -\rho^\tp d\Bigg\},  \label{eqn_pf:prop_DRO_E_LDR_MILP_1} 
\end{align}
\endgroup
which is equivalent to
\begingroup
\small
\begin{align}
\underset{\rho\in\R^I}{\textup{minimize}} \quad
& m^\tp\rho + \Bigg\{\sum_{a\in A}\big(\cg_a g'_a + \co_a o'_a\big) + \sum_{r\in R}\big(\cg_r g'_r + \co_r o'_r\big) + \sum_{i\in I} \cw_i w'_i\Bigg\} \nonumber \\
& \hspace{20mm} + \max_{d\in\calS}\Bigg\{ \bigg[\sum_{a\in A}\big(\cg_a g''_a + \co_a o''_a\big) + \sum_{r\in R}\big(\cg_r g''_r + \co_r o''_r\big) + \sum_{i\in I} \cw_i w''_i-\rho\bigg]^\tp d \Bigg\}.  \label{eqn_pf:prop_DRO_E_LDR_MILP_2} 
\end{align}
\endgroup
Using LP duality, we can reformulate the inner maximization problem over $d\in\calS$ in \eqref{eqn_pf:prop_DRO_E_LDR_MILP_2} into the following minimization problem: $\min_{\pi}\big\{\dub^\tp\piub - \dlb^\tp\pilb \,\,\big|\,\, \eqref{eqn:DRO_E_LDR_MILP_con2}\big\}$. Thus, model~\eqref{eqn:DRO_LDR} with $\varrho_\Prob=\E_\Prob$ is equivalent to
\begingroup
\small
\begin{subequations} \label{eqn_pf:prop_DRO_E_LDR_MILP_2.5}%
\begin{align}
 \underset{\substack{x,\,y,\,z,\,v,\,u,\,s,\,\alpha,\,\beta \\  \rho,\,q,\,o,\,w,\,g,\,\pi}}{\textup{minimize}} \quad
&  \sum_{r\in R}f_r v_r + \sum_{a\in A}f_a y_a + \Bigg\{\sum_{a\in A}\big(\cg_a g'_a + \co_a o'_a\big) + \sum_{r\in R}\big(\cg_r g'_r + \co_r o'_r\big) + \sum_{i\in I} \cw_i w'_i\Bigg\} \nonumber \\
& \hspace{80mm} + \Big(m^\tp\rho + \dub^\tp\piub - \dlb^\tp\pilb\Big) \label{eqn_pf:prop_DRO_E_LDR_MILP_2.5_obj} \\
 \textup{subject to\,\,\,} \quad
&  \text{\eqref{eqn:1st_stage_con1-2}--\eqref{eqn:1st_stage_con20-21}, \eqref{eqn:DRO_E_LDR_MILP_con2}}, \label{eqn_pf:prop_DRO_E_LDR_MILP_2.5_con1} \\
& \text{\eqref{eqn:DRO_LDR_con1}--\eqref{eqn:DRO_LDR_con10}}.
\end{align} 
\end{subequations}%
\endgroup

Next, we reformulate constraints \eqref{eqn:DRO_LDR_con1}--\eqref{eqn:DRO_LDR_con10}. Note that we can rewrite constraints \eqref{eqn:DRO_LDR_con1} as
\begin{equation} \label{eqn_pf:prop_DRO_E_LDR_MILP_3} 
  q'_{i'}-q'_{i} + M_\text{seq}(1-u_{i,i'})\geq \sup_{d\in\calS} \bigg\{\big(q''_{i} - q''_{i'} + e_i\big)^\tp d\bigg\},\quad\forall \{i,i'\}\subseteq I,\, i\ne i'.
\end{equation}
By LP duality, the maximization problem over $d\in\calS$ in \eqref{eqn_pf:prop_DRO_E_LDR_MILP_3}  is equivalent to the following minimization problem: $\min_{\varphi^1_{i,i'}} \big\{\dub^\tp\varphiub^1_{i,i'} - \dlb^\tp\varphilb^1_{i,i'} \,\,\big| \,\, \eqref{eqn:DRO_E_LDR_MILP_con4} \big\}$. This shows that constraint \eqref{eqn_pf:prop_DRO_E_LDR_MILP_3} is equivalent to constraint \eqref{eqn:DRO_E_LDR_MILP_con3}--\eqref{eqn:DRO_E_LDR_MILP_con4}. Repeating the same argument for the remaining constraints, we can show that the LDR approximation of the \droe{} model is equivalent to the MILP in \eqref{eqn:DRO_E_LDR_MILP}. \Halmos
\end{proof}

\begin{proposition} \label{prop:DRO_CVaR_LDR_MILP}
Solving the LDR approximation of the \drocvar{} model, i.e., model~\eqref{eqn:DRO_LDR}  with $\varrho_\Prob=\Prob\mhyphen\CVaR_\gamma$, is equivalent to solving the following MILP.
\begingroup
\small
\begin{subequations}  \label{eqn:DRO_CVaR_LDR_MILP}
\begin{align} 
 \underset{\substack{x,\,y,\,z,\,v,\,u,\,s,\,\alpha,\,\beta \\  \rho_0,\,\rho,\,\psi,\,q,\,o,\,w,\,g,\,\pi,\,\varphi}}{\textup{minimize}} \quad
&  \sum_{r\in R}f_r v_r + \sum_{a\in A}f_a y_a + \Bigg\{\sum_{a\in A}\big(\cg_a g'_a + \co_a o'_a\big) + \sum_{r\in R}\big(\cg_r g'_r + \co_r o'_r\big) + \sum_{i\in I} \cw_i w'_i\Bigg\} \nonumber \\
& \hspace{50mm} + \Bigg\{\Bigg(\frac{1}{1-\gamma}-1\Bigg)\rho_0 + \frac{1}{1-\gamma} m^\tp\rho + \dub^\tp\piub - \dlb^\tp\pilb\Bigg\} \label{eqn:DRO_CVaR_LDR_MILP_obj} \\
 \textup{subject to\,\,\,\,\,\,\,} \quad
&  \text{\eqref{eqn:1st_stage_con1-2}--\eqref{eqn:1st_stage_con20-21}, \eqref{eqn:DRO_E_LDR_MILP_con2}--\eqref{eqn:DRO_E_LDR_MILP_con22}}, \label{eqn:DRO_CVaR_LDR_MILP_con1} \\
&  \rho_0 + \sum_{i\in I} \big(\dlb_i\underline{\psi}_i-\dub_i\overline{\psi}_i\big) \geq 0, \label{eqn:DRO_CVaR_LDR_MILP_con2} \\
& \underline{\psi}_i-\overline{\psi}_i=\rho_i,\quad \underline{\psi}_i\geq 0,\quad \overline{\psi}_i\geq 0,\quad\forall i\in I.  \label{eqn:DRO_CVaR_LDR_MILP_con3}
\end{align}
\end{subequations}%
\endgroup
\end{proposition}

\begin{proof}{Proof.}
Using similar arguments in the proof of Proposition~\ref{prop:DRO_E_LDR_MILP} and  Proposition~\ref{prop:mean_support_WC_CVaR}, one can verify that the LDR approximation of the \drocvar{} model is equivalent to \eqref{eqn:DRO_CVaR_LDR_MILP_con3}. \Halmos
\end{proof}

\begin{remark}\label{rem:LDR_Size}
Formulations \eqref{eqn:DRO_E_LDR_MILP} and \eqref{eqn:DRO_CVaR_LDR_MILP} are large-scale MILPs. Specifically, both \eqref{eqn:DRO_E_LDR_MILP} and \eqref{eqn:DRO_CVaR_LDR_MILP} have $O\big(|I|^2(|I|+|A|+|R|)\big)$ variables and $O\big(|I|^2(|I|+|A|+|R|)\big)$ constraints. Hence, the sizes of these MILPs grow significantly with the number of surgeries, ORs, and anesthesiologists.  In the next section, we show that it is computationally prohibitive to solve large instances of the ORASP using these MILPs.  
\end{remark}


\subsection{Computational Results}\label{appdx:LDR_Results}

In this section, we present computational results comparing the LDR-based approximations of the DRO models with the exact approach proposed in this paper.  For brevity and illustrative purposes, we present results for solving instances 1--6 under costs 2 and 3 using the LDR approximation of the \droe{} model.  We solve the MILP reformulation \eqref{eqn:DRO_E_LDR_MILP} of this approximation using CPLEX  with default settings and a two-hour time limit imposed on each decomposed subproblem. 

In Table~\ref{table:CPU_time_LDR_DRO}, we present the average solution time (across 10 replications; see Section~\ref{subsec:expt_description}) of solving instances 1--6 using the LDR approximation of \droe{} model and the proposed C\&CG for this model. We also present the average optimality gap computed as follows. For each replication $j$ of each ORASP instance, we compute the optimality gap as $\texttt{OptGap}_j=(\hat{\upsilon}_j-\upsilon_j^*)/\upsilon_j^*$, where $\upsilon_j^*$ and $\hat{\upsilon}_j$ are the optimal values obtained from solving the \droe{} model and its LDR approximation, respectively. For instances that the MILP formulation \eqref{eqn:DRO_E_LDR_MILP} cannot solve within the time limit, we set $\hat{\upsilon}$ as the objective value of the best incumbent solution obtained upon termination. Then, we compute average optimality gap as $\sum_{j=1}^{10} \texttt{OptGap}_j/10$. A large positive optimality gap indicates that the quality of the LDR approximation is poor.

Let us first analyze the solution times presented in Table~\ref{table:CPU_time_LDR_DRO}. Similar to the C\&CG method, the solution time of the LDR approximation increases with the size of the instance. This is reasonable because the MILP of this approximation is a large-scale MILP, and the size of which increases with the size of the instance (see Remark~\ref{rem:LDR_Size}). While solving instances 1--5 using the LDR approximation is slightly faster than solving them exactly using the C\&CG method, we could not solve instance 6 (largest ORASP instance) using this approximation within the imposed time limit. In particular, the relative MIP gap upon termination ranges from 4\% to 58\%. In contrast, we can solve all the instances using our C\&CG method within a reasonable time. Next, we analyze the approximation quality. Clearly, the LDR approximation produces poor-quality solutions with large optimality gaps. Specifically, The average optimality gaps range from 9\% to 41\% and from 4\% to 23\% under cost 3 and cost 2, respectively.

Computational results in this section demonstrate that the LDR approximation provides poor-quality solutions to the ORASP and is not computationally tractable for large instances. 
\begin{table}[t]\centering\OneAndAHalfSpacedXI
\footnotesize   
\color{black}
\caption{\blue{Computational time (in s) for solving the \droe{} model and its LDR approximation, and the optimality gap between the optimal values of the \droe{} model and its LDR approximation. A large positive optimality gap indicates that the quality of the LDR approximation is poor. \textit{Note:} Instances that cannot be solved within the time limit are marked with ``--''.}} \label{table:CPU_time_LDR_DRO}
\ra{0.7}  
\begin{tabular}{@{}r|rrrrrr@{}} \toprule
\textbf{Cost 2} & Instance 1 & Instance 2 & Instance 3 & Instance 4 & Instance 5 & Instance 6 \\ \midrule
\droe{}           & 9          & 14         & 16         & 88         & 707        & 5573       \\
LDR             & 1          & 1          & 1          & 7          & 221        & --      \\
Optimality Gap        & 23.25\%    & 4.09\%     & 33.41\%    & 5.82\%     & 5.53\%     & 13.83\%     \\ \midrule
\textbf{Cost 3} & Instance 1 & Instance 2 & Instance 3 & Instance 4 & Instance 5 & Instance 6 \\ \midrule
\droe{}           & 9          & 14         & 16         & 147        & 715        & 6239       \\
LDR             & 1          & 1          & 2          & 6          & 101        & --      \\
Optimality Gap        & 31.46\%    & 9.01\%     & 41.13\%    & 12.71\%    & 16.63\%    & 22.16\%      \\
\bottomrule
\end{tabular}
\end{table}

\color{black}

\color{black}
\section{Extension of \texorpdfstring{\protect\cite{Rath_et_al:2017}}{ }'s RO Approach} \label{appdx:RO_ORASP}
In this section, we derive an extension of \cite{Rath_et_al:2017}'s RO model that incorporates all elements of the ORSAP (\ref{appdx:RO_Model}), present a decomposition method to solve this extension (\ref{appdx:RO_Decomp}), and discuss the computational challenges of this RO approach compared with the proposed DRO approach for the ORASP (\ref{appdx:RO_Results}).

\subsection{The RO Model}\label{appdx:RO_Model}
In this section, we derive an RO model of the ORASP using the same uncertainty set employed in \cite{Rath_et_al:2017}. Specifically, we use the following uncertainty set:
\begin{equation} \label{eqn:RO_uncertainty_set}
  \calD(\tau')=\bigg\{d\in\R^{I}\mid d_i=m_i+b_i\dhat_i,\, i\in I,\, b\in\calB(\tau')\bigg\},  
\end{equation}
where $\calB(\tau')=\big\{b\in\R^{I}\mid \sum_{i\in I}|b_i|\leq \tau',\, -1\leq b_i\leq 1\big\}$. The set $\calD(\tau')$ restricts the duration of surgery $i \in I$ to take values in $[m_i-\dhat_i, m_i+\dhat_i]$, where parameter $\dhat_i$  is the maximum deviation from the mean $m_i$. The parameter $\tau'\geq 0$ bounds the total maximum deviation of surgery duration from the mean across all surgeries. Using uncertainty set $ \calD(\tau')$, we formulate the following RO model of the ORASP:
\begin{subequations} 
\begin{align}
 \underset{x,\,y,\,z,\,v,\,u,\,s,\,\alpha,\,\beta}{\text{minimize}} \quad
&  \Bigg \{\sum_{r\in R}f_r v_r + \sum_{a\in A}f_a y_a + \max_{d\in\calD(\tau')}Q(x,y,z,v,u,s,d) \Bigg\} \label{eqn:1st_stage_RO_extension_obj} \\
 \text{subject to\,\,\,} \quad
&  \text{\eqref{eqn:1st_stage_con1-2}--\eqref{eqn:1st_stage_con20-21},}
\end{align} \label{eqn:1st_stage_RO_extension}%
\end{subequations}%
where $Q(x,y,z,v,u,s,d)$ is the second-stage recourse defined in \eqref{eqn:2nd_stage}. Formulation~\eqref{eqn:1st_stage_RO_extension} finds first-stage decisions $(x,y,z,v,u,s,\alpha,\beta)$ that minimize the first-stage cost plus the worst-case value of the operational cost over a set of surgery duration scenarios characterized by the uncertainty set $\calD(\tau')$. When $\tau'=0$, the uncertainty set $\calD(\tau')$ defined in \eqref{eqn:RO_uncertainty_set} is the singleton $\{m\}$. In this case,  formulation \eqref{eqn:1st_stage_RO_extension} reduces to the deterministic ORASP model with $d=m$. Compared with \cite{Rath_et_al:2017}'s RO model, formulation~\eqref{eqn:1st_stage_RO_extension} additionally includes constraints~\eqref{eqn:1st_stage_con6-7} in the first stage, and in the second stage, it additionally incorporates objectives, variables, and constraints related to waiting and (OR and anesthesiologist) idle time metrics.

Problem~\eqref{eqn:1st_stage_RO_extension} is challenging to solve directly in the presented form due to the inner max-min problem. However, adopting the same assumption made in \cite{Rath_et_al:2017} that parameter $\tau'$ is chosen to be a positive integer, we derive an equivalent solvable reformulation of \eqref{eqn:1st_stage_RO_extension} in the following proposition. 

\begin{proposition} \label{prop:RO_extension_reformulation}
Suppose that parameter $\tau'$ is chosen to be a positive integer. Then, problem~\eqref{eqn:1st_stage_RO_extension} is equivalent to the following problem:
\begin{subequations} 
\begin{align}
 \underset{x,\,y,\,z,\,v,\,u,\,s,\,\alpha,\,\beta,\,\delta}{\textup{minimize}} \quad
&  \sum_{r\in R}f_r v_r + \sum_{a\in A}f_a y_a + \delta \label{eqn:RO_extension_reformulation_obj} \\
 \textup{subject to\,\,\,\,\,} \quad
&  \textup{\eqref{eqn:1st_stage_con1-2}--\eqref{eqn:1st_stage_con20-21},}\quad \delta\geq G(x,y,z,v,u,s), \label{eqn:RO_extension_reformulation_con1}
\end{align} \label{eqn:RO_extension_reformulation}%
\end{subequations}%
where
\begin{subequations} 
\begin{align}
& \hspace{-20mm}  G(x,y,z,v,u,s)=\\
\underset{\lambda,\,\mu,\,\theta,\,\zeta,\,p}{\textup{maximize}} \quad
&  \Bigg\{ \sum_{a\in A} \cg_a(t^\textup{end}_a-t^\textup{start}_a)\hreg_a + \sum_{r\in R}\cg_r T^\textup{end} v_r \nonumber \\
& \quad + \sum_{i\in I}\Bigg[\sum_{i'\in I, i'\ne i}(\lambda_{i,i'} - \lambda_{i',i}) + \sum_{a\in A_i} \mu_{i,a} + \sum_{r\in R_i} \theta_{i,r} \Bigg] s_i \nonumber \\ 
& \quad - M_\textup{seq} \sum_{i\in I}\sum_{i'\in I, i'\ne i} \lambda_{i,i'}(1-u_{i,i'}) - \sum_{i\in I}\sum_{a\in A_i}\mu_{i,a}\Big[ t^\textup{end}_a + M_\textup{anes}(1-x_{i,a}+ y_a) \Big] \nonumber \\
& \quad - \sum_{i\in I}\sum_{r\in R_i} \theta_{i,r} \Big[T^\textup{end} +M_\textup{room}(1-z_{i,r})\Big] \nonumber \\
& \quad + \sum_{i\in I} m_i\Bigg[ \sum_{i'\in I,i'\ne i} \lambda_{i,i'} + \sum_{a\in A_i} (\mu_{i,a}- \cg_a \hreg_a x_{i,a}) + \sum_{r\in R_i}(\theta_{i,r}-\cg_r z_{i,r}) \Bigg]  \nonumber \\
& \quad - \sum_{i\in I} \dhat_i\Bigg[ \sum_{i'\in I,i'\ne i} \zeta^{L,1}_{i,i'} + \sum_{a\in A_i} (\zeta^{M,1}_{i,a}- \cg_a \hreg_a x_{i,a} p^1_i) + \sum_{r\in R_i}(\zeta^{T,1}_{i,r}-\cg_r z_{i,r} p^1_i)\Bigg]   \nonumber \\
& \quad + \sum_{i\in I} \dhat_i\Bigg[ \sum_{i'\in I,i'\ne i} \zeta^{L,2}_{i,i'} + \sum_{a\in A_i} (\zeta^{M,2}_{i,a}- \cg_a \hreg_a x_{i,a} p^2_i) + \sum_{r\in R_i}(\zeta^{T,2}_{i,r}-\cg_r z_{i,r} p^2_i)\Bigg] \Bigg\} \label{eqn:RO_inner_max_MILP_obj} \\
 \textup{subject to} \quad
& \text{\eqref{eqn:2nd_stage_dual_reform_con1}--\eqref{eqn:2nd_stage_dual_reform_con4}}, \label{eqn:RO_inner_max_MILP_con0}  \\
& \sum_{i\in I} (p^1_i+p^2_i) \leq \tau', \label{eqn:RO_inner_max_MILP_con1}  \\
& p^1_i + p^2_i \leq 1,\quad\forall i\in I, \label{eqn:RO_inner_max_MILP_con2} \\
&  \sum_{i\in I_a} \mu_{i,a} \leq \cg_a + \co_a,\quad \sum_{i\in I_r} \theta_{i,r} \leq \cg_r + \co_r, \quad\forall a\in A,\, r\in R,\label{eqn:RO_inner_max_MILP_con3} \\
&  \sum_{i'\in I, i'\ne i}(\lambda_{i,i'} - \lambda_{i',i}) + \sum_{a\in A_i} \mu_{i,a} + \sum_{r\in R_i} \theta_{i,r} + \cw_i \geq 0,\quad\forall i\in I, \label{eqn:RO_inner_max_MILP_con4} \\
&\zeta^{L,k}_{i,i'} \leq \lambda_{i,i'},\,\,\, \zeta^{L,k}_{i,i'} \leq p^k_i\lambdaub_{i,i'},\,\,\, \zeta^{L,k}_{i,i'} \geq 0,\,\,\, \zeta^{L,k}_{i,i'} \geq \lambda_{i,i'} + \lambdaub_{i,i'}(p^k_i-1), \nonumber \\
& \hspace{85mm}\forall \{i,i'\}\subseteq I,\, k\in\{1,2\},  \label{eqn:RO_inner_max_MILP_con5}\\
& \zeta^{M,k}_{i,a} \leq \mu_{i,a},\,\,\, \zeta^{M,k}_{i,a} \leq p^k_i\muub_{i,a},\,\,\, \zeta^{M,k}_{i,a} \geq 0,\,\,\,\zeta^{M,k}_{i,a} \geq \mu_{i,a} + \muub_{i,a}(p^k_i-1), \nonumber \\
& \hspace{85mm}\forall (i,a)\in \calF^A,\, k\in\{1,2\},  \label{eqn:RO_inner_max_MILP_con6} \\
& \zeta^{T,k}_{i,r} \leq \theta_{i,r},\,\,\, \zeta^{T,k}_{i,r}\leq p^k_i\thetaub_{i,r},\,\,\, \zeta^{T,k}_{i,r} \geq 0,\,\,\, \zeta^{T,k}_{i,r} \geq \theta_{i,r} + \thetaub_{i,r}(p^k_i-1), \nonumber \\
& \hspace{85mm} \forall(i,r)\in\calF^R, \, k\in\{1,2\}, \label{eqn:RO_inner_max_MILP_con7} \\
&  \lambda_{i,i'},\,\mu_{i,a},\,\theta_{i,r}\geq 0,\quad p^k_i\in\{0,1\}, \quad\forall \{i,i'\}\subseteq I,\,a\in A_i,\, r\in R_i,\, k\in\{1,2\}. \label{eqn:RO_inner_max_MILP_con8} 
\end{align}\label{eqn:RO_inner_max_MILP}%
\end{subequations} \vspace{-8mm} 
\end{proposition}

\begin{proof}{Proof.}
Using the dual problem \eqref{eqn:2nd_stage_dual_reform} of $Q(x,y,z,v,u,s,d)$, we can reformulate $\max_{d\in\calD(\tau')} Q(x,y,z,v,u,s,d)$ as
\begin{subequations} 
\begin{align}
 \underset{\lambda,\,\mu,\,\theta,\,d,\,b}{\textup{maximize}} \quad
&  \sum_{a\in A} \cg_a(t^\textup{end}_a-t^\textup{start}_a)\hreg_a + \sum_{r\in R}\cg_r T^\textup{end} v_r  \nonumber \\
& \quad + \sum_{i\in I}\Bigg[\sum_{i'\in I, i'\ne i}(\lambda_{i,i'} - \lambda_{i',i}) + \sum_{a\in A_i} \mu_{i,a} + \sum_{r\in R_i} \theta_{i,r} \Bigg] s_i \nonumber \\ 
& \quad - M_\textup{seq} \sum_{i\in I}\sum_{i'\in I, i'\ne i} \lambda_{i,i'}(1-u_{i,i'}) - \sum_{i\in I}\sum_{a\in A_i}\mu_{i,a}\Big[ t^\textup{end}_a + M_\textup{anes}(1-x_{i,a}+ y_a) \Big] \nonumber \\
& \quad - \sum_{i\in I}\sum_{r\in R_i} \theta_{i,r} \Big[T^\textup{end} +M_\textup{room}(1-z_{i,r})\Big] \nonumber \\
& \quad + \sum_{i\in I} \Bigg[ \sum_{i'\in I,i'\ne i} \lambda_{i,i'} + \sum_{a\in A_i} (\mu_{i,a}- \cg_a \hreg_a x_{i,a}) + \sum_{r\in R_i}(\theta_{i,r}-\cg_r z_{i,r})\Bigg] d_i \label{eqn_pf:RO_inner_max_MILP_1_obj} \\
 \textup{subject to} \quad
&  \text{\eqref{eqn:2nd_stage_dual_reform_con1}--\eqref{eqn:2nd_stage_dual_reform_con4}}, \label{eqn_pf:RO_inner_max_MILP_1_con1} \\
&  d_i=m_i+b_i\dhat_i,\quad\forall i\in I, \label{eqn_pf:RO_inner_max_MILP_1_con2} \\
& \sum_{i\in I} |b_i| \leq \tau',\quad -1\leq b_i\leq 1,\quad\forall i\in I. \label{eqn_pf:RO_inner_max_MILP_1_con3}
\end{align} \label{eqn_pf:RO_inner_max_MILP_1}%
\end{subequations}%
We first consider the maximization over $d$ and $b$ in \eqref{eqn_pf:RO_inner_max_MILP_1}, i.e.,
\begin{subequations} 
\begin{align}
 \underset{d,\,b}{\textup{maximize}} \quad & \sum_{i\in I} \Bigg[ \sum_{i'\in I,i'\ne i} \lambda_{i,i'} + \sum_{a\in A_i} (\mu_{i,a}- \cg_a \hreg_a x_{i,a}) + \sum_{r\in R_i}(\theta_{i,r}-\cg_r z_{i,r})\Bigg] d_i \label{eqn_pf:RO_inner_max_MILP_2_obj} \\
 \textup{subject to} \quad
&  d_i=m_i+b_i\dhat_i,\quad\forall i\in I, \label{eqn_pf:RO_inner_max_MILP_2_con1} \\
& \sum_{i\in I} |b_i| \leq \tau',\quad -1\leq b_i\leq 1,\quad\forall i\in I. \label{eqn_pf:RO_inner_max_MILP_2_con2}
\end{align} \label{eqn_pf:RO_inner_max_MILP_2}%
\end{subequations}%
Using \eqref{eqn_pf:RO_inner_max_MILP_2_con1}, we can reformulate \eqref{eqn_pf:RO_inner_max_MILP_2} as 
\begin{subequations} 
\begin{align}
 \underset{b}{\textup{maximize}} \quad & \sum_{i\in I} \Bigg[ \sum_{i'\in I,i'\ne i} \lambda_{i,i'} + \sum_{a\in A_i} (\mu_{i,a}- \cg_a \hreg_a x_{i,a}) + \sum_{r\in R_i}(\theta_{i,r}-\cg_r z_{i,r})\Bigg] \big(m_i+b_i\dhat_i\big) \label{eqn_pf:RO_inner_max_MILP_3_obj} \\
 \textup{subject to} \quad
& \sum_{i\in I} |b_i| \leq \tau',\quad -1\leq b_i\leq 1,\quad\forall i\in I. \label{eqn_pf:RO_inner_max_MILP_3_con1}
\end{align} \label{eqn_pf:RO_inner_max_MILP_3}%
\end{subequations}%
Since the objective is linear in $b$, under the assumption that $\tau'$ is a positive integer, there always exists an optimal solution $b$ such that $b_i\in\{-1,0,1\}$ for all $i\in I$. Thus, we introduce new binary variables $p^1_i$ taking value $1$ if $b_i=-1$ and $0$ otherwise, and $p^2_i$ taking value $1$ if $b_i=1$ and $0$ otherwise, for all $i\in I$. With these new variables, we can reformulate \eqref{eqn_pf:RO_inner_max_MILP_3} as
\begin{subequations} 
\begin{align}
 \underset{p}{\textup{maximize}} \quad & \sum_{i\in I} \Bigg[ \sum_{i'\in I,i'\ne i} \lambda_{i,i'} + \sum_{a\in A_i} (\mu_{i,a}- \cg_a \hreg_a x_{i,a}) + \sum_{r\in R_i}(\theta_{i,r}-\cg_r z_{i,r})\Bigg] \big(m_i-p^1_i\dhat_i+p^2_i\dhat_i\big) \label{eqn_pf:RO_inner_max_MILP_4_obj} \\
 \textup{subject to} \quad
& \sum_{i\in I} \big(p^1_i + p^2_i\big) \leq \tau', \label{eqn_pf:RO_inner_max_MILP_4_con1} \\
& p^1_i+p^2_i \leq 1,\quad\forall i\in I, \label{eqn_pf:RO_inner_max_MILP_4_con2} \\
& p^1_i\in\{0,1\},\, p^2_i\in\{0,1\},\quad\forall i\in I. \label{eqn_pf:RO_inner_max_MILP_4_con3}
\end{align} \label{eqn_pf:RO_inner_max_MILP_4}%
\end{subequations}%
Constraints \eqref{eqn_pf:RO_inner_max_MILP_4_con2} ensure that $p^1_i$ and $p^2_i$ do not take value $1$ simultaneously. With the reformulation \eqref{eqn_pf:RO_inner_max_MILP_4}, problem \eqref{eqn_pf:RO_inner_max_MILP_1} is equivalent to
\begin{subequations} 
\begin{align}
 \underset{\lambda,\,\mu,\,\theta,\,d,\,p}{\textup{maximize}} \quad
&  \sum_{a\in A} \cg_a(t^\textup{end}_a-t^\textup{start}_a)\hreg_a + \sum_{r\in R}\cg_r T^\textup{end} v_r  \nonumber \\
& \quad + \sum_{i\in I}\Bigg[\sum_{i'\in I, i'\ne i}(\lambda_{i,i'} - \lambda_{i',i}) + \sum_{a\in A_i} \mu_{i,a} + \sum_{r\in R_i} \theta_{i,r} \Bigg] s_i \nonumber \\ 
& \quad - M_\textup{seq} \sum_{i\in I}\sum_{i'\in I, i'\ne i} \lambda_{i,i'}(1-u_{i,i'}) - \sum_{i\in I}\sum_{a\in A_i}\mu_{i,a}\Big[ t^\textup{end}_a + M_\textup{anes}(1-x_{i,a}+ y_a) \Big] \nonumber \\
& \quad - \sum_{i\in I}\sum_{r\in R_i} \theta_{i,r} \Big[T^\textup{end} +M_\textup{room}(1-z_{i,r})\Big] \nonumber \\
& \quad + \sum_{i\in I} \Bigg[ \sum_{i'\in I,i'\ne i} \lambda_{i,i'} + \sum_{a\in A_i} (\mu_{i,a}- \cg_a \hreg_a x_{i,a}) + \sum_{r\in R_i}(\theta_{i,r}-\cg_r z_{i,r})\Bigg] \big(m_i-p^1_i\dhat_i+p^2_i\dhat_i\big)  \label{eqn_pf:RO_inner_max_MILP_5_obj} \\
 \textup{subject to} \quad
&  \text{\eqref{eqn:2nd_stage_dual_reform_con1}--\eqref{eqn:2nd_stage_dual_reform_con4}}, \label{eqn_pf:RO_inner_max_MILP_5_con1} \\
&  \sum_{i\in I} \big(p^1_i + p^2_i\big) \leq \tau', \label{eqn_pf:RO_inner_max_MILP_5_con2} \\
& p^1_i+p^2_i \leq 1,\quad\forall i\in I, \label{eqn_pf:RO_inner_max_MILP_5_con3} \\
& p^1_i\in\{0,1\},\quad p^2_i\in\{0,1\},\quad\forall i\in I. \label{eqn_pf:RO_inner_max_MILP_5_con4} 
\end{align} \label{eqn_pf:RO_inner_max_MILP_5}%
\end{subequations}%
Finally, to reformulate \eqref{eqn_pf:RO_inner_max_MILP_5} into an MILP, we introduce auxiliary variables $\zeta^{L,k}_{i,i'}=\lambda_{i,i'}p^k_i$, $\zeta^{M,k}_{i,a}=\mu_{i,a}p^k_i$, and $\zeta^{T,k}_{i,r}=\theta_{i,r}p^k_i$ with McCormick inequalities \eqref{eqn:RO_inner_max_MILP_con5}--\eqref{eqn:RO_inner_max_MILP_con7}.  \Halmos
\end{proof}

Key differences between the MILP reformulation of $\max_{d\in\calD(\tau')} Q(x,y,z,v,u,s,d)$ in our RO model for the ORASP and that of \cite{Rath_et_al:2017}  include the following.  First, recall that the formulation of $Q(x,y,z,v,u,s,d)$ in our RO model additionally includes objectives, variables, and constraints related to OR and anthesiologist idle time, and surgery waiting time. Hence, the dual problem \eqref{eqn:2nd_stage_dual_reform} of $Q(x,y,z,v,u,s,d)$, and accordingly problem $ G(x,y,z,v,u,s)$ in \eqref{eqn:RO_inner_max_MILP}  has different sets of variables, constraints, and objectives. Second, since \cite{Rath_et_al:2017}'s model does not incorporate OR and anesthesiologist idle time, the worst-case scenario $d_i$ in their RO model takes only two values, the mean $m_i$ or the upper bound $m_i+\dhat_i$. To see this, suppose that $\cg_a=\cg_r=0$ for all $a\in A$ and $r\in R$. In this case, the objective function \eqref{eqn_pf:RO_inner_max_MILP_3_obj} of problem \eqref{eqn_pf:RO_inner_max_MILP_3} is non-decreasing in $b_i$ (since $\lambda_{i,i'}\geq 0$, $\mu_{i,a}\geq 0$, and $\theta_{i,r}\geq 0$), and hence there exists a worst-case scenario $d$ such that $d_i\in\{m_i,m_i+\dhat_i\}$ for all $i\in I$. In contrast, since our RO model incorporates the idle time metric, the objective function \eqref{eqn_pf:RO_inner_max_MILP_3_obj} could be non-increasing or non-decreasing in $b$, depending on the first-stage assignment decision $(x,z)$. Hence, the worst-case scenario of surgery duration in our RO model may attain the lower bound ($m_i-\dhat_i$), upper bound ($m_i+\dhat_i$), or mean ($m_i$). Therefore, since the recourse function in our RO model for the ORASP and that of \cite{Rath_et_al:2017}  are different, the reformulation of $\max_{d\in\calD(\tau')} Q(x,y,z,v,u,s,d)$  are also different.

\subsection{A Decomposition Algorithm for the RO Model}\label{appdx:RO_Decomp}

\IncMargin{0em}
\begin{algorithm}[t] \SetAlgoNoEnd  \OneAndAHalfSpacedXI \small
\color{black}
\SetKwInOut{Initialization}{Initialization}
\Initialization{\vspace{-2mm}  Set $LB=0$, $UB=\infty$, $\varepsilon\geq0$, $\calK=\emptyset$,   $j=1$.} 
\textbf{1. Master problem.} Solve the master problem
\begin{subequations}
\begin{align}
 \underset{\substack{ x,y,z,v,u,s,\\ \alpha,\beta,\,\delta}}{\text{minimize}} \quad\,\,
&  \sum_{r\in R}f_r v_r + \sum_{a\in A}f_a y_a + \delta \label{eqn:RO_decomposition_master_obj} \\
\text{subject to} \quad
&  \text{\eqref{eqn:1st_stage_con1-2}--\eqref{eqn:1st_stage_con20-21},}  \label{eqn:RO_decomposition_master_con1} \\
& \delta \geq 0, \label{eqn:RO_decomposition_master_con2}\\
& \delta \geq \sum_{a\in A} \cg_a(t^\textup{end}_a-t^\textup{start}_a)\hreg_a + \sum_{r\in R}\cg_r T^\textup{end} v_r  \nonumber \\
& \qquad + \sum_{i\in I}\Bigg[\sum_{i'\in I, i'\ne i}(\lambda^k_{i,i'} - \lambda^k_{i',i}) + \sum_{a\in A_i} \mu^k_{i,a} + \sum_{r\in R_i} \theta^k_{i,r} \Bigg] s_i \nonumber \\ 
& \qquad - M_\textup{seq} \sum_{i\in I}\sum_{i'\in I, i'\ne i} \lambda^k_{i,i'}(1-u_{i,i'}) - \sum_{i\in I}\sum_{a\in A_i}\mu^k_{i,a}\Big[ t^\textup{end}_a + M_\textup{anes}(1-x_{i,a}+ y_a) \Big] \nonumber \\
& \qquad - \sum_{i\in I}\sum_{r\in R_i} \theta^k_{i,r} \Big[T^\textup{end} +M_\textup{room}(1-z_{i,r})\Big] \nonumber \\
& \qquad + \sum_{i\in I} \Bigg[ \sum_{i'\in I,i'\ne i} \lambda^k_{i,i'} + \sum_{a\in A_i} (\mu^k_{i,a}- \cg_a \hreg_a x_{i,a}) + \sum_{r\in R_i}(\theta^k_{i,r}-\cg_r z_{i,r})\Bigg] d^k_i,\quad \forall k\in\calK.  \label{eqn:RO_decomposition_master_con3}
\end{align}  \label{eqn:RO_decomposition_master}%
\end{subequations}
\hspace{2.7mm}Record the optimal solution $(x^j,y^j,z^j,v^j,u^j,s^j,\alpha^j,\beta^j,\delta^j)$ and value $Z^j$. Set $LB = Z^j$. \\
\textbf{2. Subproblem.} Solve \eqref{eqn:RO_inner_max_MILP} with fixed $(x^j,y^j,z^j,v^j,u^j,s^j)$.
\begin{enumerate}[leftmargin=12mm,topsep=0mm,itemsep=0mm]
    \item [\textbf{2.1}] Record the optimal solution $(\lambda^*,\mu^*,\theta^*,p^{*,1},p^{*,2})$ and value $Y^j$.  Set $UB = \min\big\{UB, (Z^j - \delta^j)+Y^j\big\}$.
    \item [\textbf{2.2}] If $(UB-LB)/UB<\varepsilon$ or $\delta^j\geq Y^j$, then terminate
    ; else, go to step 3.
\end{enumerate}
\textbf{3. Optimality Cut.}
\begin{enumerate}[leftmargin=12mm,topsep=0mm,itemsep=0mm]
    \item [\textbf{3.1}] Using $(p^{*,1},p^{*,2})$ from step 2, compute $d^*_i=m_i+(p^{*,2}_i-p^{*,1}_i)\dhat_i$ for all $i\in I$.
    \item [\textbf{3.2}] Add the optimality cut of the form \eqref{eqn:RO_decomposition_master_con3} with $(\lambda^k,\mu^k,\theta^k,d^k)=(\lambda^*,\mu^*,\theta^*,d^*)$. Update $j \leftarrow j+1$ and $\calK \leftarrow \calK\cup\{j\}$. Go back to step 1.
\end{enumerate}
\BlankLine
\caption{A decomposition algorithm to solve \eqref{eqn:1st_stage_RO_extension}.} \label{algo:decomposition_RO_extension}
\end{algorithm}\DecMargin{1em}

Note that the RO formulation in \eqref{eqn:RO_extension_reformulation} involves a maximization problem in constraint~\eqref{eqn:RO_extension_reformulation_con1} that defines the function value $G(x,y,z,v,u,s)$. Thus, it is not directly solvable using standard techniques. In this section, we propose a decomposition algorithm to solve the RO model \eqref{eqn:RO_extension_reformulation} based on the one presented by \cite{Rath_et_al:2017} for solving their RO model. 

Algorithm~\ref{algo:decomposition_RO_extension} summarizes the steps of our decomposition algorithm. In each iteration, we first solve the master problem \eqref{eqn:RO_decomposition_master}. In Proposition~\ref{prop:RO_optimality_cut}, we show that the right hand side of constraint \eqref{eqn:RO_decomposition_master_con3} in the master problem provides a valid lower bound on $\max_{d\in\calD(\tau')} Q(x,y,z,v,u,s,d)$. Thus, the master problem \eqref{eqn:RO_decomposition_master} is a relaxation of RO model \eqref{eqn:1st_stage_RO_extension}, providing a lower bound on the optimal value of \eqref{eqn:1st_stage_RO_extension}. Given the optimal solution from the master problem, we identify a new dual feasible solution $(\lambda^*,\mu^*,\theta^*)\in\Pi$ and a new scenario $d^*\in\calD(\tau')$ by solving the subproblem \eqref{eqn:RO_inner_max_MILP}. Since solutions of the master problem are feasible to the original problem, we obtain an upper bound on the optimal value of \eqref{eqn:1st_stage_RO_extension}. We then add a new cut associated with  $(\lambda^*,\mu^*,\theta^*)$ and $d^*$ to the master problem. Next, we solve the master problem again with the new information (in the enlarged set $\calK$) from the subproblems. This process continues until the gap between the lower and upper bound obtained in each iteration satisfies a predetermined termination tolerance $\varepsilon\geq 0$.

\begin{proposition} \label{prop:RO_optimality_cut}
For any $d^k\in\calD(\tau')$ and $(\lambda^k,\mu^k,\theta^k)\in\Pi:=\big\{(\lambda,\mu,\theta)\,\,\big|\,\,\text{\eqref{eqn:2nd_stage_dual_reform_con1}--\eqref{eqn:2nd_stage_dual_reform_con4}}\big\}$, the right hand side of \eqref{eqn:RO_decomposition_master_con3} provides a valid lower bound on $\max_{d\in\calD(\tau')} Q(x,y,z,v,u,s,d)$.
\end{proposition}

\begin{proof}{Proof.}
First, for any $d^k\in\calD(\tau')$, we have
\begin{equation} \label{eqn_pf:prop_RO_optimality_cut_1}
\max_{d\in\calD(\tau')} Q(x,y,z,v,u,s,d) \geq  Q(x,y,z,v,u,s,d^k).
\end{equation}
Second, from \eqref{eqn:2nd_stage_dual_reform}, we have  $Q(x,y,z,v,u,s,d^k)$ equals to 
\begin{align}
& \max_{(\lambda,\mu,\theta)\in\Pi} \Bigg\{\sum_{a\in A} \cg_a(t^\textup{end}_a-t^\textup{start}_a)\hreg_a + \sum_{r\in R}\cg_r T^\textup{end} v_r + \sum_{i\in I}\Bigg[\sum_{i'\in I, i'\ne i}(\lambda_{i,i'} - \lambda_{i',i}) + \sum_{a\in A_i} \mu_{i,a} + \sum_{r\in R_i} \theta_{i,r} \Bigg] s_i \nonumber \\ 
& \quad\qquad\qquad - M_\textup{seq} \sum_{i\in I}\sum_{i'\in I, i'\ne i} \lambda_{i,i'}(1-u_{i,i'}) - \sum_{i\in I}\sum_{a\in A_i}\mu_{i,a}\Big[ t^\textup{end}_a + M_\textup{anes}(1-x_{i,a}+ y_a) \Big] \nonumber \\
& \quad\qquad\qquad - \sum_{i\in I}\sum_{r\in R_i} \theta_{i,r} \Big[T^\textup{end} +M_\textup{room}(1-z_{i,r})\Big] \nonumber \\
& \quad\qquad\qquad + \sum_{i\in I} \Bigg[ \sum_{i'\in I,i'\ne i} \lambda_{i,i'} + \sum_{a\in A_i} (\mu_{i,a}- \cg_a \hreg_a x_{i,a}) + \sum_{r\in R_i}(\theta_{i,r}-\cg_r z_{i,r})\Bigg] d^k_i \Bigg\}. \label{eqn_pf:prop_RO_optimality_cut_2}
\end{align}
It follows that from \eqref{eqn_pf:prop_RO_optimality_cut_1} and \eqref{eqn_pf:prop_RO_optimality_cut_2} that the right hand side of \eqref{eqn:RO_decomposition_master_con3}  is a valid lower bound on $\max_{d\in\calD(\tau')} Q(x,y,z,v,u,s,d)$. \Halmos
\end{proof}

There are two key differences between Algorithm~\ref{algo:decomposition_RO_extension} and the decomposition algorithm presented by \cite{Rath_et_al:2017}. First, our second-stage problem $Q(x,y,z,v,u,s,d)$, which includes waiting time and idle time components, is different from that of \cite{Rath_et_al:2017}. Hence, the reformulation of the worst-case recourse function is different (see Proposition~\ref{prop:RO_extension_reformulation}). Second, the optimality cuts for the extended RO model \eqref{eqn:1st_stage_RO_extension} are different from the optimality cuts for \cite{Rath_et_al:2017}'s RO model (see Proposition~\ref{prop:RO_optimality_cut}).

\subsection{Computational Time}\label{appdx:RO_Results}

In this section, we investigate the computational performance of the proposed Algorithm~\ref{algo:decomposition_RO_extension} for the RO model of the ORASP. For brevity and illustrative purposes, we present results for instances~1 and 2 under costs~1--3. We follow the same parameter settings described in Section~\ref{subsec:expt_description}. In addition, similar to \cite{Rath_et_al:2017},  we choose $\tau'=\lceil \tau|I| \rceil$ with $\tau\in\{0.1,0.2,0.4\}$. Finally, we impose a time limit of two hours. 

Table~\ref{table:CPU_time_RO} shows solution times of the instances that Algorithm~\ref{algo:decomposition_RO_extension} solved to optimality and the relative MIP gap for those that Algorithm~\ref{algo:decomposition_RO_extension} failed to solve within the imposed time limit. We also present solution times of instances 1 and 2 using our proposed \spe{} and \droe{} models. It is clear that the RO approach takes a substantially longer time to solve such small instances than our proposed models. Specifically, using the RO model and Algorithm~\ref{algo:decomposition_RO_extension}, we could only solve instance 1 with $\tau=0.1$, and solution times of this instance range form 2214 to 7176 seconds.  For instance 2, Algorithm~\ref{algo:decomposition_RO_extension} terminates with a large relative MIP gap after two hours for all $\tau\in\{0.1,0.2,0.4\}$. In contrast, using our \spe{} and \droe{} models, we can solve instances 1 and 2 in a few seconds. These results suggest that our proposed SP and DRO approaches for the ORASP are more computationally efficient than the RO approach.

\begin{table}[t]\centering\OneAndAHalfSpacedXI
\color{black}
\footnotesize   
\caption{\blue{Computational time in seconds when solving \spe{}, \droe{}, and RO models. For cases when an optimal solution was not found in the 2-hour time limit, the percentage in brackets indicates the relative optimality gap reported from Algorithm~\ref{algo:decomposition_RO_extension} when the time limit was reached.} } \label{table:CPU_time_RO}
\ra{0.7}  
\begin{tabular}{@{}lcrrrrrcrrrrr@{}} \toprule
       && \multicolumn{5}{c}{\textbf{Instance 1}}             && \multicolumn{5}{c}{\textbf{Instance 2}}             \\
       && \spe{} & \droe{} & $\tau = 0.1$ & $\tau = 0.2$ & $\tau = 0.4$ && \spe{} & \droe{} & $\tau = 0.1$ & $\tau = 0.2$ & $\tau = 0.4$ \\ \midrule
Cost 1 && 2 & 7 & 2214         & (21.9\%)     & (46.8\%)     && 9 &  28 & (36.3\%)     & (46.2\%)      & (60.7\%)      \\
Cost 2 && 1 & 9 & 5133         & 6048         & (56.6\%)     && 2 & 14 & (49.8\%)     & (61.3\%)      & (67.5\%)      \\
Cost 3 && 1 & 9 & 7176         & (3.0\%)      & (43.6\%)     && 3 & 14 & (61.2\%)     & (70.1\%)      & (72.6\%) \\
\bottomrule
\end{tabular}
\color{black}
\end{table}

\color{black}

\section{Additional details and results on numerical experiments} \label{appdx:expt}

\subsection{Sample average approximation of the \spe{} and \spcvar{} models} \label{appdx:SP_SAA}
We provide the sample average approximation (SAA) approach for the \spe{} and \spcvar{} models. Given a set of finite scenarios $\{d^n\}_{n=1}^N$, the SAA approach is to solve the \spe{} model with the empirical distribution based on $\{d^n\}_{n=1}^N$. That is,
\begin{subequations}
\begin{align}
 \underset{x,...,s,\,q,\,o,\,w,\,g}{\text{minimize}} \quad
&  \sum_{r\in R}c_r v_r + c_q\sum_{a\in A}y_a + \frac{1}{N}\sum_{n=1}^N \bigg[\Big(\cg_a g_a^n + \co_a o_a^n \Big) +\sum_{r\in R} \Big(\cg_r g_r^n + \co_r o_r^n \Big) + \sum_{i\in I} \cw_i w_i^n \bigg] \label{eqn:SP_SAA_obj} \\
 \text{subject to } \quad
&  \eqref{eqn:1st_stage_con1-2}-\eqref{eqn:1st_stage_con20-21}, \label{eqn:SP_SAA_con1} \\
&  q^n_{i'} \geq q^n_i + d^n_i - M_\text{seq}(1-u_{i,i'}),\quad\forall \{i,i'\}\subseteq I,\, i\ne i', \,n\in[N], \label{eqn:SP_SAA_con2}\\
&  q^n_i \geq s_i,\quad\forall i\in I,\,n\in[N],\label{eqn:SP_SAA_con3} \\
&  o^n_a \geq q^n_i + d^n_i -t^\text{end}_a - M_\text{anes}(1-x_{i,a}+y_a),\quad\forall (i,a)\in\calF^A,\,n\in[N], \label{eqn:SP_SAA_con4} \\
&  o^n_r \geq q^n_i + d^n_i - T^\text{end} - M_\text{room}(1-z_{i,r}),\quad\forall (i,r)\in\calF^R,\,n\in[N], \label{eqn:SP_SAA_con5}\\
& w^n_i \geq q^n_i - s_i, \quad\forall i\in I,\,n\in[N], \label{eqn:SP_SAA_con6}\\
& g^n_a \geq \Bigg(t^\text{end}_a - t^\text{start}_a - \sum_{i\in I_a} d^n_i x_{i,a}\Bigg) \hreg_a + o^n_a,\quad\forall a\in A,\,n\in[N],\label{eqn:SP_SAA_con7}\\
& g^n_r \geq T^\text{end} v_r - \sum_{i\in I_r} d^n_i z_{i,r} + o^n_r,\quad\forall r\in R,\,n\in[N],\label{eqn:SP_SAA_con8}\\ 
&  q^n_i,\, o^n_a,\, o^n_r,\, w^n_i,\, g^n_a,\, g^n_r \geq 0,\quad\forall i\in I,\, a\in A,\, r\in R,\,n\in[N]. \label{eqn:SP_SAA_con9} 
\end{align}  \label{eqn:SP_SAA}%
\end{subequations}
\vspace{-10mm}

Next, for the \spcvar{} model, using the definition of CVaR in \eqref{eqn:CVaR_def}, we can reformulate the \spcvar{} model as follows:
\begin{subequations}
\begin{align}
 \underset{x,\,y,\,z,\,v,\,u,\,\alpha,\,\beta,\,s,\,\tau}{\text{minimize}} & \quad  \sum_{r\in R}f_r v_r + \sum_{a\in A}f_a y_a + \Bigg\{\tau + \frac{1}{1-\gamma}\E_\Prob\Big[\max\big\{ Q(x,y,z,v,u,s,d)-\tau,0\big\}\Big]  \Bigg\}\label{eqn:CVaR_obj} \\
 \text{subject to\,\,\,\,\,\,} &   \quad  \eqref{eqn:1st_stage_con1-2}-\eqref{eqn:1st_stage_con20-21}. \label{eqn:CVaR_con1}
\end{align}  \label{eqn:CVaR_model}%
\end{subequations}
Given a set of finite scenarios $\{d^n\}_{n=1}^N$, we can introduce auxiliary variable $\eta_n$ to linearize the $\max\{\cdot,\,0\}$ operator in the objective. Therefore, we have the following reformulation of the \spcvar{} model.
\begin{subequations}
\begin{align}
 \underset{x,\,y,\,z,\,v,\,u,\,\alpha,\,\beta,\,s,\,\tau,\,\eta}{\text{minimize}} & \quad  \sum_{r\in R}f_r v_r + \sum_{a\in A}f_a y_a + \Bigg\{\tau + \frac{1}{N(1-\gamma)}\sum_{n=1}^N \eta_n \Bigg\}\label{eqn:CVaR_scen_obj} \\
 \text{subject to\,\,\,\,\,\,} &   \quad  \eqref{eqn:1st_stage_con1-2}-\eqref{eqn:1st_stage_con20-21}, \label{eqn:CVaR_scen_con1}\\
 & \ \eta_n \geq 0,\quad \eta_n \geq Q(x,y,z,v,u,s,d^n)-\tau,\quad\forall n\in[N]. \label{eqn:CVaR_scen_con2}
\end{align}  \label{eqn:CVaR_model_scen}%
\end{subequations}
Substituting $Q(x,y,z,v,u,s,d^n)$ by the second-stage problem \eqref{eqn:2nd_stage}, we obtain the final formulation of the \spcvar{} model.
\begin{subequations}
\begin{align}
 \underset{x,...,s,\,\tau,\,\eta,\,q,\,o,\,w,\,g}{\text{minimize}} \quad
&  \sum_{r\in R}c_r v_r + c_q\sum_{a\in A}y_a +  \Bigg\{\tau + \frac{1}{N(1-\gamma)}\sum_{n=1}^N \eta_n \Bigg\} \label{eqn:CVaR_final_obj} \\
 \text{subject to \hspace{3mm}} \quad
&  \eqref{eqn:1st_stage_con1-2}-\eqref{eqn:1st_stage_con20-21}, \label{eqn:CVaR_final_con1} \\
&  \eta_n\geq \sum_{a\in A} \Big(\cg_a g^n_a + \co_a o^n_a \Big) +\sum_{r\in R} \Big(\cg_r g^n_r + \co_r o^n_r \Big) + \sum_{i\in I} \cw_i w^n_i - \tau,\quad\forall n\in[N],  \label{eqn:CVaR_final_con2}\\
&  \eta_n \geq 0,\quad \eqref{eqn:2nd_stage_con1}-\eqref{eqn:2nd_stage_con8},\quad\forall n\in[N]. \label{eqn:CVaR_final_con3} 
\end{align}  \label{eqn:CVaR_final}%
\end{subequations}

\subsection{Details and Additional Results for Section \ref{sec:numerical_experiments}} \label{appdx:supp_num_results}

\subsubsection{Instance Details} \label{appdx:instance_detail}  \phantom{a}

We first provide the summary statistics of the surgery duration for different specialties. In Table \ref{table:data_summary_1}, we present the mean, standard deviation, minimum, and maximum of the random surgery duration of each type for instances 1 to 6. The statistics are computed based on the data set from \cite{Mannino_et_al:2010}. The minimum and maximum surgery duration are chosen as the 20\% and 80\% percentiles of the surgery data respectively. Next, Table \ref{table:instance_detail1} presents the details for instances 1 to 6 with six surgery types (CARD: Cardiology, ORTH: Orthopedics, GYN: Gynecology, MED: Medicine, GASTRO: Gastroenterology, URO: Urology).  


\begin{table}[t]\centering\small
\footnotesize
\caption{Summary statistics of surgery duration for instances 1 to 6 (Std. Dev.: standard deviation)} \label{table:data_summary_1}
\ra{0.6}  
\begin{tabular}{@{}l|cccccc@{}} \toprule
Type      & CARD & ORTH & GYN & MED & GASTRO & URO \\ \midrule
Mean      & 99   & 142  & 78  & 75  & 132    & 72  \\
Std. Dev. & 53   & 58   & 52  & 42  & 76     & 38  \\
Minimum   & 54   & 87   & 31  & 37  & 66     & 44  \\
Maximum   & 143  & 188  & 121 & 111 & 194    & 94  \\
\bottomrule
\end{tabular}
\end{table}
\begin{table}[t]\centering\small
\footnotesize 
{\OneAndAHalfSpacedXI \caption{Instance details based on data from \protect\cite{Mannino_et_al:2010}. \textit{Note:} MED, GASTRO, and URO surgeries can be covered by the same pool of anesthesiologists. For example, in instance 1, there are two regular and one on-call anesthesiologists that can perform MED and GASTRO surgeries.} \label{table:instance_detail1} } 
\ra{0.55}  
\begin{tabular}{@{}lcccccc@{}} \toprule
\textbf{Instance   1}                   &      &      &          &            &  &  \\
Surgery type                            & CARD & ORTH & MED      & GASTRO     &  &  \\
Number of surgeries                     & 3    & 4    & 5        & 3          &  &  \\
Number of ORs                           & 1    & 2    & 2        & 2          &  &  \\
Number of anesthesiologists   (regular) & 1    & 1    & \multicolumn{2}{c}{2} &  &  \\
Number of anesthesiologists   (on call) & 0    & 0    & \multicolumn{2}{c}{1} &  &  \\ \midrule
\textbf{Instance   2}                   &      &      &          &            &  &  \\
Surgery type                            & CARD & ORTH & MED      & GASTRO     &  &  \\
Number of surgeries                     & 5    & 6    & 7        & 2          &  &  \\
Number of ORs                           & 2    & 2    & 2        & 2          &  &  \\
Number of anesthesiologists   (regular) & 2    & 2    & \multicolumn{2}{c}{2} &  &  \\
Number of anesthesiologists   (on call) & 1    & 1    & \multicolumn{2}{c}{1} &  &  \\ \midrule
\textbf{Instance   3}                   & \multicolumn{1}{l}{} & \multicolumn{1}{l}{} & \multicolumn{1}{l}{} & \multicolumn{1}{l}{} & \multicolumn{1}{l}{} & \multicolumn{1}{l}{} \\
Surgery type                            & CARD                 & ORTH                 & GYN                  & MED                  & GASTRO               & URO                  \\
Number of surgeries                     & 7                    & 4                    & 2                    & 4                    & 3                    & 5                    \\
Number of ORs                           & 2                    & 1                    & 1                    & 1                    & 1                    & 2                    \\
Number of anesthesiologists   (regular) & 2                    & 2                    & 1                    & \multicolumn{3}{c}{2}                                              \\
Number of anesthesiologists   (on call) & 1                    & 1                    & 0                    & \multicolumn{3}{c}{1}  \\ \midrule
\textbf{Instance   4}                   & \multicolumn{1}{l}{} & \multicolumn{1}{l}{} & \multicolumn{1}{l}{} & \multicolumn{1}{l}{} & \multicolumn{1}{l}{} & \multicolumn{1}{l}{} \\
Surgery type                            & CARD                 & ORTH                 & GYN                  & MED                  & GASTRO               & URO                  \\
Number of surgeries                     & 6                    & 7                    & 11                   & 3                    & 7                    & 6                    \\
Number of ORs                           & 2                    & 2                    & 2                    & 1                    & 2                    & 2                    \\
Number of anesthesiologists   (regular) & 2                    & 2                    & 2                    & \multicolumn{3}{c}{6}                                              \\
Number of anesthesiologists   (on call) & 1                    & 1                    & 1                    & \multicolumn{3}{c}{1}     \\ \midrule
\textbf{Instance   5}                   & \multicolumn{1}{l}{} & \multicolumn{1}{l}{} & \multicolumn{1}{l}{} & \multicolumn{1}{l}{} & \multicolumn{1}{l}{} & \multicolumn{1}{l}{} \\
Surgery type                            & CARD                 & ORTH                 & GYN                  & MED                  & GASTRO               & URO                  \\
Number of surgeries                     & 8                    & 10                   & 15                   & 4                    & 10                   & 8                    \\
Number of ORs                           & 3                    & 4                    & 5                    & 1                    & 4                    & 3                    \\
Number of anesthesiologists   (regular) & 4                    & 4                    & 6                    & \multicolumn{3}{c}{10}                                             \\
Number of anesthesiologists   (on call) & 0                    & 0                    & 0                    & \multicolumn{3}{c}{0} \\ \midrule
\textbf{Instance   6}                   & \multicolumn{1}{l}{} & \multicolumn{1}{l}{} & \multicolumn{1}{l}{} & \multicolumn{1}{l}{} & \multicolumn{1}{l}{} & \multicolumn{1}{l}{} \\
Surgery type                            & CARD                 & ORTH                 & GYN                  & MED                  & GASTRO               & URO                  \\
Number of surgeries                     & 12                   & 14                   & 22                   & 4                    & 14                   & 14                   \\
Number of ORs                           & 5                    & 6                    & 8                    & 1                    & 6                    & 6                    \\
Number of anesthesiologists   (regular) & 6                    & 7                    & 11                   & \multicolumn{3}{c}{16}                                             \\
Number of anesthesiologists   (on call) & 0                    & 0                    & 0                    & \multicolumn{3}{c}{0}     \\
\bottomrule
\end{tabular}
\end{table}

\subsubsection{Additional Results for Computational Time} \label{appdx:expt_comp_time} \phantom{a}

In this section, we provide additional results for computational time of instances 1--6 under costs 2 and 3 (see Section \ref{subsec:expt_description} for detailed experiment settings). Table \ref{table:CPU_time_cost2_3} presents the average solution times in seconds under costs 2 and 3. For the \spe{}, \droe{}, and \drocvar{} models, we can obtain near optimal solutions within 3 minutes for small to medium-sized instances and within 3 hours for most large instances. Also, we can solve the \spcvar{} model within 1 hour for small to medium-sized instances. While the observations are similar to those under cost 1, we note that the computational times under costs 2 and 3 are longer. We attribute this observation to the inclusion of the OR idle time and anesthesiologist idle time (for cost 3) in the objective function. That is, the objective is a weighted sum of multiple conflicting performance metrics: waiting time, overtime, and idle time.


%
\begin{table}[t]\centering\OneAndAHalfSpacedXI
\footnotesize   
\caption{Computational time (in s) with costs 2 and 3} \label{table:CPU_time_cost2_3}
\ra{0.7}  
\begin{tabular}{@{}l|llllllllllll@{}} \toprule
\textbf{Cost 2} & Instance 1 & Instance 2 & Instance 3 & Instance 4 & Instance 5 & Instance 6 \\ \midrule
\spe{}      & 1.17 & 2.72  & 3.81  & 16.67   & 153.63 & 2076.62 \\ [0.5ex]
\spcvar{}   & 4.30 & 43.00 & 30.32 & 1744.67 & NA     & NA      \\ [0.5ex]
\droe{}     & 8.58 & 13.97 & 16.14 & 88.15   & 706.52 & 5573.29 \\ [0.5ex]
\drocvar{}  & 5.08 & 19.64 & 9.55  & 72.02   & 889.70 & 8194.48  \\ \midrule
\textbf{Cost 3} & Instance 1 & Instance 2 & Instance 3 & Instance 4 & Instance 5 & Instance 6 \\ \midrule
\spe{}       & 1.23 & 2.94  & 3.62  & 14.46   & 211.80 & 2327.63 \\ [0.5ex]
\spcvar{}    & 4.31 & 46.27 & 30.35 & 2655.75 & NA     & NA      \\ [0.5ex]
\droe{}      & 8.50 & 13.78 & 16.33 & 147.38  & 714.91 & 6239.39 \\ [0.5ex]
\drocvar{}   & 4.25 & 45.02 & 9.43  & 95.29   & 638.71 & 9275.96 \\
\bottomrule
\end{tabular}
\end{table}

\newpage
\subsubsection{Comparisons of Valid Inequalities for the \droe{} Model} \label{appdx:expt_DRO_VI} \phantom{a}

In this section, investigate the computational performance of the \droe{} model when solving it with the variable-dependent valid inequalities (VIs) \eqref{eqn:VI_dual_bounds} and the variable-free VIs \eqref{eqn:VI_dual_bounds_const}. Specifically, we follow the same experiment settings detailed in Section \ref{subsec:expt_comp_time} to solve the \droe{} model with either VIs \eqref{eqn:VI_dual_bounds} or the VIs \eqref{eqn:VI_dual_bounds_const}. We keep  VIs \eqref{eqn:global_LB} in both cases.  Table \ref{table:CPU_time_cost_DRO_VI} summarizes the average solution times under costs 1--3. We observe that solution times under the variable-free version are generally similar to or shorter than solution times under the variable-dependent version. In particular, solution times under the variable-free version are significantly shorter for the largest instance, instance 6. This may be explained by the increased model complexity when including VIs~\eqref{eqn:VI_dual_bounds}, which involve the first-stage variables.
\begin{table}[t]\centering\OneAndAHalfSpacedXI
\footnotesize   
\caption{\droe{} computational time (in s) using variable-free VIs \eqref{eqn:VI_dual_bounds_const} or variable-dependent VIs \eqref{eqn:VI_dual_bounds}} \label{table:CPU_time_cost_DRO_VI}
\ra{0.7}  
\begin{tabular}{@{}l|llllllllllll@{}} \toprule
\textbf{Cost 1}      & Instance 1 & Instance 2 & Instance 3 & Instance 4 & Instance 5 & Instance 6 \\ \midrule
Variable-Free & 7.20       & 28.21      & 14.14      & 102.95     & 152.11     & 2845.10    \\ [0.5ex]
Variable-Dependent   & 6.53       & 60.66      & 9.23       & 45.62      & 186.91     & 4455.80    \\ \midrule
\textbf{Cost 2}      & Instance 1 & Instance 2 & Instance 3 & Instance 4 & Instance 5 & Instance 6 \\ \midrule
Variable-Free & 8.58       & 13.97      & 16.14      & 88.15      & 706.52     & 5573.29    \\ [0.5ex]
Variable-Dependent   & 6.56       & 11.33      & 9.61       & 68.32      & 646.44     & 9325.73    \\ \midrule
\textbf{Cost 3}      & Instance 1 & Instance 2 & Instance 3 & Instance 4 & Instance 5 & Instance 6 \\ \midrule
Variable-Free & 8.50       & 13.78      & 16.33      & 147.38     & 714.91     & 6239.39    \\ [0.5ex]
Variable-Dependent   & 6.95       & 13.54      & 10.55      & 191.87     & 681.04     & 9804.87    \\
\bottomrule
\end{tabular}
\end{table}

\color{black}
\subsubsection{Analysis of Optimal Schedules across SAA Replications} \label{appdx:expt_analysis_SAA_rep} \phantom{a}

In this section, we analyze the variability of the optimal \spe{} and \spcvar{} solutions across different SAA replications. Specifically, we solve 10 SAAs of instance 3 using the \spe{} and \spcvar{} models. First, we analyze the variability in the scheduling decision. Figure~\ref{fig:schedule_instance3_OR1} shows the time allocated to each surgery for OR 1 in instance 3, which is the same OR used for illustrative purposes in Figure~\ref{fig:schedule_OR1}. Specifically, the solid line in the \spe{} and \spcvar{} schedules represents the average time allocated to each surgery and the error bar represents the approximate 90\% confidence interval (CI) over $10$ SAA replications (based on the normal approximation). It is clear that the \spe{} and \spcvar{} schedules do not exhibit any structural pattern as in Figure~\ref{fig:schedule_OR1}.  Nonetheless, when considering different samples, it is apparent that while optimal allocations for each sample slightly differ, they consistently align with a shared underlying (unstructured) pattern. 

\begin{figure}[t!]
    \centering\OneAndAHalfSpacedXI
    \includegraphics[scale=0.8]{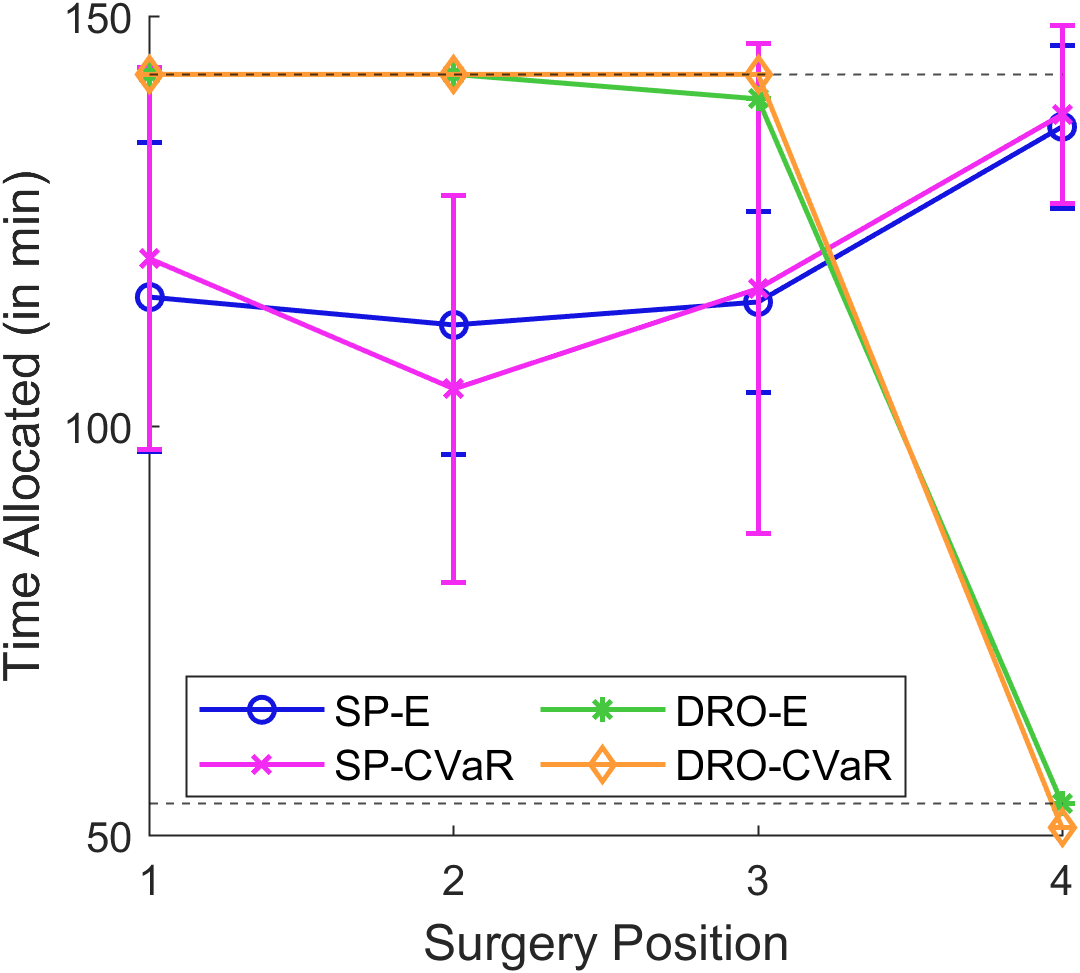}  
    \caption{Illustration of the optimal schedules for OR 1 in instance 3. The two dotted lines indicate the minimum
and maximum surgery durations.}
    \label{fig:schedule_instance3_OR1}
\end{figure}

Next, we examine the variability in the allocation and assignment decisions. In Table~\ref{table:solution_var}, we present the number of surgeries in each OR and the number of ORs open in the optimal SP-E and SP-CVaR schedules, as well as their frequencies of occurrence over $10$ SAA replications for instances 1--4. It is clear from Table~\ref{table:solution_var} that the number of surgeries assigned to each OR and the number of ORs open are relatively stable across different SAA replications. As a result, the sequencing decisions are also similar across replications. 
\begin{table}[t]\centering  \OneAndAHalfSpacedXI
\color{black}
\footnotesize  
\caption{\blue{Number of surgeries in each OR and number of ORs open in the optimal SP-E and SP-CVaR schedules, as well as their frequencies (of occurrence) over $10$ SAA replications.} } \label{table:solution_var}
\ra{0.8}  
\begin{tabular}{@{}ll|lll@{}} \toprule
                    &         & \#surgeries in each OR         & \#ORs open & Frequency \\ \midrule
\textbf{Instance 1} & SP-E    & $(3, 4, 0, 5, 0, 3, 0, 4)^\tp$ & 4          & 10/10        \\ [0.8ex]
                    & SP-CVaR & $(3, 4, 0, 5, 0, 3, 0, 4)^\tp$ & 4          & 10/10        \\  \midrule
\textbf{Instance 2} & SP-E    & $(5,0,3,3,7,0,2,0)^\tp$        & 5          & 10/10        \\  [0.8ex]
                    & SP-CVaR & $(3,2,3,3,4,3,2,0)^\tp$        & 7          & 8/10         \\
                    &         & $(3,2,3,3,5,2,2,0)^\tp$        & 7          & 1/10         \\
                    &         & $(5,0,3,3,4,3,2,0)^\tp$        & 6          & 1/10         \\  \midrule
\textbf{Instance 3} & SP-E    & $(4,3,4,2,4,3,5,0)^\tp$        & 7          & 10/10        \\  [0.8ex]
                    & SP-CVaR & $(4,3,4,2,4,3,5,0)^\tp$        & 7          & 10/10        \\  \midrule
\textbf{Instance 4} & SP-E    & $(3,3,4,3,6,5,3,4,3,6,0)^\tp$  & 10         & 10/10        \\  [0.8ex]
                    & SP-CVaR & $(3,3,4,3,6,5,3,4,3,6,0)^\tp$  & 10         & 10/10        \\
\bottomrule
\end{tabular}
\color{black}
\end{table}


\color{black}

\subsubsection{Additional Results for Solution Quality} \label{appdx:expt_sim_results} \phantom{a}

In this section, we present additional out-of-sample simulation results for instances 1--6 under costs 1--3. First, Tables \ref{table:OS_supp_cost1_waiting}--\ref{table:OS_supp_cost1_anes_OT} present respectively the average waiting time, OR overtime, and anesthesiologists overtime under cost 1.  (In the tables, we abbreviate \drocvar{} as D-CVaR.) As discussed in Section \ref{subsec:expt_sol_quality}, the \droe{} and \drocvar{} models yield significantly shorter waiting times but slightly longer OR and anesthesiologist overtime compared with the \spe{} and \spcvar{} models.

Next, Tables \ref{table:OS_supp_cost2_waiting}--\ref{table:OS_supp_cost2_OR_idle} present respectively the average waiting time, OR overtime, anesthesiologists overtime, and OR idle time under cost 2. (Recall that in this cost structure, we additionally include the cost of the OR idle time in the objective). Similar to our observations under cost 1, the \droe{} and \drocvar{} models yield significantly shorter waiting times than the other models. Moreover, since the \droe{} and \drocvar{} models open more ORs and allocate more time to each surgery (to protect against potential long surgery durations and excessive delays), both the \droe{} and \drocvar{} models yield slightly longer OR idle time compared with the \spe{} and \spcvar{} models.

Finally, Tables \ref{table:OS_supp_cost3_waiting}--\ref{table:OS_supp_cost3_anes_idle} present respectively the average waiting time, OR overtime, anesthesiologists overtime, OR idle time, and anesthesiologist idle time under cost 3. (Recall that in this cost structure, we additionally include the anesthesiologist's idle time in the objective). Again, we have observations similar to those made for cost 1 regarding waiting time, overtime, and OR idle time. Also, we observe that \droe{} and \drocvar{} models yield slightly longer anesthesiologist idle time when compared with the \spe{} and \spcvar{} models, mainly because the \droe{} and \drocvar{} models allocate more time to each surgery.

\renewcommand{\drocvar}{D-CVaR}

\begin{table}[p]\centering\OneAndAHalfSpacedXI
\footnotesize   
\caption{Average waiting time under cost 1} \label{table:OS_supp_cost1_waiting}
\ra{0.7}  
\hspace*{-10mm}

\end{table}

\renewcommand{\drocvar}{DRO-CVaR}

\subsubsection{Results Related to the Comparison with Non-Integrated Approaches} \label{appdx:expt_Rath_et_al} \phantom{ }



\textbf{Additional Results for the RO Model in \cite{Rath_et_al:2017}.} The RO model proposed in \cite{Rath_et_al:2017} minimizes the fixed costs and overtime costs under the worst-case scenario residing in the uncertainty set $\calD(\tau')$ given in \eqref{eqn:RO_uncertainty_set}. In the following, we provide further discussions and comparisons between our \spe{} and \droe{} models, and the RO model proposed in \cite{Rath_et_al:2017} based on the experiment setting in Section \ref{subsec:expt_Rath}. 

We start with comparing the optimal assignments as shown in Table \ref{table:assg_Rath_et_al}.  In the RO model, the worst-case surgery duration can either be the mean $m_i$ or the upper bound $m_i+\dhat_i$ (i.e., when $b_i=1$), and $\tau'=\lfloor \tau|I| \rfloor$ controls the number of surgeries that can deviate from mean. Surgeries highlighted in blue in Table \ref{table:assg_Rath_et_al} are those surgeries taking the maximum surgery duration $m_i+\dhat_i$.  First, we observe that the OR assignments are the same across different models. Moreover, the anesthesiologist assignments for \spe{} and \droe{} models are the same. However, the optimal anesthesiologist assignments in the RO model are different from both \spe{} and \droe{} models. In particular, when $\tau=0.2$, the RO model assigns six surgeries to anesthesiologist 3, which may lead to huge overtime as we will show next.
\begin{table}[t]\centering\small
\scriptsize
{\OneAndAHalfSpacedXI \caption{Optimal assignments from \spe{}, \droe{}, and RO models (surgeries highlighted in blue in the RO model are those attaining maximum surgery duration in optimization, i.e., $b_i=1$)}\label{table:assg_Rath_et_al}}
\ra{1.0}  
\begin{tabular}{@{}l|l|l|l@{}} \toprule
\multicolumn{4}{c}{Anesthesiologists}          \\ \midrule
& \spe{}, \droe{}  & RO ($\tau=0.2$)  & RO ($\tau=0.4$)   \\
Anes 1  & 1 $\rightarrow$ 2  $\rightarrow$ 3                                      & 1 $\rightarrow$ 2  $\rightarrow$ 3                                                        & 1 $\rightarrow$ 2  $\rightarrow$ 3                                      \\
Anes 2  & 4 $\rightarrow$ 5  $\rightarrow$ 6   $\rightarrow$ 7                    & 4 $\rightarrow$ 5  $\rightarrow$ 6 $\rightarrow$ 7                                        & 4 $\rightarrow$ 5  $\rightarrow$ 6   $\rightarrow$ \blue{7}                    \\
Anes 3  & 8 $\rightarrow$ 9  $\rightarrow$ 10   $\rightarrow$ 11 $\rightarrow$ 12 & 8 $\rightarrow$ \blue{13}  $\rightarrow$ \blue{14} $\rightarrow$ \blue{15}   $\rightarrow$ 11 $\rightarrow$ 12 & \blue{8} $\rightarrow$ \blue{13}  $\rightarrow$   14 $\rightarrow$ 11                 \\
Anes 4 & 13 $\rightarrow$ 14  $\rightarrow$   15                                 & 9    $\rightarrow$ 10                                                                     & \blue{9} $\rightarrow$ \blue{10}  $\rightarrow$   \blue{15} $\rightarrow$ 12                 \\
Anes 5 & $-$                                                                     & $-$                                                                                       & $-$                                                                     \\ \midrule
\multicolumn{4}{c}{Operating Rooms}   \\ \midrule
 & \spe{}, \droe{}    & RO ($\tau=0.2$)   & RO ($\tau=0.4$)    \\
OR 1 & 1 $\rightarrow$ 2  $\rightarrow$ 3                                      & 1 $\rightarrow$ 2  $\rightarrow$ 3                                                        & 1 $\rightarrow$ 2  $\rightarrow$ 3                                      \\
OR 2 & 4 $\rightarrow$ 5  $\rightarrow$ 6   $\rightarrow$ 7                    & 4 $\rightarrow$ 5  $\rightarrow$ 6 $\rightarrow$ 7                                        & 4 $\rightarrow$ 5  $\rightarrow$ 6   $\rightarrow$ \blue{7}                    \\
OR 3 & $-$                                                                     & $-$                                                                                       & $-$                                                                     \\
OR 4 & 8 $\rightarrow$ 9  $\rightarrow$ 10   $\rightarrow$ 11 $\rightarrow$ 12 & 8 $\rightarrow$ 9  $\rightarrow$ 10 $\rightarrow$ 11   $\rightarrow$ 12                   & \blue{8} $\rightarrow$ \blue{9}  $\rightarrow$ \blue{10}   $\rightarrow$ 11 $\rightarrow$ 12 \\
OR 5 & $-$                                                                     & $-$                                                                                       & $-$                                                                     \\
OR 6 & 13 $\rightarrow$ 14  $\rightarrow$   15                                 & \blue{13} $\rightarrow$ \blue{14}  $\rightarrow$ \blue{15}                                                     & \blue{13} $\rightarrow$ 14  $\rightarrow$   \blue{15}                                 \\
OR 7 & $-$                                                                     & $-$                                                                                     & $-$            \\
\bottomrule
\end{tabular}
\end{table}

Figures \ref{fig:OS_Rath_et_al_anes_OT} and \ref{fig:OS_Rath_et_al_room_OT} show the out-of-sample anesthesiologist and OR overtime from the three models, respectively. We observe that our \spe{} model generally produces a smaller anesthesiologist and OR overtime than the \droe{} and RO models. As discussed in Section \ref{subsec:expt_sol_quality}, the \droe{} model produces a slightly larger overtime than the \spe{} model, as well as the RO model. However, we observe that the RO model may yield an anesthesiologist or OR overtime significantly larger than both the \spe{} and \droe{} models. This could be explained by the construction of the RO uncertainty set $\calD(\tau')$, which consists of scenarios with exactly $\tau'$ surgeries deviating from the mean only. That is, the uncertainty set may not be able to capture the variability of all surgery durations (since only part of the surgery durations are altered) as opposed to our \spe{} and \droe{} models. In contrast, the out-of-sample overtime performances from the \spe{} and \droe{} models are more stable (with a smaller standard deviation).
\begin{figure} 
    \centering
    \includegraphics[scale=0.65]{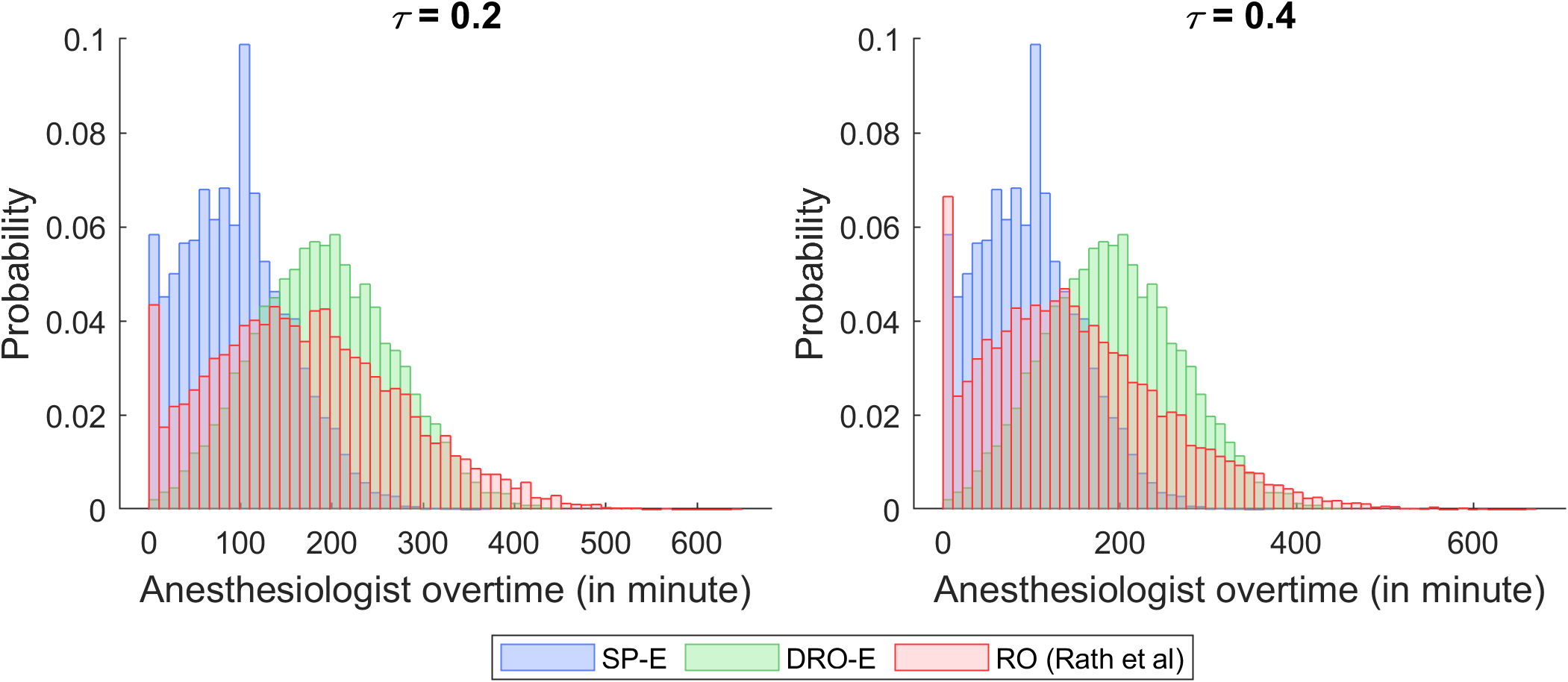}
    \caption{Anesthesiologist overtime from \spe{}, \droe{}, and RO models in instance 1 ($\tau$ controls the size of RO uncertainty set) \vspace{-2mm}}
    \label{fig:OS_Rath_et_al_anes_OT}
\end{figure}
\begin{figure} 
    \centering
    \includegraphics[scale=0.65]{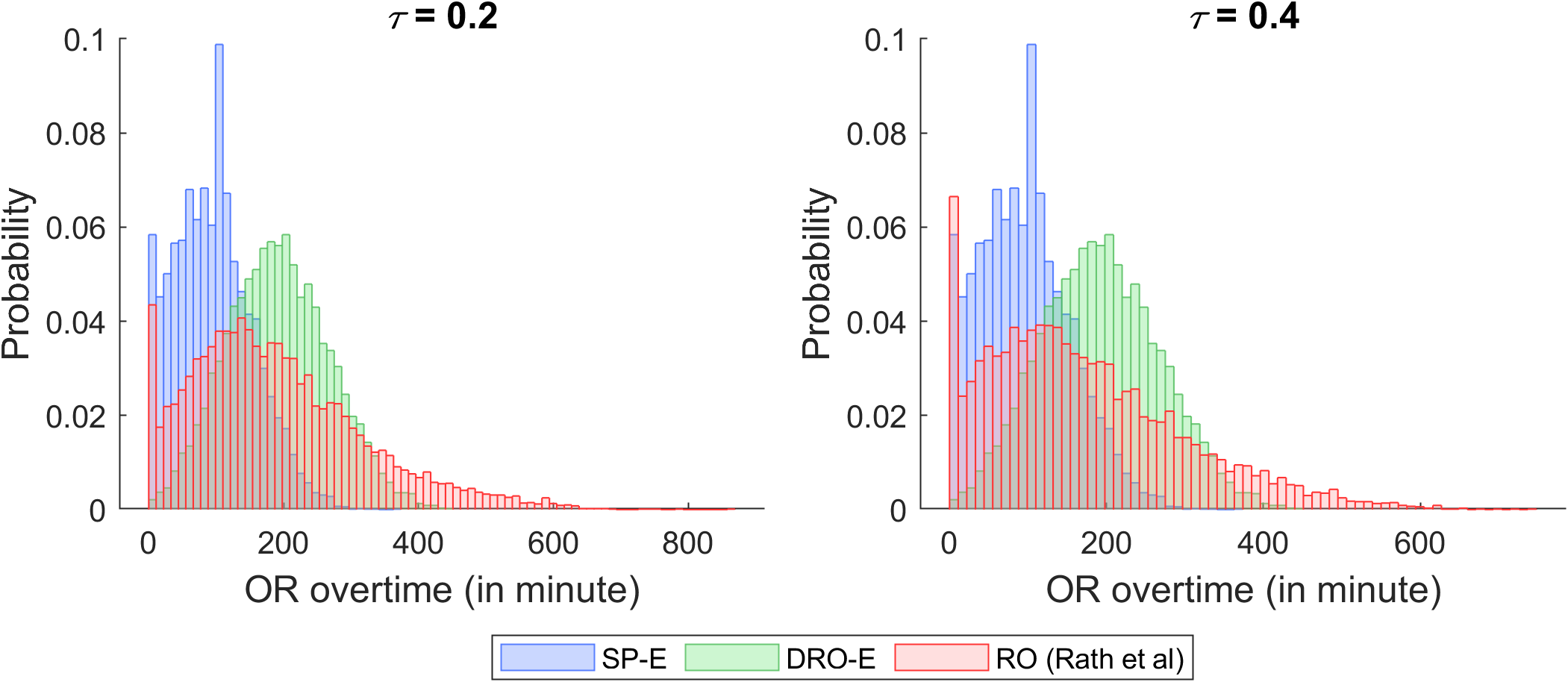}
    \caption{OR overtime from \spe{}, \droe{}, and RO models in instance 1 ($\tau$ controls the size of RO uncertainty set) \vspace{-2mm}}
    \label{fig:OS_Rath_et_al_room_OT}
\end{figure}

Finally, we also compare the computational performance of the three models. Specifically, we solve instances 1, 3, and 5 (small, medium, and large-sized instances) using the three models under cost 1 with $\cw_i=0$. We also impose a one-hour time limit to solve each model. Table \ref{table:Rath_et_al_time} shows the solution time of the three models. Using our proposed models, the instances can be solved to optimality within 2 minutes. In contrast, the RO model takes a longer time to solve instances 1 and 3, and it fails to solve instance 5. In particular, \cite{Rath_et_al:2017}'s decomposition method terminates with a large relative MIP gap after one hour. These computational results indicate that even if we ignore the waiting time component of the ORASP, our proposed models are more computationally efficient to solve than \cite{Rath_et_al:2017}'s RO model.
\begin{table}[t]\centering\small \OneAndAHalfSpacedXI
\footnotesize
\caption{Computational time in seconds when solving \spe{} and \droe{} models along with \cite{Rath_et_al:2017}'s RO model. For cases when an optimal solution was not found in the 1-hour time limit, the percentage in brackets indicates the relative optimality gap when the time limit was reached.} \label{table:Rath_et_al_time}
\ra{0.6}  
\begin{tabular}{@{}rr|rrrrr@{}} \toprule
           &       & \multicolumn{1}{c}{\spe{}} & \multicolumn{1}{c}{\droe{}} & \multicolumn{3}{c}{RO}                                                                                                                           \\
           &       & \multicolumn{1}{c}{}   & \multicolumn{1}{c}{}    & \multicolumn{1}{c}{$\tau = 0.1$}                & \multicolumn{1}{c}{$\tau = 0.2$}                & \multicolumn{1}{c}{$\tau = 0.4$}                \\ \midrule
Instance 1 & Time  & 1                      & \multicolumn{1}{r}{8}   & \multicolumn{1}{r}{106}                        & \multicolumn{1}{r}{264}                        & \multicolumn{1}{r}{1315}                       \\ 
Instance 3 & Time  & 2                      & \multicolumn{1}{r}{10}  & \multicolumn{1}{r}{1992}                       & \multicolumn{1}{r}{2431}                       & \multicolumn{1}{r}{(11.6\%)} \\
Instance 5 & Time  & 111                    & \multicolumn{1}{r}{65}  & \multicolumn{1}{r}{(87.2\%)} & \multicolumn{1}{r}{(88.6\%)} & \multicolumn{1}{r}{(89.5\%)} \\
\bottomrule
\end{tabular}
\end{table}


\textbf{Comparison with the Sequential Approach.} Next, we compare our integrated approach with a sequential approach that separates the OR assignment decisions from the remaining decisions (i.e., anesthesiologist assignment, sequencing, and scheduling decisions). Specifically, in the sequential approach, we first solve the following classical OR assignment model \citep{Denton_et_al:2010}:
\begin{subequations} \label{eqn_apdx:OR_assignement_only}
\begin{align} 
    \underset{v,\,z}{\text{minimize}} &\quad \sum_{r\in R}f_r v_r + \E_\Prob\Bigg[\sum_{r\in R}\co_r\bigg(\sum_{i\in I} d_i z_{i,r} - T_\text{end}\bigg)_+\Bigg] \\
    \text{subject to}
    &\quad \sum_{r\in R_i} z_{i,r}=1,\quad\forall i\in I, \\
    &\quad z_{i,r}\leq v_r,\quad\forall (i,r)\in\calF^R, \\
    &\quad v_r\in\{0,1\},\, z_{i,r}\in\{0,1\},\quad\forall (i,r)\in\calF^R.
\end{align}%
\end{subequations}%
Model \eqref{eqn_apdx:OR_assignement_only} decides which OR to open and assigns surgeries to open ORs. It assumes that a scheduled surgery can start immediately after the preceding surgery (in the same OR) is completed.  The objective is to minimize the sum of the fixed cost of opening ORs and the expected overtime cost. After solving model \eqref{eqn_apdx:OR_assignement_only} to obtain the optimal decision $(v^\star,z^\star)$, we then solve our proposed model \eqref{eqn:1st_stage}--\eqref{eqn:2nd_stage} by fixing $(v,z)$ to $(v^\star,z^\star)$.

We conduct the following experiment to compare the performance of our optimal solution obtained using our integrated approach and the sequential approach. Following the same experiment settings in Section \ref{sec:numerical_experiments}, we solve instances 1--6 under cost 1 using the two approaches, employing the same set of symmetry-breaking constraints for a fair comparison. Table \ref{table:sep_OR_assg_numOR} presents the number of ORs opened. Table \ref{table:sep_OR_assg_OS} summarizes the associated average out-of-sample waiting time, OR overtime, and operational cost under setting I (the perfect distributional setting) and setting IIIc (a mis-specified distributional setting); see Section \ref{subsec:expt_sol_quality} for descriptions of the settings.

First, we observe that the sequential approach opens the same or smaller number of ORs than our \spe{} model. This is because in the sequential approach,  model \eqref{eqn_apdx:OR_assignement_only} assumes that every surgery can start immediately when the preceding surgery is completed and ignores the need for anesthesiologists to perform each surgery. This results in a packed schedule leading to longer waiting times, OR overtime, and consequently higher operational costs. In particular, the operational cost of the sequential approach could be two times higher than that of our \spe{} model for large instances (e.g., instances 5--6). Finally, we note that the differences in the value of operational metrics and associated costs are more significant (in magnitude) under the mis-specified distributional setting.

\begin{table}[t]\centering\small
\footnotesize
{\OneAndAHalfSpacedXI \caption{Number of ORs open from our \spe{} model and the sequential approach}\label{table:sep_OR_assg_numOR} }
\ra{0.52}  
\begin{tabular}{@{}l|rrrrrr@{}} \toprule
\multicolumn{1}{l|}{} & Instance 1 & Instance 2 & Instance 3 & Instance 4 & Instance 5 & Instance 6 \\ \midrule
\spe{}                   & 4          & 5          & 7          & 10         & 16         & 22         \\
Sequential           & 4          & 5          & 7          & 9          & 12         & 19        \\
\bottomrule
\end{tabular}
\end{table}
\begin{table}[t]\centering\small
\footnotesize
{\OneAndAHalfSpacedXI \caption{Average out-of-sample waiting time, OR overtime, and operational cost from our \spe{} model and the sequential approach under Settings I and IIIc}\label{table:sep_OR_assg_OS} }
\ra{0.52}  
\begin{tabular}{@{}ll|rrrrrr@{}} \toprule
&&\multicolumn{6}{c}{\textbf{Setting I}}                                                                                             \\ [0.5ex]
                     & \multicolumn{1}{l|}{\textbf{}} & Instance 1 & Instance 2 & Instance 3 & Instance 4 & Instance 5 & Instance 6 \\ \midrule
\textbf{Waiting}     & \spe{}                        & 155        & 280        & 166        & 423        & 319        & 367        \\
\textbf{Time}        & Sequential                        & 155        & 279        & 168        & 582        & 645        & 911        \\ \midrule
\textbf{OR}          & \spe{}                        & 91         & 108        & 95         & 192        & 75         & 80         \\
\textbf{Overtime}    & Sequential                        & 91         & 108        & 96         & 309        & 301        & 317        \\ \midrule
\textbf{Operational} & \spe{}                        & 1429       & 2018       & 1506       & 3330       & 1815       & 2028       \\
\textbf{Cost}       & Sequential                        & 1427       & 2010       & 1514       & 5042       & 5172       & 6216       \\ \midrule
&&\multicolumn{6}{c}{\textbf{Setting IIIc}}                                                                                          \\ [0.5ex]
                     & \multicolumn{1}{l|}{\textbf{}} & Instance 1 & Instance 2 & Instance 3 & Instance 4 & Instance 5 & Instance 6 \\ \midrule
\textbf{Waiting}     & \spe{}                        & 573        & 1082       & 749        & 1713       & 1700       & 2296       \\
\textbf{Time}        & Sequential                        & 562        & 1080       & 754        & 2195       & 2613       & 3849       \\ \midrule
\textbf{OR}          & \spe{}                        & 324        & 490        & 396        & 843        & 794        & 1104       \\
\textbf{Overtime}    & Sequential                        & 321        & 483        & 409        & 1077       & 1252       & 1737       \\ \midrule
\textbf{Operational} & \spe{}                        & 5146       & 8554       & 6454       & 14184      & 13684      & 18781      \\
\textbf{Cost}       & Sequential                        & 5089       & 8442       & 6605       & 18237      & 21477      & 30703  \\
\bottomrule
\end{tabular}
\end{table}

\subsection{Computational Results on Additional ORASP Instances} \label{appdx:num_results_add}

\subsubsection{Instance Details} \label{appdx:instance_detail_add} \phantom{a}

We first provide the summary statistics of the surgery duration for different specialties of this data set. In Table \ref{table:data_summary_2}, we present the summary statistics of the random surgery duration for instances 7 to 12. The mean and standard deviation are directly obtained from \cite{Min_Yih:2010} while the minimum and maximum surgery duration are computed as the (exact) 20\% and 80\% percentiles of lognormal distribution with given mean and standard deviation respectively (since we do not have the surgery data). Next, Table \ref{table:instance_detail2} presents the details for instances 7 to 12 with various surgery types (ENT: Ear, nose and throat, CARD: Cardiology, VAS: Vascular, ORTHO: Orthopedics, NSG: Neurosurgery, GEN: General, OPHTH: Ophthalmology, URO: Urology).
\begin{table}[t]\centering\small
\footnotesize
\caption{Summary statistics of surgery duration for instances 7 to 12 (Std. Dev.: standard deviation)} \label{table:data_summary_2}
\ra{0.6}  
\begin{tabular}{@{}l|ccccccccc@{}} \toprule
Type      & ENT & OBGYN & ORTHO & Nuro & General & OPHTH & Vascular & Cardiac & Urology \\ \midrule
Mean      & 74  & 86    & 107   & 160  & 93      & 38    & 120      & 240     & 64      \\
Std. Dev. & 37  & 40    & 44    & 77   & 49      & 19    & 61       & 103     & 52      \\
Minimum   & 44  & 54    & 71    & 98   & 54      & 23    & 71       & 156     & 27      \\
Maximum   & 99  & 113   & 138   & 212  & 125     & 51    & 160      & 312     & 90      \\
\bottomrule
\end{tabular}
\end{table}
\begin{table}[t]\centering\small
\footnotesize
{\OneAndAHalfSpacedXI \caption{Instance details based on data from \protect\cite{Min_Yih:2010} \textit{Note:} GEN, OPHTH, and URO surgeries can be covered by the same pool of anesthesiologists. For example, in instance 7, there are two regular and one on-call anesthesiologists that can perform GEN and OPHTH surgeries.}  \label{table:instance_detail2} }
\ra{0.55}  
\begin{tabular}{@{}lcccccccc@{}} \toprule
\textbf{Instance   7}                   &        &        &           &           &  &  &  &  \\
Surgery type                            & ENT    & ORTH   & GEN       & OPHTH     &  &  &  &  \\
Number of surgeries                     & 4      & 5      & 4         & 2         &  &  &  &  \\
Number of ORs                           & 1      & 2      & 2         & 1         &  &  &  &  \\
Number of anesthesiologists   (regular) & 1      & 2      & \multicolumn{2}{c}{2} &  &  &  &  \\
Number of anesthesiologists   (on call) & 0      & 0      & \multicolumn{2}{c}{1} &  &  &  &  \\ \midrule
\textbf{Instance   8}                   &        &        &        &           &           &  &  &  \\
Surgery type                            & ENT    & NSG    & ORTHO  & GEN       & OPHTH     &  &  &  \\
Number of surgeries                     & 5      & 2      & 6      & 8         & 2         &  &  &  \\
Number of ORs                           & 1      & 1      & 2      & 3         & 1         &  &  &  \\
Number of anesthesiologists   (regular) & 1      & 1      & 1      & \multicolumn{2}{c}{3} &  &  &  \\
Number of anesthesiologists   (on call) & 0      & 0      & 1      & \multicolumn{2}{c}{1} &  &  &  \\ \midrule
\textbf{Instance 9}                     &     &      &     &       &        &      &  &  \\
Surgery type                            & ENT & CARD & VAS & GEN   & OPHTH  & URO  &  &  \\
Number of surgeries                     & 8   & 3    & 4   & 9     & 3      & 3    &  &  \\
Number of ORs                           & 2   & 2    & 2   & 2     & 1      & 1    &  &  \\
Number of anesthesiologists   (regular) & 2   & 1    & 2   & \multicolumn{3}{c}{4} &  &  \\
Number of anesthesiologists   (on call) & 0   & 1    & 0   & \multicolumn{3}{c}{0} &  &     \\ \midrule
\textbf{Instance 10}                    &     &      &     &       &        &      &  &  \\
Surgery   type                          & ENT & CARD & VAS & GEN   & OPHTH  & URO  &  &  \\
Number of surgeries                     & 10  & 6    & 6   & 8     & 4      & 6    &  &  \\
Number of ORs                           & 2   & 4    & 2   & 2     & 1      & 2    &  &  \\
Number of anesthesiologists   (regular) & 2   & 3    & 2   & \multicolumn{3}{c}{6} &  &  \\
Number of anesthesiologists   (on call) & 0   & 1    & 1   & \multicolumn{3}{c}{1} &  &  \\ \midrule
\textbf{Instance   11}                  &  &  & &  &  &  &  &  \\
Surgery type                            & ENT & CARD & VAS & ORTHO & GEN   & OPHTH  & URO  &  \\
Number of surgeries                     & 4   & 9    & 11  & 13    & 13    & 2      & 8    &  \\
Number of ORs                           & 1   & 5    & 3   & 4     & 3     & 1      & 3    &  \\
Number of anesthesiologists   (regular) & 1   & 4    & 2   & 3     & \multicolumn{3}{c}{6} &  \\
Number of anesthesiologists   (on call) & 0   & 1    & 2   & 2     & \multicolumn{3}{c}{2} &   \\ \midrule
\textbf{Instance   12}                  & &  &  &   &  &  &  \\
Surgery   type                        & ENT & CARD & VAS & ORTHO & NSG & GEN   & OPHTH   & URO  \\
Number of surgeries                   & 7   & 10   & 9   & 16    & 10  & 15    & 3       & 10   \\
Number of ORs                         & 2   & 5    & 3   & 6     & 3   & 4     & 2       & 5    \\
Number of anesthesiologists (regular) & 1   & 4    & 2   & 6     & 3   & \multicolumn{3}{c}{12} \\
Number of anesthesiologists (on call) & 1   & 2    & 2   & 0     & 0   & \multicolumn{3}{c}{2}   \\ 
\bottomrule
\end{tabular}
\end{table}

\subsubsection{Computational Time} \label{appdx:expt_comp_time_add} \phantom{a}

In this section, we provide the computational times for solving our proposed models under the three cost structures (see Section \ref{subsec:expt_comp_time} for the experiment settings). Table \ref{table:CPU_time_add} presents the average solution times in seconds for instances 7--12 under costs 1--3. We note that the observations are similar to those for instances 1--6.
\begin{table}[t]\centering\OneAndAHalfSpacedXI
\footnotesize   
\caption{Computational time (in s) for additional instances (instance with ``$\dag$'': apply \eqref{eqn:global_LB} and \eqref{eqn:VI_dual_bounds} with initial scenario $m$; instance with ``--'': cannot be solved within $10$ hours)} \label{table:CPU_time_add}
\ra{0.7}  
\begin{tabular}{@{}l|llllllllllll@{}} \toprule
\textbf{Cost 1} & Instance 7 & Instance 8 & Instance 9 & Instance 10 & Instance 11 & Instance 12 \\ \midrule
\spe{}       & 0.82 & 3.23  & 8.60   & 13.70  & 410.82  & 352.00 \\  [0.5ex]
\spcvar{}     & 2.43 & 47.40 & 204.18 & 722.54 & --      & --     \\  [0.5ex]
\droe{}      & 5.22 & 10.73 & 20.74  & 54.51  & 1065.69 & 777.36 \\  [0.5ex]
\drocvar{} & 1.55 & 2.63  & 3.45   & 5.13   & 20.02   & 32.23   \\ \midrule
\textbf{Cost 2} & Instance 7 & Instance 8 & Instance 9 & Instance 10 & Instance 11 & Instance 12 \\  \midrule
\spe{}       & 0.81 & 3.41  & 9.03  & 16.18   & 229.06 & 491.22  \\  [0.5ex]
\spcvar{}     & 2.22 & 35.70 & 99.68 & 4907.91 & --     & --      \\  [0.5ex]
\droe{}      & 5.88 & 12.85 & 23.98 & 57.02   & 553.40 & 1005.93 \\  [0.5ex]
\drocvar{} & 3.27 & 5.52  & 16.50 & 18.52   & 390.80 & 2688.92 \\ \midrule
\textbf{Cost 3} & Instance 7 & Instance 8 & Instance 9 & Instance 10 & Instance 11 & Instance 12 \\  \midrule
\spe{}       & 0.81 & 3.31  & 7.60   & 16.00   & 265.99 & 645.85  \\  [0.5ex]
\spcvar{}     & 2.82 & 41.35 & 132.68 & 6087.30 & --     & --      \\  [0.5ex]
\droe{}      & 5.82 & 12.43 & 28.13  & 78.18   & 630.64 & 961.50  \\  [0.5ex]
\drocvar{} & 3.25 & 7.20  & 15.16  & 19.77   & 547.40 & 2344.09   \\
\bottomrule
\end{tabular}
\end{table}

\subsubsection{Analysis of Optimal Solutions}  \phantom{a}

In this section, we compare the number of ORs opened and the number of on-call anesthesiologists called in from our proposed models. Figure \ref{fig:expt_num_OR_add} shows the optimal number of ORs opened by each model. The observations are similar to those in Section \ref{subsec:expt_opt_sol}.  In addition, we also observe that the \drocvar{} and \spe{} models call in the largest and smallest number of on-call anesthesiologists, respectively.

\begin{figure}[t!]
    \centering
    \includegraphics[scale=0.7]{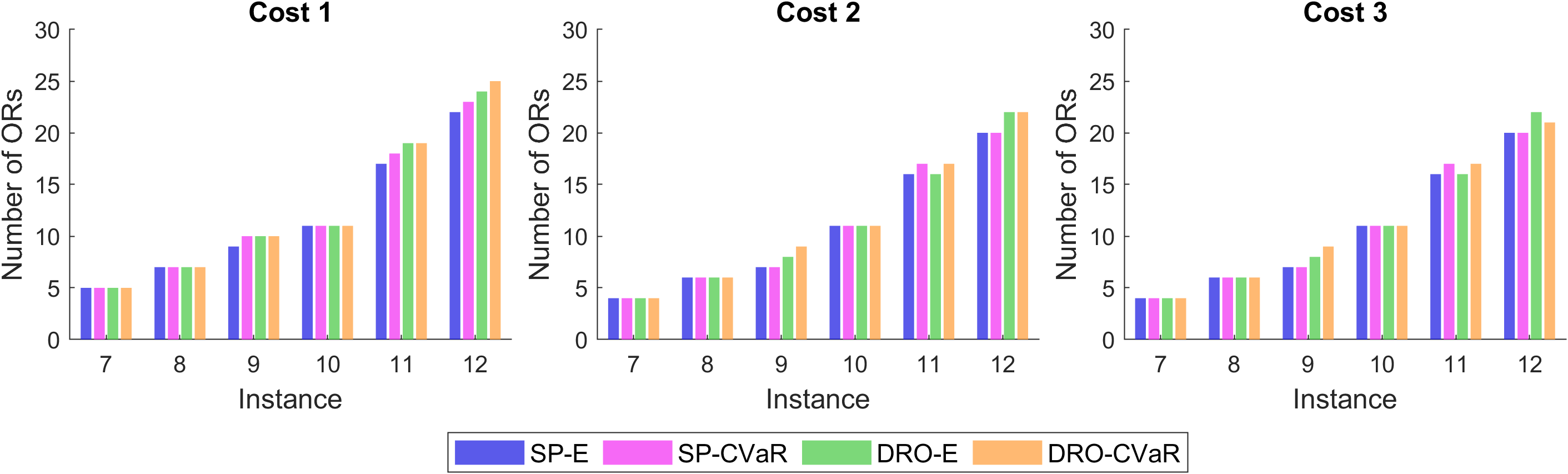}
    \caption{Number of ORs opened for different instances under different costs}
    \label{fig:expt_num_OR_add}
\end{figure}

\newpage
\subsubsection{Analysis of Solution Quality} \label{appdx:expt_sim_results_add} \phantom{a}

In this section, we present the average out-of-sample performance metrics for instances 7--12 under costs 1--3. Tables \ref{table:OS_add_cost1_waiting}--\ref{table:OS_add_cost1_anes_OT} present respectively the average waiting time, OR overtime, and anesthesiologists overtime under cost 1; Tables \ref{table:OS_add_cost2_waiting}--\ref{table:OS_add_cost2_OR_idle} present respectively the average waiting time, OR overtime, anesthesiologists overtime, and OR idle time under cost 2; Tables~\ref{table:OS_add_cost3_waiting}--\ref{table:OS_add_cost3_anes_idle} present respectively the average waiting time, OR overtime, anesthesiologists overtime, OR idle time, and anesthesiologist idle time under cost 3. We note that the observations are similar to those for instances 1--6. 

\renewcommand{\drocvar}{D-CVaR}

\begin{table}[p]\centering\OneAndAHalfSpacedXI
\footnotesize   
\caption{Average waiting time under cost 1} \label{table:OS_add_cost1_waiting}
\ra{0.7}  
\hspace*{-10mm}

\end{table}

\renewcommand{\drocvar}{DRO-CVaR}

\newpage

\color{black}
\subsubsection{Computation Time for Another Set of ORASP Instances} \label{appdx:comp_time_mutliOR} \phantom{a}

In this section, we present additional computational results for another set of ORASP instances with some ORs accommodating multiple surgery types. Specifically, we construct these new ORASP instances as follows. (i) We construct instances 1$'$--5$'$ from instances 1--5 by allowing ORs dedicated to either medicine (MED) or gastroenterology (GASTRO) to be shared by both MED and GASTRO surgeries. (ii) We construct instances 6$'$--11$'$ from instances 6--11 by allowing an OR dedicated to ophthalmology (OPHTH) to be shared by both general (GEN) and OPHTH surgeries. (iii) We construct a new hypothetical instance 13$'$ (based on instance~$4$) with four surgery types, where there is one OR that can accommodate three surgery types (cardiology, orthopedic, and medicine) and one anesthesiologist that can cover two surgery types (gynecology and medicine). Table~\ref{table:CPU_time_multiOR} presents the average solution times in seconds under cost 1 (see Section~8.2 for the experiment settings). It is clear that we can solve all the new instances using the SP-E, DRO-E, and DRO-CVaR models within a reasonable time. These new results demonstrate that our proposed models are tractable even for these  ORASP instances, further emphasizing our contribution.
%
\begin{table}[t]\centering\OneAndAHalfSpacedXI \color{black}
\footnotesize   
\caption{\blue{Computational time (in s) with cost 1 (instance with ``--'': some replications cannot be solved within $2$ hours)} }\label{table:CPU_time_multiOR}
\ra{0.7}  
\begin{tabular}{@{}l|llllllllllllll@{}} \toprule
         & Instance 1$'$ & Instance 2$'$ & Instance 3$'$ & Instance 4$'$  & \multicolumn{2}{l}{Instance 5$'$} \\ \midrule
SP-E     & 12.53         & 14.07         & 10.08         & 68.57          & 1565.64         &                 \\
SP-CVaR  & 55.80         & 201.96        & 57.93         & --             & --              &                 \\
DRO-E    & 8.06          & 18.28         & 23.44         & 342.13         & 2861.24         &                 \\
DRO-CVaR & 3.43          & 5.80          & 3.06          & 149.53         & 153.37          &                 \\ \midrule
         & Instance 7$'$ & Instance 8$'$ & Instance 9$'$ & Instance 10$'$ & Instance 11$'$  & Instance 13$'$  \\ \midrule
SP-E     & 1.62          & 9.77          & 109.99        & 312.68         & 1819.34         & 914.01          \\
SP-CVaR  & 10.91         & 199.00        & 2167.11       & --             & --              & --              \\
DRO-E    & 3.27          & 16.22         & 43.69         & 198.20         & 3875.95         & 1995.26         \\
DRO-CVaR & 1.36          & 2.63          & 4.13          & 5.76           & 95.86           & 83.41        \\
\bottomrule
\end{tabular}\color{black}
\end{table}

\color{black}

\subsection{Monte Carlo Optimization} \label{appdx:MCO}

In this section, we describe the Monte Carlo optimization procedure to obtain near optimal solution for the SP-E model with possibly a small number of scenarios and provide the corresponding results. The procedure is as follows (see \cite{Jebali_Diabat:2015ec,Kleywegt_et_al:2002,Lamiri_et_al:2009,Shehadeh_et_al:2021ec} for more detail explanations). 
\begin{enumerate}[leftmargin=1.6cm]
    \item [Step 1a.] Generate scenarios $d^{n,k}_i$ for $i\in I$, $m\in\{1,\dots,K\}$ and $n\in\{1,\dots,N\}$.
    \item [Step 1b.] Solve the SP-E model using SAA with scenarios $\{d^{n,k}_i\}_{n=1}^N$ and obtain the optimal first-stage solution $\chi^k:=(x^k,y^k,z^k,v^k,u^k,s^k)$ with optimal value $v^k$ for $k\in\{1,\dots,K\}$.
    \item [Step 1c.] Generate new scenarios $\tilde{d}^{n,k}_i$ for $i\in I$, $k\in\{1,\dots,K\}$ and $n\in\{1,\dots,N'\}$.
    \item [Step 1d.] Obtain the estimate of the true function value $\widehat{v}^{n,k}$ evaluated at $\chi^k$ using samples $\{\tilde{d}^{n,k}_i\}_{n=1}^{N'}$ by solving the second-stage problem \eqref{eqn:2nd_stage} for $m\in\{1,\dots,K\}$ and $n\in\{1,\dots,N'\}$.
    \item [Step 2.] Compute the estimates $\widehat{\mu}=K^{-1}\sum_{k=1}^K v^k$, $\widehat{\mu}^k=(N')^{-1}\sum_{n=1}^{N'} \hat{v}^{n,k}$, 
    $$\widehat{\sigma}^2=\frac{1}{K(K-1)}\sum_{k=1}^K\big(v^k-\widehat{\mu}\big)^2 \quad\text{and}\quad (\widehat{\sigma}^2)^k=\frac{1}{N'(N'-1)}\sum_{n=1}^{N'}\big(\widehat{v}^{n,k} - \widehat{\mu}^k \big)^2.$$
    \item [Step 3.] Obtain the estimated optimality gap $\widehat{\mu}^k-\widehat{\mu}$ and its variance $\widehat{\sigma}^2+ (\widehat{\sigma}^2)^k$ for $k\in\{1,\dots,K\}$.
\end{enumerate}

The quantities $\widehat{\mu}$ and $\widehat{\mu}^k$ provide a statistical lower and upper bound on the optimal objective value of the SP-E model respectively \citep{Kleywegt_et_al:2002,Mak_et_al:1999ec}. Therefore, given a sample size $N$, if the estimated optimality gap $\widehat{\mu}^k-\widehat{\mu}$ and its variance are small, then the sample size $N$ is sufficiently large for producing near optimal solutions. Otherwise, we could increase the sample size $N$ and estimate the new optimality gap. 

We apply the Monte Carlo optimization method to the ORASP with $K=20$, $N=100$ and $N'=10000$. In Table \ref{table:MCO}, we report the absolute value of the mean (over $K=20$ replications) of the estimated gap and the standard deviation estimate. Moreover, we report the normalize gap, i.e., approximated optimality index (AOI) defined as $\big|\sum_{k=1}^K (\widehat{\mu}^k-\widehat{\mu})/\sum_{k=1}^K \widehat{\mu}^k\big|$ \citep{Shehadeh_et_al:2021ec}. It is clear from Table \ref{table:MCO} that almost all the AOIs are less than $1\%$. That is, the relative optimality gap is small. Also, the standard deviations of the estimated optimality gaps are small. Therefore, it is reasonable to solve the SP-E model using SAA with $N=100$ scenarios to obtain near-optimal solutions.

\begin{table}[t]\centering\small \OneAndAHalfSpacedXI
\footnotesize
\caption{Statistics from Monte Carlo optimization method with $K=20$, $N=100$ and $N'=10000$} \label{table:MCO}
\ra{1.0}  
\begin{tabular}{@{}l|rrr|rrr|rrr@{}} \toprule
& \multicolumn{3}{c}{\textbf{Cost 1}} & \multicolumn{3}{c}{\textbf{Cost 2}} & \multicolumn{3}{c}{\textbf{Cost 3}} \\
Instance & Gap    & Std.  & AOI    & Gap    & Std.  & AOI    & Gap    & Std.  & AOI    \\ \midrule
1        & 10.87  & 22.07 & 0.22\% & 34.35  & 22.82 & 0.47\% & 42.15  & 25.14 & 0.52\% \\
2        & 15.34  & 20.36 & 0.24\% & 23.85  & 22.08 & 0.27\% & 29.14  & 25.92 & 0.28\% \\
3        & 43.04  & 21.81 & 0.55\% & 68.01  & 26.75 & 0.50\% & 69.95  & 32.00 & 0.45\% \\
4        & 58.89  & 31.64 & 0.48\% & 64.46  & 48.67 & 0.37\% & 65.95  & 51.62 & 0.31\% \\
5        & 55.83  & 20.39 & 0.35\% & 96.96  & 32.03 & 0.42\% & 108.44 & 40.64 & 0.32\% \\
6        & 102.50 & 19.82 & 0.47\% & 184.59 & 44.29 & 0.58\% & 209.68 & 42.90 & 0.40\% \\
7        & 47.48  & 10.45 & 1.05\% & 54.53  & 17.64 & 0.65\% & 60.71  & 22.16 & 0.58\% \\
8        & 5.95   & 1.71  & 0.08\% & 30.68  & 27.86 & 0.24\% & 26.20  & 32.54 & 0.18\% \\
9        & 6.29   & 17.22 & 0.06\% & 34.81  & 34.02 & 0.21\% & 43.24  & 36.94 & 0.22\% \\
10       & 20.23  & 24.85 & 0.18\% & 30.71  & 27.20 & 0.17\% & 27.92  & 30.06 & 0.12\% \\
11       & 42.40  & 41.07 & 0.19\% & 110.49 & 60.77 & 0.36\% & 120.40 & 55.62 & 0.36\% \\
12       & 134.56 & 37.73 & 0.47\% & 112.24 & 46.57 & 0.30\% & 132.14 & 54.92 & 0.28\% \\
\bottomrule
\end{tabular}
\end{table}

\newpage
\bibliographystyle{informs2014.bst}
\bibliography{references_main}

\end{document}